\begin{document}

\newcommand\Mand{\ \text{and}\ }
\newcommand\Mwith{\ \text{with}\ }
\newcommand\Mwhere{\ \text{where}\ }
\newcommand\Mfor{\ \text{for}\ }
\newcommand\Mst{\ \text{such that}\ }
\newcommand\Mor{\ \text{or}\ }
\newcommand\Mon{\ \text{on}\ }
\newcommand\Min{\ \text{in}\ }
\newcommand\Mif{\ \text{if}\ }
\newcommand\Miff{\ \text{iff}\ }
\newcommand\Mthen{\ \text{then}\ }
\newcommand\nin{\notin}
\newcommand\identity{\operatorname{id}}
\newcommand\Id{\operatorname{Id}}
\newcommand\Real{\mathbb{R}}
\newcommand\RR{\mathbb{R}}
\newcommand\pos{\Real^+}
\newcommand\Rnp{\Real\setminus\{0\}}
\newcommand\nzero{\setminus\{0\}}
\newcommand\Cx{\mathbb{C}}
\newcommand\Cxp{\Cx^+}
\newcommand\Cxm{\Cx^-}
\newcommand\NN{\mathbb{N}}
\newcommand\Nat{\mathbb{N}}
\newcommand\halfNat{{\frac{1}{2}}\mathbb{N}}
\newcommand\intgr{\mathbb{Z}}
\newcommand\BB{\mathbb{B}}
\newcommand\HH{\mathbb{H}}
\newcommand\im{\operatorname{Im}}
\newcommand\re{\operatorname{Re}}
\newcommand\sign{\operatorname{sign}}
\newcommand\codim{\operatorname{codim}}
\newcommand\End{\operatorname{End}}
\newcommand\Ker{\operatorname{Ker}}
\newcommand\Hom{\operatorname{Hom}}
\newcommand\tr{\operatorname{tr}}
\newcommand\Tr{\operatorname{Tr}}
\newcommand\ideal{{\mathcal I}}
\newcommand\Span{\operatorname{span}}
\newcommand\image{\operatorname{image}}
\newcommand\Range{\operatorname{Ran}}
\newcommand\Graph{\operatorname{graph}}
\newcommand\Card{\operatorname{Card}}
\newcommand\Hess{\operatorname{Hess}}
\newcommand\slim{\operatornamewithlimits{s-lim}}
\newcommand\spp{\operatorname{sp}}
\newcommand\sll{\operatorname{sl}}
\newcommand\sol{\operatorname{so}}
\newcommand\SL{\operatorname{SL}}
\newcommand\SO{\operatorname{SO}}
\newcommand\On{\operatorname{O}}
\newcommand\pa{\partial}
\newcommand\eff{\mathrm{eff}}
\newcommand\Rn{\Real^n}
\newcommand\Rm{\Real^m}
\newcommand\RN{\Real^N}
\newcommand\RtN{\Real^{2N}}
\newcommand\RM{\Real^M}
\newcommand\sphere{\mathbb{S}}
\newcommand\Sn{\sphere^{n-1}}
\newcommand\Sm{\sphere^{m-1}}
\newcommand\Snp{\sphere^n_+}
\newcommand\Smp{\sphere^m_+}
\newcommand\SN{\sphere^{N-1}}
\newcommand\SNp{\sphere^N_+}
\newcommand\circlep{\sphere^1_+}
\newcommand\Phom{P_{h}}
\newcommand\Shom{S_{h}}
\newcommand\distance{\operatorname{dist}}
\newcommand\cl{\operatorname{cl}}
\newcommand\interior{\operatorname{int}}
\newcommand\Fa{\operatorname{Fa}}
\newcommand\ff{\operatorname{ff}}
\newcommand\mf{\operatorname{mf}}
\newcommand\cf{\operatorname{cf}}
\newcommand\scf{\operatorname{sf}}
\newcommand\lf{\operatorname{lf}}
\newcommand\rf{\operatorname{rf}}
\newcommand\indfam{{\mathcal K}}
\newcommand\fraka{{\mathfrak a}}
\newcommand\cB{{\mathcal B}}
\newcommand\calA{{\mathcal A}}
\newcommand\calB{{\mathcal B}}
\newcommand\calR{{\mathcal R}}
\newcommand\calO{{\mathcal O}}
\newcommand\calJ{{\mathcal J}}
\newcommand\calL{{\mathcal L}}
\newcommand\cL{{\mathcal L}}
\newcommand\calM{{\mathcal M}}
\newcommand\calN{{\mathcal N}}
\newcommand\calX{{\mathcal X}}
\newcommand\calU{{\mathcal U}}
\newcommand\calV{{\mathcal V}}
\newcommand\calF{{\mathcal F}}
\newcommand\calG{{\mathcal G}}
\newcommand\calT{{\mathcal T}}
\newcommand\calC{{\mathcal C}}
\newcommand\calCt{{\tilde {\mathcal C}}}
\newcommand\calCL{{\mathcal C}_{\text L}}
\newcommand\calCR{{\mathcal C}_{\text R}}
\newcommand\cF{{\mathcal F}}
\newcommand\cE{{\mathcal E}}
\newcommand\cH{{\mathcal H}}
\newcommand\cG{{\mathcal G}}
\newcommand\cU{{\mathcal U}}
\newcommand\cC{{\mathcal C}}
\newcommand\cX{{\mathcal X}}
\newcommand\cY{{\mathcal Y}}
\newcommand\cS{{\mathcal S}}
\newcommand\CI{{\mathcal C}^{\infty}}
\newcommand\Cinf{{\mathcal C}^{\infty}}
\newcommand\dist{{\mathcal C}^{-\infty}}
\newcommand\dCinf{{\dot{\mathcal C}}^{\infty}}
\newcommand\dCI{{\dot{\mathcal C}}^{\infty}}
\newcommand\ddist{\dot\dist}
\newcommand\Cj{{\mathcal C}^j}
\newcommand\Linf{L^{\infty}}
\newcommand\phg{{\text{phg}}}
\newcommand\comp{{\text{comp}}}
\newcommand\loc{{\text{loc}}}
\renewcommand{\Box}{{\square}}
\newcommand\bcon{{\mathcal A}}
\newcommand\bconc{{\mathcal A}_{\text{phg}}}
\newcommand\Sch{{\mathcal S}}
\newcommand\temp{\Sch^{\prime}}
\newcommand\Diff{\operatorname{Diff}}
\newcommand\Diffb{\operatorname{Diff}_{\text{b}}}
\newcommand\Diffc{\operatorname{Diff}_{\text{c}}}
\newcommand\Diffsc{\operatorname{Diff}_{\text{sc}}}
\newcommand\DiffI{\operatorname{Diff}_{\text{I}}}
\newcommand\DiffIq{\operatorname{Diff}_{\text{I},q}}
\newcommand\sing{\text{sing}}
\newcommand\reg{\text{reg}}
\newcommand\supp{\operatorname{supp}}
\newcommand\ssupp{\operatorname{sing\ supp}}
\newcommand\csupp{\operatorname{cone\ supp}}
\newcommand\esupp{\operatorname{ess\ supp}}
\newcommand\Fr{{\mathcal F}}
\newcommand\Frinv{\Fr^{-1}}
\newcommand\bop{{\mathcal B}}
\newcommand\spec{\operatorname{spec}}
\newcommand\pspec{\spec_{pp}}
\newcommand\cspec{\spec_{c}}
\newcommand\FIO{{\mathcal I}}
\newcommand\SP{\operatorname{RC}}
\newcommand\RC{\operatorname{RC}}
\newcommand\Symc{S_c}
\newcommand\Symca{S_c^{\alpha}}
\newcommand\Symczero{S_c^{0,...,0}}
\newcommand\zi{{}^{0}}
\newcommand\sci{{}^{\text{sc}}}
\newcommand\sct{\sci T^*}
\newcommand\scT{\sci T}
\newcommand\zT{\zi T}
\newcommand\zS{\zi S}
\newcommand\scdt{\sci \dot T^*}
\newcommand\dS{\dot S^*}
\newcommand\dT{\dot T^*}
\newcommand\dSreg{\dot\Sigma_{\text reg}}
\newcommand\scct{\sci\bar{T}^*}
\newcommand\zcT{\zi\bar{T}}
\newcommand\Csc{C_{\text{sc}}}
\newcommand\SNpscd{(\SNp)^2_{\text{sc}}}
\newcommand\scdiag{\Delta_{\text{sc}}}
\newcommand\projscl{\pi^L_{\text{sc}}}
\newcommand\projscr{\pi^R_{\text{sc}}}
\newcommand\scHL{\sci H^{2,0}_{|\zeta|^2-\lambda^2}}
\newcommand\scHrg{\sci H^{2,0}_{\sqrt{g}}}
\newcommand\Hsc{H_{\text{sc}}}
\newcommand\Char{\operatorname{Char}}
\newcommand\dChar{\operatorname{\dot Char}}
\newcommand\WF{\operatorname{WF}}
\newcommand\zWF{\operatorname{WF}_0}
\newcommand\WFb{\operatorname{WF}_{\bl}}
\newcommand\WFbz{\operatorname{WF}_{\bl}}
\newcommand\WFbd{\operatorname{WF}_{\bl}}
\newcommand\WFp{\operatorname{WF^{\prime}}}
\newcommand\WFsc{\operatorname{WF}_{\text{sc}}}
\newcommand\WFscp{\operatorname{WF_{sc}^{\prime}}}
\newcommand\WFC{\operatorname{WF}_C}
\newcommand\WFCi{\operatorname{WF}_{C_i}}
\newcommand\elliptic{\operatorname{ell}}
\newcommand\Psop{\operatorname{\Psi}}
\newcommand\zPs{\operatorname{\Psi}_0}
\newcommand\Psib{\operatorname{\Psi}_{\bl}}
\newcommand\Psibc{\operatorname{\Psi}_{\text{bc}}}
\newcommand\Psiscrs{\operatorname{\Psi_{sc}^{-2,\infty}}}
\newcommand\Psiscr{\operatorname{\Psi_{sc}^{-2,0}}}
\newcommand\Psiscrm{\operatorname{\Psi_{sc}^{0,2}}}
\newcommand\PsiscHam{\operatorname{\Psi_{sc}^{2,0}}}
\newcommand\Psisci{\operatorname{\Psi_{sc}^{*,*}}}
\newcommand\Psiscid{\operatorname{\Psi_{sc}^{0,0}}}
\newcommand\Psiscis{\operatorname{\Psi_{sc}^{0,\infty}}}
\newcommand\Psiscsi{\operatorname{\Psi_{sc}^{-\infty,0}}}
\newcommand\Psiscs{\operatorname{\Psi_{sc}^{-\infty,\infty}}}
\newcommand\Psiscalg{\operatorname{\Psi_{sc}^{\infty,-\infty}}}
\newcommand\nullHam{{\mathcal N}}
\newcommand\charD{\Sigma_{\Delta-\lambda^2}}
\newcommand\charLap{\Sigma_{\Delta-\lambda}}
\newcommand\Snl{\Sn_{\lambda}}
\newcommand\SNl{\SN_{\lambda}}
\newcommand\gammat{\tilde\gamma}
\newcommand\gammasc{\gamma}
\newcommand\Tau{\mathcal{T}}
\newcommand\taut{\tilde\tau}
\newcommand\taub{\bar\tau}
\newcommand\Nout{N^+_{\lambda}}
\newcommand\Nin{N^-_{\lambda}}
\newcommand\Nio{N^{\pm}_{\lambda}}
\newcommand\El{E_{\lambda}}
\newcommand\Elt{\tilde E_{\lambda}}
\newcommand\Eil{E^i_{\lambda}}
\newcommand\Ejl{E^j_{\lambda}}
\newcommand\Eajl{E^{\alpha_j}_{\lambda}}
\newcommand\Eilt{\tilde E^i_{\lambda}}
\newcommand\Np{N^+}
\newcommand\Nm{N^-}
\newcommand\Npm{N^{\pm}}
\newcommand\Fin{F^-(\lambda)}
\newcommand\Fini{F^-_i(\lambda)}
\newcommand\Fout{F^+(\lambda)}
\newcommand\Fouti{F^+_i(\lambda)}
\newcommand\Foutj{F^+_j(\lambda)}
\newcommand\Rout{R^+_{\lambda}}
\newcommand\Routl{R^+_{\lambda^2}}
\newcommand\Routsgnl{R^{\sign\lambda}_{\lambda^2}}
\newcommand\Rin{R^-_{\lambda}}
\newcommand\Rinl{R^-_{\lambda^2}}
\newcommand\Rinsgnl{R^{-\sign\lambda}_{\lambda^2}}
\newcommand\Rio{R^{\pm}_{\lambda}}
\newcommand\Riol{R^{\pm}_{\lambda^2}}
\newcommand\Roi{R^{\mp}_{\lambda}}
\newcommand\Roil{R^{\mp}_{\lambda^2}}
\newcommand\Riob{R^{\pm}}
\newcommand\Roib{R^{\mp}}
\newcommand\Tio{T^{\pm}}
\newcommand\Tiob{T^{\pm}_{\ff}}
\newcommand\Toi{T^{\mp}}
\newcommand\Toib{T^{\mp}_{\ff}}
\newcommand\TIiob{T_I^{\pm}}
\newcommand\Rinb{R^-}
\newcommand\Rinbsgnl{R^{-\sign\lambda}}
\newcommand\Tin{T^-}
\newcommand\Tinb{T^-_{\ff}}
\newcommand\TIinb{T^-_I}
\newcommand\Routb{R^+}
\newcommand\Routbsgnl{R^{\sign\lambda}}
\newcommand\Tout{T^+}
\newcommand\Toutb{T^+_{\ff}}
\newcommand\TIoutb{T^+_I}
\newcommand\Rlkf{(|\xib|^2-(\lambda-i0)^2)^{-1}}
\newcommand\Rlk{\rho_0(\lambda)}
\newcommand\Rmlk{\rho_0(-\lambda)}
\newcommand\Rpmlk{\rho_0(\pm\lambda)}
\newcommand\Rlka{\rho_1(\lambda)}
\newcommand\Rlkb{\rho_2(\lambda)}
\newcommand\Rilk{\rho_i(\lambda)}
\newcommand\reduced{\natural}
\newcommand\Rlf{R_0(\lambda)}
\newcommand\Rla{R_1(\lambda)}
\newcommand\Rlb{R_2(\lambda)}
\newcommand\Ril{R_i(\lambda)}
\newcommand\Rlj{R_j(\lambda)}
\newcommand\Rlft{R_0(\lambda)}
\newcommand\Rflambda{R_0^{\reduced}(\sigma)}
\newcommand\RV{R^{\reduced}_V}
\newcommand\Rfsigma{R_0^{\reduced}(\sigma)}
\newcommand\Rfsigmah{R_0^{\reduced}(\sigma^{1/2})}
\newcommand\Rfzero{R_0^{\reduced}(0)}
\newcommand\RlV{R^{\reduced}_V(\sigma)}
\newcommand\RlVi{R^{\reduced}_{V_i}(\sigma)}
\newcommand\RlVt{R_V(\lambda)}
\newcommand\RlVtL{{R}_V^L(\lambda)}
\newcommand\RlVtR{{R}_V^R(\lambda)}
\newcommand\RlVit{{R}_{V_i}(\lambda)}
\newcommand\RlVta{{R}_V^{(1)}(\lambda)}
\newcommand\RlVtk{{R}_V^{(k)}(\lambda)}
\newcommand\RlVatV{{R}_{V_{\alpha}}(\lambda)V_{\alpha}}
\newcommand\RlVatVa{{R}_{V_{\alpha_1}}(\lambda)V_{\alpha_1}}
\newcommand\RlVatVb{{R}_{V_{\alpha_2}}(\lambda)V_{\alpha_2}}
\newcommand\RlVatVk{{R}_{V_{\alpha_k}}(\lambda)V_{\alpha_k}}
\newcommand\RlVatVkk{{R}_{V_{\alpha_{k+1}}}(\lambda)V_{\alpha_{k+1}}}
\newcommand\RlVaptV{{R}_{V_{\alpha'}}(\lambda)V_{\alpha'}}
\newcommand\RlVapptV{{R}_{V_{\alpha''}}(\lambda)V_{\alpha''}}
\newcommand\RlVajtV{{R}_{V_{\alpha_j}}(\lambda)V_{\alpha_j}}
\newcommand\RlVaktV{{R}_{V_{\alpha_k}}(\lambda)V_{\alpha_k}}
\newcommand\RlVakktV{{R}_{V_{\alpha_{k+1}}}(\lambda)V_{\alpha_{k+1}}}
\newcommand\Tl{T(\lambda)}
\newcommand\Tlt{\tilde\Tl}
\newcommand\Tltp{\tilde T'(\lambda)}
\newcommand\Tltpp{\tilde T''(\lambda)}
\newcommand\Tli{T_i(\lambda)}
\newcommand\Tlit{\tilde\Tli}
\newcommand\Tlip{T_i'(\lambda)}
\newcommand\Tlipp{T_i''(\lambda)}
\newcommand\Tlj{T_j(\lambda)}
\newcommand\Tla{T_{\alpha}(\lambda)}
\newcommand\Tlaa{T_{\alpha_1}(\lambda)}
\newcommand\Tlab{T_{\alpha_2}(\lambda)}
\newcommand\Tlak{T_{\alpha_k}(\lambda)}
\newcommand\Tlakt{\tilde\Tlak}
\newcommand\Tlaj{T_{\alpha_j}(\lambda)}
\newcommand\Tlajj{T_{\alpha_{j+1}}(\lambda)}
\newcommand\Tlajp{T_{\alpha_j}'(\lambda)}
\newcommand\Tlajpt{\tilde\Tlajp}
\newcommand\Tlajt{\tilde\Tlaj}
\newcommand\Tlakk{T_{\alpha_{k+1}}(\lambda)}
\newcommand\Tlakkp{T_{\alpha_{k+1}}'(\lambda)}
\newcommand\Tlap{T_{\alpha'}(\lambda)}
\newcommand\Tlapt{\tilde\Tlap}
\newcommand\Tlapp{T_{\alpha''}(\lambda)}
\newcommand\Tkl{T^{(k)}(\lambda)}
\newcommand\Tcl{T^{\flat}(\lambda)}
\newcommand\Fl{F(\lambda)}
\newcommand\BlVt{\tilde B_V(\lambda)}
\newcommand\KBlVt{K_{\BlVt}}
\newcommand\BlVaat{B_{V_{\alpha_1}}(\lambda)}
\newcommand\BV{B_V}
\newcommand\Bone{B_1}
\newcommand\Btwo{B_2}
\newcommand\Bthree{B_3}
\newcommand\Banyj{B_j}
\newcommand\PlV{P_V(\lambda)}
\newcommand\PlVc{P_V^{\flat}(\lambda)}
\newcommand\Pl{P_0(\lambda)}
\newcommand\SVl{S_V(\lambda)}
\newcommand\Sjr{S_j^{\reduced}}
\newcommand\Rkp{{\mathcal R}^k_+}
\newcommand\Rkm{{\mathcal R}^k_-}
\newcommand\Rkpm{{\mathcal R}^k_{\pm}}
\newcommand\Phys{{\mathcal P}}
\newcommand\Pc{\overline{\mathcal P}}
\newcommand\pip{\pi^{\perp}}
\newcommand\pipa{\pi_\partial}
\newcommand\gammapa{\gamma_\partial}
\newcommand\pipah{\hat\pi_\partial}
\newcommand\pit{\tilde\pi}
\newcommand\xit{\tilde\xi}
\newcommand\zetat{\tilde\zeta}
\newcommand\etat{\tilde\eta}
\newcommand\sigmat{\tilde\sigma}
\newcommand\sigmahat{\hat\sigma}
\newcommand\thetat{\tilde\theta}
\newcommand\psit{\tilde\psi}
\newcommand\phit{\tilde\phi}
\newcommand\chit{\tilde\chi}
\newcommand\rhot{\tilde\rho}
\newcommand\xib{\bar\xi}
\newcommand\zetab{\bar\zeta}
\newcommand\thetab{\bar\theta}
\newcommand\etab{\bar\eta}
\newcommand\iotal{\iota_{\lambda}}
\newcommand\rhoat{\rhot_{\alpha_1}}
\newcommand\Lambdat{\tilde\Lambda}
\newcommand\Lambdati{\tilde\Lambda^{\text{in}}}
\newcommand\Lambdato{\tilde\Lambda^{\text{out}}}
\newcommand\Lambdatp{\tilde\Lambda^{\text{prop}}}
\newcommand\Lambdai{\Lambda^{\text{in}}}
\newcommand\Lambdao{\Lambda^{\text{out}}}
\newcommand\poles{\Lambda'}
\newcommand\rpoles{\Lambda_p}
\newcommand\thresholds{\Lambda}
\newcommand\Vt{\tilde V}
\newcommand\It{\tilde I}
\newcommand\half{{\frac{1}{2}}}
\newcommand\sigmah{\sigma^{1/2}}
\newcommand\bX{\partial X}
\newcommand\bXb{\partial \Xb}
\newcommand\Deltabt{\tilde\Delta_0}
\newcommand\strip{\Omega_T}
\newcommand\Kf{K^{\flat}}
\newcommand\Gs{G^{\sharp}}
\newcommand\Gt{\tilde G}
\newcommand\Osc{\sci\Omega}
\newcommand\OSc{{}^\Scl\Omega}
\newcommand\Osch{\sci\Omega^{\half}}
\newcommand\Oscmh{\sci\Omega^{-\half}}
\newcommand\Isc{I_{sc}}
\newcommand\os{{\text{os}}}
\newcommand\Qzl{Q^0_{-\lambda}}
\newcommand\Lie{{\mathcal L}}
\newcommand\bl{{\text b}}
\newcommand\scl{{\text{sc}}}
\newcommand\sccl{{\text{scc}}}
\newcommand\Scl{{\text{sc}}}
\newcommand\ScLl{{\text{Sc,L}}}
\newcommand\ScRl{{\text{Sc,R}}}
\newcommand\Sccl{{\text{Scc}}}
\newcommand\sus{{\text{sus}}}
\newcommand\ssl{{\text{ss}}}
\newcommand\XXb{X^2_\bl}
\newcommand\XXbt{\Xt^2_\bl}
\newcommand\XXsc{X^2_\scl}
\newcommand\XXsct{\Xt^2_\scl}
\newcommand\XXSc{X^2_\Scl}
\newcommand\XXSct{\Xt^2_\Scl}
\newcommand\XXScL{X^2_\ScLl}
\newcommand\XXScR{X^2_\ScRl}
\newcommand\MMsc{M^2_\scl}
\newcommand\Deltab{\Delta_\bl}
\newcommand\Deltasc{\Delta_\scl}
\newcommand\DeltaSc{\Delta_\Scl}
\newcommand\DeltaScL{\Delta_\ScLl}
\newcommand\DeltaScR{\Delta_\ScRl}
\newcommand\prs{\sigma}
\newcommand\Nsc{N_\scl}
\newcommand\Nscp{N_{\scl,p}}
\newcommand\Nff{N_{\ff}}
\newcommand\Nffz{N_{\ff,0}}
\newcommand\Nffzp{N_{\ff,0,p}}
\newcommand\Nffl{N_{\ff,l}}
\newcommand\Nffml{N_{\ff,-l}}
\newcommand\Nmf{N_{\mf}}
\newcommand\Nmfz{N_{\mf,0}}
\newcommand\Nmfl{N_{\mf,l}}
\newcommand\Nmfml{N_{\mf,-l}}
\newcommand\ffb{\operatorname{bf}}
\newcommand\Ffb{\operatorname{bf'}}
\newcommand\ffsc{\operatorname{sf}}
\newcommand\ffSc{\operatorname{sf_C}}
\newcommand\Ffsc{\operatorname{sf'}}
\newcommand\rff{\rho_{\ff}}
\newcommand\rmf{\rho_{\mf}}
\newcommand\rffb{\rho_{\ffb}}
\newcommand\rffsc{\rho_{\ffsc}}
\newcommand\rFfsc{\rho_{\Ffsc}}
\newcommand\rffSc{\rho_{\ffSc}}
\newcommand\rinf{\rho_{\infty}}
\newcommand\CL{C_L}
\newcommand\CR{C_R}
\newcommand\betab{\beta_\bl}
\newcommand\betasc{\beta_\scl}
\newcommand\betaSc{\beta_\Scl}
\newcommand\BetaSc{\bar\beta_\Scl}
\newcommand\betaScL{\beta_\ScLl}
\newcommand\betaScR{\beta_\ScRl}
\newcommand\ScT{{}^\Scl T^*}
\newcommand\SccT{{}^\Scl \bar T^*}
\newcommand\ScS{{}^\Scl S^*}
\newcommand\Sb{{}^\bl S}
\newcommand\Tb{{}^\bl T}
\newcommand\Tsc{{}^\scl T}
\newcommand\TSc{{}^\Scl T}
\newcommand\CSc{C_\Scl}
\newcommand\Lambdasc{{}^\scl\Lambda}
\newcommand\XXXb{X^3_\bl}
\newcommand\XXXsc{X^3_\scl}
\newcommand\XXXSc{X^3_\Scl}
\newcommand\XXXScO{X^3_{\Scl,O}}
\newcommand\XXXScF{X^3_{\Scl,F}}
\newcommand\XXXScS{X^3_{\Scl,S}}
\newcommand\XXXScC{X^3_{\Scl,C}}
\newcommand\KDsc{\operatorname{KD^{\half}_\scl}}
\newcommand\KDSc{\operatorname{KD^{\half}_\Scl}}
\newcommand\KDScEF{\operatorname{KD^{E,F}_\Scl}}
\newcommand\Oh{\operatorname{\Omega^{\half}}}
\newcommand\WFSc{\WF_\Scl}
\newcommand\WFtSc{\WF_{\text 3sc}}
\newcommand\WFScmf{\WF_{\Scl,\mf}}
\newcommand\WFScff{\WF_{\Scl,\ff}}
\newcommand\WFScs{\WF_{\Scl,\prs}}
\newcommand\WFScp{\WF'_\Scl}
\newcommand\WFScmfp{\WF'_{\Scl,\mf}}
\newcommand\WFScffp{\WF'_{\Scl,\ff}}
\newcommand\WFScsp{\WF'_{\Scl,\prs}}
\newcommand\Diffscc{\Diff_\sccl}
\newcommand\DiffSc{\Diff_\Scl}
\newcommand\DiffScc{\Diff_\Sccl}
\newcommand\DiffscI{\Diff_{\scl,\text{I}}}
\newcommand\VscI{\Vf_{\scl,\text{I}}}
\newcommand\DiffsV{\operatorname{Diff}_{\sus(V)}}
\newcommand\DiffsVsc{\operatorname{Diff}_{\sus(V),\scl}}
\newcommand\DiffsVCsc{\operatorname{Diff}_{\sus(V)-C,\scl}}   
\newcommand\Psisc{\Psop_\scl}
\newcommand\Psiscc{\Psop_\sccl}
\newcommand\Psiss{\Psop_\ssl}
\newcommand\Psisch{\Psop_{\scl,h}}
\newcommand\Psiscch{\Psop_{\sccl,h}}
\newcommand\PsiSc{\Psop_\Scl}
\newcommand\PsiScph{\Psop_{\Scl,\phi}}
\newcommand\PsiScra{\Psop_{\Scl,\rho^\sharp_a}}
\newcommand\PsiScc{\Psop_\Sccl}
\newcommand\PsiSccml{\Psop^{m,l}_\Sccl}
\newcommand\PsiScxx{\Psop^{*,*}_\Scl}
\newcommand\PsiScml{\Psop^{m,l}_\Scl}
\newcommand\PsiScmz{\Psop^{m,0}_\Scl}
\newcommand\PsiScmmz{\Psop^{-m,0}_\Scl}
\newcommand\PsiSckz{\Psop^{k,0}_\Scl}
\newcommand\PsiScmmml{\Psop^{-m,-l}_\Scl}
\newcommand\Psiscmkk{\Psop^{-k,k}_\scl}
\newcommand\Psiscmmmkk{\Psop^{-m-k,k}_\scl}
\newcommand\Psiscmoo{\Psop^{-1,1}_\scl}
\newcommand\Psiscmz{\Psop^{m,0}_\scl}
\newcommand\Psiscmmz{\Psop^{-m,0}_\scl}
\newcommand\PsiSckmkl{\Psop^{km,kl}_\Scl}
\newcommand\PsiScmplp{\Psop^{m',l'}_\Scl}
\newcommand\PsiScmmpllp{\Psop^{m+m',l+l'}_\Scl}
\newcommand\Psiscml{\Psop^{m,l}_\scl}
\newcommand\PsiScid{\Psop^{0,0}_\Scl}
\newcommand\PsiSczo{\Psop^{0,1}_\Scl}
\newcommand\PsiScmii{\Psop^{-\infty,\infty}_\Scl}
\newcommand\PsiScmiz{\Psop^{-\infty,0}_\Scl}
\newcommand\PsiScmoo{\Psop^{-1,1}_\Scl}
\newcommand\PsisCid{\Psop^{0,0}_{\scl-C}}
\newcommand\PsisC{\Psop_{\scl-C}}
\newcommand\Psiinf{\Psop_{\infty}}
\newcommand\Psiinfid{\Psop_{\infty}^0}
\newcommand\PsiFinf{\Psop_{\infty-\Fr}}
\newcommand\PsisVscml{\Psop^{m,l}_{\sus(V),\scl}}
\newcommand\PsisVsc{\Psop_{\sus(V),\scl}}
\newcommand\PsisVpsc{\Psop_{\sus(V_p),\scl}}
\newcommand\PsisVCSc{\Psop_{\sus(V)-C,\scl}}
\newcommand\SFinf{S_{\infty-\Fr}}
\newcommand\YsVC{Y^2_{\sus(V)-C,\scl}}
\newcommand\ffYsc{\ffsc_{\sus(V)}}
\newcommand\SXC{S(X;C)}
\newcommand\Ios{I_{\text{os}}}
\newcommand\pbL{\pi^2_{\bl,{\text L}}}
\newcommand\pbR{\pi^2_{\bl,{\text R}}}
\newcommand\pscL{\pi^2_{\scl,{\text L}}}
\newcommand\pscR{\pi^2_{\scl,{\text R}}}
\newcommand\PbO{\pi^3_{\bl,{\text O}}}
\newcommand\PscO{\pi^3_{\scl,{\text O}}}
\newcommand\PScO{\pi^3_{\Scl,{\text O}}}
\newcommand\PScF{\pi^3_{\Scl,{\text F}}}
\newcommand\PScC{\pi^3_{\Scl,{\text C}}}
\newcommand\PScS{\pi^3_{\Scl,{\text S}}}
\newcommand\pScL{\pi^2_{\Scl,{\text L}}}
\newcommand\pScR{\pi^2_{\Scl,{\text R}}}
\newcommand\CLF{\CL^F}
\newcommand\CLO{\CL^O}
\newcommand\CLS{\CL^S}
\newcommand\CLC{\CL^C}
\newcommand\DeltaYb{\Delta_{\bl,Y}}
\newcommand\DeltaYsc{\Delta_{\sus-\scl}}
\newcommand\diag{\operatorname{diag}}
\newcommand\Vf{{\mathcal V}}
\newcommand\Vb{{\mathcal V}_{\bl}}
\newcommand\Vsc{{\mathcal V}_{\scl}}
\newcommand\VSc{{\mathcal V}_{\Scl}}
\newcommand\VfI{\Vf_{\text{I}}}
\newcommand\VfIq{\Vf_{\text{I},q}}
\newcommand\scH{{}^\scl H}
\newcommand\scHg{\scH_g}
\newcommand\Hss{H_\ssl}
\newcommand\xh{\hat x}
\newcommand\yh{\hat y}
\newcommand\sh{\hat s}
\newcommand\rh{\hat r}
\newcommand\Yh{\hat Y}
\newcommand\Zh{\hat Z}
\newcommand\Yb{\bar Y}
\newcommand\hb{\bar h}
\newcommand\xih{\hat\xi}
\newcommand\etah{\hat\eta}
\newcommand\muh{\hat\mu}
\newcommand\mub{\bar\mu}
\newcommand\nub{\bar\nu}
\newcommand\mubh{\widehat{\bar\mu}}
\newcommand\yb{\bar y}
\newcommand\zb{\bar z}
\newcommand\ub{\bar u}
\newcommand\Qb{\bar Q}
\newcommand\Wbp{{\bar W}^\perp}
\newcommand\Wp{W^\perp}
\newcommand\Kt{\tilde K}
\newcommand\Wt{\tilde W}
\newcommand\Ut{\tilde U}
\newcommand\yt{\tilde y}
\newcommand\ut{\tilde u}
\newcommand\vt{\tilde v}
\newcommand\ft{\tilde f}
\newcommand\htil{\tilde h}
\newcommand\St{\tilde S}
\newcommand\Pt{\tilde P}
\newcommand\Rt{\tilde R}
\newcommand\qt{\tilde q}
\newcommand\Qt{\tilde Q}
\newcommand\Xb{\bar X}
\newcommand\lambdat{\tilde\lambda}
\newcommand\betat{\tilde\beta}
\newcommand\Phit{\tilde\Phi}
\newcommand\epst{\tilde\epsilon}
\newcommand\ep{\epsilon}
\newcommand\bt{\tilde b}
\newcommand\Xt{\tilde X}
\newcommand\Mt{\tilde M}
\newcommand\At{\tilde A}
\newcommand\Et{\tilde E}
\newcommand\Ht{\tilde H}
\newcommand\at{\tilde a}
\newcommand\Ct{\tilde C}
\newcommand\pih{\hat\pi}
\newcommand\Rh{\hat R}
\newcommand\Ah{\hat A}
\newcommand\Bh{\hat B}
\newcommand\Ch{\hat C}
\newcommand\Gh{\hat G}
\newcommand\Hh{\hat H}
\newcommand\Qh{\hat Q}
\newcommand\Ph{\hat P}
\newcommand\Nh{\hat N}
\newcommand\Sh{\hat S}
\newcommand\Gcal{{\mathcal G}}
\newcommand\GcalC{{\mathcal G}_C}
\newcommand\Jcal{{\mathcal J}}
\newcommand\JcalC{{\mathcal J}_C}
\setcounter{secnumdepth}{3}
\newtheorem{lemma}{Lemma}[section]
\newtheorem{prop}[lemma]{Proposition}
\newtheorem{thm}[lemma]{Theorem}
\newtheorem{cor}[lemma]{Corollary}
\newtheorem{result}[lemma]{Result}
\newtheorem*{thm*}{Theorem}
\newtheorem*{prop*}{Proposition}
\newtheorem*{cor*}{Corollary}
\newtheorem*{conj*}{Conjecture}
\numberwithin{equation}{section}
\theoremstyle{remark}
\newtheorem{rem}[lemma]{Remark}
\newtheorem*{rem*}{Remark}
\theoremstyle{definition}
\newtheorem{Def}[lemma]{Definition}
\newtheorem*{Def*}{Definition}
\def\signature#1#2{\par\noindent#1\dotfill\null\\*
{\raggedleft #2\par}}

\renewcommand{\theenumi}{\roman{enumi}}
\renewcommand{\labelenumi}{(\theenumi)}

\title[The wave equation on asymptotically
de Sitter-like spaces]{The wave equation on asymptotically\\
de Sitter-like spaces}
\author[Andras Vasy]{Andr\'as Vasy}
\date{June 25, 2007.}
\thanks{The author gratefully acknowledges financial support for this
project from the National Science Foundation under
grants DMS-0201092 and DMS-0733485, from a Clay Research Fellowship and a
Sloan Fellowship.}
\address{Department of Mathematics, Stanford University, Stanford, CA
94305-2125, U.S.A.}
\email{andras@math.stanford.edu}

\begin{abstract}
In this paper we obtain the asymptotic behavior of solutions of the
Klein-Gordon equation on Lorentzian manifolds $(X^\circ,g)$
which are de Sitter-like
at infinity. Such manifolds are Lorentzian analogues of the so-called
Riemannian conformally compact (or asymptotically hyperbolic) spaces. 
Under global assumptions on the (null)bicharacteristic flow,
namely that the boundary of the compactification $X$
is a union of two disjoint manifolds, $Y_\pm$,
and each
bicharacteristic converges to one of these two manifolds as the parameter
along the bicharacteristic goes to $+\infty$, and to the other manifold
as the parameter goes to $-\infty$,
we also define the scattering operator, and show that it is a Fourier
integral operator associated to the bicharacteristic flow from $Y_+$ to
$Y_-$.
\end{abstract}

\maketitle

\section{Introduction}
Consider a de Sitter-like pseudo-Riemannian metric $g$ of signature
$(1,n-1)$ on an $n$-dimensional
($n\geq 2$) manifold
with boundary $X$, with boundary $Y$, which near $Y$ is of the form
\begin{equation*}
g=\frac{dx^2-h}{x^2},
\end{equation*}
$h$ a smooth symmetric 2-cotensor on $X$ such that with respect to some
product decomposition of $X$ near $Y$, $X=Y\times[0,\ep)_x$,
$h|_Y$ is a section of $T^*Y\otimes T^*Y$ (rather than merely
$T^*_Y X\otimes T^*_Y X$) and is a
Riemannian metric on $Y$.
Let the wave operator $\Box$ be the Laplace-Beltrami operator
associated to this metric, and let $P=P(\lambda)
=\Box-\lambda$ be the Klein-Gordon
operator, $\lambda\in\RR$.

Below we consider solutions of $P u=0$. The {\em bicharacteristics} of $P$
over $X^\circ$ are the integral curves of the Hamilton vector field
of the principal symbol $\sigma_2(P)$ (given by the dual metric function)
{\em inside the characteristic set} of
$P$. As $g$ is conformal to $dx^2-h$,
bicharacteristics of $P$ are reparameterizations of bicharacteristics
of $dx^2-h$ (near $Y$, that is). Since $g$ is complete, this means that
the bicharacteristics $\gamma$ of $P$ have limits
$\lim_{t\to\pm\infty}\gamma(t)$ in $S^*_YX$, provided that they approach $Y$.
While many of the results below are local in character, it is simpler to
state a global result, for which we need to assume that
\begin{itemize}
\item[(A1)]
$Y=Y_+\cup Y_-$ with $Y_+$ and $Y_-$ a union of connected components of $Y$
\item[(A2)]
each bicharacteristic $\gamma$ of $P$ converges to $Y_+$ as
$t\to+\infty$ and to $Y_-$ as $t\to-\infty$, or vice versa
\end{itemize}
Due to
the conformality,
the characteristic set $\Sigma(P)$ of $P$ can be identified with
a smooth submanifold of $S^*X$, transversal to $\pa X$, so
$S^*_YX\cap\Sigma(P)$ can be identified with two copies $S^*_\pm Y$
of $S^*Y$, one
for each sign of the dual variable of $x$. Under our assumptions we
thus have a classical scattering map $\cS_{\cl}:S^*_+Y_+\to S^*_-Y_-$.

It is well-known, cf.\ \cite{Geroch:Domain}, that (A1) and (A2) imply the
existence of a global compactified `time' function $T$,
with $T\in\CI(X)$,
$T|_{Y_\pm}=\pm 1$, and the pullback of $T$ to $S^*X$ having positive/negative
derivative along the Hamilton vector field inside the characteristic
set $\Sigma(p)$ depending on whether the corresponding bicharacterstics
tend to $Y_+$ or $Y_-$. Notice that $1-x$ resp.\ $x-1$ has the desired properties near $Y_+$ resp.\ $Y_-$,  so the point is that a function like these
can be extended to all of $X$. Moreover, such a function gives a fibration
$T:X\to[-1,1]$, hence $X$ is in fact diffeomorphic to $[-1,1]\times S$ for
a compact manifold $S$. In particular, $Y_+$ and $Y_-$ are both diffeomorphic
to $S$. Denote the level set $T=t_0$ by $S_{t_0}$.
With any choice of such a function $T$, a constant
$t_0\in(-1,1)$, and a vector field $V$ transversal to $S_{t_0}$ (e.g.\ take
the vector field corresponding to $dT$ under the metric identification of
$TX^\circ$ and $T^*X^\circ$),
$P$ is strictly hyperbolic, and
the Cauchy problem $Pu=0$ in $X^\circ$, $u|_{S_{t_0}}=\psi_0$,
$Vu|_{S_{t_0}}=\psi_1$, $\psi_0,\psi_1\in\Cinf(S_{t_0})$ is well posed.

\begin{thm}(See Theorem~\ref{thm:Cauchy-exist}.)
Let  $s_\pm(\lambda)=\frac{n-1}{2}\pm\sqrt{\frac{(n-1)^2}{4}-\lambda}$.
Assuming (A1) and (A2),
the solution $u$ of the Cauchy problem
has the form
\begin{equation}\label{eq:asymp-exp}
u=x^{s_+(\lambda)}v_++x^{s_-(\lambda)}v_-,\ v_\pm\in\CI(X),
\end{equation}
if $s_+(\lambda)-s_-(\lambda)=2\sqrt{\frac{(n-1)^2}{4}-\lambda}$
is not an integer.
If $s_+(\lambda)-s_-(\lambda)$
is an integer, the same conclusion holds if we replace
$v_-\in\CI(X)$ by $v_-=\CI(X)
+x^{s_+(\lambda)-s_-(\lambda)}\log x\,\CI(X)$.
\end{thm}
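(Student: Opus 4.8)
The plan is to localise the problem near $Y=\{x=0\}$, extract the expansion from the indicial operator of $P$, and supply the regularity needed to run that extraction by an energy estimate that propagates interior smoothness up to $Y$. By (A1)--(A2) the Cauchy problem is globally well posed and $Y$ is a disjoint union of pieces that are spacelike for the conformally rescaled metric $dx^2-h$; using finite speed of propagation for $dx^2-h$ it therefore suffices to work in a collar $Y\times[0,\ep)_x$. In the associated product decomposition a direct computation of $\Box$ shows
\begin{equation*}
P=-\bigl((x\pa_x)^2-(n-1)(x\pa_x)+\lambda\bigr)+x^2A(x)+xB(x),
\end{equation*}
where $A(x)$ and $B(x)$ are smooth up to $x=0$, $A(x)$ being a family of second order differential operators on $Y$ (with $A(0)$ the Laplacian of $h|_Y$, up to sign) and $B(x)$ of order $\le 1$. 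Hence the indicial polynomial is $I_P(s)=s^2-(n-1)s+\lambda$, whose roots are exactly $s_\pm=s_\pm(\lambda)$. Writing $\nu=s_+-s_-$, one has $I_P(s_++k)=k(k+\nu)$, which vanishes only for $k=0$, while $I_P(s_-+k)=k(k-\nu)$ vanishes for $k=0$ and, when $\nu$ is a positive integer, also for $k=\nu$. This lone resonance --- on the $x^{s_-}$ branch, at order $x^{s_-+\nu}=x^{s_+}$ --- is the only source of logarithms and is exactly what the $x^{s_+-s_-}\log x\,\CI(X)$ correction in the statement accommodates.

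The main analytic step, and the one I expect to be the real obstacle, is an a priori estimate: the solution $u$, which lies in $\CI(X^\circ)$ by strict hyperbolicity and the quoted well-posedness, is \emph{conormal to $Y$ with some weight}, i.e.\ for some $N$ all iterated derivatives of $u$ along $x\pa_x$ and the $\pa_{y_j}$ lie in $x^{-N}L^2$ near $Y$. To prove this I would conjugate, $\hat u=x^{-s_-}u$, so that $\hat P\hat u=0$ with $\hat P=x^{-s_-}Px^{s_-}=-(x\pa_x)^2+\nu(x\pa_x)-x^2\Delta_h+x(\cdots)$; in the variable $T=\log x$, with $Y$ at $T=-\infty$, this is a wave equation whose tangential part degenerates like $e^{2T}\to0$ and whose level sets $\{T=\mathrm{const}\}$ are spacelike, so that pairing with $\pa_T\hat u$ over the closed fibre $Y$ produces the usual energy $\|\pa_T\hat u\|_{L^2(Y)}^2+e^{2T}\|\nabla_h\hat u\|_{L^2(Y)}^2$. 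A Gr\"onwall argument run in the direction $T\to-\infty$ then yields $\hat u=O(x^{-N})$ for some $N$ depending on $\nu$ and the lower order terms, and commuting $x\pa_x$ and the $\pa_{y_j}$ through $\hat P$ and re-running the estimate (starting the induction on an interior level set, where $u$ is smooth) promotes this to the conormality statement above. One has to be careful with the sign of the $\nu\pa_T$ term, but --- crucially for the next step --- the sharp weight is not needed, only conormality with \emph{some} weight.

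The final step converts conormality into the asymptotic expansion via the indicial operator. Taking the Mellin transform in $x$, conormality makes $\hat u(\sigma,\cdot)$ holomorphic in a half plane $\{\,\mathrm{Re}\,\sigma<-N\,\}$ and rapidly decreasing on vertical lines, and $Pu=0$ becomes a relation expressing $I_P(\sigma)\hat u(\sigma)$ in terms of the $\hat u(\sigma+k)$ with $k\ge 1$; this forces a meromorphic continuation of $\hat u$ to $\mathbb{C}$ with poles only at $\sigma\in\{s_++k,\ s_-+k:\ k=0,1,2,\dots\}$, simple except that when $\nu$ is a positive integer the poles at $\sigma=s_++k$ ($k\ge0$) are double, and when $\lambda=\tfrac{(n-1)^2}{4}$ the pole at $\sigma=s_+=s_-$ is double (this last case needing an extra $(\log x)^2$ in the bookkeeping). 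Shifting the inversion contour rightward across these poles and estimating the shifted integral by $O(x^c)$ gives a full polyhomogeneous expansion
\begin{equation*}
u\sim\sum_{k\ge0}a^+_k\,x^{s_++k}+\sum_{k\ge0}\bigl(a^-_k+b^-_k\log x\bigr)x^{s_-+k},
\end{equation*}
with all $a^\pm_k,b^-_k\in\CI(Y)$ and $b^-_k=0$ unless $\nu$ is a positive integer and $k\ge\nu$; in particular the first pole, at $\sigma=s_-$, places the leading behaviour of $u$ at $x^{s_-}$. (One may equally run this step as a direct iteration, peeling off the expansion order by order and at each stage solving $I_P(s)\,a_s=(\text{lower order data})$ in $\CI(Y)$ --- possible except at the resonance, where a $\log$ term is forced.) Borel-summing the two series to $v_\pm^{(0)}\in\CI(X)$ and the $\log$ series to $w\in\CI(X)$, the difference $u-x^{s_+}v_+^{(0)}-x^{s_-}v_-^{(0)}-x^{s_+}\log x\,w$ is polyhomogeneous with empty index set, hence vanishes to infinite order at $Y$; absorbing it and the $\log$ term into $v_-$ (using that $x^{-s_-}$ times an infinitely-vanishing $\CI$ function is $\CI$, and $x^{s_+}\log x=x^{s_-}\cdot x^{s_+-s_-}\log x$) gives $u=x^{s_+}v_+ + x^{s_-}v_-$ with $v_+\in\CI(X)$, and with $v_-\in\CI(X)$ when $s_+-s_-\notin\mathbb{Z}$, respectively $v_-\in\CI(X)+x^{s_+-s_-}\log x\,\CI(X)$ when $s_+-s_-\in\mathbb{Z}$ --- which is \eqref{eq:asymp-exp}.
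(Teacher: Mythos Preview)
Your argument is sound in outline and reaches the result by the same two-step mechanism as the paper---first establish that the Cauchy solution is conormal at $Y$ with \emph{some} weight, then feed this into the indicial analysis (your Mellin shift is equivalent to the paper's Lemma~\ref{lemma:expansion}) to read off the polyhomogeneous expansion---but the route to conormality is genuinely different. The paper does not run an energy estimate on the Cauchy solution directly; instead it \emph{reconstructs} $u$ as a compactly supported local solution $\chi_1\chi_2 u_0$ plus corrections $u_1,u_2$ produced by a global solvability theorem on weighted $0$-Sobolev spaces (Theorem~\ref{thm:global-solvability}, itself resting on unique continuation at infinity, Proposition~\ref{prop:unique}). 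Conormality of these corrections is then obtained through a chain of microlocal results in the $0$-calculus: radial-point propagation for $0$-regularity (Proposition~\ref{prop:edge-reg}, Corollary~\ref{cor:edge-reg}), normal-operator commutator estimates for decay (Proposition~\ref{prop:decay-gain}), and a commutation argument with the \emph{tangential Laplacian} $\tilde\Delta_Y$---not the individual $\partial_{y_j}$---exploiting that $[P,\tilde\Delta_Y]\in x\,\mathrm{Diff}^1_0\,\mathrm{Diff}^2_b(X)$ gains an order of vanishing (Proposition~\ref{prop:conormal-reg}). Your Fuchsian energy approach is more elementary and self-contained for this particular theorem, and avoids the global-solvability detour entirely; what it costs is that the tangential commutation step needs care: it closes only because $[\hat P,\partial_{y_j}]$ carries an extra factor $e^{2T}$, so that the coupling between levels is integrable toward $T\to-\infty$, and you should make this explicit rather than leaving it inside ``re-running the estimate''. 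The paper's microlocal machinery, by contrast, is what is reused later for the scattering-operator results.

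One correction: in the double-root case $\lambda=(n-1)^2/4$ no $(\log x)^2$ term arises. The indicial operator $(x\partial_x-s_0)^2$ has $I_P(s_0+k)=k^2\neq 0$ for $k\geq 1$, and applying it to $x^{s_0+k}\log x$ gives $k^2 x^{s_0+k}\log x+2kx^{s_0+k}$, so a right-hand side of the form $c_k+d_k\log x$ is solvable within the span of $x^{s_0+k}$ and $x^{s_0+k}\log x$; equivalently, on the Mellin side the poles at $s_0+k$ remain of order two but never higher. This matches Lemma~\ref{lemma:expansion-threshold}.
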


Conversely, the asymptotic behavior of $v_\pm$ either at $Y_+$ or at $Y_-$
can be prescribed arbitrarily, see Theorem~\ref{thm:smooth-solns}. Thus,
assuming A1 and A2, if $s_+(\lambda)-s_-(\lambda)$ is not an integer,
we show that given $g_\pm\in\CI(Y_+)$
there exists a unique $u\in\CI(X^\circ)$ such that $Pu=0$
and which is of the form \eqref{eq:asymp-exp}
and such
that
\begin{equation}\label{eq:v+-v--spec}
v_+|_{Y_+}=g_+,\ v_-|_{Y_+}=g_-.
\end{equation}
If $s_+(\lambda)-s_-(\lambda)$
is a non-zero integer, the same conclusion holds if we replace
$v_-\in\CI(X)$ by $v_-=\sum_{j=0}^{s_+(\lambda)-s_-(\lambda)-1}a_jx^j
+x^{s_+(\lambda)-s_-(\lambda)}\log x\,\CI(X)$, $a_j\in\CI(Y)$,
see Theorem~\ref{thm:smooth-solns}. For $\lambda=\frac{(n-1)^2}{4}$,
a similar results holds, with
\begin{equation}\label{eq:asymp-exp-threshold}
u=x^{(n-1)/2}v_++x^{(n-1)/2}\log x\,v_-,\ v_\pm\in\CI(X),\ v_\pm|_{Y_+}=g_\pm.
\end{equation}

That is, for all $\lambda\in\RR$, there
is a unique solution of $Pu=0$ with two pieces of `Cauchy data'
specified at $Y_+$. Note the contrast with the elliptic asymptotically
hyperbolic problem (conformally compact Riemannian metrics): there
one specifies one of the two pieces of the Cauchy data, but over all
of $Y$ (not only at $Y_+$), see \cite{Mazzeo-Melrose:Meromorphic}.
The quantum scattering map
is the map:
\begin{equation*}
\cS:\CI(Y_+)\oplus\CI(Y_+)\to\CI(Y_-)\oplus\CI(Y_-),
\ \cS(g_+,g_-)=(v_+|_{Y_-},v_-|_{Y_-}).
\end{equation*}
Of course, the labelling of $Y_+$ and $Y_-$ can be reversed, so $\cS$ is
invertible. In fact, it is useful to renormalize $\cS=\cS(\lambda)$ somewhat
so that the two pieces of Cauchy data at infinity carry the same
`weight'.
Let $\Delta_h'$ denote the operator which is
$\Delta_h$ on the orthocomplement of the nullspace of $\Delta_h$
and is the identity on the nullspace, so $\Delta_h'$ is positive
and invertible.
Then the renormalization is
\begin{equation*}\begin{split}
&\tilde \cS(\lambda)\\
&=
((\Delta'_h)^{-s_+(\lambda)/2+n/4}\oplus(\Delta'_h)^{-s_-(\lambda)+n/4})
\cS(\lambda)((\Delta'_h)^{s_+(\lambda)/2-n/4}
\oplus(\Delta'_h)^{s_-(\lambda)/2-n/4});
\end{split}\end{equation*}
this is analogous to using $A\psi_0$ in place of $\psi_0$ for the
finite time Cauchy data, where $A\in\Psi^1(S_{t_0})$ elliptic, invertible.
We show that:

\begin{thm} (See Theorem~\ref{thm:S-FIO}.)
Suppose that $s_+(\lambda)-s_-(\lambda)$ is not an integer,
i.e.\ $\lambda\neq\frac{(n-1)^2-m^2}{4}$, $m\in\Nat$.
$\tilde\cS=\tilde\cS(\lambda)$
is an invertible elliptic $0$th order
Fourier integral operator with canonical relation given by
$\cS_{\cl}$, and $\cS$ is a Fourier integral operator.
\end{thm}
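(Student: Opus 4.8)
The plan is to factor the scattering map through finite-time Cauchy data and to identify each factor as a Fourier integral operator. Fix Cauchy surfaces $S_{t_0}$ with $t_0$ close to $1$ (so that $S_{t_0}$ lies in a collar of $Y_+$) and $S_{t_1}$ with $t_1$ close to $-1$, together with the transversal vector field $V$ of the introduction. By Theorem~\ref{thm:smooth-solns} each $(g_+,g_-)\in\CI(Y_+)^2$ determines a unique solution of $Pu=0$ with the prescribed asymptotics at $Y_+$; restricting $(u,Vu)$ to $S_{t_0}$ gives a Poisson operator $\mathcal{P}_+\colon\CI(Y_+)^2\to\CI(S_{t_0})^2$, and likewise its $Y_-$-analogue $\mathcal{P}_-\colon\CI(Y_-)^2\to\CI(S_{t_1})^2$ (the cited theorem being valid after interchanging $Y_+$ and $Y_-$). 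By Theorem~\ref{thm:Cauchy-exist} every solution has the expansion \eqref{eq:asymp-exp} near $Y_-$, so $\mathcal{P}_-$ is invertible; set $\mathcal{R}_-:=\mathcal{P}_-^{-1}\colon\CI(S_{t_1})^2\to\CI(Y_-)^2$, which sends Cauchy data on $S_{t_1}$ to $(v_+|_{Y_-},v_-|_{Y_-})$. With $U(t_1,t_0)$ the Cauchy solution operator of the strictly hyperbolic operator $P$ from $S_{t_0}$ to $S_{t_1}$ we then have $\cS=\mathcal{R}_-\circ U(t_1,t_0)\circ\mathcal{P}_+$.

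The finite-time factor is classical: for a strictly hyperbolic second-order operator, $U(t_1,t_0)$ is a $2\times 2$ matrix of Fourier integral operators, invertible with inverse $U(t_0,t_1)$, whose canonical relation is the graph of the Hamilton flow of $\sigma_2(P)$ within $\Sigma(P)$ from the conormal bundle of $S_{t_0}$ to that of $S_{t_1}$. Everything thus reduces to the boundary operators $\mathcal{P}_\pm$. Near $Y_+$ I would exploit the conformal structure $g=(dx^2-h)/x^2$: an appropriate conjugation of $P$ by a power of $x$, together with the conformal weight, turns it in the variable $s=-\log x$ into a wave operator of the form $\partial_s^2-(s_+(\lambda)-s_-(\lambda))\,\partial_s-\Delta_h$ plus terms vanishing at $s=\infty$, posed on a product collar $(s_*,\infty)_s\times Y_+$; here $\{x=0\}=\{s=\infty\}$ is a spacelike hypersurface sitting at infinite time, the two indicial roots $s_\pm(\lambda)$ are precisely the exponents in \eqref{eq:asymp-exp}, and the first-order $s$-derivative term encodes the relative weighting of the two modes.

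Consequently $\mathcal{P}_+$ is, up to the conjugating powers of $x$, the restriction to $S_{t_0}$ of the solution of a damped wave equation evolved from data prescribed at time $s=+\infty$. The transversality of $\Sigma(P)$ to $\pa X$ noted in the introduction excludes glancing, and a Friedlander-type radiation-field analysis --- reducing the limit $s\to\infty$ to stationary phase in the oscillatory-integral representation of the wave group on the product collar --- shows that each entry of $\mathcal{P}_+$ is an elliptic Fourier integral operator associated with the bicharacteristics limiting onto $S^*_+Y_+$; the same applies to $\mathcal{P}_-$, and hence to $\mathcal{R}_-=\mathcal{P}_-^{-1}$. The only effect of the powers of $x$ is to shift the orders of the two summands of each factor by $\tfrac12(s_+(\lambda)-s_-(\lambda))$; these shifts are exactly cancelled by the conjugation by powers of $\Delta_h'$ in the definition of $\tilde\cS(\lambda)$, which is why $\tilde\cS(\lambda)$, unlike $\cS(\lambda)$, comes out of order $0$.

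It remains to compose the three factors. Since $\mathcal{P}_+$ and $\mathcal{R}_-$ are elliptic Fourier integral operators and $U(t_1,t_0)$ is an invertible one, and the canonical relations involved are graphs (so that the composition is automatically transversal), the product $\cS(\lambda)$, and with the order count of the previous paragraph $\tilde\cS(\lambda)$, is an elliptic Fourier integral operator of order $0$; moreover $\cS(\lambda)$ itself is a Fourier integral operator, as it differs from $\tilde\cS(\lambda)$ only by composition with matrices of pseudodifferential operators, namely complex powers of $\Delta_h'$. The composed canonical relation concatenates the limiting bicharacteristics $S^*_+Y_+\to S^*S_{t_0}$, the Hamilton flow $S^*S_{t_0}\to S^*S_{t_1}$, and the limiting bicharacteristics $S^*S_{t_1}\to S^*_-Y_-$, so it is the full bicharacteristic flow from $Y_+$ to $Y_-$, i.e.\ the classical scattering map $\cS_{\cl}\colon S^*_+Y_+\to S^*_-Y_-$; since $\cS_{\cl}$ is a canonical diffeomorphism this makes $\tilde\cS(\lambda)$ a genuine $0$th order elliptic Fourier integral operator, and its invertibility follows by interchanging the roles of $Y_+$ and $Y_-$. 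The hypothesis $\lambda\neq\frac{(n-1)^2-m^2}{4}$ enters exactly where it does in Theorem~\ref{thm:smooth-solns}: for those values $s_+(\lambda)-s_-(\lambda)$ is a non-negative integer, logarithmic terms enter the expansion and the two indicial modes are no longer cleanly separated. The step I expect to be the main obstacle is this boundary analysis --- showing that solving $Pu=0$ from the spacelike hypersurface-at-infinity $Y_\pm$ produces honest elliptic Fourier integral operators, with the correct order bookkeeping, rather than merely bounded maps between the relevant function spaces; the interior hyperbolic propagation and the composition of canonical relations are comparatively routine.
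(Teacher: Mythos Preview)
Your factorization $\cS=\mathcal{R}_-\circ U(t_1,t_0)\circ\mathcal{P}_+$ is exactly the paper's strategy: in its notation $\tilde\cS=\tilde\cS_{-,-1+\ep}^{-1}\circ C_{1-\ep,-1+\ep}\circ\tilde\cS_{+,1-\ep}$, with the middle factor the standard hyperbolic propagator (Proposition~\ref{prop:FIO-int}). The composition step and the identification of the composed canonical relation with $\cS_{\cl}$ are also as in the paper.

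The substantive gap is in the boundary factor. You propose to show that $\mathcal{P}_+$ is an FIO by passing to $s=-\log x$, viewing $P$ as a damped wave operator on an infinite cylinder, and invoking a Friedlander-type radiation-field argument. But $Y_+$ is \emph{spacelike} at infinity, not null: bicharacteristics meet $\{x=0\}$ transversally, and solutions have polyhomogeneous expansions $x^{s_\pm(\lambda)}v_\pm$ rather than traveling-wave profiles, so the stationary-phase mechanism behind radiation fields does not obviously apply. The paper instead builds an explicit parametrix for the Poisson kernel as a conormal distribution on the $0$-stretched product $[X\times Y_+;\diag_{Y_+}]$, with model given by analytic continuation of the hyperbolic Poisson kernel on the front face; this is most of Section~\ref{sec:scattering} (Propositions~\ref{prop:transport} and \ref{prop:E(lambda)-near-Y_+}). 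Without a replacement for that construction your argument does not establish that $\mathcal{P}_+$ is an FIO at all, let alone elliptic.

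Two further points. First, you mislocate the role of the hypothesis $s_+(\lambda)-s_-(\lambda)\notin\Nat$: Theorem~\ref{thm:smooth-solns} holds for \emph{all} $\lambda$ (with logarithmic modifications), so that is not where the obstruction lies. In the paper the hypothesis enters (i) in constructing the parametrix $E_-(\lambda)$, where one must avoid poles of the analytically continued model resolvent $R(\sigma)$ at $\sigma\in\hat s_\pm(\lambda)+\Nat_+$, and (ii) in the ellipticity check for the $2\times2$ principal symbol of $\tilde\cS_{+,t_0}$, which boils down to $e^{i\pi(s_+(\lambda)-s_-(\lambda))}\neq 1$. Second, ``each entry of $\mathcal{P}_+$ is an elliptic FIO'' is not what is needed: the canonical relation is two-sheeted (two bicharacteristics over each $(y,\eta)\in S^*Y_+$), and invertibility of the renormalized map requires the $2\times2$ principal-symbol matrix summed over the two sheets to be nondegenerate. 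The paper checks this by an explicit computation in the proof of Proposition~\ref{prop:S+ep-FIO}; your sketch does not address it.
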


\begin{rem}
The somewhat strange powers in the normalization correspond to making
the map from Cauchy data at infinity to Cauchy data at time
$t_0\in(-1,1)$ a FIO of order $0$; see Proposition~\ref{prop:S+ep-FIO}.
\end{rem}

Note that the canonical relation is independent of $\lambda$. While
our parametrix construction for $\cS(\lambda)$ does not work apparently
if  $s_+(\lambda)-s_-(\lambda)$ is an integer due to the possible
non-solvability of a model problem with the prescribed ansatz,
it is expected that with
more detailed analysis (changing the ansatz slightly to allow logarithmic
terms in $x$)
one can prove the theorem in this case as well.
Moreover, we actually construct a parametrix for the solution operator
$(g_+,g_-)\mapsto u$, and even if $s_+(\lambda)-s_-(\lambda)$ is an integer,
the part of the operator corresponding to $g_+$ (i.e.\ with $g_-=0$)
can be constructed as a Fourier integral operator.

In addition, if
$g$ is {\em even}, i.e.\ there is a boundary defining function $x$ such that
only even powers of $x$ appear in the Taylor series of $g$ at $\pa X$ expressed
in geodesic normal coordinates,
see \cite{Guillarmou:Meromorphic} for the Riemannian case,
then the $\log x$ terms in $v_-$ disappear and our parametrix construction
for $\cS(\lambda)$ goes through provided that $s_+(\lambda)-s_-(\lambda)$
is odd. In particular, this covers the actual d'Alembertian ($\lambda=0$)
if $n$ is even.

For the Cauchy problem, we similarly have:

\begin{thm}\label{thm:Cauchy-FIO}
For $t_0\in(-1,1)$ and for
all $(\psi_0,\psi_1)\in\CI(S_{t_0})^2$, let $u\in\CI(X^\circ)$
denote the unique solution
of the Cauchy problem
$Pu=0$ in $X^\circ$,
$u|_{S_{t_0}}=\psi_0$, $V u|_{S_{t_0}}=\psi_1$.
This solution $u$ has asymptotic
expansion as in \eqref{eq:asymp-exp}. If
$\lambda\neq\frac{(n-1)^2-m^2}{4}$, $m\in\Nat$, the operators
\begin{equation*}
(\psi_0,\psi_1)\mapsto (v_+|_{Y_+},v_-|_{Y_+})\Mand
(\psi_0,\psi_1)\mapsto (v_+|_{Y_-},v_-|_{Y_-})
\end{equation*}
are both Fourier
integral operators associated to the bicharacteristic flow.
\end{thm}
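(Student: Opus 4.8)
The plan is to factor the Cauchy-data-to-infinity map through the solution operator for the characteristic problem at $Y_+$ already analyzed in Theorem~\ref{thm:S-FIO}. Concretely, given $(\psi_0,\psi_1)\in\CI(S_{t_0})^2$, the solution $u$ of the Cauchy problem has the asymptotic expansion \eqref{eq:asymp-exp} (by the first theorem quoted above), and in particular its leading data $(v_+|_{Y_+},v_-|_{Y_+})=:(g_+,g_-)$ are well-defined. First I would argue that the map $(\psi_0,\psi_1)\mapsto(g_+,g_-)$ is the inverse of the operator $(g_+,g_-)\mapsto(u|_{S_{t_0}},Vu|_{S_{t_0}})$, where $u$ on the right is the unique solution of $Pu=0$ with prescribed data at $Y_+$ (existence and uniqueness being Theorem~\ref{thm:smooth-solns}); the latter operator is, up to the elliptic conjugating factors $(\Delta'_h)^{s_\pm(\lambda)/2-n/4}$ built into $\tilde\cS$, precisely the operator whose FIO property is asserted in Proposition~\ref{prop:S+ep-FIO} referenced in the remark. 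Thus the first map in the statement is (a conjugate by elliptic pseudodifferential operators of) the inverse of an elliptic FIO, hence itself an elliptic FIO; its canonical relation is the inverse of that of $\mathcal{S}_+^\ep$, which by the construction is the backward bicharacteristic flow from $S^*S_{t_0}$ to $S^*_+Y_+$ — i.e. the flow from $S_{t_0}$ to $Y_+$ along bicharacteristics. Since the conjugating factors are pseudodifferential (order zero after the normalization, or of the appropriate order before it), they do not alter the canonical relation, only the symbol; invertibility of the FIO is inherited from invertibility of $\tilde\cS$ and of $A$.

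For the second map, $(\psi_0,\psi_1)\mapsto(v_+|_{Y_-},v_-|_{Y_-})$, I would compose the first map with the scattering operator: by definition $\mathcal{S}(\lambda)(g_+,g_-)=(v_+|_{Y_-},v_-|_{Y_-})$, so the second map is $\mathcal{S}(\lambda)$ applied to the output of the first map. Since $\mathcal{S}(\lambda)$ is a Fourier integral operator (Theorem~\ref{thm:S-FIO}, after undoing the $\Delta'_h$ renormalization, which is pseudodifferential hence preserves the FIO class and composes the canonical relations trivially) with canonical relation $\mathcal{S}_{\cl}:S^*_+Y_+\to S^*_-Y_-$, and the first map is an FIO with canonical relation the flow $S^*S_{t_0}\to S^*_+Y_+$, the composition is an FIO whose canonical relation is the composite $\mathcal{S}_{\cl}\circ(\text{flow }S_{t_0}\to Y_+)$. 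Under (A1)–(A2) this composite is exactly the bicharacteristic flow from $S_{t_0}$ to $Y_-$, which is the assertion. The transversality/properness hypotheses needed for the composition of FIOs to again be an FIO are guaranteed here because the intermediate space $S^*_+Y_+$ is fibered cleanly over the bicharacteristic flow by assumptions (A1)–(A2): each bicharacteristic meets $S_{t_0}$ exactly once (that is the content of $T$ being a fibration with $dT$ having a definite sign along the flow) and limits to a unique point of $Y_\pm$, so the relevant fiber product is transverse and the canonical relations compose cleanly; I would spell this out using the time function $T$ and the completeness of $g$.

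The main obstacle I expect is not any single estimate but the bookkeeping required to identify the canonical relation of $\mathcal{S}_+^\ep$ (equivalently of the first map) precisely and to verify clean composition with $\mathcal{S}_{\cl}$. This amounts to tracking, through the parametrix construction of Proposition~\ref{prop:S+ep-FIO}, how a point of $S^*S_{t_0}$ propagates along the Hamilton flow of $\sigma_2(P)$ inside $\Sigma(P)$ — noting that, because $g$ is conformal to $dx^2-h$ near $Y$, the flow is a reparametrization of the $dx^2-h$ flow and hence extends smoothly to the boundary $S^*_\pm Y$, so that the limiting maps $S_{t_0}\to Y_\pm$ are genuine (boundary) canonical transformations rather than merely relations. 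Once this smooth extension and the single-intersection property (from $T$ being a fibration) are in hand, the composition is clean and the FIO calculus applies directly; the restriction $\lambda\neq\frac{(n-1)^2-m^2}{4}$ enters only because it is needed already for $\mathcal{S}(\lambda)$ and for the clean expansion \eqref{eq:asymp-exp} without logarithmic corrections, and plays no further role in this argument.
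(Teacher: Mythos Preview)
Your approach is essentially the same as the paper's: the map from Cauchy data at $S_{t_0}$ to boundary data at $Y_+$ is identified as the inverse of the Poisson-to-Cauchy map $\cS_{+,t_0}$ of Proposition~\ref{prop:S+ep-FIO}, which is an invertible elliptic FIO, so its inverse is one too.

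There is one small gap worth flagging. Proposition~\ref{prop:S+ep-FIO} is stated (and proved) only for $t_0$ \emph{sufficiently close to $1$}; the parametrix $E_\pm(\lambda)$ is only constructed near $Y_+\times Y_+$ and only solves $PE_\pm=0$ there. For a general $t_0\in(-1,1)$ the paper fills this in by first composing with the interior propagator $C_{t_0,t_1}$ of Proposition~\ref{prop:FIO-int}, which is an invertible FIO associated to the flow, to push the Cauchy data to a slice $S_{t_1}$ with $t_1$ close to $1$; then $\cS_{+,t_1}^{-1}$ applies. You should insert this reduction step explicitly.

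For the map to $Y_-$ you take a slightly different route than the paper: you compose the first map with the scattering operator $\cS(\lambda)$, whereas the paper simply repeats the argument symmetrically at $Y_-$ (propagate to $t_1$ close to $-1$, then invert $\cS_{-,t_1}$). Both are valid; yours has the virtue of reusing Theorem~\ref{thm:S-FIO} rather than reproving its $Y_-$ analogue, while the paper's route avoids stacking three compositions. The transversality check you worry about is in either case immediate from the single-intersection property of bicharacteristics with each $S_{t}$, exactly as you say.
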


To justify our terminology of asymptotically de Sitter spaces,
we recall that de Sitter space is given
by the hyperboloid $z_1^2+\ldots+z_n^2=z_{n+1}^2+1$ in $\Real^{n+1}$
equipped with the pull-back of the Lorentzian metric
$dz_{n+1}^2-dz_1^2-\ldots-dz_n^2$. Introducing polar coordinates $(r,\theta)$
in the first $n$ variables and writing $t=z_{n+1}$, the hyperboloid
can be identified with $\Real_t\times\sphere^{n-1}_\theta$ with the
Lorentzian metric
\begin{equation*}
\frac{dt^2}{t^2+1}-(t^2+1)\,d\theta^2,
\end{equation*}
with $d\theta^2$
being the standard Riemannian metric on the sphere.
For $t>1$, say, we let $x=t^{-1}$, and note that the metric becomes
$\frac{(1+x^2)^{-1}\,dx^2-(1+x^2)\,d\theta^2}{x^2}$, which is of
the required form. An analogous formula holds for $t<-1$, so compactifying
the real line as an interval $[-1,1]_s$ (with $s=1-x$
for $x<\frac{1}{2}$, say), we see that de Sitter space indeed fits into
our framework. (Thus, one can take $T=s$ for the global compactified
time function.) We also note that another, perhaps more familiar, form
of the metric can be obtained by
letting $t=\sinh\rho$; the metric becomes $d\rho^2-\cosh^2\rho\,d\theta^2$.
(One can take e.g.\ $T=\tanh\rho$ here.)

\begin{figure}[ht]
\begin{center}
\mbox{\epsfig{file=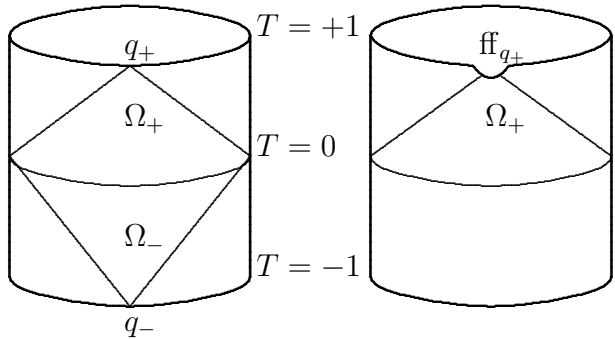}}
\end{center}
\caption{On the left, the compactification of de Sitter space with the
backward light cone from $q_+$ and forward light cone from $q_-$ are
shown. $\Omega_+$, resp.\ $\Omega_-$, denotes the intersection of these
light cones with $T>0$, resp.\ $T<0$.
On the right, the blow up of de Sitter space at $q_+$ is shown. The
interior of the light cone inside the front face $\ff_{q_+}$ can
be identified with the spatial part of the static model of de Sitter
space.}
\label{fig:deSitter-space}
\end{figure}

We also use this occasion to explain the connection with the static model
of de Sitter space. This corresponds to singling out a point on
$\sphere^{n-1}_{\theta}$,
e.g.\ $q_0=(1,0,\ldots,0)\in\sphere^{n-1}\subset\RR^n$. The static model
of de Sitter space then is the intersection of the backward lightcone
from $q_0$ considered as a point $q_+$ on $Y_+$ (so $T(q_+)=1$) and the forward
light cone from $q_0$ considered as a point $q_-$ on $Y_-$
(so $T(q_-)=-1$). These happen to intersect the equator $T=0$ (here $t=0$)
in the same set, and altogether form a `diamond', see
Figure~\ref{fig:deSitter-space}. Explicitly this
region is given by $z_2^2+\ldots+z_n^2\leq 1$ inside the
hyperboloid. The standard static
coordinates $(\tau,r,\omega)$ on the `diamond' are given by
\begin{equation*}\begin{split}
r&=\sqrt{z_2^2+\ldots+z_n^2}=\sqrt{1+z_{n+1}^2-z_1^2},\\
\sinh\tau&=\frac{z_{n+1}}{\sqrt{z_1^2-z_{n+1}^2}},\\
\omega&=r^{-1}(z_2,\ldots,z_n)\in\sphere^{n-2}.
\end{split}\end{equation*}
In these coordinates the
metric becomes $(1-r^2)\,d\tau^2-(1-r^2)^{-1}dr^2-r^2\,d\omega^2$. Note that
the singularity at $r=1$ is completely artificial (is due to the
coordinates), the metric is incomplete, but is conformal to a complete
Lorentzian metric, of product type, with $\Box$ also of product type.
While one {\em can} analyze the solutions of the wave equations on de Sitter
space at points inside the `diamond' by considering the diamond only
(in view of the finite propagation speed for the wave equation), the
resulting picture does include rather artificial limitations. For instance,
the asymptotics at the sides of the diamond are automatically smooth
in de Sitter space (as we have a standard wave equation there), which
is not obvious if one's world consists of the diamond, and the local
static asymptotics, corresponding to the tip of the diamond at $Y_+$,
describes only a small part of the asymptotics of solutions of the Cauchy
problem on de Sitter space. However, the `spatial' part of the static
operator (or modifications of it) do show up in our analysis as models
for the Poisson operator $(g_+,g_-)\mapsto u$; the proper place
for its existence is on the interior of the light cone in
the blow up of the distinguished point $q_+$
in de Sitter space.

It should be pointed out that the de Sitter-Schwarzschild metric in fact
has many similar features, and the analogous result is the subject
of an ongoing project with Ant\^onio S\'a Barreto and Richard Melrose.
Weaker results on the asymptotics in that case
are contained in the part of works of Dafermos and
Rodnianski concerned with the underlying linear problem
\cite{Dafermos-Rodnianski:Price} (they study a non-linear problem),
and local energy decay was studied by Bony and H\"afner
\cite{Bony-Haefner:Decay}, in part based on the stationary resonance
analysis of S\'a Barreto and Zworski \cite{Sa-Barreto-Zworski:Distribution}.

We also note that on de Sitter space itself, one can solve the
wave equation explicitly, see \cite{Polarski:Hawking}, but
even the `smooth asymptotics' result,
Theorem~\ref{thm:Cauchy-exist}, is not apparent from such a solution.

There are two rather different techniques used to prove the results
here. The `rough' results yielding the existence of the asymptotics,
Theorems~\ref{thm:smooth-solns} and \ref{thm:Cauchy-exist},
are proved using positive commutator estimates, which roughly speaking
describe the microlocal (i.e.\ phase space) propagation of $L^2$ (or
Sobolev) mass (`energy'). Such methods are very
robust, but (unless they are used in a more sophisticated form as
in \cite{Hassell-Melrose-Vasy:Scattering})
give less precise results. The Fourier integral operator
results are proved by a parametrix construction which is significantly
more delicate (taking up two-fifth of this paper),
but is very instructive. It is at this stage that
the static de Sitter model shows up on the front face of
$[X\times Y_+;\diag_{Y_+}]$; see $P_\sigma$ in Section~\ref{sec:scattering}.
One should think of this as analogous to the way the hyperbolic
Laplacian shows up as a model on the front face of the 0-double space for
conformally compact Riemannian manifolds,
see \cite{Mazzeo-Melrose:Meromorphic}.

The plan of the paper is the following. In Section~\ref{sec:0-geom} we
adopt a 0-microlocal point of view, and analyze propagation of
singularities in the 0-cotangent bundle introduced by Mazzeo and
Melrose \cite{Mazzeo-Melrose:Meromorphic} two decades ago. The proof
uses positive commutator estimates, and
is quite similar to propagation of singularities for manifolds with
boundary equipped with a so-called (incomplete) edge metric,
which includes e.g.\ manifolds with conic points -- see
\cite{Melrose-Wunsch:Propagation} and
\cite{Melrose-Vasy-Wunsch:Propagation} and references therein.
In the following sections we analyze local solvability near the boundary
as well as conormal regularity of the solutions there. {\em We
emphasize that the results of Sections~\ref{sec:0-geom}-\ref{sec:conormal}
do not need the global assumptions (A1)-(A2).}
In Section~\ref{sec:global} we prove a unique continuation theorem
at $\pa X$ (i.e.\ at `infinity') by a Carleman-type estimate, and use
it to prove that the asymptotic behavior of the solutions in fact
determines the solutions, i.e.\ we can talk about a `Cauchy problem at
infinity', hence also about the scattering map. In the final section
we construct a parametrix for the scattering map, and use it to show
that it is indeed a Fourier integral operator.

I am very grateful for Rafe Mazzeo, Richard Melrose, Ant\^onio
S\'a Barreto and Maciej Zworski
for numerous fruitful discussions. In particular, I thank Richard
Melrose for pointing out that the assumptions (A1) and (A2) imply the existence
of a global time foliation, while relating the analysis here to the
static de Sitter model arose from discussions with Maciej Zworski.

\section{0-geometry and propagation of 0-singularities}\label{sec:0-geom}

For the purposes of analysis, we need a good understanding
of bicharacteristic geometry.
Thus, note
that $P\in\Diff^2_0(X)$, in the zero-calculus of Mazzeo and Melrose
\cite{Mazzeo-Melrose:Meromorphic}. Let $\zT^* X$ denote the zero-cotangent
bundle of $X$. Its elements are covectors of the form $\xi\,\frac{dx}{x}
+\eta\,\frac{dy}{x}$. Then the principal symbol
$p=\sigma(P)$ is a homogeneous degree $2$
polynomial on $\zT^*X$; explicitly at $Y$,
$p|_Y=\xi^2-H|_Y$, $H|_Y$ the metric function
corresponding to $h$, and $p$ itself is the metric function of the dual
pseudo-Riemannian metric $g$. We refer to \cite{Mazzeo-Melrose:Meromorphic,
Sa-Barreto-Zworski:Distribution} for nice descriptions of the basic
setup, and \cite{Melrose-Wunsch:Propagation,Melrose-Vasy-Wunsch:Propagation}
for analysis of a hyperbolic problem in the related edge setting.

If $a$ is a homogeneous function on $\zT^* X\setminus o$, then there
is a (homogeneous)
Hamilton vector field $H_a$ associated to it on $T^*X^\circ\setminus o$. A
change of coordinates calculation shows that in the
0-canonical coordinates given above
\begin{equation*}
H_a=(\pa_\xi a)
(\xi\pa_\xi+\eta\pa_\eta+x\pa_x)+x(\pa_\eta a\pa_y-\pa_y a\pa_\eta)
-((x\pa_x+\xi\pa_\xi+\eta\pa_\eta)a)\pa_\xi,
\end{equation*}
so $H_a$ in fact extends to a $\CI$ vector field on $\zT^* X\setminus o$
which is tangent to $\zT^*_{\pa X}X$.
At $x=0$ this gives $H_a=(\pa_\xi a) R-(Ra)\pa_\xi$, where $R$ is the radial
vector field $\xi\pa_\xi+\eta\pa_\eta$ on $\zT^*X$. Since $a$ is homogeneous
of degree, say, $k$, $Ra=ka$, and $H_a=(\pa_\xi)a R-ka\pa_\xi$, so
on the characteristic set $\Sigma(a)=a^{-1}(\{0\})$ of $a$, at $x=0$,
$H_a$ is radial.
It is thus rather convenient to consider the cosphere bundle $\zS^*X$
which is the boundary at fiber infinity of the fiber radial compactification
$\zcT^*X$ of $\zT^*X$.

As we work with $p$, so that near $Y$, $\xi\neq 0$ on the characteristic set,
we use projective coordinates $\etah=\eta/|\xi|$, $\rho=|\xi|^{-1}$
valid near $\Sigma(p)$.
Then
\begin{equation*}\begin{split}
(\sign\xi)\rho^{-1}H_a=&-\left((\rho\pa_\rho+\etah\pa_{\etah})a\right)
(-\rho\pa_\rho+x\pa_x)
+x(\pa_{\etah}a\pa_y-\pa_ya\pa_{\etah})\\
&\qquad\qquad+\left((x\pa_x-\rho\pa_\rho)a\right)
(\rho\pa_\rho+\etah\pa_{\etah}).
\end{split}\end{equation*}
Thus, for $a\in\rho^{-k}\CI(\zcT^*X)$, $W_a=\rho^{k-1}H_a$ is a smooth
vector field on $\zcT^*X$, whose restriction to $\zS^*_Y X$ is
$(\sign\xi)ka\etah\pa_{\etah}$, i.e.\ it vanishes at $a=0$.
Thus, if $da$ is not
conormal to $\zS^*_YX$ in $\zS^*X$, so $\Sigma(a)$ is
transversal to $\zS^*_YX$, then $W_a$ is a smooth vector field on $\Sigma(a)$
that vanishes at $x=0$, and hence is of the form $W_a=xW'_a$, $W'_a\in
\Vf(\Sigma(a))$.

Applying this with $a=p=\rho^{-2}\CI(\zcT^*X)$ yields that inside $\Sigma(p)$,
$W_p=xW'_p$,
\begin{equation*}
W'_p|_{x=0}=(\sign\xi)(2\pa_x+H_h),
\end{equation*}
$H_h$ the Hamilton vector field
of $h$ (evaluated at $(y,\etah)$). In particular, $W'_p$ is transversal
to $Y$. Also, $W_p$ is complete, and $\gamma$ is an integral curve of
$W_p$, then a reparameterized version of $\gamma$ is an integral
curve of $W'_p$, hence $\lim_{t\to -(\sign\xi)\infty}\gamma(t)$ exists in
$\zS^*_YX\cap\Sigma(p)$.
(Note
that the map switching the sign of covectors preserves even functions,
such as $p$, while transforms $H_p$ to $-H_p$.) Conversely,
for any $q\in \zS^*_YX\cap\Sigma(p)$
there is a unique (up to translation of the
parameterization) integral curve of $W_p$
with limit $q$ as $t\to-(\sign\xi)\infty$, namely this is just a
reparameterization of the unique integral curve of $W'_p$ through $q$.
Note also that $\zS^*_YX\cap\Sigma(p)$ can be identified with two
copies of $S^*Y$, one
for each sign of $\xi$; we write these as $S^*_+Y$ and $S^*_-Y$.

Suppose now
that $Y=Y_+\cup Y_-$, where $Y_\pm$ are unions of connected
components of $Y$, and this decomposition satisfies that all bicharacteristics
$t\mapsto \gamma(t)$ of $P$ satisfy $\lim_{t\to +\infty}\gamma(t)\in S^*Y_+$,
$\lim_{t\to -\infty}\gamma(t)\in S^*Y_-$, or vice versa, i.e.\ that (A1) and
(A2) of the introduction hold.
For $q\in S^*_+Y_+$ there is a unique
bicharacteristic with $\lim_{t\to-\infty}\gamma(t)=q$. By
(A1) and (A2), $\lim_{t\to+\infty}\gamma(t)=q'\in S^*Y_-$
exists; as we saw above, it necessarily lies in $S^*_-Y_-$. The classical
scattering map is the map $\cS_{\cl}:S^*Y_+\to S^* Y_-$ with
$\cS_{\cl}:q\mapsto q'$.
Fixing a homogeneous degree $1$ function on $T^*Y\setminus o$,
we can extend these to
maps $T^*Y_+\setminus o\to T^*Y_-\setminus o$ -- we can use $h^{1/2}$,
for instance. The induced relation on $(T^*Y_+\setminus o) \times
(T^*Y_-\setminus o)$ is Lagrangian with respect to the twisted symplectic
form (i.e.\ with a negative sign on one of the factors).

As follows easily from the results of \cite{Geroch:Domain},
(A1) and (A2) imply the
existence of a global compactified `time' function $T$,
with $T\in\CI(X)$,
$T|_{Y_\pm}=\pm 1$, and the pullback $\pi^*T$
of $T$ to $S^*X$ having positive/negative
derivative along the Hamilton vector field inside the characteristic
set $\Sigma(p)$ depending on whether the corresponding bicharacterstics
tend to $Y_+$ or $Y_-$. Notice that $1-x$ resp.\ $x-1$ has the desired properties near $Y_+$ resp.\ $Y_-$, so the point is the interior of $X$
these can be extended to all of $X$.

With any choice of such a function $T$, a constant
$t_0\in(-1,1)$, and a vector field $V$ transversal to $S_{t_0}$
$P$ is strictly hyperbolic, and
the Cauchy problem $Pu=0$ in $X^\circ$, $u|_{S_{t_0}}=\psi_0$,
$Vu|_{S_{t_0}}=\psi_1$, $\psi_0,\psi_1\in\Cinf(S_{t_0})$ is well posed.

Our first result is that of $0$-regularity of solutions of $Pu=0$ with a weight
given by a space $u$ a priori lies in. There is a dichotomy
between solutions depending on the a priori regularity relative to
this weighted space. If the a priori regularity is low, we only
obtain regularity up to a limit implied by the weight, but we do so
without having to assume any interior regularity for $u$. If the
a priori regularity is high, then we obtain additional regularity up to the
limit corresponding to the smoothness of $u$ in $X^\circ$.

\begin{prop}\label{prop:edge-reg}
Suppose that $q\in Y$, and suppose that $u$ is in
$H_0^{r_0,s_0}(X)$ in a neighborhood of $q$ and $Pu=0$. Then:
\begin{enumerate}
\item
If $r_0<s_0+1/2$ then $u$ is in $H_0^{r,s_0}(X)$ near $q$ for all
$r<s_0+1/2$.
\item
If $r_0>s_0+1/2$ and $r>r_0$, $\alpha\in \zS^*_q X\cap\Sigma(p)$
then $\alpha\nin\zWF^{r,s_0}(u)$
provided that the bicharacteristic $\gamma$ approaching $\alpha$
is disjoint from $\WF^r(u)\subset S^*X^\circ$. The same conclusion
holds if $r_0\leq s_0+1/2$, but $\alpha\nin\zWF^{r_1,s_0}(u)$
for some $r_1>s_0+1/2$.
\item
In particular, if $r_0>s_0+1/2$ and $r>r_0$, then
$u$ is in $H_0^{r,s_0}(X)$ near $q$ provided that all bicharacteristics
approaching $\zS^*_q X$ are disjoint from $\WF^r(u)\subset S^*X^\circ$.
\end{enumerate}
\end{prop}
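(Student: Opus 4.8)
The plan is to prove this by a positive commutator argument in the zero-calculus, propagating $0$-Sobolev regularity both along the boundary $\pa X$ and into the interior via the bicharacteristic flow of $W_p$. The key geometric input is already in place: near $Y$ the rescaled Hamilton vector field $W_p = x W'_p$ with $W'_p|_{x=0} = (\sign\xi)(2\pa_x + H_h)$ is transversal to $\pa X$ inside $\Sigma(p)$, so bicharacteristics enter (or leave) $\pa X$ cleanly. First I would set up a family of commutants $A = A(r,s_0,\delta)$ built from a symbol $a$ that is (essentially) $x^{-2s_0}\rho^{-2r}$ times a suitable microlocal cutoff localizing near $\alpha\in\zS^*_qX\cap\Sigma(p)$ along the incoming bicharacteristic, regularized by a factor like $(1+\delta\rho^{-1})^{-N}$ so that $A\in\Psi_0^{*,*}$ genuinely and one can integrate by parts. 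Computing $i[P,A^*A]$ and using $Pu = 0$, the main term is governed by $W_p$ acting on $a^2$; the weight factor $x^{-2s_0}$ contributes, via $W_p = xW'_p$ and $W'_p x = 2(\sign\xi)x$ at $x=0$, a term with a definite sign $4s_0 (\sign\xi)\, x\cdot a^2$, which is favorable in one direction along the flow. The threshold $r_0 = s_0 + 1/2$ is exactly where the sign of the combined weight/subprincipal contribution switches: for $r_0 < s_0 + 1/2$ one gets regularity for free (part (i)), whereas for $r_0 > s_0 + 1/2$ one must feed in regularity from the interior along $\gamma$ (part (ii)).

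For part (i), I would run the commutator estimate with the commutant microsupported near all of $\zS^*_qX\cap\Sigma(p)$ (using ellipticity of $P$ off $\Sigma(p)$ to handle the rest), choosing the sign of the propagation so that the boundary term at "infinite time" is the one we control a priori in $H_0^{r_0,s_0}$ and the error terms are either lower order or absorbable; this upgrades $r_0$ to any $r < s_0 + 1/2$ without any interior hypothesis, since the relevant part of the bicharacteristic stays in $x$ small. For part (ii), I would instead propagate from the interior: the bicharacteristic $\gamma$ reaching $\alpha$ spends finite parameter-time in $x \geq \ep$, where $\WF^r(u) \cap \gamma = \emptyset$ gives control, and then I run the $0$-commutator estimate along the segment of $\gamma$ with $x < \ep$, transporting the interior $H^r$ bound (which dominates the weighted $H_0^{r,s_0}$ norm locally, as $x$ is bounded below there, then continues to $x = 0$) to conclude $\alpha \notin \zWF^{r,s_0}(u)$. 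The variant where $r_0 \leq s_0 + 1/2$ but $\alpha \notin \zWF^{r_1,s_0}(u)$ for some $r_1 > s_0 + 1/2$ is handled by first applying (i) to reach regularity $r_1$ just below threshold, noting we are then given extra microlocal regularity, and bootstrapping. Part (iii) is immediate from (ii) by taking the union over $\alpha$, combined with elliptic regularity away from $\Sigma(p)$ and a compactness argument on the fiber $\zS^*_qX$.

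The main obstacle I anticipate is the bookkeeping of the error terms in the commutator computation near $\pa X$: because $P$ is only in $\Diff^2_0$, the commutant and its adjoint interact with the $x$-weights in a way that produces several terms of the same order as the main positive term, and one must show the "bad" ones are controlled by an $\ep$-fraction of the main term plus an a priori bounded remainder. This is where the sharp inequality $r_0 \lessgtr s_0 + 1/2$ enters quantitatively — it is precisely the condition under which the coefficient of the dangerous boundary/weight term has the right sign after all contributions (the factor $\frac12$ coming from the indicial/normal-operator structure, i.e. the same arithmetic that produces $s_\pm(\lambda)$ and the $\sqrt{(n-1)^2/4 - \lambda}$ gap) are combined. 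A secondary technical point is ensuring the regularization in $\delta$ can be removed: one needs uniform-in-$\delta$ bounds, which is standard but requires care that the regularizer commutes well enough with $P$ and does not spoil the sign. Modulo these standard-but-delicate points, the structure is the familiar Hörmander–Melrose propagation scheme adapted to the zero-cotangent bundle, closely parallel to the edge case of \cite{Melrose-Wunsch:Propagation, Melrose-Vasy-Wunsch:Propagation}.
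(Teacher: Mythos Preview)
Your approach is essentially the same as the paper's: a positive commutator estimate in the zero calculus at the radial set $\zS^*_YX\cap\Sigma(p)$, with a commutant of the form $a=\rho^{-m}x^l\chi(x)\psi$ (your $x^{-2s_0}\rho^{-2r}$ with cutoffs), regularized in the standard way, and the dichotomy (i)/(ii) coming from the sign of the leading coefficient $2(m+l)$ in $H_pa$. The paper likewise cites the edge propagation papers of Melrose--Wunsch and Melrose--Vasy--Wunsch as the template.

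Two corrections. First, your explanation of the threshold $r=s_0+1/2$ is wrong: it has nothing to do with the indicial roots $s_\pm(\lambda)$ or the normal operator, and in particular is independent of $\lambda$. It is purely a principal-symbol (radial-point) phenomenon: at $\zS^*_YX\cap\Sigma(p)$ one has $\rho H_p=2(-\rho\pa_\rho+x\pa_x)$ as a b-vector field on $\zcT^*X$, so $H_p(\rho^{-m}x^l)=2(m+l)\rho^{-m-1}x^l$, and the threshold $m+l=0$ translates to $r=s_0+1/2$ once you match orders ($\tilde A\Lambda$ with $\Lambda\in\zPs^{1/2,0}$ tests $H_0^{(m+1)/2,-l/2}$). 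Second, your picture of case (i) is slightly off: it is not that a ``boundary term at infinite time'' is controlled a priori, but rather that the interior cutoff term $(W_p\chi)\,\psi$ has the \emph{same} sign as the main term $2(m+l)\rho^{-1}a$ when $m+l<0$, so it can simply be dropped; this is why no interior wave-front hypothesis is needed. In case (ii) the signs disagree, and the cutoff term must be bounded using $\WF^r(u)\cap\gamma=\emptyset$. With these clarifications your outline matches the paper's proof.
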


\begin{proof}
This proof is very similar to the proofs of propagation of `edge regularity'
for the wave equation with {\em incomplete} metrics in
\cite{Melrose-Wunsch:Propagation} and
\cite{Melrose-Vasy-Wunsch:Propagation}, so we shall be brief.

While $\rho H_p$ restricts to a smooth vector field
on $\Sigma(p)$ with vanishing restriction
at $Y$, if we evaluate $\rho H_p$ as a section of the b-tangent bundle
of $\zcT^*X$ at $\Sigma(p)\cap \zS^*_YX$, the result is more interesting:
$\rho H_p=2(-\rho\pa_\rho+x\pa_x)$ in this sense. Correspondingly,
for $A\in\zPs^{m,l}(X)$, the symbol of $i[P,A]\in\zPs^{m+1,l}(X)$
is $H_p a=2(m+l)\rho^{-1}a$,
$a=\sigma(A)$, at $\Sigma(p)\cap \zS^*_YX$. Thus, much as
\cite{Melrose-Wunsch:Propagation} and
\cite{Melrose-Vasy-Wunsch:Propagation},
one can show propagation of zero-regularity
into the boundary for $m+l\neq 0$. Unlike in the setting of
\cite{Melrose-Wunsch:Propagation}, the characteristic set of $P$ only
intersects the boundary $Y$ in radial points, i.e.\ there is no propagation
inside $Y$, which explains why there is no requirement for $m+l$ having
a particular sign (as long as it is non-zero), although the results
are different depending on the sign: (i) has no wave front set assumptions
on $u$. This corresponds to the presence of a
cutoff $\chi$, identically $1$ near $Y$, such that $\pa_x\chi\leq 0$,
the sign of the commutator with $\chi$ agrees with the sign arising
from the weights if $m+l<0$. Moreover, one can microlocalize in $S^*_Y X$
by pulling back functions from $S^*_Y X\cap\Sigma(p)$ using the
flow of $W'_p$, extending them to a neighborhood of the characteristic
set in an arbitrary smooth fashion.

Thus, let $\psi_0\in\CI(S^*_Y X\cap\Sigma(p))$, and
for any integral curve $\tilde \gamma$ of $W'_p$ with
$\tilde\gamma(0)\in S^*_Y\cap\Sigma(p))$,
we let $\psi(\tilde\gamma(t))=\psi_0(\gamma(0))$. Note that this
defines a $\CI$ function on $\Sigma(p)$ near $Y$, for
the map $\Phi: S^*_YX\cap\Sigma(p)\times[0,\ep)\to\Sigma(p)$
given by $\Phi(q,t)=\exp(tW'_p)q$ is a local diffeomorphism near $t=0$.
As $\Sigma(p)$ is a $\CI$ submanifold of $S^* X$, we can extend
$\psi$ to a $\CI$ function on $S^*X$, still denoted by $\psi$, hence
further to an element of $\CI(\zcT^*X)$, at least near $Y$. Now let
$\chi\in\CI_c([0,\ep))$ be such that $\chi'=-\chi_0^2$, $\chi\equiv 1$ near
$0$, $\chi\geq 0$, $\chi^{1/2}$ is $\CI$, and let
$a=\rho^{-m}x^l \chi(x) \psi$, and note that $W_p \chi(x)=b^2 x\chi'(x)$ with
$b>0$ near $Y$. As $W'_p \psi$ vanishes at $p=0$,
we deduce that
\begin{equation*}
H_p a=2(m+l)\rho^{-m-1} x^l \chi(x) \psi+2\rho^{-m-1}x^{l+1}b^2\chi'(x)
+p \rho^{-m+1} x^l e+\rho^{-m} x^l f,
\end{equation*}
with $b,e,f\in\CI(\zcT^*X)$.

Now the standard positive commutator argument finishes the proof of the
proposition, see e.g.\ \cite{Melrose-Wunsch:Propagation}.
For the reader's convenience, we sketch the argument, skipping the
(necessary but straightforward) regularization part of the argument.
Thus, $\sigma_{-m-1}(i[P,A])=H_p a$ shows that
\begin{equation*}\begin{split}
&i[P,A]=2(m+l)\Lambda\tilde A^*\tilde A\Lambda-B^*B+PE+F,\\
&\tilde A\in\zPs^{m/2,l/2}(X),\ \sigma(\tilde A)=\sigma(A)^{1/2},\\
&B\in\zPs^{(m+1)/2,l/2}(X),\ \WF'(B)\subset \supp\chi_0\cap\supp\psi,
\ \sigma(B)=b\chi_0(2\rho^{-m-1}x^{l+1})^{1/2}\\
&E\in\zPs^{m-1,l}(X),\ F\in\zPs^{m,l}(X),
\end{split}\end{equation*}
$\Lambda\in\zPs^{1/2,0}(X)$
elliptic formally self-adjoint with positive principal symbol,
$\rho^{-1/2}$. Proceeding as in
\cite{Melrose-Wunsch:Propagation} shows that for $u$ with $Pu=0$,
\begin{equation*}
|\pm \|\tilde A \Lambda u\|_{H_0^{0,0}(X)}^2-\|Bu\|_{H_0^{0,0}(X)}^2|\leq
C\|u\|_{H_0^{m/2,-l/2}(X)}^2,
\end{equation*}
provided that the right hand side is finite,
with the $-$ sign applying if $m+l<0$, and the $+$ sign applying if $m+l>0$.
In the first case, the second term on the left hand side can simply
be dropped, so we do not need to make any assumptions on the $H^{(m+1)/2}$
norm of $u$,
while in the second case we need to assume that $\WF^{(m+1)/2}(u)$
is disjoint from $\supp\chi_0$, in order to conclude that
$\|\tilde A\Lambda u\|_{H_0^{0,0}(X)}$ is finite,
i.e.\ $\zWF^{(m+1)/2,-l/2}(u)$ is disjoint from the elliptic set of $A$,
i.e.\ from the interior of $\supp\psi$ near $x=0$.

The standard iteration argument now proves the proposition.
\end{proof}

The approximation process prevents us from crossing the line $r=s_0+1/2$, which
is why we cannot directly
obtain information about $u$ in $H_0^{r,s_0}(X)$ with
$r>s_0+1/2$ unless we know $u$ is in $H_0^{r_0,s_0}(X)$ for $r_0>s_0+1/2$.
However, if $u\in H^{r_0,s_0}(X)$ with $r_0=s_0+1/2-\ep/2$,
so $r_0<s_0+1/2$, then
$u\in H^{r_0,s_0-\ep}(X)$, and $r_0>(s_0-\ep)+1/2$ now. We thus deduce:

\begin{cor}\label{cor:edge-reg}
Suppose that $q\in Y$, and suppose that $u$ is in
$H_0^{r_0,s_0}(X)$ in a neighborhood of $q$ and $Pu=0$.
If $r>r_0$ and $s<s_0$, $\alpha\in \zS^*_q X\cap\Sigma(p)$
then $\alpha\nin\zWF^{r,s}(u)$
provided that the bicharacteristic $\gamma$ approaching $\alpha$
is disjoint from $\WF^r(u)\subset S^*X^\circ$.

In particular, $u$ is in $H_0^{r,s}(X)$ near
$q$ provided that all bicharacteristics
approaching $\zS^*_q X$ are disjoint from $\WF^r(u)\subset S^*X^\circ$.
\end{cor}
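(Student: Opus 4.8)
The plan is to read this off from Proposition~\ref{prop:edge-reg} by exploiting that \emph{lowering} the weight is free: since $H_0^{r_0,s_0}(X)\subset H_0^{r_0,s}(X)$ locally near $q$ whenever $s\le s_0$, the solution $u$ already lies in $H_0^{r_0,s}(X)$ near $q$ for every $s<s_0$, and the threshold $r=(\text{weight})+1/2$ past which the regularization argument in the proof of Proposition~\ref{prop:edge-reg} cannot be pushed now sits at $s+1/2<s_0+1/2$. So one should expect to gain the derivatives that were blocked at weight $s_0$ simply by paying with a little weight, of which there is plenty to spare.

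First I would dispose of the case $r_0\ge s_0+1/2$. Here, for any $s<s_0$ one has $r_0\ge s_0+1/2>s+1/2$, and $u\in H_0^{r_0,s}(X)$ near $q$, so Proposition~\ref{prop:edge-reg}(ii), applied with the weight $s$ in place of $s_0$, gives $\alpha\nin\zWF^{r,s}(u)$ for every $r>r_0$ under exactly the stated hypothesis that the bicharacteristic through $\alpha$ miss $\WF^r(u)$. In the complementary case $r_0<s_0+1/2$, I would first run Proposition~\ref{prop:edge-reg}(i) at the original weight $s_0$, obtaining $u\in H_0^{r',s_0}(X)$ near $q$ for every $r'<s_0+1/2$. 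Now fix $s<s_0$. For $r_0<r\le s+1/2$ we are already done, since $r<s_0+1/2$ forces $u\in H_0^{r,s_0}(X)\subset H_0^{r,s}(X)$ near $q$. For $r>s+1/2$ I would choose $r'$ with $\max(r_0,s+1/2)<r'<\min(r,s_0+1/2)$ -- this interval is nonempty because $r_0<s_0+1/2$, $s+1/2<s_0+1/2$, and $r>s+1/2$ -- so that $u\in H_0^{r',s_0}(X)\subset H_0^{r',s}(X)$ near $q$ with $r'>s+1/2$; then Proposition~\ref{prop:edge-reg}(ii), applied with $s$ in place of $s_0$ and $r'$ in place of $r_0$, yields $\alpha\nin\zWF^{r,s}(u)$, again under the stated hypothesis. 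Together the two regimes $r\le s+1/2$ and $r>s+1/2$ exhaust all $r>r_0$, which proves the microlocal assertion.

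The ``in particular'' statement then follows by a covering argument. At the points of $\zS^*_qX$ lying off $\Sigma(p)$, $P$ is elliptic in the zero calculus, so microlocal elliptic regularity applied to $Pu=0$ with the a priori weight $s_0$ gives $\alpha\nin\zWF^{r,s}(u)$ there for all $r$ (and all $s<s_0$); at the points of $\zS^*_qX\cap\Sigma(p)$ the assertion just proved gives $\alpha\nin\zWF^{r,s}(u)$ under the bicharacteristic hypothesis. Since $\zS^*_qX$ is compact, finitely many of these microlocal estimates patch together to give $u\in H_0^{r,s}(X)$ near $q$, provided every bicharacteristic approaching $\zS^*_qX$ is disjoint from $\WF^r(u)$.

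I do not expect a genuine obstacle: the analytic content is entirely in Proposition~\ref{prop:edge-reg}, and what remains is bookkeeping of the two indices. The one point requiring care is to verify, each time before invoking part~(ii), that after the costless weakening of the weight the a priori regularity index genuinely lies \emph{strictly} above the lowered threshold $s+1/2$, and to check that the ranges $r\le s+1/2$ (handled by part~(i) at weight $s_0$) and $r>s+1/2$ (handled by part~(ii) at weight $s$) together cover the full interval $r>r_0$ asserted in the corollary; the critical value $r=s+1/2$ itself is unproblematic here because it still satisfies $r<s_0+1/2$, so part~(i) applies.
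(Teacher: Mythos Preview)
Your proposal is correct and follows essentially the same approach as the paper: weaken the weight from $s_0$ to $s<s_0$, use part~(i) of Proposition~\ref{prop:edge-reg} to push regularity up to just below $s_0+1/2$, then observe this exceeds the new threshold $s+1/2$ so part~(ii) applies at weight $s$. The paper's version is a bit more compressed (it phrases the intermediate step via the wavefront set and invokes the alternative clause of (ii) rather than the Sobolev membership form), and it does not spell out the elliptic-regularity covering argument for the ``in particular'' statement, but the logic is the same.
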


\begin{rem}
Thus, we gain {\em full} 0-regularity for solutions if we are willing
to give up some (arbitrarily little) decay. Note that (ii) of
the Proposition states that one can take $s=s_0$ if $r_0>s_0+1/2$,
so the present corollary is only interesting if $r_0\leq s_0+1/2$.
\end{rem}

\begin{proof}
Let $s<s_0$ be given, and let $\ep=s_0-s>0$.
As remarked, we may assume $r_0\leq s_0+1/2$, and if needed, we can
decrease $r_0$ so that $r_0<s_0+1/2$.
By (i) of Proposition~\ref{prop:edge-reg}, $\alpha\nin\zWF^{r,s_0}(u)$ for
all $r<s_0+1/2$. Then $\alpha\nin\zWF^{s_0+1/2-\ep/2,s_0}(u)$,
and hence $\alpha\nin\zWF^{s_0+1/2-\ep/2,s_0-\ep}(u)$.
By (ii) of Proposition~\ref{prop:edge-reg}, $\alpha\nin\zWF^{r,s_0-\ep}(u)
=\zWF^{r,s}(u)$
for all $r$, proving the corollary.
\end{proof}

\section{Local solvability near $\pa X$}\label{sec:local-solvability}
In this section we show the solvability of $Pu=0$ near $\pa X$
in suitable senses, $P=\Box-\lambda$. This
relies on a positive commutator estimate with {\em compact} error term,
so we need to control the normal operator of our commutator in the
0-calculus. Recall from \cite{Mazzeo-Melrose:Meromorphic} that
the normal operator map on $\Diff^k_0(X)$ (or $\zPs^k(X)$)
captures $Q\in\Diff^k_0(X)$ modulo $x\Diff^k_0(X)$, as opposed
to the principal symbol map, which captures it modulo $\Diff^{k-1}_0(X)$.
The compactness referred to above then is that of the inclusion map
for the associated Sobolev spaces, $H_0^{r,s}(X)$ to $H_0^{r',s'}(X)$,
with $r>r'$, $s>s'$; note that compactness requires improvements in
both the regularity and decay orders, hence control of both the
principal symbols (described in the previous section) and normal operators.

We thus start by calculating the normal operator of $P$, as well as that
of its commutator with another operator $A$.
Thus, we calculate the the commutator modulo terms with an additional
order of vanishing. As $P\in \Diff^2_0(X)$, and our commutant
will be an operator $A_r\in x^{r-1}\Diff^1_0(X)$, $[P,A_r]
\in x^{r-1}\Diff^2_0(X)$, so we need to compute $[P,A_r]$ modulo
$x^r\Diff^2_0(X)$. This is computation is thus unaffected if $P$ is
changed by addition of
a term in $x\Diff^2_0(X)$, or $A_r$ is changed by
a term in $x^r\Diff^1_0(X)$. This means that effectively we may
assume that $X$ has a product decomposition near $Y$
and $h$ is actually a Riemannian metric on $Y$.
The wave operator is the Laplace-Beltrami operator associated to this metric:
\begin{equation*}
\Box=(xD_x)^2+i(n-1)(xD_x)-x^2\Delta_Y=(xD_x)^*(xD_x)-x^2\Delta_Y,
\end{equation*}
with the adjoint taken with respect to the pseudo-Riemannian density
$x^{-n}\,|dx\,dy|$.

We remark here that the actual normal operator in the 0-calculus
(which results from restricting the Schwartz kernels to the 0-front face)
is even simpler than this model, for it localizes in $Y$. Thus, one
could simply compute with the Euclidean Laplacian in $Y$, but as this
has absolutely no impact on our considerations, we use our more
global model.

We let
$A_r=x^r D_x+i\frac{n-r}{2}\,x^{r-1}$, which is symmetric, and compute
\begin{equation*}\begin{split}
[P,A]&=[(xD_x)^2+i(n-1)(xD_x),x^r D_x+i\frac{n-r}{2}\,x^{r-1}]\\
&\qquad\qquad-[x^2,x^r D_x+i\frac{n-r}{2}\,x^{r-1}]\Delta_Y\\
&=-2i\left\{(r-1)(xD_x+i\frac{n-r}{2})^*x^{r-1}(xD_x+i\frac{n-r}{2})
+x^{r+1}\Delta_Y\right\}.
\end{split}\end{equation*}
Thus, up to the factor $-2i$, this is clearly a positive operator for
$r\geq 1$. We would like to improve this statement, and in particular
show that this is greater than $Cx^{r-1}$ for suitable $C$, at least
in a range of $r$, and at least modulo terms of the form $P B+B^*P$.

The flexibility we have here in arranging this positivity
is the choice of the coefficient $B$ of $P$. Thus, we convert
part of the tangential Laplacian term, $x^{r+1}\Delta_Y$ into $P$
by writing $x^{r+1}\Delta_Y=\gamma x^{r+1}\Delta_Y+(1-\gamma) x^{r+1}\Delta_Y$,
with $\gamma$ to be determined, and
writing
\begin{equation*}
x^{r+1}\Delta_Y=\frac{1}{2}\{x^{r-1}((xD_x)^*(xD_x)-\lambda-P)+
((xD_x)^*(xD_x)-\lambda-P)x^{r-1}\}
\end{equation*}
in the first term. We deduce with $B=-\frac{\gamma}{2}\,x^{r-1}$,
\begin{equation*}\begin{split}
\frac{i}{2}[P,A]
=&(r-1)(xD_x+i\frac{n-r}{2})^*x^{r-1}(xD_x+i\frac{n-r}{2})
+(1-\gamma)x^{r+1}\Delta_Y\\
&+\frac{\gamma}{2}x^{r-1}(xD_x)^*(xD_x)+\frac{\gamma}{2}(xD_x)^*(xD_x)x^{r-1}
-\gamma\lambda x^{r-1}
+PB+B^*P.
\end{split}\end{equation*}
Now, the form of the first term is quite convenient to us in
view of the factor $x^{r-1}$, corresponding to a weighted estimate
on $x^{-(r-1)/2}L^2$ relative to $x^{-n}\,dx$, since its null-space
consists of $x^{(n-r)/2}$, which just misses being in $x^{-(r-1)/2}L^2$
(i.e.\ is in $x^{-(r-1)/2-\delta}L^2$ for all $\delta>0$), so it will
give us optimal zeroth order terms below, and saves us having to use
that for all $s$,
\begin{equation}\label{eq:optimal-xD_x-est}
\frac{(2s-n-1)^2}{4}\|x^{s-1}u\|^2\leq \|x^s D_x u\|^2.
\end{equation}
Note, however, that the first term
can easily be written in a simpler looking form,
\begin{equation*}
(xD_x+i\frac{n-r}{2})^*x^{r-1}(xD_x+i\frac{n-r}{2})=(xD_x)^*x^{r-1}(xD_x)
-\frac{(n-r)^2}{4}x^{r-1}.
\end{equation*}
This can be checked easily as the two sides have the same principal
symbol, so their difference is first order, moreover both sides are real
and self-adjoint, hence actually zeroth order, i.e.\ multiplication by
a smooth function. Their equality can be checked by evaluating
them on $1$.
Moreover, a similar calculation yields
\begin{equation*}\begin{split}
&\frac{1}{2}(x^{r-1}(xD_x)^*(xD_x)+(xD_x)^*(xD_x)x^{r-1})\\
&\qquad=
(xD_x+i\frac{n-r}{2})^*x^{r-1}(xD_x+i\frac{n-r}{2})+\frac{(n+r-2)(n-r)}{4}x^{r-1}.
\end{split}\end{equation*}

Thus,
\begin{equation}\begin{split}\label{eq:comm-form}
\frac{i}{2}\,[P,A]=&(r-1+\gamma)(xD_x+i\frac{n-r}{2})^*x^{r-1}(xD_x+i\frac{n-r}{2}))
+(1-\gamma)x^{r+1}\Delta_Y\\
&\qquad+\gamma \left(\frac{(n-r)(n+r-2)}{4}-\lambda\right)x^{r-1}
+PB+B^*P.
\end{split}\end{equation}
In order to obtain a `positive commutator', modulo the terms involving $P$,
we thus need that
\begin{equation}\label{eq:sign-list}
r-1+\gamma,\ 1-\gamma\ \Mand\ \gamma \left(\frac{(n-r)(n+r-2)}{4}-\lambda\right)
\end{equation}
As $\frac{(n-r)(n+r-2)}{4}-\lambda=0$ gives
\begin{equation*}
\frac{r-1}{2}=\pm \sqrt{\left(\frac{n-1}{2}\right)^2-\lambda},
\end{equation*}
we introduce
\begin{equation}\label{eq:l(lambda)-def}
l(\lambda)=\re\sqrt{\left(\frac{n-1}{2}\right)^2-\lambda},
\end{equation}
so $l(\lambda)=0$ for $\lambda\geq\frac{(n-1)^2}{4}$, $l(\lambda)>0$
for $\lambda<\frac{(n-1)^2}{4}$.

\begin{lemma}\label{lemma:same-signs}
The quantities listed in \eqref{eq:sign-list} have the same (non-zero)
sign if:
\begin{itemize}
\item
if $r>\max(0,1-2l(\lambda))$, $r\neq 1+2l(\lambda)$, in which
case they are all positive, or
\item
if $r<\min(0,1-2l(\lambda))$, in which case they are all negative.
\end{itemize}
\end{lemma}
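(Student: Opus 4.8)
The plan is to analyze the three quantities in \eqref{eq:sign-list} directly as functions of the two parameters $r$ and $\gamma$, treating $\lambda$ (hence $l(\lambda)$) as fixed, and to show that the set of $(r,\gamma)$ for which all three are simultaneously positive (resp.\ negative) projects onto exactly the stated range of $r$. The first two quantities, $r-1+\gamma$ and $1-\gamma$, are affine in $\gamma$ and do not involve $\lambda$ at all; requiring both positive forces $1-r<\gamma<1$, which is a nonempty open interval precisely when $r<2$ — but more to the point, for \emph{any} $r$ one can choose $\gamma$ making both positive iff $1-r<1$, i.e.\ $r>0$, wait — rather, the interval $(1-r,1)$ is nonempty iff $r>0$. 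So the sign of $r$ governs the first two, and symmetrically both can be made negative (take $\gamma>1$ and $\gamma<1-r$, nonempty iff $r<0$). The third quantity is $\gamma$ times $\frac{(n-r)(n+r-2)}{4}-\lambda$, and the identity already recorded in the excerpt shows this inner factor vanishes exactly when $\frac{r-1}{2}=\pm\sqrt{\bigl(\frac{n-1}{2}\bigr)^2-\lambda}$, i.e.\ at $r=1\pm 2l(\lambda)$ in the case $\lambda<\frac{(n-1)^2}{4}$, and is a downward-opening parabola in $r$ that is positive for $r$ strictly between $1-2l(\lambda)$ and $1+2l(\lambda)$ and negative outside.

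First I would dispose of the subtlety of the sign of $\gamma$. Since in the constructed range we will want $\gamma\in(1-r,1)$, and since $1-r<1$, the interval $(1-r,1)$ always contains values of $\gamma$ of whatever sign we need provided it is nonempty; in particular if $r>0$ it contains positive $\gamma$ near $1$, and if $r<1$ it contains $\gamma$ with $0<\gamma$ as long as $1-r>0$ is not forced — I will simply note that in the positive case one takes $\gamma$ close to $1$ (so $\gamma>0$), in the negative case $\gamma>1$, so $\gamma$ has a definite sign there. Then the sign of the third quantity equals the sign of $\gamma$ times the sign of the parabola $Q(r):=\frac{(n-r)(n+r-2)}{4}-\lambda$.

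Next I would combine. \textbf{Positive case:} we need $r>0$ (for the first two, choosing $\gamma$ slightly below $1$, so $\gamma>0$) and $Q(r)>0$. If $l(\lambda)>0$, $Q(r)>0$ means $1-2l(\lambda)<r<1+2l(\lambda)$; intersecting with $r>0$ and excluding the zero $r=1+2l(\lambda)$ gives exactly $\max(0,1-2l(\lambda))<r$, $r\ne 1+2l(\lambda)$ — but one must also check the upper endpoint: actually the stated conclusion has no upper bound on $r$, so I need to recheck. For $r>1+2l(\lambda)$ we have $Q(r)<0$, and then to keep the third quantity positive we must take $\gamma<0$; but $\gamma<0$ together with $r-1+\gamma>0$ forces $r>1-\gamma>1$, consistent, and $1-\gamma>0$ automatically — so in fact for large $r$ one flips the sign of $\gamma$. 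This is the point that needs care: the sign of $\gamma$ is not fixed across the whole range, and the statement "all three have the same sign" refers to the three listed quantities, not to $\gamma$ itself. So the real case division is on whether $\gamma$ is taken positive or negative, and within "positive $\gamma$" one gets $0<r$ with $Q(r)>0$, i.e.\ $0<r<1+2l(\lambda)$, while within "negative $\gamma$" one gets $r-1+\gamma>0$ hence $r>1$, $1-\gamma>0$ automatic, and $Q(r)<0$, i.e.\ $r>1+2l(\lambda)$ (when $l(\lambda)>0$) or $r>$ the larger root in general. Patching these two subcases gives $r>\max(0,1-2l(\lambda))$, $r\ne 1+2l(\lambda)$, all three positive. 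The \textbf{negative case} is the mirror image: take $\gamma>1$ so $1-\gamma<0$; then $r-1+\gamma<0$ needs $r<1-\gamma<0$, and $Q(r)\gamma<0$ needs $Q(r)>0$, i.e.\ $1-2l(\lambda)<r$; combined with $r<0$ this is nonempty only when $0<1-2l(\lambda)$, but actually one should also allow $\gamma<0$ sub-subcase, giving the full $r<\min(0,1-2l(\lambda))$. When $l(\lambda)=0$ (i.e.\ $\lambda\ge\frac{(n-1)^2}{4}$) the two roots coincide at $r=1$ and $Q(r)\le 0$ everywhere, $Q(r)<0$ for $r\ne 1$; then "positive" needs $\gamma<0$ and $r>1$, i.e.\ $r>\max(0,1)=1$, consistent with the formula since $1-2l(\lambda)=1$; and "negative" needs $\gamma>0$... and $r-1+\gamma<0$ with $\gamma>0$ forces $r<1$, plus $1-\gamma<0$ forces $\gamma>1$ contradiction, so one instead takes $\gamma<0$, $r<\min(0,1)=0$ — matching.

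\textbf{Main obstacle.} The genuinely fiddly part is the bookkeeping of which sub-ranges of $r$ are covered by $\gamma>0$ versus $\gamma<0$ versus $0<\gamma<1$ versus $\gamma>1$, and verifying that the union of all admissible sub-ranges is \emph{exactly} $\{r>\max(0,1-2l(\lambda)),\ r\ne 1+2l(\lambda)\}\cup\{r<\min(0,1-2l(\lambda))\}$ with no gaps and nothing extra — in particular that the excluded value $r=1+2l(\lambda)$ really cannot be rescued by any choice of $\gamma$ (there the third quantity is forced to vanish whenever the first two have a common sign), and that the apparent boundary $r=2$ coming from "$(1-r,1)$ nonempty" does not actually restrict anything because one is free to take $\gamma<1-r<0$. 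I expect the cleanest writeup is a short case analysis on $\operatorname{sign}\gamma$ rather than on $r$, reducing each case to a pair of linear inequalities in $r$ plus the sign of the explicitly-factored parabola $Q(r)$, and then reading off the union.
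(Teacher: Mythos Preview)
Your approach is essentially the paper's: an elementary case analysis on $r$, $\gamma$, and the sign of the parabola $Q(r)=\frac{(n-r)(n+r-2)}{4}-\lambda$. The paper organizes the cases slightly differently---first by the range of $r$ (treating $r>1$ separately as the easy case where $|\gamma|$ can be taken small of either sign, then $r\in(0,1]$ and $r<0$), and then splits on whether $\lambda\le\frac{(n-1)^2}{4}$ or $\lambda>\frac{(n-1)^2}{4}$---whereas you plan to split on the sign of $\gamma$; either organization works and the content is the same.

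One concrete slip to fix in your negative case: with $\gamma>1$ you have $\gamma>0$, so $\gamma Q(r)<0$ requires $Q(r)<0$, not $Q(r)>0$. With that correction the $\gamma>1$ sub-case alone already yields $r<0$ together with $Q(r)<0$, i.e.\ exactly $r<\min(0,1-2l(\lambda))$. Your proposed ``$\gamma<0$ sub-subcase'' contributes nothing here, since $\gamma<0$ forces $1-\gamma>1>0$ and the second quantity cannot be made negative. Once that sign is straightened out the bookkeeping goes through as you describe, and your identification of the main obstacle (verifying the unions match exactly, and that $r=1+2l(\lambda)$ is genuinely excluded because $Q$ vanishes there) is accurate.
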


\begin{proof}
First, note that for $\frac{r-1}{2}\in(-l(\lambda),l(\lambda))$,
i.e.\ $r\in(1-2l(\lambda),1+2l(\lambda))$,
$\frac{(n-r)(n+r-2)}{4}-\lambda>0$, while for
$\frac{r-1}{2}\nin[-l(\lambda),l(\lambda)]$,
$\frac{(n-r)(n+r-2)}{4}-\lambda<0$.

For $r>1$, $r\neq 1+2l(\lambda)$ it is easy to arrange
that all three quantities in \eqref{eq:sign-list} have the same sign
since the first two terms are positive if $|\gamma|$ is sufficiently
small, so choosing the sign of $\gamma$ correctly, the last term can
also be made positive as long as $r\neq 1+2l(\lambda)$
($r>1$ rules out $r=1-2l(\lambda)$).

In general, the first two terms have the same sign if $\gamma\in(1,1-r)$,
resp.\ $\gamma\in(1-r,1)$, depending on whether $r<0$, resp.\ $r>0$, and
this sign is negative, resp.\ positive in the two cases.

Suppose first that $\lambda\leq\frac{(n-1)^2}{4}$.

If $r<0$, we have $\gamma>1$ by the previous remark,
so we need $(n+r-2)(n-r)-\lambda<0$,
i.e.\ $r\nin[1-2l(\lambda),1+2l(\lambda)]$, which in view of $r<0$ amounts
to $r<1-2l(\lambda)$ (and $r<0$).
In the latter case, if $r\in (0,1]$, $\gamma>0$
still, but now we need $(n+r-2)(n-r)-\lambda>0$,
i.e.\ $r\in(1-2l(\lambda),1+2l(\lambda))$. As $r\in(0,1]$, this means
$r\in(\max(0,1-2l(\lambda)),1]$.
On the other hand,
if $r>1$, we have already seen that $\gamma \left(\frac{(n-r)(n+r-2)}{4}
-\lambda\right)$ can be made positive as well as long as $r\neq 1+2l(\lambda)$.
This completes the proof of the lemma if $\lambda\leq\frac{(n-1)^2}{4}$.

For $\lambda>\left(\frac{n-1}{2}\right)^2$,
$\frac{(n-r)(n+r-2)}{4}-\lambda<0$ for all values of $r$. The `positive'
commutator criterion thus becomes that $r-1+\gamma$, $1-\gamma$ and
$-\gamma$ must have the same sign. The first two give $\gamma\in(1,1-r)$,
resp.\ $\gamma\in(1-r,1)$ depending on $r<0$ or $r>0$, as beforehand, while
the last two give $\gamma\nin[0,1]$. As $(1,1-r)$ or $(1-r,1)$ intersects
the complement of $[0,1]$ in a non-empty set if $r<0$ or $r>1$,
we get exactly the range stated in the lemma, taking into account that
$\max(0,1-2l(\lambda))=1$, $\min(0,1-2l(\lambda))=0$.
\end{proof}

If the conditions of Lemma~\ref{lemma:same-signs} are satisfied,
the right hand side of \eqref{eq:comm-form}, applied to $v$
supported near $Y$, is,
modulo the terms involving $P$, bounded below a positive multiple
(if all quantities in \eqref{eq:sign-list} are positive),
resp.\ bounded above by a negative multiple
(if all quantities in \eqref{eq:sign-list} are negative),
of the squared $x^l H_0^1$
norm of $v$, $l=-\frac{r-1}{2}$. We thus have:

\begin{lemma}
Suppose
\begin{equation}\label{eq:exp-range-lambda}
l\in(-\infty,\min(\frac{1}{2},l(\lambda)),\ l\neq -l(\lambda)
\ \Mor\ l\in(\max(\frac{1}{2},l(\lambda)),+\infty).
\end{equation}
Then there exists $C>0$ and $\delta>0$ such that
\begin{equation}\label{eq:dual-estimate}
\|x^{-l}v\|_{H^1_0}\leq C\|x^{-l}Pv\|_{L^2}.
\end{equation}
for all $v\in \dCI(X)$ with $\supp v\subset\{x<\delta\}$.
\end{lemma}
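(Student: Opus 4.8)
The plan is to obtain \eqref{eq:dual-estimate} directly from the positive commutator identity \eqref{eq:comm-form}, taking $r = 1 - 2l$ so that the weight in the first term of \eqref{eq:comm-form} is $x^{r-1} = x^{-2l}$ and the associated estimate is precisely one on $x^{-l}H^1_0$. Concretely, I would choose $\gamma$ as in Lemma~\ref{lemma:same-signs} (with the appropriate sign according to whether $r>\max(0,1-2l(\lambda))$ or $r<\min(0,1-2l(\lambda))$, which under the hypothesis \eqref{eq:exp-range-lambda} is exactly the dichotomy $l<\min(\frac12,l(\lambda))$, $l\neq -l(\lambda)$, or $l>\max(\frac12,l(\lambda))$), and pair the identity \eqref{eq:comm-form} with $v$. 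The left side, $\langle \frac{i}{2}[P,A]v,v\rangle = \re\langle \frac1i APv,v\rangle - \re\langle \frac1i Av,Pv\rangle$ up to a constant, is controlled by $\|x^{-l}Pv\|_{L^2}\,\|x^{-l}v\|_{H^1_0}$ after noting $A = A_r \in x^{r-1}\Diff^1_0(X)$ maps $x^{-l}H^1_0$-type spaces appropriately; more precisely $Av$ and $A^*v$ lie in $x^{-l+?}L^2$ with enough room that the pairings make sense once one tracks the weights carefully. The right side of \eqref{eq:comm-form}, again paired with $v$, is bounded below (up to sign) by a positive multiple of $\|x^{-l}v\|_{H^1_0}^2$ \emph{modulo} the terms $\langle(PB+B^*P)v,v\rangle = 2\re\langle Bv,Pv\rangle$, which are again absorbed into $\|x^{-l}Pv\|_{L^2}\,\|x^{-l}v\|_{L^2}$ since $B = -\frac{\gamma}{2}x^{r-1}$.

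The main structural step is therefore the coercivity: under \eqref{eq:exp-range-lambda}, Lemma~\ref{lemma:same-signs} guarantees that $r-1+\gamma$, $1-\gamma$, and $\gamma\big(\frac{(n-r)(n+r-2)}{4}-\lambda\big)$ are simultaneously positive (or simultaneously negative), so the right side of \eqref{eq:comm-form} is, as an operator, $\pm$ a sum of a positive multiple of $(xD_x + i\frac{n-r}{2})^*x^{r-1}(xD_x+i\frac{n-r}{2})$, a positive multiple of $x^{r+1}\Delta_Y$, and a positive multiple of $x^{r-1}$ — exactly the three pieces whose sum dominates the squared $x^{-l}H^1_0$ norm (the first controls $\|x^{-l}xD_x v\|^2$ up to the zeroth-order term it already includes, the second controls $\|x^{-l}x\nabla_Y v\|^2$, and the third controls $\|x^{-l}v\|^2$). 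Collecting, one gets $c\|x^{-l}v\|_{H^1_0}^2 \le C\|x^{-l}Pv\|_{L^2}\|x^{-l}v\|_{H^1_0} + C'\|x^{-l}Pv\|_{L^2}\|x^{-l}v\|_{L^2}$, and dividing by $\|x^{-l}v\|_{H^1_0}$ (or Cauchy–Schwarz with a small parameter) yields \eqref{eq:dual-estimate}.

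The hard part is the rigorous justification of the integration-by-parts identity and the finiteness of all the pairings — i.e.\ that the formal computation \eqref{eq:comm-form} can legitimately be paired with $v$. For $v\in\dCI(X)$ supported in $\{x<\delta\}$ this is in principle routine, but one must be careful that the conjugated operators act on the correct weighted $L^2$ space (everything here is relative to the pseudo-Riemannian density $x^{-n}\,|dx\,dy|$, with respect to which $A_r$ and $B$ are symmetric as noted), and that no boundary terms at $x=0$ appear — which is where the support condition $\supp v \subset\{x<\delta\}$ and the decay built into the weight $x^{-l}$ relative to the null-space $x^{(n-r)/2}$ of the first operator are used. One also needs the positivity constant to be uniform, which it is since $\gamma$ and the weight exponents depend only on $r$ and $\lambda$, not on $v$; the dependence on $\delta$ enters only through the smooth coefficients $b,e,f$ in $H_p a$, and shrinking $\delta$ only helps. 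Finally, the excluded value $l = -l(\lambda)$ (equivalently $r = 1+2l(\lambda)$, the other root of $\frac{(n-r)(n+r-2)}{4}-\lambda = 0$) is genuinely necessary: there the zeroth-order term degenerates and one loses control of $\|x^{-l}v\|_{L^2}$, so the estimate \eqref{eq:dual-estimate} as stated cannot hold.
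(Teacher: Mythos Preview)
Your approach is the same as the paper's and the core of the argument is right: set $r=1-2l$, invoke Lemma~\ref{lemma:same-signs} so the three coefficients in \eqref{eq:comm-form} share a sign, pair with $v$, and absorb the cross terms $\langle Av,Pv\rangle$, $\langle Bv,Pv\rangle$ by Cauchy--Schwarz with a small parameter. That part is fine.

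Where you go wrong is the role of $\delta$. For $v\in\dCI(X)$ there are no boundary terms at $x=0$ to worry about, and the reference to ``the smooth coefficients $b,e,f$ in $H_p a$'' is from the propagation argument in Section~\ref{sec:0-geom}, not this lemma. The identity \eqref{eq:comm-form} is derived for the \emph{model} operator, i.e.\ the exact warped product $(xD_x)^*(xD_x)-x^2\Delta_Y-\lambda$; for the actual $P$ one has $[P,A]=[P_0,A_0]+R'$ with $R'\in x^{r}\Diff^2_0(X)$ (one extra power of $x$ beyond the main terms, which live in $x^{r-1}\Diff^2_0(X)$). Pairing this remainder with $v$ gives
\[
|\langle R'v,v\rangle|\le C\delta\,\|x^{-l}v\|_{H^1_0}^2
\]
for $\supp v\subset\{x<\delta\}$, and \emph{this} is what forces $\delta$ small so that the remainder can be absorbed into the coercive part. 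In the pure warped-product case the estimate holds for any $\delta$ on which the product structure is valid. Once you make this correction your argument matches the paper's exactly.
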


\begin{rem}
Note that (near $x=0$)
$x^s\in x^l L^2$ if $l<s-(n-1)/2$, so (neglecting the $\frac{1}{2}$
above) the two critical values
$l=-l(\lambda)$ and $l=l(\lambda)$
arise from the monomials $x^{-l(\lambda)+\frac{n-1}{2}}$,
resp.\ $x^{l(\lambda)+\frac{n-1}{2}}$, which are exactly the
monomial solutions of $Pv=0$.
\end{rem}

\begin{proof}
Note that \eqref{eq:exp-range-lambda} holds if and only if one
of the conditions in Lemma~\ref{lemma:same-signs} holds with
$l=-\frac{r-1}{2}$.

First, suppose that $v\in\dCI(X)$ supported in $x<\delta$ and $g$
is an exact warped product Lorentzian metric for $x<2\delta$.
Then
\begin{equation*}\begin{split}
\langle\frac{i}{2} Av,Pv\rangle&
-\langle\frac{i}{2} Pv,Av\rangle=\langle \frac{i}{2}[P,A]v,v\rangle\\
&=(r-1+\gamma)\|x^{\frac{r-1}{2}}(xD_x+i\frac{n-r}{2})v\|^2
+(1-\gamma)\|x^{\frac{r+1}{2}}d_Y v\|^2\\
&\qquad+\gamma\left(\frac{(n-r)(n+r-2)}{4}-\lambda\right)
\|x^\frac{r-1}{2}v\|^2+\langle Bv,Pv\rangle+\langle Pv,Bv\rangle,
\end{split}\end{equation*}
so as the three squares on the right hand side have coefficients
with the same sign,
\begin{equation*}\begin{split}
\|x^{-l}v\|^2_{H^1_0}&\leq C\|x^{-l}Pv\|_{L^2}
(\|x^l Av\|_{L^2}+\|x^l Bv\|_{L^2})\\
&\leq
C\ep^{-1}\|x^{-l}Pv\|_{L^2}^2+C\ep(\|x^l Av\|_{L^2}^2+\|x^l Bv\|_{L^2}^2).
\end{split}\end{equation*}
As $\|x^l Av\|_{L^2}^2+\|x^l Bv\|_{L^2}^2\leq C'\|x^{-l}v\|_{H^1_0}^2$,
for $B=-\frac{\gamma}{2}\,x^{-2l}$, $A=x^{-2l}(xD_x+i\frac{n-r}{2})$, for
$\ep>0$ small we deduce that (with a new $C>0$)
\begin{equation*}
\|x^{-l}v\|_{H^1_0}\leq C\|x^{-l}Pv\|_{L^2}.
\end{equation*}
This proves the lemma for warped product $g$ (with $\delta>0$ arbitrary,
as long as on $x<2\delta$ the metric is warped product).

If we do not consider an exact warped
product metric near $Y$, then $P=P_0+P_1$,
$P_0=\Box_0$ is the wave operator for the warped product metric and
$P_1\in x\Diff^2_0(X)$. Moreover, making $A$ self-adjoint with respect to
the new metric, $A=A_0+A_1$, $A_1\in x^r\Diff^1_0(X)$. Thus,
\begin{equation*}
[P,A]=[P_0,A_0]+R',\ R'\in x^r\Diff^2_0(X).
\end{equation*}
Taking into account that $l=-\frac{r-1}{2}$, for functions $v$ supported in
$x<\delta$ this gives
\begin{equation*}
|\langle v,R'v\rangle|\leq C\delta\|x^{-l}v\|^2_{H^1_0}
\end{equation*}
with $C$ depending on $R'$ only (i.e.\ independent of $\delta\in (0,1]$),
so for sufficiently small $\delta>0$, \eqref{eq:dual-estimate} still
holds.
\end{proof}

The estimate \eqref{eq:dual-estimate} gives, by duality, an existence
result. As the argument is local
near each connected component of $Y$, we have:

\begin{prop}\label{prop:solvable}
Suppose $g$ is asymptotically de Sitter like, $P=\Box-\lambda$,
$l(\lambda)$ is given by \eqref{eq:l(lambda)-def},
and
\begin{equation}\label{eq:dual-exp-range-lambda}
l\in(-\infty,-\max(\frac{1}{2},l(\lambda))),
\ \Mor\ l\in(-\min(\frac{1}{2},l(\lambda)),+\infty),
\ l\neq l(\lambda).
\end{equation}
For every $f\in x^{l}L^2(X)$ there exists $u\in x^{l}H^1_0(X)$ such that
$Pu=f$ near $Y$. Moreover, if $Y_j$ is a connected component of $Y$,
and $\supp f$ is disjoint from other components of $Y$, then $\supp u$
may be taken disjoint from other components of $Y$.
\end{prop}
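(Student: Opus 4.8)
The plan is to deduce the existence statement from the a priori estimate \eqref{eq:dual-estimate} by the standard duality argument, turning an `energy' estimate for $P$ acting on test functions into solvability of $Pu=f$ in the dual weighted space. Two preliminary observations set this up. First, $P=\Box-\lambda$ is formally self-adjoint with respect to the natural density $x^{-n}\,|dx\,dy|$ (this is the Laplace--Beltrami structure recalled above in the expression for $\Box$), so in the pairing $\langle u,v\rangle=\int u\,\bar v\,x^{-n}\,|dx\,dy|$ the operator $P$ is its own transpose; all the weighted $L^2$ and $H^1_0$ spaces below are taken with respect to this density. Second, the range \eqref{eq:dual-exp-range-lambda} is exactly the reflection $l\mapsto-l$ of the range \eqref{eq:exp-range-lambda}, so for $l$ satisfying \eqref{eq:dual-exp-range-lambda} the estimate \eqref{eq:dual-estimate}, applied with $-l$ in place of $l$, becomes
\begin{equation*}
\|x^{l}v\|_{H^1_0}\le C\|x^{l}Pv\|_{L^2},\qquad v\in\dCI(X),\ \supp v\subset\{x<\delta\},
\end{equation*}
for suitable $\delta>0$, $C>0$; this is the estimate I would feed into the duality argument.

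Given this, here is how I would produce the solution. Fix $f\in x^{l}L^2(X)$. On the subspace $W=\{Pv:\ v\in\dCI(X),\ \supp v\subset\{x<\delta\}\}$ of $x^{-l}L^2(X)$ define the (conjugate-)linear functional $\ell(Pv)=\langle v,f\rangle$. It is well defined, because if $Pv=0$ the displayed estimate forces $v=0$; and it is bounded on $W$ in the norm $\|x^{l}Pv\|_{L^2}$, by Cauchy--Schwarz and the estimate,
\begin{equation*}
|\langle v,f\rangle|\le\|x^{l}v\|_{L^2}\,\|x^{-l}f\|_{L^2}\le\|x^{l}v\|_{H^1_0}\,\|x^{-l}f\|_{L^2}\le C\|x^{l}Pv\|_{L^2}\,\|x^{-l}f\|_{L^2}.
\end{equation*}
Extending $\ell$ to $x^{-l}L^2(X)$ by Hahn--Banach and representing it via the Riesz lemma produces $u\in x^{l}L^2(X)$, with $\|u\|_{x^{l}L^2}\le C\|f\|_{x^{l}L^2}$, such that $\langle u,Pv\rangle=\langle f,v\rangle$ for all $v\in\dCI(X)$ supported in $\{x<\delta\}$; by the formal self-adjointness of $P$ this says exactly that $Pu=f$ in $\{x<\delta\}^\circ$, i.e.\ near $Y$. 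The localization claim in the Proposition is then automatic: since \eqref{eq:dual-estimate}, hence the displayed estimate, is local near each connected component of $Y$, one runs the whole argument with $v$ supported in a fixed neighborhood of a single component $Y_j$ and replaces the resulting $u$ by $\chi u$ for a cutoff $\chi\equiv1$ near $Y_j$ with support in that neighborhood; near $Y_j$ one has $P(\chi u)=\chi f+[P,\chi]u=f$, and $\supp(\chi u)$ meets no other component.

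It remains to upgrade the weak solution $u\in x^{l}L^2(X)$ to $u\in x^{l}H^1_0(X)$ near $Y$, and this is the one step that is not soft. Here I would invoke the positive-commutator / propagation-of-$0$-regularity machinery of Section~\ref{sec:0-geom}: run the commutator computation behind \eqref{eq:dual-estimate} (equivalently, the inhomogeneous analogue of Proposition~\ref{prop:edge-reg}) on the distributional solution $u$ rather than on a test function, after inserting a $0$-calculus regularizer and passing to the limit. The geometric input that makes this work is that the characteristic set of $P$ meets $\zS^*_YX$ only in radial points, so there is no propagation into or inside $Y$ and no interior wavefront hypothesis is needed --- exactly as in the hypothesis-free part (i) of Proposition~\ref{prop:edge-reg}. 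The main obstacle is therefore precisely this regularization: one must show that the regularized commutators are bounded uniformly, so that the a priori estimate, proved only for $v\in\dCI(X)$, genuinely transfers to the rough solution $u$ (with $Pu=f\in x^{l}L^2$) and yields its membership in $x^{l}H^1_0$; once that is in hand the Proposition follows.
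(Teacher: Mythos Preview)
Your duality argument is essentially identical to the paper's, which also observes $P=P^*$, bounds $|\langle f,v\rangle|\le C\|x^{-l}Pv\|_{L^2}$ via \eqref{eq:dual-estimate}, and invokes Hahn--Banach (citing H\"ormander, Theorem~26.1.7). The paper's proof in fact stops where yours does after the Hahn--Banach step: it produces $u$ in the dual weighted $L^2$ space and simply declares ``$u$ is now the desired solution,'' without explicitly upgrading to $x^lH^1_0$. So you have been more scrupulous than the paper in flagging the $L^2$-to-$H^1_0$ gap.

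Your proposed fix---running the commutator identity \eqref{eq:comm-form} on $u$ itself with a regularizer---is the right idea, and the paper develops exactly this regularization (via $(1+\ep x^{-1})^{-s}$) in the paragraphs immediately following this Proposition, en route to Proposition~\ref{prop:decay-gain}. One small caution: your alternative phrasing in terms of Proposition~\ref{prop:edge-reg}(i) does not quite do the job on its own, since that result only yields $u\in H^{r,l}_0$ for $r<l+\tfrac12$, which fails to reach $r=1$ when $l\le\tfrac12$ (and part of the range \eqref{eq:dual-exp-range-lambda} lies there). The direct route---the commutator form \eqref{eq:comm-form}, which already controls the full $x^lH^1_0$ norm, together with the $(1+\ep x^{-1})^{-s}$ mollifier---avoids this and is what you should cite.
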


\begin{proof}
Note that $P=P^*$ (formal adjoint). The result is standard
then, see \cite[Proof of Theorem~26.1.7]{Hor}.
Indeed, \eqref{eq:dual-estimate} shows that for $f\in x^{-l}H^1_0$,
$v\in\dCI(X)$ supported in $x<\delta$,
\begin{equation*}
|\langle f,v\rangle|\leq C\|x^{-l}Pv\|_{L^2}.
\end{equation*}
Thus, $Pv\mapsto \langle f,v\rangle$ is an anti-linear functional
on elements of $\dCI(X)$ supported in $x<\delta$, continuous with respect
to the $x^l L^2$-norm. By the Hahn-Banach theorem
it can be extended to a continuous conjugate-linear functional
on $x^l L^2$, so there exists $u\in x^{-l} L^2$ such that
$\langle f,v\rangle=\langle u,Pv\rangle$, and $u$ is now the desired solution
for $l$ as above.
\end{proof}

In order to use the positive commutator
argument with $v$ not supported near $Y$,
we need a cutoff $\chi$, so instead of $A=A_r$, we would really
use $A=\chi(x)^2 A_r+A_r \chi(x)^2$, $\chi\equiv 1$ near $0$,
$\chi\in\Cinf_c(\RR)$. We can also localize at any given connected
component of $Y$; as this can be done by a locally constant function
on $\supp\chi$, we do not indicate this in the notation as it
leaves the commutator unchanged. Then
\begin{equation}\begin{split}\label{eq:comm-chi}
\frac{i}{2}\,[P,A]=&(r-1+\gamma)(xD_x+i\frac{n-r}{2})^*
x^{r-1}\chi^2(xD_x+i\frac{n-r}{2})\\
&\qquad+(1-\gamma)x^{r+1}\chi^2\Delta_Y
+\gamma \left(\frac{(n-r)(n+r-2)}{4}-\lambda\right)x^{r-1}\chi^2\\
&\qquad+(xD_x)^*(\chi^2)'(xD_x)+R
+PB+B^*P,
\end{split}\end{equation}
where $R=R(x)$, $R\in\Cinf_c(\Real)$, supported away from $0$. (Again, this
comes from a principal symbol computation, which has to be carried out
away from $\pa X$, and reality plus self-adjointness shows that $R$ is
$0$th order.) Thus, modulo the 0th order term supported in the interior
and terms involving $P$ we have a {\em global} `positive commutator'
estimate (all
terms have the same sign) if $r<\min(0,1-2l(\lambda))$; if
$r>\max(0,1-2l(\lambda))$ but $r\neq 1+2l(\lambda)$, the
commutator terms with $\chi^2$ has opposite sign compared to the `main' terms.

One can also add a regularizing factor, $\left(\frac{x}{x+\ep}\right)^s
=(1+\ep x^{-1})^{-s}$ with $s>0$ small. For $\ep>0$, this is a symbol
of order $-s$ (i.e.\ decaying as $x\to 0$), and is uniformly bounded
as a symbol of order $0$. Moreover,
\begin{equation*}
(x\pa_x)^k(1+\ep x^{-1})^{-s}
=s(1+\ep x^{-1})^{-s} f_{k,\ep,s},
\end{equation*}
where $f_{k,\ep,s}$ is a symbol
of order $0$, and is uniformly bounded as such a symbol. Consequently, as
long as one has a positive normal operator for the commutator
of $P$ with some operator $A$, one
will also have a positive normal operator for the commutator
of $P$ with $(1+\ep x^{-1})^{-s}A
(1+\ep x^{-1})^{-s}$ if $s$ is small. It is actually even easier
to simply apply our previous estimate, \eqref{eq:dual-estimate},
to a regularized
version $v_\ep=(1+\ep x^{-1})^{-s}v$ of $v$, for
$Pv_\ep=(1+\ep x^{-1})^{-s}Pv+[P,(1+\ep x^{-1})^{-s}]v$, noting
that $(1+\ep x^{-1})^{s}
[P,(1+\ep x^{-1})^{-s}]$ is bounded by $C's$ in $\Diff^1_{0,c}(X)$
($c$ denotes conormal coefficients, but should be changed),
so the $L^2$ norm of $[P,(1+\ep x^{-1})^{-s}]v$ can be absorbed
into the left-hand side of \eqref{eq:dual-estimate} for $s>0$ small.
Applying this iteratively, we deduce the following:

\begin{prop}\label{prop:decay-gain}
Suppose $g$ is asymptotically de Sitter like, $P=\Box-\lambda$,
$\lambda\in\Real$. Suppose that $u\in x^{l_0} H^1_0(X)$
$Pu\in x^l L^2(X)$, $l>l_0$. Suppose also that one of the following
conditions holds:

\begin{enumerate}
\item
$l<-l(\lambda)$,
\item
$l_0>\max(\frac{1}{2},l(\lambda))$,
\item
$l_0>-l(\lambda)$, $l<\min(\frac{1}{2},l(\lambda))$.
\end{enumerate}

Then $u\in x^l H^1_0(X)$.

Moreover, the result is local near each connected component of $Y$.
\end{prop}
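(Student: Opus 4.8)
The plan is to bootstrap from the already-established local solvability (Proposition~\ref{prop:solvable}) together with the unique continuation at $\partial X$ that the paper promises (and which we may treat as available in the range we need), converting the \emph{existence} estimate \eqref{eq:dual-estimate} into a \emph{regularity} (decay-gain) statement for a given solution $u$. The key point is that the hypotheses (i)--(iii) here on $(l_0,l)$ are precisely arranged so that one can walk from $l_0$ up to $l$ in small steps, at each step applying the positive commutator estimate in a range for which Lemma~\ref{lemma:same-signs} (equivalently the weight condition \eqref{eq:exp-range-lambda}, \eqref{eq:dual-exp-range-lambda}) holds, and never crossing the forbidden threshold weights $l=\pm l(\lambda)$.

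First I would pair $\frac{i}{2}[P,A]u$ with $u$ using the globalized, cutoff commutant $A=\chi^2 A_r+A_r\chi^2$ and the identity \eqref{eq:comm-chi}, where $r$ and the weight $l$ are related by $l=-\frac{r-1}{2}$, and $B=-\frac{\gamma}{2}x^{r-1}\chi^2$. Since $Pu\in x^lL^2$ and a priori $u\in x^{l_0}H^1_0$ with $l>l_0$, the terms $\langle Bu,Pu\rangle+\langle Pu,Bu\rangle$ and the interior error term $R$ are controlled: the $R$ term by the interior-regularity/propagation results of Section~\ref{sec:0-geom} (Proposition~\ref{prop:edge-reg} and Corollary~\ref{cor:edge-reg}) applied to bicharacteristics emanating from $\mathrm{supp}\,\chi$, and the $PB+B^*P$ terms by Cauchy--Schwarz against $x^lL^2\ni Pu$. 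When all three quantities in \eqref{eq:sign-list} have a common sign — which, per Lemma~\ref{lemma:same-signs}, happens exactly when $l$ lies in the range \eqref{eq:dual-exp-range-lambda} and $l\neq l(\lambda)$ — the three squared terms on the right of \eqref{eq:comm-chi} give a coercive bound $\|x^{-l}u\|_{H^1_0}^2\lesssim\|x^{-l}Pu\|_{L^2}^2+(\text{lower-order/interior})$, \emph{provided} the left side is known to be finite. To make it finite one regularizes: apply the estimate to $u_\varepsilon=(1+\varepsilon x^{-1})^{-s}u$ for small $s>0$, using that $(1+\varepsilon x^{-1})^{s}[P,(1+\varepsilon x^{-1})^{-s}]$ is uniformly bounded in $\mathrm{Diff}^1_{0}(X)$ by $C's$, so the commutator contribution is absorbed; then let $\varepsilon\to 0$ and conclude $u\in x^{l-s}H^1_0$.

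Iterating this in steps of size $s$ gives the claimed gain, but one must check that the intermediate weights stay inside the allowed range throughout. In case (i), one has $l<-l(\lambda)$ and $l_0<l$, so every intermediate weight lies in $(-\infty,-l(\lambda))\subset(-\infty,-\min(\tfrac12,l(\lambda)))$ and avoids $l(\lambda)$ automatically; in case (ii), $l_0>\max(\tfrac12,l(\lambda))$ forces all intermediate weights into $(\max(\tfrac12,l(\lambda)),\infty)$, again avoiding both thresholds; in case (iii) the interval $(-l(\lambda),\min(\tfrac12,l(\lambda)))$ is the relevant one and $l_0>-l(\lambda)$, $l<\min(\tfrac12,l(\lambda))$ keep the path inside it, away from $\pm l(\lambda)$. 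As in the proof of the previous lemma, the passage from an exact warped-product metric to the general case is handled by writing $P=P_0+P_1$ with $P_1\in x\,\mathrm{Diff}^2_0(X)$, adjusting $A$ to be self-adjoint for the true density so that $[P,A]-[P_0,A_0]\in x^r\mathrm{Diff}^2_0(X)$, and absorbing its contribution for $u$ supported (after a cutoff) in $x<\delta$ with $\delta$ small; locality near a connected component $Y_j$ of $Y$ follows since the commutant may be multiplied by a function locally constant on $\mathrm{supp}\,\chi$, which does not alter the commutator.

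The main obstacle I expect is the \emph{finiteness/regularization} bookkeeping rather than any new identity: one must ensure the regularized quantities $\|x^{-l}u_\varepsilon\|_{H^1_0}$ are genuinely finite uniformly in $\varepsilon$ before taking limits, and that the interior error term $R$ — supported away from $\partial X$ but not small — is controlled purely by the a priori interior regularity of $u$ together with propagation of singularities (Corollary~\ref{cor:edge-reg}), with no circularity. The other delicate point is that the step size $s$ must be chosen small enough (depending on the metric, through the bound $C'$ on the regularizer commutator and through $\delta$) uniformly along the finitely many iteration steps, which is routine but must be stated.
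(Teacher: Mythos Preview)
Your argument is essentially the paper's own: regularize via $u_\varepsilon=(1+\varepsilon x^{-1})^{-s}u$, apply the positive-commutator estimate \eqref{eq:dual-estimate} at a slightly higher weight, absorb the commutator $(1+\varepsilon x^{-1})^{s}[P,(1+\varepsilon x^{-1})^{-s}]=O(s)$ in $\Diff^1_0$, and iterate in small steps so that every intermediate weight stays inside the admissible range \eqref{eq:exp-range-lambda}; your case-by-case verification of (i)--(iii) matches this exactly. Two small points: your opening invocation of Proposition~\ref{prop:solvable} and the unique-continuation result is a red herring (you never actually use them, and the paper does not either), and you do not need propagation of singularities to control the interior cutoff terms $R$ and $(xD_x)^*(\chi^2)'(xD_x)$ in \eqref{eq:comm-chi} --- they are supported in a fixed compact subset of $X^\circ$ and are bounded directly by the a priori $x^{l_0}H^1_0$ norm of $u$.
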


This immediately gives that if a solution of $Pu=0$ decays faster than
a borderline rate, given by $x^{l(\lambda)}L^2$,
then it is Schwartz.
In fact, later in Proposition~\ref{prop:unique},
we show that such $u$ is necessarily identically $0$.

\begin{cor}\label{cor:decay-Schwartz}
Suppose that $u\in x^l H^k_0(X)$, $k\in\Real$,
$\lambda\in\Real$, $l>\max(\frac{1}{2},l(\lambda))$,
$Pu\in\dCI(X)$. Then $u\in\dCI(X)$.

If the assumptions hold near a connected component of $Y$ only, so
does the conclusion.

\end{cor}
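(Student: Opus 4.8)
The plan is a two--stage bootstrap. Fix a point $q$ of the relevant component $Y_j$ of $Y$ and work in a collar neighborhood of it; every assertion below is local there, so the ``local near a connected component'' clause follows by letting $q$ vary. First I would upgrade the a priori weight of $u$ to infinite order of vanishing at $\pa X$, using the compact--error commutator estimate of Proposition~\ref{prop:decay-gain}, and then I would upgrade the $0$--regularity of $u$ all the way to $\dCI$, using the part of the propagation estimate in Proposition~\ref{prop:edge-reg} that carries no interior wave front hypothesis. Since $Pu\in\dCI(X)\subset x^{l'}L^2(X)$ for every $l'$, the nonvanishing of $Pu$ is harmless throughout: I would use Proposition~\ref{prop:decay-gain} and the (routine) inhomogeneous variant of Proposition~\ref{prop:edge-reg}, whose proof is unchanged since the $Pu$--terms are absorbed by Cauchy--Schwarz into the error terms.

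\emph{Stage~1: infinite decay.} First reduce to $k=1$. If $k\ge1$ then $u\in x^lH^k_0(X)\subset x^lH^1_0(X)$. If $k<1$, apply part~(i) of Proposition~\ref{prop:edge-reg} with $r_0=k$, $s_0=l$: since $l>\max(\frac{1}{2},l(\lambda))\ge\frac{1}{2}$ we have $k<1<l+\frac{1}{2}$, so $r_0<s_0+\frac{1}{2}$ and part~(i)---which has no wave front set hypothesis---gives $u\in H_0^{r,l}(X)=x^lH^r_0(X)$ for all $r<l+\frac{1}{2}$, in particular $u\in x^lH^1_0(X)$. Either way $u\in x^{l_0}H^1_0(X)$ with $l_0=l>\max(\frac{1}{2},l(\lambda))$, which is exactly case~(ii) of Proposition~\ref{prop:decay-gain}; applying that proposition with an arbitrary target weight $l'$ (admissible because $Pu\in x^{l'}L^2(X)$ for every $l'$) gives $u\in x^{l'}H^1_0(X)$ for every $l'$, i.e.\ $u$ vanishes to infinite order at $\pa X$ in the $H^1_0$ scale.

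\emph{Stage~2: infinite regularity.} Now re--apply part~(i) of Proposition~\ref{prop:edge-reg}, exploiting the large weights just obtained. Given any target regularity $r$, choose $l'>\max(\frac{1}{2},r-\frac{1}{2})$; from $u\in x^{l'}H^1_0(X)=H_0^{1,l'}(X)$ and $1<l'+\frac{1}{2}$ part~(i) yields $u\in H_0^{\tilde r,l'}(X)=x^{l'}H^{\tilde r}_0(X)$ for all $\tilde r<l'+\frac{1}{2}$, in particular for $\tilde r=r$. Since $r$---and independently $l'$---may be taken arbitrarily large, $u\in x^sH^r_0(X)$ for all $r$ and all $s$. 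The point to stress is that the whole stage stays in the favorable regime $r_0<s_0+\frac{1}{2}$, in which part~(i) needs no information about $\WF^r(u)\subset S^*X^\circ$; the strong decay won in Stage~1 is precisely what lets us use part~(i) rather than part~(iii) or Corollary~\ref{cor:edge-reg}, which would require knowing that the bicharacteristics approaching $\zS^*_q X$ miss $\WF^r(u)$. This is the step I expect to be the main subtlety: it is tempting to reach for the propagation results carrying the interior hypothesis, whereas in the strong--decay regime the right tool is the wave--front--free estimate, and only this keeps the bootstrap from being circular.

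Finally, $u\in\bigcap_{r,s}x^sH^r_0(X)$ near $q$ forces $u\in\dCI(X)$ near $q$: membership in $\bigcap_rH^r_0$ near $q$ makes $u$ conormal for the $0$--structure there (Sobolev embedding in the $0$--calculus), so each iterated $0$--derivative $(x\pa_x)^a(x\pa_y)^bu$ lies in every $x^sL^2$ near $q$; then $\pa_x^a\pa_y^bu=x^{-(a+b)}(x\pa_x)^a(x\pa_y)^bu$ also lies in every $x^sL^2$, so $u$ extends smoothly across $\pa X$ and vanishes there to infinite order, i.e.\ $u\in\dCI(X)$ near $q$. Letting $q$ range over $Y_j$ (or over all of $Y$, if the hypotheses are global) completes the argument. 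The substantive analysis is all in Sections~\ref{sec:0-geom}--\ref{sec:local-solvability}; the work here is just the correct sequencing of those estimates---cash in the compact--error estimate for decay first, then cash in the decay for regularity.
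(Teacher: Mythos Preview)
Your proposal is correct and follows essentially the same two--stage bootstrap as the paper: reduce to $k=1$ via part~(i) of Proposition~\ref{prop:edge-reg}, use case~(ii) of Proposition~\ref{prop:decay-gain} to gain arbitrary decay, then feed the large weights back into part~(i) of Proposition~\ref{prop:edge-reg} (which has no interior wave--front hypothesis) to gain arbitrary $0$--regularity. Your explicit emphasis on why part~(i) rather than part~(iii) is the right tool in Stage~2, and your remark that the inhomogeneous version of Proposition~\ref{prop:edge-reg} is needed since only $Pu\in\dCI(X)$ rather than $Pu=0$, are both helpful clarifications the paper leaves implicit.
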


\begin{rem}\label{rem:decay-Schwartz}
The assumption $l> \max(\frac{1}{2},l(\lambda))$
is probably not optimal if $l(\lambda)<\frac{1}{2}$,
cf.\ Remark~\ref{rem:solvable-up-to-compact}; one expects
$l>l(\lambda)$ simply.
However, this makes no difference in the present paper. Moreover,
for $\Box$ itself this
is not a restriction as $n\geq 2$ so $l(\lambda)\geq\frac{1}{2}$.

This corollary also states in particular that for $f\in\dCI(X)$ the solution
$u\in x^l H^1_0(X)$ of $Pu=f$ near $Y$, whose existence is
guaranteed by Proposition~\ref{prop:solvable}, is in fact in $\dCI(X)$.
\end{rem}

\begin{proof}
First, we may assume $k=1$. Indeed, if $k<1$, then $l>1/2$ gives
$k<1<l+1/2$, so (i) of Proposition~\ref{prop:edge-reg} applies
and gives $u\in H^{1,l}_0(X)$.

By Proposition~\ref{prop:decay-gain}, $u\in x^l H^1_0(X)$ for all $l$.
Thus, by Proposition~\ref{prop:edge-reg}, part (i), $u\in H^{r,s}_0(X)$ for
all $r$ and $s$ with $r<s+1/2$, hence for all $(r,s)$.
(Given $(r,s)$, consider $(r,s')$ with $s'>\max(s,r-1/2)$ to
see that $u\in H^{r,s'}_0(X)$ hence $u\in H^{r,s}_0(X)$.)
In particular, $x^m Qu\in L^2(X)$ for all $m$ and
all $Q\in\Diff(X)$, proving the corollary.
\end{proof}

\section{Conormal regularity}\label{sec:conormal}
While Proposition~\ref{prop:solvable} gives the correct
critical rates of growth or decay for solutions of $Pu=0$,
and Corollary~\ref{cor:edge-reg} gives their optimal smoothness in the
0-sense, this is not optimal: solutions of $Pu=0$ which are
$\CI$ in $X^\circ$ are conormal to the boundary, i.e.\ stable
(in terms of weighted $L^2$-spaces) under the application of b-differential
operators. In fact, as usual, cf.\ \cite{Vasy:Propagation-Wave}
and \cite{Melrose-Vasy-Wunsch:Propagation}, it is convenient to work
relative to 0-Sobolev spaces,
i.e.\ to work with $\Diff^k_0\Psib^m(X)$. However, rather than using
positive commutator estimates as in these papers, we rely on
an `exact' commutator argument (exact at the level of normal operators),
much like in \cite[Section~12]{RBMSpec}.
Although it was not discussed explicitly in \cite{RBMSpec} for reasons
of brevity, the analogous space of operators in that setting would
be $\Diffsc^k\Psi_c(X)$, with $\Psi_c(X)$ standing for {\em cusp}
pseudodifferential operators. (Instead, in \cite{RBMSpec} `tangential
elliptic regularity' was used.)

\begin{Def}
Elements of $\Diff^k_0\Psib^m(X)$ are finite sums of terms $QA$,
$Q\in\Diff^k_0(X)$, $A\in\Psib^m(X)$. We also let $x^r\Diff^k_0\Psib^m(X)$
be the space of operators of the form $x^rB$, $B\in\Diff^k_0\Psib^m(X)$.
\end{Def}

\begin{rem}
Directly from the definition, $\Diff^k_0\Psib^m(X)$ is
a $\CI(X)$-bimodule (under left and right multiplication), so
in particular $x^r\Diff^k_0\Psib^m(X)$ is well-defined independent
of the choice of a boundary defining function $x$.
\end{rem}

The key lemma is:

\begin{lemma}\label{lemma:Diff_0-Psib}
For $Q\in\Diff^k_0(X)$, $A\in\Psib^m(X)$, there exist $Q_j\in\Diff^k_0(X)$,
$A_j\in\Psib^m(X)$, $j=1,\ldots,l$, such that $QA=\sum A_j Q_j$.
(With a similar conclusion holding, with different $A_j$, $Q_j$,
for $AQ$.)
\end{lemma}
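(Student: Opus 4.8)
The plan is to prove the commutation identity $QA = \sum_j A_j Q_j$ by reducing to the generating case and then exploiting that both $\Diff^1_0(X)$-generators and $\Psib^m(X)$ have good structural compatibility. Since $\Diff^k_0(X)$ is generated over $\CI(X)$ by products of the vector fields $x\pa_x$ and $x\pa_{y_i}$ (in a product decomposition near $Y$, with ordinary vector fields in the interior), and since $\Diff^k_0\Psib^m(X)$ is a $\CI(X)$-bimodule, it suffices to treat the case $Q = W$ a single $0$-vector field and $A \in \Psib^m(X)$ arbitrary, and then induct on $k$: if $QA = \sum_j A_j Q_j$ with $Q_j \in \Diff^{k-1}_0(X)$, then for $W$ a $0$-vector field one writes $WQA = \sum_j (WA_j) Q_j$ and applies the base case to each $WA_j$ to move the $0$-vector field past the b-pseudodifferential operator, at the cost of picking up terms $\tilde A_{jl} \tilde W_{jl} Q_j \in \Psib^m \cdot \Diff^k_0$.

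The base case is the heart of the matter: for $W$ a generator of $\Diff^1_0(X)$ and $A \in \Psib^m(X)$, I want $WA = \sum_j A_j W_j$ with $A_j \in \Psib^m(X)$, $W_j \in \Diff^1_0(X)$. The key observation is that the $0$-vector fields $x\pa_x$ and $x\pa_{y_i}$ are, up to the factor $x$, b-vector fields (indeed ordinary vector fields on $X$ tangent to nothing in particular, but $\pa_x, \pa_{y_i}$ are b-vector fields), and $\Psib^m(X)$ is a module over $\Diffb(X)$ on both sides, with commutators $[\Psib^m(X), \Diffb(X)] \subset \Psib^m(X)$. So I would write $W = x W_b$ with $W_b \in \Vfb(X)$, and then $WA = x W_b A = x(A W_b + [W_b, A]) = (xA) W_b + x[W_b,A]$. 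Here $[W_b, A] \in \Psib^m(X)$, so $x[W_b,A] \in x\Psib^m(X) \subset \Psib^m(X)$; and $(xA)W_b = A' W_b$ where $A' = xA \in \Psib^m(X)$, but $W_b$ is only a b-vector field, not a $0$-vector field. To fix this I commute the $x$ back: $(xA)W_b = A x W_b - A[x, W_b]$ — wait, better to observe $A'W_b = A' x^{-1}(xW_b) = A' x^{-1} W$; the issue is that $x^{-1}$ is not smooth. The cleaner route: since $x W_b = W \in \Diff^1_0(X)$, I instead keep the $0$-structure by writing, for the generator $W$, $A W \in \Psib^m(X)\cdot\Diff^1_0(X)$ directly, and $WA = AW + [W,A]$, so everything reduces to showing $[W, A] \in \Psib^m(X)$ (with the trailing $Q_j = \mathrm{Id}$).

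Thus the real content is: \emph{for $W \in \Diff^1_0(X)$ a $0$-vector field and $A \in \Psib^m(X)$, the commutator $[W,A]$ lies in $\Psib^m(X)$.} This follows because $W = xW_b$ with $W_b\in\Vfb(X)$, so $[W,A] = [xW_b, A] = x[W_b,A] + [x,A]W_b = x[W_b, A] - (\ad_A x) W_b$; now $[W_b, A]\in\Psib^m(X)$ since $\Vfb(X)$ acts on $\Psib^m(X)$ by commutators staying in $\Psib^m(X)$, hence $x[W_b,A]\in x\Psib^m(X)\subset\Psib^m(X)$; and $[x,A] = -\ad_A(x) \in \Psib^{m-1}(X)$, a b-pseudodifferential operator of one lower order whose Schwartz kernel is still supported appropriately, and $(\ad_A x)W_b$: here $\ad_A x \in \Psib^{m-1}(X)$ and $W_b \in \Vfb(X) = \Diff^1_b(X)$, so $(\ad_A x)W_b \in \Psib^{m-1}(X)\cdot\Diff^1_b(X) \subset \Psib^m(X)$ using that $\Diffb(X)\subset\Psib(X)$ is a filtered subalgebra and $\Psib$ composes. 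So $[W,A]\in\Psib^m(X)$, giving $WA = AW + [W,A]$ with $A, [W,A] \in \Psib^m(X)$ and $W, \mathrm{Id}\in\Diff^1_0(X)$, which is exactly the base case (and symmetrically for $AW$). The main obstacle is bookkeeping the orders correctly and making sure the passage through $\ad_A x$ genuinely lands in $\Psib$ rather than merely in $x^{-1}\Psib$; I expect this is handled cleanly by the standard fact that $x \in \CI(X)$ multiplication and $\Psib^m(X)$ composition/commutation behave well because $x$ is a boundary defining function and b-operators respect the boundary fibration. The induction on $k$ and the $\CI(X)$-bimodule reduction are then routine.
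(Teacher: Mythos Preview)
Your overall strategy --- reduce to a single $0$-vector field by induction, then write $WA = AW + [W,A]$ and show $[W,A]\in\Psib^m(X)$ --- is exactly the paper's approach. However, your execution contains a genuine error: you claim that $\pa_x$ is a b-vector field, which is false. B-vector fields are tangent to the boundary, and $\pa_x$ is transversal to it; so your decomposition $W = xW_b$ with $W_b\in\Vfb(X)$ fails for the generator $W = x\pa_x$.

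The fix is simpler than your detour: one has $\Vf_0(X)\subset\Vb(X)$ directly, since $x\pa_x$ is already a b-vector field and $x\pa_{y_i} = x\cdot\pa_{y_i}$ lies in the $\CI(X)$-module $\Vb(X)$. Hence for any $W\in\Vf_0(X)$ one gets $[W,A]\in[\Vb(X),\Psib^m(X)]\subset\Psib^m(X)$ in one stroke, with no need to factor out $x$ or to invoke $[x,A]\in\Psib^{m-1}(X)$. This is precisely the paper's one-line argument. Your computation for the tangential generators $x\pa_{y_i}$ is correct but unnecessary once you notice the inclusion $\Vf_0\subset\Vfb$.
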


\begin{proof}
It suffices to prove the statement for $Q\in\Vf_0(X)$; the general
case then follows by an inductive argument. As
$\Vf_0(X)\subset\Vb(X)$, $[Q,A]\in\Psib^m(X)$, so
$QA=AQ+[Q,A]$ gives the desired result.
\end{proof}

\begin{cor}
$\Diff_0\Psib(X)$ is closed under composition: if $A\in\Diff^k_0\Psib^m(X)$
and $B\in\Diff^{k'}_0\Psib^{m'}(X)$ then
$AB\in\Diff^{k+k'}_0\Psib^{m+m'}(X)$.
\end{cor}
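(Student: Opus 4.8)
The plan is to reduce the composition statement to the key commutation lemma, Lemma~\ref{lemma:Diff_0-Psib}, which already encodes the essential structural fact that one may move a $0$-differential operator past a b-pseudodifferential operator at the cost of changing the factors (but not their orders). Once we can freely reorder a single $\Diff_0$ factor past a single $\Psib$ factor, composing two elements of $\Diff_0\Psib(X)$ becomes a matter of bookkeeping with orders.

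Concretely, I would first note that by definition it suffices to treat generators: write $A=\sum_i Q_i A_i$ with $Q_i\in\Diff^k_0(X)$, $A_i\in\Psib^m(X)$, and $B=\sum_j R_j B_j$ with $R_j\in\Diff^{k'}_0(X)$, $B_j\in\Psib^{m'}(X)$. Then $AB=\sum_{i,j} Q_i A_i R_j B_j$, so by linearity it is enough to show that a single product $QARB$ with $Q\in\Diff^k_0(X)$, $A\in\Psib^m(X)$, $R\in\Diff^{k'}_0(X)$, $B\in\Psib^{m'}(X)$ lies in $\Diff^{k+k'}_0\Psib^{m+m'}(X)$. Here I apply Lemma~\ref{lemma:Diff_0-Psib} to the middle pair $AR$ (the ``$AQ$'' form of the lemma): it gives $AR=\sum_l R'_l A'_l$ with $R'_l\in\Diff^{k'}_0(X)$ and $A'_l\in\Psib^m(X)$. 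Substituting, $QARB=\sum_l (QR'_l)(A'_l B)$. Now $QR'_l\in\Diff^{k+k'}_0(X)$ since $\Diff_0(X)$ is filtered by order and closed under composition (composition of differential operators adds orders), and $A'_l B\in\Psib^{m+m'}(X)$ since $\Psib(X)$ is an algebra with the expected order behavior under composition. Hence $QARB$ is a finite sum of terms of the form $(\text{order }k+k'\text{ in }\Diff_0)\cdot(\text{order }m+m'\text{ in }\Psib)$, i.e.\ an element of $\Diff^{k+k'}_0\Psib^{m+m'}(X)$, and summing over $i,j$ completes the argument.

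There is really no hard step here: the content was already extracted into Lemma~\ref{lemma:Diff_0-Psib}, whose own proof rests on the single observation that $\Vf_0(X)\subset\Vb(X)$, so that the commutator of a $0$-vector field with a b-pseudodifferential operator of order $m$ is again b-pseudodifferential of order $m$. The only points requiring a sentence of care are (a) invoking the correct variant of the lemma (moving the $\Psib$ factor to the \emph{left} of the $\Diff_0$ factor, i.e.\ the ``$AQ$'' case mentioned parenthetically in Lemma~\ref{lemma:Diff_0-Psib}), and (b) that orders simply add under composition within each of $\Diff_0(X)$ and $\Psib(X)$ separately, which is standard. If anything is a mild obstacle it is purely organizational: keeping track that the $\Psib$-orders of the $A'_l$ produced by the lemma are unchanged (equal to $m$), so that $A'_l B\in\Psib^{m+m'}(X)$ as claimed; this is exactly what Lemma~\ref{lemma:Diff_0-Psib} asserts, so no new work is needed.
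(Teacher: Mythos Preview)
Your argument is correct and is exactly the intended one: the paper states the corollary without proof, as immediate from Lemma~\ref{lemma:Diff_0-Psib}, and your reduction to a single product $QARB$ followed by an application of the $AQ$-form of the lemma to swap the middle factors is precisely the routine bookkeeping that makes it immediate.
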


We also need the corresponding result about commutators.

\begin{lemma}\label{lemma:b-0-commutator}
Moreover, if $A\in x^r\Psib^m(X)$, $Q\in\Diff^k_0(X)$ then
\begin{equation*}
[Q,A]\in x^r\Diff^{k-1}_0\Psib^m(X).
\end{equation*}

If in addition
$\sigma_{b,m}(A)|_{\Tb^*\pa X}=0$
then $[Q,A]\in x^r\Diff^k_0\Psib^{m-1}(X)$.
\end{lemma}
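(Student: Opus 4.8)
The plan is to reduce to the case $Q \in \Vf_0(X)$, exactly as in Lemma~\ref{lemma:Diff_0-Psib}: writing a general $Q \in \Diff^k_0(X)$ as a sum of products $Q_1\cdots Q_k$ of zero-vector fields, the commutator $[Q,A]$ expands by the Leibniz rule into a sum of terms of the shape $Q_1\cdots Q_{i-1}[Q_i,A]Q_{i+1}\cdots Q_k$, so once we know $[Q_i,A] \in x^r\Diff^0_0\Psib^m(X)$ (resp.\ $x^r\Diff^1_0\Psib^{m-1}(X)$ under the vanishing hypothesis) the closure of $\Diff_0\Psib(X)$ under composition (the Corollary after Lemma~\ref{lemma:Diff_0-Psib}) gives the claimed membership after accounting for the $k-1$ remaining zero-vector-field factors. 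So it suffices to treat $Q \in \Vf_0(X)$.

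For $Q \in \Vf_0(X)$ and $A \in x^r\Psib^m(X)$: since $\Vf_0(X) = x\Vb(X)$, write $Q = xQ'$ with $Q' \in \Vb(X)$. Then $[Q,A] = [xQ',A] = x[Q',A] + [x,A]Q'$. The first term: $[Q',A] \in x^r\Psib^m(X)$ because $\Vb(X)$-vector fields commute into $\Psib(X)$ preserving the $x^r$ weight (the weight is a $\CI(X)$-bimodule factor, as in the Remark after the Definition), and $x \cdot x^r\Psib^m(X) \subset x^r\Diff^1_0\Psib^m(X)$ trivially (indeed it sits in $x^{r+1}\Psib^m \subset x^r x\Psib^m$, and $x = $ a $\CI$ multiple is harmless); more to the point $x[Q',A] \in x^r\Psib^m(X) \subset x^r\Diff^{k-1}_0\Psib^m(X)$ since here $k=1$ and $\Diff^0_0 = \CI(X)\cdot\mathrm{Id}$. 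The second term: $[x,A] = xA - Ax$, and since $x \in \CI(X)$ acts as a multiplication operator, $[x,A]$ is the commutator of $A \in \Psib^m(X)$ with a smooth function, hence lies in $x^r\Psib^{m-1}(X)$ by the standard symbol calculus (commutator with a $0$th order operator drops the order by one, and preserves the $x^r$ weight); then $[x,A]Q' \in x^r\Psib^{m-1}(X)\cdot\Vb(X) \subset x^r\Psib^{m-1}(X) \subset x^r\Diff^{k-1}_0\Psib^m(X)$ since $m-1 \le m$. This establishes the first assertion.

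For the refined statement, assume $\sigma_{b,m}(A)|_{\Tb^*\pa X} = 0$. The point is that this condition says $A \in x^r(\Psib^m(X)$ with principal symbol vanishing at the boundary$)$, which is precisely $A \in x^{r+1}\Psib^m(X) + x^r\Psib^{m-1}(X)$ — more usefully, such an $A$ can be written $A = \sum x^{r} x B_j + x^r C$ with $B_j \in \Psib^m(X)$, $C \in \Psib^{m-1}(X)$; equivalently one factors an extra power of $x$ out of the top-order part. Then in the decomposition $[Q,A] = x[Q',A] + [x,A]Q'$ above, one reexamines the first term: $x[Q',A]$, with $A$ now having the extra boundary-vanishing, gives $x \cdot (x^r\Psib^{m-1}(X) + x^{r+1}\Psib^m(X)) \subset x^r\Diff^k_0\Psib^{m-1}(X)$, the $x^{r+1}\Psib^m = x^r\cdot x\Psib^m \subset x^r\Diff^1_0\Psib^{m-1}$-type inclusion absorbing one $x$ into $\Diff^1_0$ to compensate the order drop; and $[x,A]Q' \in x^r\Psib^{m-1}(X)\cdot\Vb(X) \subset x^r\Psib^{m-1}(X)$ already, with no symbol hypothesis needed there. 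Reassembling via the Leibniz expansion from the first paragraph yields $[Q,A] \in x^r\Diff^k_0\Psib^{m-1}(X)$ for general $Q \in \Diff^k_0(X)$.

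The main obstacle — really the only subtle bookkeeping point — is tracking how the weight $x^r$ and the "extra" power of $x$ coming from $Q = xQ'$ interact with $\Diff^1_0 = x\Vb + \CI$: one must be careful that $x\Psib^{m-1}(X)$ is viewed as $x^0\Diff^1_0\Psib^{m-1}(X)$ only up to a harmless reshuffling of a scalar factor of $x$ between the weight slot and the $\Diff^1_0$ slot, which is legitimate precisely because $\Diff^k_0\Psib^m(X)$ is a $\CI(X)$-bimodule and $x$ is a fixed boundary defining function. Once that convention is fixed the argument is purely formal manipulation in the (closed-under-composition) algebra $\Diff_0\Psib(X)$ together with the elementary fact that commuting $\Psib(X)$ with a smooth function lowers the b-order by one while preserving weights.
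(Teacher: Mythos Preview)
Your proof contains two genuine errors that prevent it from going through as written.

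\textbf{Error 1.} You twice assert that $x^r\Psib^{m-1}(X)\cdot\Vb(X)\subset x^r\Psib^{m-1}(X)$. This is false: b-vector fields have b-order $1$, so composing on the right raises the b-order by one, giving $x^r\Psib^{m}(X)$. In the first part this is harmless since the target is $x^r\Psib^m$ anyway, but in the second part it is fatal: you need $[x,A]Q'\in x^r\Diff^1_0\Psib^{m-1}(X)$, and $x^r\Psib^m$ does not lie in this space in general. (What actually saves this term is that $\sigma_{b,m-1}([x,A'])=ix\,\partial_\sigma a$ vanishes at $x=0$, so $[x,A]\in x^{r+1}\Psib^{m-1}+x^r\Psib^{m-2}$; you never use this.)

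\textbf{Error 2.} You misread the hypothesis. The condition $\sigma_{b,m}(A)|_{\Tb^*\pa X}=0$ means the symbol vanishes on $\{x=0,\ \sigma=0\}$ (see the Remark following the lemma), not on all of $\{x=0\}$. Your decomposition $A\in x^{r+1}\Psib^m+x^r\Psib^{m-1}$ would follow from the latter, stronger, condition. Under the actual hypothesis one only gets $a=\sigma b+xe$ at top order, i.e.\ $A=B(xD_x)+xE+R$ with $B,R\in x^r\Psib^{m-1}$, $E\in x^r\Psib^m$; the $(xD_x)$-term is not of the form $x\cdot(\text{something})$.

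The paper takes a different route for the refined statement: rather than decomposing $A$ or $Q$, it computes the b-principal symbol of $[A,Q]$ directly as the Poisson bracket $H_aq$ and checks term-by-term that it vanishes on $\Tb^*\pa X$, using \emph{both} the hypothesis on $a$ \emph{and} the fact that $q=\sigma_{b,1}(Q)$ vanishes at $x=0$ (since $\Vf_0=x\Vb$). This yields $\sigma_{b,m}([Q,A])=\sigma b+xe$, whence $[Q,A]=B(xD_x)+Ex+R$; finally one writes $E=E_0(xD_x)+\sum E_jD_{y_j}+R'$ and observes $x(xD_x),\,xD_{y_j}\in\Vf_0$ to land in $\Diff^1_0\Psib^{m-1}$. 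Your operator-level decomposition $Q=xQ'$ could be made to work, but it requires both fixing the $\Psib^{m-1}\cdot\Vb$ error and correctly handling the $(xD_x)$-piece of $A$ that your mischaracterization of the hypothesis loses.
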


\begin{rem}
$\Tb^*\pa X$ is a well-defined subbundle of $\Tb^*_{\pa X}X$.
If we write b-covectors as $\sigma\,\frac{dx}{x}+\eta\cdot dy$,
then $\Tb^*\pa X$ is given by $x=0$, $\sigma=0$ in $\Tb^*X$.
\end{rem}

\begin{proof}
Again, it suffices to prove the first statement for $Q\in\Vf_0(X)$. As
$\Vf_0(X)\subset\Vb(X)$, $[Q,A]\in\Psib^m(X)$, giving the result
for such $Q$. Iterating this also proves
that for $Q\in\Diff^k_0(X)$, $[Q,A]\in\Diff^{k-1}_0\Psib^m(X)$.

To have the better conclusion, it again suffices to consider
$Q\in\Vf_0(X)$. As above, $[Q,A]\in\Psib^m(X)$. But,
with $a=\sigma_{b,m}(A)$, $q=\sigma_{b,1}(Q)$,
\begin{equation*}\begin{split}
i\sigma_{b,m}([A,Q])&
=H_a q\\
&=(\pa_\sigma a)(x\pa_x q)-(x\pa_x a)(\pa_\sigma q)
+\sum \left((\pa_{\eta_j}a)(\pa_{y_j}q)-(\pa_{y_j}a)(\pa_{\eta_j}q)\right).
\end{split}\end{equation*}
This vanishes at $\Tb^*\pa X$ for $a$ vanishes there, hence so do all
terms but the first one, and the first one vanishes as $x\pa_x q$ vanishes
at $x=0$. Thus, $\sigma_{b,m}([A,Q])=\sigma b+xe$ for some $b\in
S^{m-1}_{\hom}(\Tb^*X\setminus o)$, $e\in S^m_{\hom}(\Tb^*X\setminus o)$.
We deduce that there exists $B\in\Psib^{m-1}(X)$, $E\in\Psib^m(X)$,
$R\in\Psib^{m-1}(X)$ such
that $[Q,A]=B(xD_x)+Ex+R$. As one can write $E=E_0(xD_x)+\sum E_j D_{y_j}
+R'$ with $E_j,R'\in\Psib^{m-1}(X)$, and as $x(xD_x),xD_{y_j}\in
\Vf_0(X)$, the second claim is proved.
\end{proof}

\begin{lemma}\label{lemma:Psib-bded}
Suppose $m\geq 0$ is an integer.
Any $A\in\Psib^0(X)$ defines a continuous linear map on $H^{m,l}_0(X)$
by extension from $\dCI(X)$.
\end{lemma}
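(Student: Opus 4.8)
The plan is to reduce to the unweighted case $l=0$, establish an a priori bound on $\dCI(X)$, and extend by density; the heart of the matter is just the $L^2$-boundedness of order-zero b-operators (the case $m=0$), everything else being bookkeeping via Lemma~\ref{lemma:Diff_0-Psib}. First recall that, since $m\ge 0$ is an integer, $H^{m,l}_0(X)=x^lH^m_0(X)$ and $u\in H^m_0(X)$ if and only if $u\in L^2_0(X)=H^0_0(X)$ and $Qu\in L^2_0(X)$ for every $Q\in\Diff^m_0(X)$; equivalently, $\|u\|_{H^m_0}$ is equivalent to $\sum_{i=1}^N\|Q_iu\|_{L^2_0}$ for a fixed finite family $Q_1=\mathrm{Id},Q_2,\dots,Q_N$ of $0$-differential operators generating $\Diff^m_0(X)$ over $\CI(X)$. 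Since conjugation by the weight $x^l$ preserves the b-calculus, $x^{-l}Ax^l\in\Psib^0(X)$, and $A$ is bounded on $x^lH^m_0(X)$ if and only if $x^{-l}Ax^l$ is bounded on $H^m_0(X)$; so one may assume $l=0$. Finally, $\Psib^0(X)$ maps $\dCI(X)$ into $\dCI(X)$ and $\dCI(X)$ is dense in $H^m_0(X)$, so it suffices to prove an estimate $\|Au\|_{H^m_0}\le C\|u\|_{H^m_0}$ for $u\in\dCI(X)$.

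For $m=0$ this is the standard fact that an order-zero b-pseudodifferential operator is bounded on $L^2_0(X)$ — a statement insensitive to the precise choice of $0$- or b-density, as any two differ by a bounded positive factor; see the b-calculus as developed in \cite{RBMSpec}. For general $m$, let $Q\in\Diff^m_0(X)$. By Lemma~\ref{lemma:Diff_0-Psib} (with b-order $0$) there are $A_j\in\Psib^0(X)$ and $Q_j\in\Diff^m_0(X)$, $j=1,\dots,k$, with $QA=\sum_jA_jQ_j$. Hence, for $u\in\dCI(X)$,
\[
\|QAu\|_{L^2_0}\le\sum_j\|A_jQ_ju\|_{L^2_0}\le C\sum_j\|Q_ju\|_{L^2_0}\le C'\|u\|_{H^m_0},
\]
where the middle inequality uses the $m=0$ case applied to each $A_j$ and the last uses the equivalence of norms recalled above. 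Taking the supremum over the finitely many generators $Q\in\{Q_1,\dots,Q_N\}$ gives $\|Au\|_{H^m_0}\le C''\|u\|_{H^m_0}$ for all $u\in\dCI(X)$, and extending by density (the extension being consistent with the already-known action of $A$ on $L^2_0(X)$) completes the proof.

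The only ingredient coming from outside the present paper is the $m=0$ assertion, i.e.\ $L^2$-boundedness of order-zero b-pseudodifferential operators, which is classical b-calculus; its weighted version, needed when $l\ne 0$, is just this statement conjugated by $x^l$. Thus there is no real obstacle: the work is entirely absorbed into the algebraic identity $QA=\sum_jA_jQ_j$ of Lemma~\ref{lemma:Diff_0-Psib}, which lets one trade every $0$-differential order on the left of $A$ for one on the right, where it can act on $u$.
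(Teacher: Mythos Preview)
Your proof is correct and follows essentially the same approach as the paper: both arguments reduce to the $L^2$-boundedness of $\Psib^0(X)$ and use Lemma~\ref{lemma:Diff_0-Psib} to commute a finite family of $\Diff^m_0$ operators past $A$, the only cosmetic difference being that you conjugate away the weight $x^l$ at the outset while the paper carries it through and writes $x^{-l}B^{(i)}A=\sum A_j x^{-l}B_j$ directly.
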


\begin{proof}
We can use any collection $B^{(i)}\in\Diff_0^m(X)$, $i=1,\ldots,N$,
such that at each point of $\zS^*X$ at least one of the $B^{(i)}$ is
elliptic, to put a norm
on $H^{m,l}_0(X)$:
\begin{equation*}
\|u\|^2_{H^{m,l}_0(X)}=\sum_i\|x^{-l}B^{(i)}u\|^2_{L^2(X)}
+\|x^{-l}u\|^2_{L^2(X)}.
\end{equation*}
We need to show then that for $A$ as above,
$\|Au\|_{H^{m,l}_0(X)}\leq C\|u\|_{H^{m,l}_0(X)}$. Since $A$ is bounded
on $x^{-l}L^2(X)$, we only need to prove that for each $i$,
$\|x^{-l}B^{(i)}Au\|\leq C'\|u\|_{H^{m,l}_0(X)}$.
But $x^{-l}B^{(i)}A=\sum A_j x^{-l}B_j$
with $A_j\in\Psib^0(X)$ and $B_j\in\Diff_0^m(X)$
by Lemma~\ref{lemma:Diff_0-Psib}, so
$\|x^{-l}B^{(i)}A u\|\leq \sum C_j\|x^{-l}B_j u\|$
as $A_j$ are bounded on $L^2(X)$.
This proves the corollary.
\end{proof}

As we work relative to $x^l H^r_0(X)=H^{r,l}_0(X)$,
for $k\geq 0$ we use the Sobolev spaces
\begin{equation*}
x^l H^{k,r}_{b,0}(X)=\{u\in x^l H^r_0(X):\ \forall A\in\Psib^k(X),
\ Au\in x^l H^1_0(X)\}.
\end{equation*}
These can be normed by taking any elliptic $A\in\Psib^k(X)$ and
letting
\begin{equation*}
\|u\|_{x^l H^{k,r}_{b,0}(X)}^2=\|u\|_{x^l H^r_0(X)}^2
+\|Au\|^2_{x^l H^r_0(X)}.
\end{equation*}
Although the norm depends on the choice of $A$, different choices
give equivalent norms. Indeed, if $\tilde A\in\Psib^k(X)$, then
let $G\in\Psib^{-k}(X)$ be a parametrix for $A$, so
$GA=\Id+E$, $AG=\Id+F$, $E,F\in\Psib^{-\infty}(X)$, and note
that
\begin{equation}\begin{split}\label{eq:Hb0-well-def}
\|\tilde Au\|_{x^l H^r_0(X)}&\leq \|\tilde A GAu\|_{x^l H^r_0(X)}
+\|\tilde AE u\|_{x^l H^r_0(X)}\\
&\leq C(\|Au\|_{x^l H^r_0(X)}
+\|u\|_{x^l H^r_0(X)}),
\end{split}\end{equation}
where we used that $\tilde AG\in\Psib^0(X)$ and $AE\in\Psib^{-\infty}(X)
\subset\Psib^0(X)$ are bounded on $x^l H^r_0(X)$ by
Lemma~\ref{lemma:Psib-bded}. If $\tilde A$ is elliptic, there
is a similar estimate with the role of $A$ and $\tilde A$ interchanged,
which shows the claimed equivalence.

\begin{lemma}\label{lemma:Psib-b0-bded}
If $Q\in\Psib^0(X)$, then $Q$ is bounded on $x^l H^{k,r}_{b,0}(X)$.
\end{lemma}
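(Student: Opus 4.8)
The plan is to reduce boundedness of $Q\in\Psib^0(X)$ on $x^l H^{k,r}_{b,0}(X)$ to the already-established facts that operators in $\Psib^0(X)$ are bounded on $x^l H^r_0(X)$ (Lemma~\ref{lemma:Psib-bded}), that $\Diff_0\Psib(X)$ is closed under composition (the Corollary after Lemma~\ref{lemma:Diff_0-Psib}), and the $\Psib$-commutator estimate Lemma~\ref{lemma:b-0-commutator}. Fix an elliptic $A\in\Psib^k(X)$ used to define the $x^l H^{k,r}_{b,0}(X)$ norm, and let $G\in\Psib^{-k}(X)$ be a parametrix, $GA=\Id+E$ with $E\in\Psib^{-\infty}(X)$. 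For $u\in x^l H^{k,r}_{b,0}(X)$, I need to bound $\|Qu\|_{x^l H^r_0(X)}$ and $\|AQu\|_{x^l H^r_0(X)}$. The first is immediate from Lemma~\ref{lemma:Psib-bded}. For the second, the point is to commute $A$ past $Q$.

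First I would write $AQ=QA+[A,Q]$. The composition $QA$ is handled by inserting the parametrix: $QA = Q(GA-E)\cdot$... more precisely, $AQ u = QAu + [A,Q]u$, and $QAu$ is estimated by $\|QAu\|_{x^l H^r_0(X)}\le C\|Au\|_{x^l H^r_0(X)}$ since $Q\in\Psib^0(X)$ is bounded on $x^l H^r_0(X)$ by Lemma~\ref{lemma:Psib-bded}; and $\|Au\|_{x^l H^r_0(X)}\le\|u\|_{x^l H^{k,r}_{b,0}(X)}$ by definition of the norm. For the commutator term, Lemma~\ref{lemma:b-0-commutator} applied with $r=0$ gives $[A,Q]\in\Diff^{k-1}_0\Psib^0(X)$ — wait, here the roles are reversed: $Q\in\Diff$? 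No: in the present lemma $Q\in\Psib^0(X)$, so I should instead invoke Lemma~\ref{lemma:b-0-commutator} with its $A$ equal to our $A\in\Psib^k(X)$ and its $Q$ equal to a generating set of $\Diff_0(X)$; but what I actually need is simpler. Since $A\in\Psib^k(X)$ and $Q\in\Psib^0(X)$ are both b-pseudodifferential, $[A,Q]\in\Psib^{k-1}(X)$ by the standard b-calculus; then writing $[A,Q]=A'G'$... rather, $[A,Q]=E'A+F'$ with $E'\in\Psib^{-1}(X)$, $F'\in\Psib^{-\infty}(X)$ via the parametrix $G$ (so $[A,Q]=[A,Q]GA-[A,Q]E$, and $[A,Q]G\in\Psib^{-1}(X)\subset\Psib^0(X)$). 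Hence $\|[A,Q]u\|_{x^l H^r_0(X)}\le C(\|Au\|_{x^l H^r_0(X)}+\|u\|_{x^l H^r_0(X)})\le C\|u\|_{x^l H^{k,r}_{b,0}(X)}$ by Lemma~\ref{lemma:Psib-bded} again. Combining, $\|AQu\|_{x^l H^r_0(X)}\le C\|u\|_{x^l H^{k,r}_{b,0}(X)}$, which together with the bound on $\|Qu\|_{x^l H^r_0(X)}$ gives $\|Qu\|_{x^l H^{k,r}_{b,0}(X)}\le C\|u\|_{x^l H^{k,r}_{b,0}(X)}$.

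The only real subtlety — and the step I would be most careful about — is the bookkeeping that everything here takes place relative to the $0$-Sobolev space $x^l H^r_0(X)$ rather than an ordinary b-Sobolev space: I must make sure that when I write $[A,Q]=[A,Q]GA - [A,Q]E$ the operators $[A,Q]G$ and $[A,Q]E$ genuinely land in $\Psib^0(X)$ so that Lemma~\ref{lemma:Psib-bded} applies, and that composing a $\Psib$ operator with $A$ or with elements of $\Diff_0(X)$ stays inside the bimodule $\Diff_0\Psib(X)$, which is exactly the content of Lemma~\ref{lemma:Diff_0-Psib} and its corollary. Once that is in place the estimate is a routine three-term triangle inequality, so there is no genuine obstacle; the lemma should also be noted to extend to $x^l H^{k,r}_{b,0}(X)$ for negative integer $k$ and then by duality/interpolation to real $k$, though for the applications in the paper integer $k\ge 0$ suffices.
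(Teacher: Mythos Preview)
Your proof is correct and follows essentially the same route as the paper: bound $\|Qu\|_{x^l H^r_0}$ directly by Lemma~\ref{lemma:Psib-bded}, then control $\|AQu\|_{x^l H^r_0}$ by the parametrix trick $\Id=GA-E$ together with boundedness of $\Psib^0$ on $x^l H^r_0$. The paper is slightly more economical: rather than splitting $AQ=QA+[A,Q]$ and then applying the parametrix identity to $[A,Q]$, it simply observes $\tilde A=AQ\in\Psib^k(X)$ and invokes the already-established estimate \eqref{eq:Hb0-well-def} (which is exactly the parametrix trick applied to an arbitrary, not necessarily elliptic, $\tilde A\in\Psib^k$). Your commutator step is harmless but unnecessary---you end up running the same parametrix argument on $[A,Q]\in\Psib^{k-1}$ that works directly on $AQ\in\Psib^k$.
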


\begin{proof}
As $Q$ is bounded on $x^lH^r_0(X)$, we only need to prove that
for $A\in\Psib^k(X)$, $\|AQu\|_{x^l H^r_0(X)}\leq
C(\|u\|_{x^l H^r_0(X)}
+\|Au\|_{x^l H^r_0(X)})$. But $\tilde A=AQ\in\Psib^k(X)$, though
not necessarily elliptic, so by \eqref{eq:Hb0-well-def}, this estimate
holds.
\end{proof}

\begin{lemma}\label{lemma:elliptic}
If $L\in\Diffb^k(X)$ is elliptic, $u\in x^l H_{b,0}^{s,\infty}(X)$,
$Lu\in x^l H_{b,0}^{s,\infty}(X)$, then $u\in x^l H_{b,0}^{s+k,\infty}(X)$.
\end{lemma}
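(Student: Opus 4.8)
The plan is to prove elliptic regularity for $\Diffb^k(X)$ acting on the scale of spaces $x^l H_{b,0}^{s,\infty}(X)$ by the standard parametrix construction, carried out in the b-calculus $\Psib(X)$, but keeping track throughout of the auxiliary $\Diff_0$-regularity (encoded in the lower index $s$) using the module structure results established above, in particular Lemma~\ref{lemma:Diff_0-Psib} and Lemma~\ref{lemma:Psib-b0-bded}. First I would recall that an elliptic $L\in\Diffb^k(X)$ admits a (microlocal, in fact here global since $L$ is globally elliptic) parametrix $G\in\Psib^{-k}(X)$ with $GL=\Id+E$, $E\in\Psib^{-\infty}(X)$. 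Applying $G$ to the equation $Lu=f$ with $f=Lu\in x^lH_{b,0}^{s,\infty}(X)$ gives $u=Gf-Eu$. The claim then reduces to showing (a) $G$ maps $x^lH_{b,0}^{s,\infty}(X)$ into $x^lH_{b,0}^{s+k,\infty}(X)$, and (b) $E\in\Psib^{-\infty}(X)$ maps $x^lH_{b,0}^{s,\infty}(X)$ into $x^lH_{b,0}^{s+k,\infty}(X)$ (indeed into $x^lH_{b,0}^{s,\infty}(X)$ for all lower indices, which more than suffices once combined with the fact that $x^lH_{b,0}^{s,\infty}(X)\subset x^lH_{b,0}^{s',\infty}(X)$ is not automatic — but $E$ being smoothing in the b-sense, $Eu\in x^lH_{b,0}^{\infty,\infty}(X)$ by iterating Lemma~\ref{lemma:Psib-b0-bded} together with $E\Psib^N(X)\subset\Psib^{-\infty}(X)$).

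For step (a), I would argue as follows. To show $Gf\in x^lH_{b,0}^{s+k,\infty}(X)$ I must show that for every $B\in\Psib^{s+k}(X)$, $BGf\in x^lH_0^1(X)=x^lH_0^{r}(X)$ (in the notation of the excerpt; here the relevant fixed $0$-Sobolev order is what the definition of $H_{b,0}^{s,\infty}$ uses, namely membership of $B(\cdot)$ in $x^lH_0^1(X)$ for $B\in\Psib^{s}$). Since $f\in x^lH_{b,0}^{s,\infty}(X)$ means $A f\in x^lH_0^1(X)$ for all $A\in\Psib^{s}(X)$, and since $BG\in\Psib^{s}(X)$ as $G\in\Psib^{-k}(X)$, $B\in\Psib^{s+k}(X)$, we get $BGf\in x^lH_0^1(X)$ directly — no module gymnastics are even needed here because composition within $\Psib(X)$ already lands us back at b-order $s$. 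Thus $Gf\in x^lH_{b,0}^{s+k,\infty}(X)$. For step (b), $Eu$: since $E\in\Psib^{-\infty}(X)$, for any $B\in\Psib^N(X)$ we have $BE\in\Psib^{-\infty}(X)\subset\Psib^0(X)$, which is bounded on $x^lH_0^1(X)$ by Lemma~\ref{lemma:Psib-bded}; combined with the boundedness of $E$ on the spaces $x^lH_{b,0}^{m,r}(X)$ (Lemma~\ref{lemma:Psib-b0-bded} handles $\Psib^0$, and one upgrades using $E\in\Psib^{-\infty}$), one concludes $Eu\in x^lH_{b,0}^{\infty,\infty}(X)\subset x^lH_{b,0}^{s+k,\infty}(X)$. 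Adding the two contributions, $u=Gf-Eu\in x^lH_{b,0}^{s+k,\infty}(X)$, as desired.

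The one genuine subtlety — and the step I expect to require the most care — is the bookkeeping of the two different regularity orders: the b-order (captured by $\Psib$) and the $0$-order (the fixed order, here $1$, appearing in the definition of $H_{b,0}^{s,\infty}$). The cleanliness of step (a) relies on the fact that conjugating/composing b-pseudodifferential operators does not disturb the $0$-structure, which is essentially the content of Lemma~\ref{lemma:Psib-bded} and Lemma~\ref{lemma:Psib-b0-bded}: a $\Psib^0$ operator is bounded on the $0$-Sobolev spaces and on their b-refinements. The only place one must be slightly careful is in the hypothesis: the statement assumes $u\in x^lH_{b,0}^{s,\infty}(X)$ already, i.e.\ $u$ has a priori b-regularity of order $s$ (relative to $H_0^1$), so $Eu$ is controlled, and the argument is not circular. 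If instead one only assumed $u\in x^lH_0^1(X)$ with $Lu$ smooth in the b-sense, one would need an iteration/bootstrap; but as stated the regularity of $u$ is given, so a single application of the parametrix suffices. A remark worth inserting: the same proof gives the analogous statement relative to $x^lH_0^{r}(X)$ for any fixed $0$-order $r$ in place of $1$, since nothing in the argument used the specific value.
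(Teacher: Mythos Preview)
Your proposal is correct and matches the paper's proof essentially verbatim: both take a b-parametrix $G\in\Psib^{-k}(X)$ with $GL=\Id+R$, $R\in\Psib^{-\infty}(X)$, write $u=G(Lu)-Ru$, and observe that composing with suitable b-pseudodifferential operators lands in $\Psib^0(X)$ so that Lemma~\ref{lemma:Psib-b0-bded} (boundedness of $\Psib^0$ on $x^lH_{b,0}^{s,\infty}$) finishes the job. The only cosmetic difference is that the paper tests against $A\in\Psib^k(X)$ and concludes $Au\in x^lH_{b,0}^{s,\infty}(X)$, whereas you unpack one level further and test directly against $B\in\Psib^{s+k}(X)$; these are equivalent, and your commentary about step~(b) being more delicate than necessary is apt---the paper handles $Ru$ in the same line as $G(Lu)$ with no extra work.
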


\begin{proof}
Let $G\in\Psib^{-k}(X)$ be a parametrix for $L$ so that
$GL=\Id+R$, $R\in\Psib^{-\infty}(X)$. Then $u=G(Lu)-Ru$. Now,
if $A\in\Psib^k(X)$ then $Au=(AG)(Lu)-(AR)u\in x^l H_{b,0}^{s,\infty}(X)$
by Lemma~\ref{lemma:Psib-b0-bded}
since $AG,AR\in\Psib^0(X)$. This proves the lemma.
\end{proof}

The conormal regularity
theorem is {\em global} in each connected component of $Y$.
It uses the following lemma, which shows that the boundary Laplacian
commutes with $P$ one order better (in terms of decay)
than a priori expected:

\begin{lemma}
Let $\tilde\Delta_Y\in \Diffb^2(X)$ have normal operator given by
$\Delta_Y$. Then $[P,\tilde\Delta_Y]\in x\Diff^1_0\Diff^2_b(X)$.
\end{lemma}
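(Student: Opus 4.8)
The plan is to reduce to the exact warped product model near $Y$, where the commutator vanishes identically, and then to carry the extra factor of $x$ through the error terms.

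First I would fix a product decomposition of $X$ near $Y$ in which $h|_Y$ is extended so as to be independent of $x$, and write $P=P_0+P_1$, where $P_0=(xD_x)^2+i(n-1)(xD_x)-x^2\Delta_Y-\lambda$ is the model operator — here $\Delta_Y=\Delta_{h|_Y}$ has $x$-independent coefficients and involves only the $\pa_{y_j}$ — and $P_1\in x\Diff^2_0(X)$, exactly as in Section~\ref{sec:local-solvability}. Since the $x$-independent operator $\Delta_Y\in\Diff^2_b(X)$ also has normal operator $\Delta_Y$, we have $\tilde\Delta_Y=\Delta_Y+R$ with $R\in x\Diff^2_b(X)$. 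Then
\begin{equation*}
[P,\tilde\Delta_Y]=[P_0,\Delta_Y]+[P_0,R]+[P_1,\Delta_Y]+[P_1,R],
\end{equation*}
and $[P_0,\Delta_Y]=0$, since $\Delta_Y$ commutes with $xD_x$, with every function of $x$ (in particular with $x^2$, hence $[x^2\Delta_Y,\Delta_Y]=0$), and with itself. This exact cancellation is what produces the gain over the naive bound $[P,\tilde\Delta_Y]\in\Diff^3_b(X)$.

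The remaining three terms I would handle using two elementary facts about the operator algebras, proved from the local generators $\Vf_0(X)=\langle x\pa_x,x\pa_{y_j}\rangle$ and $\Vb(X)=\langle x\pa_x,\pa_{y_j}\rangle$. First, $[\Diff^2_0(X),\Diff^2_b(X)]\subseteq\Diff^1_0\Diff^2_b(X)$: writing an element of $\Diff^2_0(X)$ as $\sum a_\alpha V^\alpha$ with the $V_i\in\Vf_0(X)\subseteq\Vb(X)$, one has $[V_i,A]\in\Diff^2_b(X)$ for $A\in\Diff^2_b(X)$, and then one pushes the $\Vf_0(X)$-factors to the left, using the $\Diff_b$-analogue of Lemma~\ref{lemma:Diff_0-Psib}, namely $\Diff^m_b(X)\Diff^k_0(X)=\Diff^k_0(X)\Diff^m_b(X)$, together with $\Diff^1_0(X)\subseteq\Diff^1_b(X)$ and $x\Diff^1_b(X)\subseteq\Diff^1_0(X)$ (which let one rewrite $\Diff^1_b\Diff^2_0$ and $\Diff^2_0\Diff^1_b$ inside $\Diff^1_0\Diff^2_b(X)$). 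Second, the boundary defining function almost commutes with both algebras: since $Vx\in x\CI(X)$ for $V\in\Vb(X)$, one gets $[\Diff^2_0(X),x]\subseteq x\Diff^1_0(X)$ and $[x,\Diff^2_b(X)]\subseteq x\Diff^1_b(X)\subseteq\Diff^1_0(X)$.

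With these in hand the rest is bookkeeping. For $[P_1,\Delta_Y]$, write $P_1=xS$ with $S\in\Diff^2_0(X)$; then $[xS,\Delta_Y]=x[S,\Delta_Y]+[x,\Delta_Y]S=x[S,\Delta_Y]\in x\Diff^1_0\Diff^2_b(X)$, using $[x,\Delta_Y]=0$ and the first fact. For $[P_0,R]$ with $R=xT$, $T\in\Diff^2_b(X)$: $[P_0,xT]=[P_0,x]T+x[P_0,T]\in x\Diff^1_0\Diff^2_b(X)$, since $[P_0,x]\in x\Diff^1_0(X)$ and $[P_0,T]\in\Diff^1_0\Diff^2_b(X)$. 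Finally $[P_1,R]=[xS,xT]$ expands in the same way and lands in $x^2\Diff^1_0\Diff^2_b(X)+x\Diff^1_0\Diff^2_0(X)\subseteq x\Diff^1_0\Diff^2_b(X)$. Summing the four contributions gives $[P,\tilde\Delta_Y]\in x\Diff^1_0\Diff^2_b(X)$, and in particular the conclusion does not depend on the choice of $\tilde\Delta_Y$ (that being built into the $R$-term). I expect the one genuinely delicate point to be the first algebraic fact: a priori $[\Diff^2_0,\Diff^2_b]$ only lies in $\Diff^3_b(X)$, and one must check that the drop of one $\Diff_0$-order is exactly absorbed rather than costing a $\Diff_b$-order — this is precisely where $\Diff^1_0\subseteq\Diff^1_b$ and $x\Diff^1_b\subseteq\Diff^1_0$ are used to reshuffle the factors.
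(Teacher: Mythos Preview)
Your proof is correct and follows essentially the same strategy as the paper: reduce to the exact product model where $[P_0,\Delta_Y]=0$, and show that the perturbation terms land in $x\Diff^1_0\Diff^2_b(X)$. The paper packages your three remaining commutators as ``the statement depends only on the normal operators of $\tilde\Delta_Y$ and of $P$'' and invokes Lemma~\ref{lemma:b-0-commutator} (whose differential-operator version is exactly your first algebraic fact $[\Diff^2_0,\Diff^2_b]\subseteq\Diff^1_0\Diff^2_b$) rather than expanding by hand, but the content is the same.
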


\begin{proof}
Changing $\tilde\Delta_Y$ by $Q\in x\Diffb^2(X)$ changes the
commutator by an element of $x\Diff^1_0\Diff^2_b(X)$ due to
Lemma~\ref{lemma:b-0-commutator}, so the statement only
depends on the normal operator of $\tilde\Delta_Y$. Similarly,
it only depends on the normal operator of $P$. Thus, we may work
on the model space $[0,\ep)_x\times Y$, replace $P$ by
$(xD_x)^2+i(n-1)(xD_x)-x^2\Delta_Y$, $\tilde \Delta_Y$ by $\Delta_Y$,
and then the result is immediate.
\end{proof}

\begin{prop}\label{prop:conormal-reg}
Suppose $l\in\Real$, $u\in x^l H^{-\infty}_0(X)$,
$Pu\in\dCI(X)$ and $u\in \CI(X^\circ)$.
Then for all $\ep>0$, $u\in x^{l-\ep} H^{\infty,0}_{b,0}(X)=
x^{l-\ep} H^{\infty,\infty}_{b,0}(X)$.
\end{prop}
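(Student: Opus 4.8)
The plan is to establish conormal regularity by an iterative commutator argument relative to the scale $x^{l}H^{r,\cdot}_{b,0}(X)$, using the structural lemmas already proved (Lemmas~\ref{lemma:Diff_0-Psib}, \ref{lemma:b-0-commutator}, the elliptic regularity Lemma~\ref{lemma:elliptic}, and the key commutator estimate $[P,\tilde\Delta_Y]\in x\Diff^1_0\Diff^2_b(X)$). First I would reduce to proving the weaker statement $u\in x^{l-\ep}H^{k,0}_{b,0}(X)$ for each fixed nonnegative integer $k$, and then establish this by induction on $k$, the base case $k=0$ being the hypothesis $u\in x^{l}H^{-\infty}_0(X)$ combined with Corollary~\ref{cor:edge-reg} (which gives full $0$-regularity at the cost of an arbitrarily small loss of decay, so $u\in x^{l-\ep}H^{\infty,0}_0(X)$, i.e.\ $u\in x^{l-\ep}H^{0,0}_{b,0}(X)$ once we set up the $b$-scale). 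The final equality $H^{\infty,0}_{b,0}=H^{\infty,\infty}_{b,0}$ will follow from iterating the decay gain, or simply by noting the $b$-scale is elliptically generated once interior smoothness and $0$-regularity are in hand.

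The inductive step is where the content lies. Assuming $u\in x^{l-\ep/2}H^{k,0}_{b,0}(X)$, I want to show $u\in x^{l-\ep}H^{k+1,0}_{b,0}(X)$. The standard device, following \cite{RBMSpec} as the paper signals, is to test against powers of the operator $\tilde\Delta_Y$ (together with $xD_x$ and the $0$-vector fields, which are already controlled by Corollary~\ref{cor:edge-reg}). The mechanism is: $\tilde\Delta_Y$ is $b$-elliptic in the tangential directions, so controlling $\tilde\Delta_Y^j u$ in $x^l H^{\cdot}_0(X)$ for all $j$ upgrades the $b$-regularity. To control $\tilde\Delta_Y u$ one commutes $P$ past $\tilde\Delta_Y$: since $P(\tilde\Delta_Y u) = \tilde\Delta_Y(Pu) + [P,\tilde\Delta_Y]u$, with $Pu\in\dCI(X)$ and $[P,\tilde\Delta_Y]\in x\Diff^1_0\Diff^2_b(X)$, the right-hand side lies in $x\cdot x^{l-\ep/2}H^{\cdot}_0(X)$-type spaces — crucially with an extra factor of $x$, which is exactly the gain that lets the normal-operator/solvability estimate close. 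Then one feeds $\tilde\Delta_Y u$ into a tangential elliptic estimate (or into Proposition~\ref{prop:decay-gain}/the solvability estimate for $P$, as appropriate, noting $l$ may need to be pushed into the allowed range, which costs the $\ep$) to recover $\tilde\Delta_Y u\in x^{l-\ep}H^{\cdot}_0(X)$, and iterate.

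The main obstacle I anticipate is bookkeeping the interplay between the two filtrations — the $0$-differential order and the $b$-order — and ensuring that at each stage the weight $l$ stays inside the range where the relevant estimate (either Corollary~\ref{cor:edge-reg}, which is weight-agnostic modulo interior wavefront hypotheses, or Proposition~\ref{prop:decay-gain}, whose hypotheses (i)--(iii) constrain $l$ versus $l(\lambda)$) actually applies. Because $Pu$ is Schwartz, the interior $\WF$ hypotheses in Corollary~\ref{cor:edge-reg} are vacuous (interior smoothness is assumed), so the $0$-regularity side is clean; the delicate point is that the extra factor of $x$ coming from $[P,\tilde\Delta_Y]$ must be spent to absorb a potential loss of $x^{-1}$ when inverting $P$ at the normal-operator level — this is precisely the ``$[P,\tilde\Delta_Y]$ is one order better in decay than expected'' remark, so the lemma is doing exactly the work needed, and the iteration will lose only the harmless $\ep$. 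A secondary technical point is a regularization argument (cutting off $\tilde\Delta_Y^j$ at high frequency) to make the formal commutator manipulations rigorous, but this is routine and can be handled exactly as in \cite{RBMSpec} or \cite{Melrose-Vasy-Wunsch:Propagation}.
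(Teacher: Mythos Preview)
Your proposal is essentially correct and follows the same route as the paper: Corollary~\ref{cor:edge-reg} for full $0$-regularity, then iterated commutation with $\tilde\Delta_Y$ using $[P,\tilde\Delta_Y]\in x\Diff^1_0\Diffb^2(X)$, Proposition~\ref{prop:decay-gain} to recover the weight, and Lemma~\ref{lemma:elliptic} (ellipticity of $(xD_x)^{2k}+\tilde\Delta_Y^k$) to translate into $b$-regularity.

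Two refinements in the paper's execution that you gloss over are worth flagging. First, each induction step is a \emph{two-pass} bootstrap: from $u\in x^{l-\ep}H^{2(k-1),\infty}_{b,0}$ one initially only gets $P\tilde\Delta_Y^k u\in H^{\infty,l-1-\ep}_0$ (since $[P,\tilde\Delta_Y^k]\in x\Diff^1_0\Diffb^{2k}\subset x^{-1}\Diff^3_0\Diffb^{2(k-1)}$), hence $u\in x^{l-1-\ep}H^{2k,\infty}_{b,0}$; only with this improved $b$-regularity in hand does $[P,\tilde\Delta_Y^k]u$ land in $H^{\infty,l-\ep}_0$, after which a second application of Proposition~\ref{prop:decay-gain} and Lemma~\ref{lemma:elliptic} gives $u\in x^{l-\ep}H^{2k,\infty}_{b,0}$. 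Second, your remark that ``$l$ may need to be pushed into the allowed range, which costs the $\ep$'' understates the issue: Proposition~\ref{prop:decay-gain} (case (i)) requires $l<-l(\lambda)$, so for general $l$ the paper first runs the entire argument at an auxiliary weight $l'<-l(\lambda)$ to obtain $u\in x^{l'}H^\infty_b(X)$, and then recovers $x^{l-\ep}H^\infty_b$ by interpolation with the a~priori membership $u\in x^l L^2(X)$. Your $\ep$-loss alone does not bridge a gap of size $l+l(\lambda)$.
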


\begin{rem}
The proposition states
that once one knows that $u$ is smooth in $X^\circ$ and is
in some weighted $L^2$-space, one gets
b-regularity relative to that space.

Also, the proposition can be restated in terms of the standard b-spaces:
$u\in x^{l+\frac{n-1}{2}-\ep}H^{\infty}_b(X)$.
The shift $\frac{n-1}{2}$ in the exponent is simply due to
$H^s_b(X)$ being defined relative to $L^2_b(X)$, the $L^2$-space
relative to a non-vanishing b-measure.
\end{rem}

\begin{proof}
Assume first that $l<-l(\lambda)$. We prove that
$u\in x^{l-\ep} H^{\infty,\infty}_{b,0}(X)$. We first note that
by Corollary~\ref{cor:edge-reg}, $u\in H^{\infty,l-\ep}_0(X)$ for all
$\ep>0$, i.e.\ we have full 0-regularity. Let $\tilde\Delta_Y$ be
as above.

As $u\in H^{\infty,l-\ep}_0(X)$, $\tilde\Delta_Y\in x^{-2}\Diff^2_0(X)$, we
see that
$\tilde\Delta_Y u\in H^{\infty,l-2-\ep}_0(X)$. Then
\begin{equation}\label{eq:P-Delta_Y}
P\tilde\Delta_Y u=\tilde\Delta_Y Pu+[P,\tilde\Delta_Y]u\in H^{\infty,l-1-\ep}_0(X)
\end{equation}
since $[P,\tilde\Delta_Y]\in x\Diff^1_0\Diffb^2(X)\subset x^{-1}\Diff^3_0(X)$.
(In fact, this can be phrased by saying that $N(\tilde\Delta_Y)$ and $N(P)$
commute.) Thus, by Proposition~\ref{prop:decay-gain},
$\tilde\Delta_Y u\in H^{\infty,l-1-\ep}_0(X)$. As
$(xD_x)^2u\in H^{\infty,l-\ep}_0(X)$, $((xD_x)^2+\tilde\Delta_Y)u\in
H^{\infty,l-1-\ep}_0(X)$. Since $(xD_x)^2+\tilde\Delta_Y$ is elliptic
in $\Diffb^2(X)$, Lemma~\ref{lemma:elliptic} shows
that $u\in x^{l-1-\ep}H_{b,0}^{2,\infty}(X)$.

Thus, \eqref{eq:P-Delta_Y} and $[P,\tilde\Delta_Y]\in x\Diff^1_0\Diffb^2(X)$ gives
$P\tilde\Delta_Y u\in x^{l-\ep}H_0^\infty(X)$, so
by Proposition~\ref{prop:decay-gain},
$\tilde\Delta_Y u\in H^{\infty,l-\ep}_0(X)$. Proceeding as above, we
deduce that $u\in x^{l-\ep}H_{b,0}^{2,\infty}(X)$.

We now iterate this argument for $\tilde\Delta_Y^k u$ in place of $\tilde\Delta_Y u$.
So suppose we already know that $u\in x^{l-\ep}H_{b,0}^{2(k-1),\infty}(X)$
for all $\ep>0$. Then $[P,\tilde\Delta_Y^k]\in x\Diff^1_0\Diffb^{2k}
\subset x^{-1}\Diff^3_0\Diffb^{2(k-1)}(X)$, so
\begin{equation*}
P\tilde\Delta_Y^k u=\tilde\Delta_Y^k Pu+[P,\tilde\Delta_Y^k]u\in H^{\infty,l-1-\ep}_0(X)
\end{equation*}
Again, by Proposition~\ref{prop:decay-gain},
$\tilde\Delta_Y^k u\in H^{\infty,l-1-\ep}_0(X)$. As
$(xD_x)^{2k}u\in H^{\infty,l-\ep}_0(X)$, $((xD_x)^{2k}+\tilde\Delta_Y^k)u\in
H^{\infty,l-1-\ep}_0(X)$. Using Lemma~\ref{lemma:elliptic}, we
conclude that  $u\in x^{l-1-\ep}H_{b,0}^{2k,\infty}(X)$.

Equipped with this additional knowledge, we deduce that
$[P,\tilde\Delta_Y^k]u\in H^{\infty,l-\ep}_0(X)$, hence $P\Delta^k_Y u$
is in the same space. Applying Proposition~\ref{prop:decay-gain},
we see that
$\tilde\Delta_Y u\in H^{\infty,l-\ep}_0(X)$. Proceeding as above, we
deduce that $u\in x^{l-\ep}H_{b,0}^{2k,\infty}(X)$.
This proves the proposition if $l<-l(\lambda)$.

In general, if $l\geq -l(\lambda)$,
we may apply the previous
argument with $l$ replaced by any $l'<-l(\lambda)$
to conclude that
$u\in x^{l'}H_b^\infty(X)$ for all $l'<-l(\lambda)$.
Since $u\in x^l
L^2(X)$, interpolation gives $u\in x^{l-\ep}H_b^\infty(X)$ as stated.
\end{proof}

We now consider $P=\Box-\lambda$ acting on polyhomogeneous functions, or
more generally symbols. Recall that $u\in\bcon^k(X)$ means that
$Lu\in x^k L^2_b(X)$ for all $L\in\Diffb(X)$, so in particular
$u\in x^k L^2_b(X)$.

We remark that if $s_+,s_-\in\Cx$ with $s_+-s_-\notin\intgr$, and a function
$u$ has the form $x^{s_+}v_++x^{s_-}v_-$, $v_\pm\in\CI(X)$, then the
leading terms $v_\pm|Y$ (in fact, the full Taylor series of $v_\pm$) is
well-defined. However, if $s_+-s_-$ is an integer, this is no longer true,
which explains some of the complications we face in stating the converse
direction of the following lemma.

\begin{lemma}\label{lemma:expansion}
Suppose $\lambda\in\Real$, $\lambda\neq\frac{(n-1)^2}{4}$. Let
\begin{equation*}
s=s_\pm(\lambda)=
\frac{n-1}{2}\pm\sqrt{\left(\frac{n-1}{2}\right)^2-\lambda},
\end{equation*}
be the (not necessarily real) indicial roots of $(xD_x+i(n-1))(xD_x)-\lambda$.
If $u\in\bcon^k(X)$ for some $k$ and $Pu\in\dCI(X)$
and $s_+(\lambda)-s_-(\lambda)$ is not an integer then
there exists $v_\pm\in\CI(X)$,
such that
\begin{equation*}
u=x^{s_+(\lambda)}v_++x^{s_-(\lambda)}v_-.
\end{equation*}
If $s_+(\lambda)-s_-(\lambda)$ is an integer (in which case both
$s_\pm(\lambda)$ are real) then the analogous statement
holds with $v_-\in\CI(X)$ replaced by
\begin{equation*}
v_-\in\CI(X)+x^{s_+(\lambda)-s_-(\lambda)}\log x\,\CI(X).
\end{equation*}
In either case, if $v_\pm|_{Y}$ vanish, then $u\in\dCI(X)$.

Conversely,
given $g_+,g_-\in\CI(Y)$, there exist
\begin{enumerate}
\item
$v_\pm\in\CI(X)$ if $s_+(\lambda)-s_-(\lambda)$ is not an integer,
\item
\begin{equation*}
v_+\in\CI(X),\ v_--\sum_{j=0}^{s_+(\lambda)-s_-(\lambda)-1}
a_j x^j\in x^{s_+(\lambda)-s_-(\lambda)}\log x\,\CI(X),\ a_j\in\CI(Y),
\end{equation*}
if $s_+(\lambda)-s_-(\lambda)$ is an integer,
\end{enumerate}
such that
\begin{equation*}
u=x^{s_+(\lambda)}v_++x^{s_-(\lambda)}v_-,\ v_\pm|_Y=g_\pm,
\end{equation*}
satisfies $Pu\in\dCI(X)$.
\end{lemma}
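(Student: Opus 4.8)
The plan is to treat the two directions of Lemma~\ref{lemma:expansion} separately; the forward direction (from $Pu\in\dCI(X)$ to the expansion) should follow from the conormal regularity already established, while the converse (constructing $u$ with prescribed leading data) is a formal power series argument followed by an application of the forward direction and Corollary~\ref{cor:decay-Schwartz}. Throughout, $\tilde\Delta_Y$ denotes a b-operator with normal operator $\Delta_Y$ as in the lemmas above, so that on the model space near $Y$ one has $P\equiv (xD_x+i(n-1))(xD_x)-\lambda - x^2\Delta_Y$, whose indicial operator is $I(P)(s)=s(s-(n-1))-\lambda$ with roots exactly $s_\pm(\lambda)$.

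\emph{Forward direction.} Suppose $u\in\bcon^k(X)$ and $Pu=f\in\dCI(X)$. By Proposition~\ref{prop:conormal-reg} (applied after noting $u\in x^{k}L^2_b(X)\subset x^{l}H^{-\infty}_0(X)$ for an appropriate $l$), $u$ is conormal: $u\in x^{l-\ep}H^{\infty,\infty}_{b,0}(X)$ for all $\ep>0$, with $l$ as large as the a priori weight permits. I would then run the standard indicial-root iteration: write $u=x^{s_-}w_0$ with $w_0$ conormal (shifting so the leading exponent is $s_-=s_-(\lambda)$ if $\re s_-\le\re s_+$), expand $f$ and the coefficients of $P$ in Taylor series at $Y$, and solve away the asymptotic expansion term by term. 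At each integer step $j$ one must invert $I(P)(s_-+j)=(s_-+j)(s_-+j-(n-1))-\lambda$, which is nonzero unless $s_-+j=s_+$, i.e. unless $j=s_+-s_-$. When $s_+-s_-\notin\intgr$ this never happens and one obtains a full Taylor expansion $u\sim x^{s_-}\sum a_j x^j + x^{s_+}\sum b_j x^j$; Borel summation produces $v_\pm\in\CI(X)$ with $u-(x^{s_+}v_++x^{s_-}v_-)\in\dCI(X)$ by Corollary~\ref{cor:decay-Schwartz} (the remainder decays faster than every power and solves $P(\cdot)\in\dCI$, hence is Schwartz; absorb it into $v_-$). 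When $s_+-s_-=m\in\Nat$, at step $j=m$ the equation $I(P)(s_-+m)\,a_m = (\text{known})$ is unsolvable in general unless one allows a $\log x$: the operator $(xD_x+i(n-1))(xD_x)-\lambda$ applied to $x^{s_+}\log x$ produces a nonzero multiple of $x^{s_+}$, so one introduces the term $x^{s_+}\log x\cdot(\text{const})$ to cancel the obstruction, which is exactly the stated form $v_-\in\CI(X)+x^{m}\log x\,\CI(X)$. The final clause ($v_\pm|_Y=0\Rightarrow u\in\dCI$) follows because then the leading exponent of $u$ is strictly greater than $\max(\re s_+,\tfrac12,l(\lambda))$, putting $u$ in the decay regime of Corollary~\ref{cor:decay-Schwartz}.

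\emph{Converse direction.} Given $g_\pm\in\CI(Y)$, I would build $u$ as a \emph{formal} polyhomogeneous series and then correct it. Start with $u_0=x^{s_+}\tilde g_+ + x^{s_-}\tilde g_-$ where $\tilde g_\pm\in\CI(X)$ are arbitrary extensions of $g_\pm$; then $Pu_0 = x^{s_+}r_+ + x^{s_-}r_- + (\text{higher order})$ with the leading errors $r_\pm|_Y$ determined by the indicial operator acting on the leading terms — but these vanish since $s_\pm$ are exactly the indicial roots, so $Pu_0\in x^{s_-+1}\CI(X)+x^{s_++1}\CI(X)$ (schematically). Now iterate: at each integer step solve $I(P)(s_-+j)a_j = -(\text{current error coefficient})$, which is solvable with $a_j\in\CI(Y)$ precisely when $j\ne m=s_+-s_-$; when $s_+-s_-\notin\intgr$ all steps go through and Borel summation yields $v_\pm\in\CI(X)$ with $v_\pm|_Y=g_\pm$ and $Pu\in\dCI(X)$. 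When $s_+-s_-=m\in\Nat$, the step $j=m$ requires the $x^{s_+}\log x$ term to absorb the obstruction; the coefficients $a_0,\dots,a_{m-1}$ are the genuine Taylor coefficients of $v_-$ (which, as remarked before the lemma, are no longer canonically determined once there is resonance — one simply chooses extensions), and the construction produces $v_-$ of the stated form with $v_-|_Y = a_0 = g_-$.

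\emph{Main obstacle.} The principal subtlety is the resonant case $s_+-s_-=m\in\Nat$: one must verify that the $x^{s_+}\log x$ term introduced at step $m$ does not itself generate new obstructions at subsequent steps $j>m$ (it generates further $x^{s_++j'}\log x$ terms, all of which \emph{are} solvable since $s_++j'$ is no longer an indicial root), and one must keep careful track of which Taylor coefficients are free versus determined — this is the bookkeeping underlying the precise statement with $\sum_{j=0}^{m-1}a_jx^j$. The rest is the standard indicial-operator/Borel-summation machinery combined with the conormality and Schwartz-decay results (Proposition~\ref{prop:conormal-reg}, Corollary~\ref{cor:decay-Schwartz}) already in hand.
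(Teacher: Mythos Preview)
Your overall strategy (indicial-root iteration plus Borel summation) is exactly the paper's, and the converse direction is fine. Two points of contrast with the paper are worth noting. First, the paper proves the converse direction \emph{first} and then feeds it back into the forward direction: once the formal solutions $u_j$ with $Pu_j\in\dCI(X)$ and prescribed leading term $x^{s_j}g_j$ are in hand, the forward direction becomes a clean ``peel off $u_j$ and iterate in $\bcon^{k+1}$'' argument. Your ordering works too, but the paper's is more economical. Second, your invocation of Proposition~\ref{prop:conormal-reg} is unnecessary: the hypothesis $u\in\bcon^k(X)$ already \emph{is} full conormal regularity (stability under all $L\in\Diffb(X)$), so nothing further needs to be established before starting the iteration.

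There is, however, a genuine gap in your argument for the clause ``$v_\pm|_Y=0\Rightarrow u\in\dCI(X)$''. You claim that $v_\pm|_Y=0$ forces the leading exponent of $u$ to exceed $\max(\re s_+,\tfrac12,l(\lambda))$, thereby placing $u$ in the regime of Corollary~\ref{cor:decay-Schwartz}. This is false in general: vanishing of $v_\pm|_Y$ only gives $u\in x^{\re s_-(\lambda)+1}\CI(X)+x^{\re s_+(\lambda)+1}\CI(X)$, i.e.\ one additional order of decay, and when $s_+(\lambda)-s_-(\lambda)>1$ (equivalently $l(\lambda)>\tfrac12$, which holds e.g.\ for $\lambda=0$ and $n\geq 2$) the exponent $\re s_-(\lambda)+1$ is still below $\re s_+(\lambda)$, so Corollary~\ref{cor:decay-Schwartz} does not apply. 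The paper's argument avoids this entirely: in the iterative step, if the leading coefficients $g_j$ vanish one simply concludes $u\in\bcon^{k+1}(X)$ with $Pu\in\dCI(X)$ and re-runs the iteration; since there are only finitely many indicial roots, after finitely many steps one is past both of them and each subsequent step gains a full order, yielding $u\in\bcon^{k'}(X)$ for all $k'$, i.e.\ $u\in\dCI(X)$. Equivalently, one can observe that the Taylor coefficients of $v_\pm$ are determined by a homogeneous linear recursion from $v_\pm|_Y$ (since $Pu\in\dCI$), so $v_\pm|_Y=0$ forces the entire Taylor series of $v_\pm$ to vanish. Either way, the conclusion does not go through Corollary~\ref{cor:decay-Schwartz}.
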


\begin{proof}
We start with the converse direction.
As $P=(xD_x+i(n-1))(xD_x)-\lambda+Q$, $Q\in x\Diffb^2(X)$, for
$v\in\CI(X)$,
\begin{equation}\label{eq:P-xs}
P(x^s v)=(s(n-1-s)-\lambda)x^sv+w,\ w\in x^{s+1}\CI(X).
\end{equation}
Thus, when $s$ is an indicial root,
$P(x^s v)\in x^{s+1}\CI(X)$ automatically, and otherwise given
$f\in x^s\CI(X)$,
$P(x^s v)=f$ can be solved {\em uniquely},
modulo $x^{s+1}\CI(X)$, with $v\in\CI(X)$.
Iterating this argument, and using Borel summation, we deduce that
unless the two indicial roots differ by an integer,
given $g_+,g_-\in\CI(Y)$, there exists $v_+,v_-\in\CI(X)$ such that
\begin{equation*}
u=x^{s_+(\lambda)}v_++x^{s_-(\lambda)}v_-,\ v_\pm|_Y=g_\pm,
\end{equation*}
satisfies $Pu\in\dCI(X)$.

If the two indicial roots differ by an integer
(but are distinct, i.e.\ not equal to $\frac{n-1}{2}$),
only a minor modification is needed in that we need to allow
logarithmic factors. Thus, for $v\in\CI(X)$,
\begin{equation}\begin{split}\label{eq:P-xs-log}
P(x^s\log x v)=&(s(n-1-s)-\lambda)\log x\, x^s v+(n-1-2s)x^s v+w,\\
&\qquad\qquad w\in x^{s+1}\log x\,\CI(X)+x^{s+1}\CI(X),
\end{split}\end{equation}
so if $s=s_\pm(\lambda)$, $Pu=f$, $f\in x^s \CI(X)$, has a
solution modulo $x^{s+1}\log x\,\CI(X)+x^{s+1}\CI(X)$, of the
form $u\in x^s\log x\,\CI(X)$, so applying this with $s=s_+(\lambda)$, the
error term arising from $s_-(\lambda)$ of the form $x^s$ times
a smooth function, can be solved away to leading order. Moreover,
for $s\neq s_\pm (\lambda)$, $Pu=f$, $f\in x^s\log x\,\CI(X)$ has a solution,
modulo $x^{s+1}\log x\,\CI(X)+x^{s+1}\CI(X)$, of the
form $u\in
x^s\log x\,\CI(X)$, so again iteration gives infinite order solvability,
in this case of the form:
given $g_+,g_-\in\CI(Y)$, there exists $v_+\in\CI(X)$,
$v_-\in\CI(X)+x^{s_+(\lambda)-s_-(\lambda)}\log x\,\CI(X)$ such that
\begin{equation*}
u=x^{s_+(\lambda)}v_++x^{s_-(\lambda)}v_-,\ v_\pm|_Y=g_\pm,
\end{equation*}
satisfies $Pu\in\dCI(X)$.

On the other hand, suppose that $u\in\bcon^k(X)$ and $Pu\in\dCI(X)$.
As $Qu\in\bcon^{k+1}(X)$,
we have $((xD_x+i(n-1))(xD_x)-\lambda)u\in\bcon^{k+1}$.
Since near $Y$, using an product decomposition of a neighborhood
of $Y$, $\bcon^r(X)$ can be identified with $\CI(Y;\bcon^r([0,\ep)))$,
we can treat $Y$ as a parameter and solve this ODE. If there
is no indicial root in $(k,k+1]$, one deduces that
$u\in\bcon^{k+1}(X)$; otherwise $u=\sum_j x^{s_j} g_j+u'$ where the $s_j$ are
the indicial roots in the interval, $g_j$ are smooth and $u'\in\bcon^{k+1}$.
By the first part of the proof one can choose $v_j$ as in the statement of
the lemma (denoted by $v_\pm$ there) to get $u_j=x^{s_j}v_j\in\bcon^k$
with $Pu_j\in\dCI(X)$ and $u_j-x^{s_j}g\in\bcon^{k+1}$. Thus,
$u-\sum u_j\in\bcon^{k+1}$ with $P(u-\sum u_j)\in\dCI(X)$, so one
can proceed iteratively to finish the existence argument. Note that
if $g_j|Y$ vanish, one concludes $u\in\bcon^{k+1}$, which by iteration
gives the uniqueness.
\end{proof}

In fact, the same argument also deals with the case $\lambda=(n-1)^2/4$, but
as the result is of a slightly different form, we state it separately:

\begin{lemma}\label{lemma:expansion-threshold}
Suppose $\lambda=\frac{(n-1)^2}4$, so $s_\pm(\lambda)=\frac{n-1}{2}$.
If $u\in\bcon^k(X)$ for some $k$ and $Pu\in\dCI(X)$ then
there exists $v_\pm\in\CI(X)$,
such that
\begin{equation*}
u=x^{s_+(\lambda)}v_++x^{s_-(\lambda)}\log x \,v_-.
\end{equation*}
Conversely,
given $g_+,g_-\in\CI(Y)$, there exists $v_\pm\in\CI(X)$,
such that
\begin{equation*}
u=x^{s_+(\lambda)}v_++x^{s_-(\lambda)}\log x\,v_-,\ v_\pm|_Y=g_\pm,
\end{equation*}
satisfies $Pu\in\dCI(X)$.
\end{lemma}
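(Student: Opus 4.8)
The plan is to run the argument of Lemma~\ref{lemma:expansion} essentially verbatim; the only new feature is that at $\lambda=\frac{(n-1)^2}{4}$ the two indicial roots of $(xD_x+i(n-1))(xD_x)-\lambda$ collide at $s_0=s_\pm(\lambda)=\frac{n-1}{2}$, and moreover the factor $n-1-2s_0$ appearing in \eqref{eq:P-xs-log} also vanishes there. Consequently a $\log x$ factor must be carried already at the leading order $x^{s_0}$ rather than only at relative order $x^{s_+-s_-}$, which accounts for the shape of the stated expansion.

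For the converse direction, write $P=(xD_x+i(n-1))(xD_x)-\lambda+Q$, $Q\in x\Diffb^2(X)$, and use \eqref{eq:P-xs} and \eqref{eq:P-xs-log}. Since $s_0(n-1-s_0)-\lambda=0$ and $n-1-2s_0=0$, these give, for $v\in\CI(X)$, that $P(x^{s_0}v)\in x^{s_0+1}\CI(X)$ and $P(x^{s_0}\log x\,v)\in x^{s_0+1}\CI(X)+x^{s_0+1}\log x\,\CI(X)$; in particular neither operation produces a $(\log x)^2$ term. Extending $g_\pm\in\CI(Y)$ to $\CI(X)$ and setting $u_0=x^{s_0}g_++x^{s_0}\log x\,g_-$, one thus has $Pu_0\in x^{s_0+1}(\CI(X)+\log x\,\CI(X))$. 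I would then correct order by order: at order $s_0+j$ with $j\geq1$ the indicial polynomial $s\mapsto s(n-1-s)-\lambda$ takes the value $-j^2\neq0$, so \eqref{eq:P-xs} and \eqref{eq:P-xs-log} let one solve away the $x^{s_0+j}$ and $x^{s_0+j}\log x$ parts of the current error, modulo $x^{s_0+j+1}(\CI(X)+\log x\,\CI(X))$, using a correction term $x^{s_0+j}(a_j+b_j\log x)$, $a_j,b_j\in\CI(X)$; no higher powers of $\log x$ are ever needed precisely because these shifted roots are simple. Borel summing the $a_j$ (together with $g_+$) and the $b_j$ (together with $g_-$) yields $v_\pm\in\CI(X)$ with $v_\pm|_Y=g_\pm$ and $Pu\in\dCI(X)$.

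For the forward direction I would argue exactly as in Lemma~\ref{lemma:expansion}: $Qu\in\bcon^{k+1}(X)$ gives $((xD_x+i(n-1))(xD_x)-\lambda)u\in\bcon^{k+1}(X)$, which one solves as an ODE in $x$ with $y\in Y$ a parameter. The only indicial root is the double root $s_0$, with homogeneous solutions $x^{s_0}$ and $x^{s_0}\log x$, so iterating over unit intervals in the weight, $u$ improves by one order whenever the current interval misses $s_0$, while the single step whose interval contains $s_0$ produces $u=x^{s_0}g+x^{s_0}\log x\,g'+u'$ with $g,g'\in\CI(Y)$ (suitably extended) and $u'\in\bcon^{k'+1}(X)$. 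Feeding $g,g'$ into the converse direction gives $\tilde u=x^{s_0}v_+^{(0)}+x^{s_0}\log x\,v_-^{(0)}$, $v_\pm^{(0)}\in\CI(X)$, with $P\tilde u\in\dCI(X)$ and $\tilde u$ agreeing with $x^{s_0}g+x^{s_0}\log x\,g'$ modulo $\bcon^{k'+1}(X)$; then $u-\tilde u\in\bcon^{k'+1}(X)$, $P(u-\tilde u)\in\dCI(X)$, and since $s_0$ is now behind us the iteration has no further obstruction and gains arbitrarily many orders, so $u-\tilde u\in\dCI(X)$. As $x^{-s_0}\dCI(X)\subset\CI(X)$, setting $v_+=v_+^{(0)}+x^{-s_0}(u-\tilde u)$ and $v_-=v_-^{(0)}$ gives the claim (with $v_-=0$ in the degenerate case $u\in\bcon^k(X)$ for some $k>s_0$). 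The one point requiring care is the stability of the $\log$ structure under the converse iteration, and this is exactly the fact that $s\mapsto s(n-1-s)-\lambda$ vanishes to order two at $s_0$ but equals $-j^2\neq0$ at each $s_0+j$, $j\in\Nat$; everything else is the bookkeeping already carried out for Lemma~\ref{lemma:expansion}.
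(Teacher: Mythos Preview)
Your proof is correct and follows essentially the same approach as the paper's: both use that at $\lambda=\frac{(n-1)^2}{4}$ the indicial operator factors as $(xD_x+i\frac{n-1}{2})^2$, so the double root $s_0=\frac{n-1}{2}$ forces the $x^{s_0}\log x$ term in the expansion, while at every shifted level $s_0+j$ the indicial value $-j^2$ is nonzero and the iteration from Lemma~\ref{lemma:expansion} goes through unchanged. Your write-up is somewhat more explicit about the Borel summation and the absorption of the $\dCI(X)$ remainder into $v_+$, but the argument is the same.
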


\begin{proof}
$s=s_\pm(\lambda)=(n-1)/2$ now satisfies $s(n-1-s)-\lambda=0$
as $n-1-2s=0$, so
\eqref{eq:P-xs} and \eqref{eq:P-xs-log} imply that
$P(x^sv_1+x^s\log x\,v_2)\in x^{s+1}\CI(X)+x^{s+1}\log x\CI(X)$.
The argument of the previous lemma then shows the second claim.

For the first claim, we need to observe that if
$u\in\bcon^k(X)$ and $Pu\in\dCI(X)$ then
$Qu\in\bcon^{k+1}(X)$, so
$((xD_x+i(n-1))(xD_x)-\lambda)u\in\bcon^{k+1}$, i.e.\ $(xD_x+i(n-1)/2)^2u
\in\bcon^{k+1}$. Proceeding as above, the only difference is that
if $s=\frac{n-1}{2}\in(k,k+1]$, one deduces that
$u=x^s g_1+x^s\log x\,g_2+u'$, $g_j$ smooth, $u'\in\bcon^{k+1}$. One finishes
the proof exactly as above.
\end{proof}

Since we already know
(by virtue of Proposition~\ref{prop:solvable}
and Remark~\ref{rem:decay-Schwartz})
that we can solve $Pu'=f$, $f\in\dCI(X)$, with $u'\in
\dCI(X)$, modulo $\CI_c(X^\circ)$, we deduce that these $u$ can be further
extended to be exact solutions near $\pa X$.

\section{Global solvability}\label{sec:global}
For global solvability, i.e.\ solvability on all of $X$ rather than
just near $\pa X$, of $Pu=0$ we need the additional assumptions (A1)-(A2).
We thus assume that $Y=Y_+\cup Y_-$, where $Y_\pm$ are unions of connected
components of $Y$, and this decomposition satisfies that all bicharacteristics
$t\mapsto \gamma(t)$ of $P$ (i.e.\ those of $\Box$, independent
of $\lambda$) satisfy $\lim_{t\to +\infty}\gamma(t)\in Y_+$,
$\lim_{t\to -\infty}\gamma(t)\in Y_-$, or vice versa.
In this case,
noting that the
sign of the $\chi'$ term agrees with the others if $r<\min(0,1-2l(\lambda))$
(for they are
all negative; recall $l(\lambda)=\frac{n-1}{2}$ for the wave operator
itself), one can easily `cut and paste' the estimates with
\begin{itemize}
\item
near $Y_+$, $r=r_+>1+2l(\lambda)$ (or just $r=r_+>\max(0,1-2l(\lambda))$,
$r_+\neq 1+2l(\lambda)$),
\item
near $Y_-$, $r=r_-<\min(0,1-2l(\lambda))$, and
\item
standard
microlocal propagation estimates in the interior of $X$
\end{itemize}
to
deduce that for a partition of unity $\chi_++\chi_-+\chi_0=1$ with $\chi_+$
supported near $Y_+$, identically $1$ in a smaller neighborhood of $Y_+$,
analogously with $\chi_-$, $\chi_0\in\CI_c(X^\circ)$, there exists
$\tilde\chi_0\in\CI_c(X^\circ)$ such that
\begin{equation}\label{eq:exist-est}
\|x^{(r_+-1)/2}\chi_+ v\|^2_{H^1_0}
+\|x^{(r_--1)/2}\chi_- v\|^2_{H^1_0}+\|\chi_0 v\|^2_{H^1_0}
\leq C(\|\tilde\chi_0 v\|^2_{H^{1/2}_0}+\|Pv\|^2).
\end{equation}
Let $H_0^{m,q_+,q_-}(X)$ be the space $x_+^{q_+}x_-^{q_-}H_0^m(X)$,
where $x_\pm$ are defining functions of $Y_\pm$, we can put the
norm
\begin{equation*}
\|v\|_{H_0^{m,q_+,q_-}(X)}^2=\|x^{-q_+}\chi_+ v\|^2_{H^m_0}
+\|x^{-q_-}\chi_- v_-\|^2_{H^m_0}+\|\chi_0 v\|^2_{H^m_0};
\end{equation*}
on it. (Note that it is the completion of $\dCI(X)$ with respect to this norm.)
This is just $x^{q_\pm} H^m_0(X)$ near $Y_\pm$, $H^m(X^\circ)$
in the interior.
Let $l_\pm=(r_\pm-1)/2$.
The argument of \cite[Proof of Theorem~26.1.7]{Hor} shows the following:

\begin{prop}\label{prop:solvable-up-to-compact}
Suppose that $\lambda\in\Real$,
$l_+>\max(\frac{1}{2},l(\lambda))$, $l_-<-\max(\frac{1}{2},l(\lambda))$.
Then
\begin{equation*}
N_{l_+,l_-}=\{v\in H_0^{1,-l_+,-l_-}(X):\ Pv=0\}
\end{equation*}
is finite dimensional, and for $f\in H_0^{0,l_+,l_-}(X)$, $f$ orthogonal
to $N_{l_+,l_-}$, $Pu=f$ has a solution $u\in H_0^{1,l_+,l_-}(X)$.

Moreover, elements of $N_{l_+,l_-}$ are in $H_0^{\infty,l,-l_-}(X)$ for all
$l<-l_+$,
are Schwartz at $Y_-$, and have an expansion as in Lemma~\ref{lemma:expansion}
at $Y_+$.
\end{prop}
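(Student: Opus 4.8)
The plan is to treat this as a Fredholm-type solvability statement built on the a priori estimate \eqref{eq:exist-est}, following the duality argument of \cite[Proof of Theorem~26.1.7]{Hor}, and then to identify the structure of $N_{l_+,l_-}$ by feeding the earlier regularity results into a propagation-of-singularities argument in $X^\circ$. First I would note that the left side of \eqref{eq:exist-est} is, up to equivalence of norms, $\|v\|_{H_0^{1,-l_+,-l_-}(X)}^2$, so that estimate reads
\[
\|v\|_{H_0^{1,-l_+,-l_-}(X)}\le C\bigl(\|\tilde\chi_0 v\|_{H_0^{1/2}(X)}+\|Pv\|_{H_0^{0,-l_+,-l_-}(X)}\bigr),\qquad v\in\dCI(X).
\]
Since $\tilde\chi_0\in\CI_c(X^\circ)$, the map $v\mapsto\tilde\chi_0 v$ from $H_0^{1,-l_+,-l_-}(X)$ into $H_0^{1/2}(X)$ factors through the Rellich-compact inclusion $H^1(\supp\tilde\chi_0)\hookrightarrow H^{1/2}(\supp\tilde\chi_0)$, hence is compact, so \eqref{eq:exist-est} is a semi-Fredholm estimate. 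Two standard consequences follow: $N_{l_+,l_-}$ satisfies $\|v\|_{H_0^{1,-l_+,-l_-}(X)}\le C\|\tilde\chi_0 v\|_{H_0^{1/2}(X)}$, so its unit ball is precompact and $\dim N_{l_+,l_-}<\infty$; and there is $C'>0$ with $\|v\|_{H_0^{1,-l_+,-l_-}(X)}\le C'\|Pv\|_{H_0^{0,-l_+,-l_-}(X)}$ for $v$ in a fixed closed complement of $N_{l_+,l_-}$. Making this precise requires first extending \eqref{eq:exist-est} from $\dCI(X)$ to all $v\in H_0^{1,-l_+,-l_-}(X)$ with $Pv\in H_0^{0,-l_+,-l_-}(X)$, by the regularization already used for \eqref{eq:dual-estimate} (mollifiers $(1+\ep x^{-1})^{-s}$ near $\pa X$, ordinary mollifiers in $X^\circ$).

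Solvability then follows by the duality argument of \cite[Proof of Theorem~26.1.7]{Hor}, using $P=P^*$: for $f\in H_0^{0,l_+,l_-}(X)$ orthogonal to $N_{l_+,l_-}$ in the pairing with $H_0^{0,-l_+,-l_-}(X)$, the clean estimate makes $Pv\mapsto\langle f,v\rangle$ a bounded conjugate-linear functional of $Pv\in H_0^{0,-l_+,-l_-}(X)$ (well defined because $Pv_1=Pv_2$ forces $v_1-v_2\in N_{l_+,l_-}$, on which $f$ vanishes, and bounded upon replacing $v$ by its minimal-$H_0^{1,-l_+,-l_-}$-norm representative), so Hahn--Banach and the Riesz lemma give $u$ with $Pu=f$ weakly; one then upgrades $u$ to $H_0^{1,l_+,l_-}(X)$ using the near-boundary solvability of Proposition~\ref{prop:solvable} (with $l=l_+$ near $Y_+$ and $l=l_-$ near $Y_-$, both admissible by \eqref{eq:dual-exp-range-lambda}) together with $0$-regularity in $X^\circ$.

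It remains to describe a solution $v\in N_{l_+,l_-}$. Near $Y_-$ we have $v\in x_-^{-l_-}H_0^1(X)$ with $-l_->\max(\frac{1}{2},l(\lambda))$, so Corollary~\ref{cor:decay-Schwartz} (via Proposition~\ref{prop:decay-gain}) gives that $v$ is Schwartz at $Y_-$; in particular $v$ is smooth, hence $\WF(v)=\emptyset$, near $Y_-$. Since by (A2) every bicharacteristic over $X^\circ$ converges to $Y_-$ along the flow in one of the two directions, propagation of singularities forces $\WF(v)=\emptyset$ throughout $S^*X^\circ$, i.e.\ $v\in\CI(X^\circ)$. Knowing this, Corollary~\ref{cor:edge-reg} applied near $Y_+$ (its hypothesis on $\WF$ in $X^\circ$ now being vacuous) gives $v\in H_0^{\infty,l,-l_-}(X)$ for every $l<-l_+$; Proposition~\ref{prop:conormal-reg} then makes $v$ conormal near $Y_+$, so $v\in\bcon^k(X)$ there for suitable $k$; and Lemma~\ref{lemma:expansion}---or Lemma~\ref{lemma:expansion-threshold} when $\lambda=\frac{(n-1)^2}{4}$---gives the asserted expansion of $v$ at $Y_+$.

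I expect the main obstacle to be the functional-analytic bookkeeping rather than any new idea: \eqref{eq:exist-est} is naturally available only where $Pv$ has the claimed regularity and decay, so the density/regularization step and the verification that the Hahn--Banach solution genuinely lies in $H_0^{1,l_+,l_-}(X)$---and not merely in the dual of $H_0^{0,-l_+,-l_-}(X)$---must be handled carefully, drawing on Proposition~\ref{prop:solvable} near $\pa X$ and Proposition~\ref{prop:edge-reg} in $X^\circ$. One must also keep track of the two roles of (A1)--(A2): once in the cutting-and-pasting that produces \eqref{eq:exist-est}, and once in the global propagation argument that pins down $N_{l_+,l_-}$, where it has to be interlocked with the $0$-microlocal regularity of Corollary~\ref{cor:edge-reg}.
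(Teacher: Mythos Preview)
Your proposal is correct and follows essentially the same route as the paper: the semi-Fredholm estimate \eqref{eq:exist-est} plus the duality argument of \cite[Proof of Theorem~26.1.7]{Hor} for finite-dimensionality and solvability, and then Corollary~\ref{cor:decay-Schwartz}, interior propagation, Corollary~\ref{cor:edge-reg}, Proposition~\ref{prop:conormal-reg}, and Lemma~\ref{lemma:expansion} for the structure of $N_{l_+,l_-}$.

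The only organizational difference worth noting is that the paper reverses your order: it establishes the regularity of elements of $N_{l_+,l_-}$ \emph{first}, and then observes that this regularity (in particular the expansion at $Y_+$ and Schwartz behavior at $Y_-$) lets one apply the commutator computations behind \eqref{eq:exist-est} directly to $v\in N_{l_+,l_-}$ without any mollification, thereby sidestepping the density/regularization step you flag as the main bookkeeping issue. Your approach of extending \eqref{eq:exist-est} by regularization is equally valid; the paper's ordering just buys a slightly cleaner justification of the estimate on the nullspace.
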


\begin{rem}\label{rem:solvable-up-to-compact}
Note that the expansion of Lemma~\ref{lemma:expansion} implies
that $N_{l_+,l_-}$ are in $H_0^{\infty,l,\infty}(X)$ for all
$l<-l(\lambda)$, not merely $l<-l_+$.
\end{rem}

\begin{proof}
We first prove the last statement. For $v\in N_{l_+,l_-}$,
by Corollary~\ref{cor:decay-Schwartz}, $v$ is Schwartz at $Y_-$.
In particular, $v$ is $\CI$ near $Y_-$, so by the standard propagation
of singularities for $P$, $v\in\CI(X^\circ)$. Then,
by Corollary~\ref{cor:edge-reg}, $v\in H_0^{\infty,l,-l_-}$ for all
$l<-l_+$. By Proposition~\ref{prop:conormal-reg} and the remark
following it, $u\in x^{l+\frac{n-1}{2}}
H^\infty_b(X)=\bcon^{l+\frac{n-1}{2}}(X)$ for all $l<-l_+$.
Thus, by Lemma~\ref{lemma:expansion}, it has an expansion at $Y_+$
of the form given by Lemma~\ref{lemma:expansion}.

This in particular implies that the commutator calculations giving rise to
\eqref{eq:exist-est} can be applied directly
(without mollification) to all $v\in N_{l_+,l_-}$
The proof of the first part is finished as in \cite{Hor}, and the second
part can then be proved exactly as in \cite{Hor}.
\end{proof}

Note that the role of $Y_\pm$ is reversible, so the estimates, hence
the proposition, also hold with
$l_\pm$ interchanged. Correspondingly, we deduce that the solution $u$ of
$Pu=f$ above is unique modulo the finite dimensional space $N_{-l_+,-l_-}$.

One can also get uniqueness, namely that

\begin{prop}\label{prop:unique}
Suppose $u\in\dCI(X)$ and $Pu=0$. Then $u=0$.

In fact, it suffices to assume that $u$ is Schwartz at $Y_+$.

If we merely assume that $u$ is Schwartz
at a connected component $Y_j$ of $Y$, and $Pu=0$ near $Y_j$, then
we can still conclude that $u=0$ near $Y_j$.
\end{prop}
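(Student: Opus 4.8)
The plan is to establish a Carleman-type estimate at $\pa X$ with the weight exponent sent to $+\infty$, and then to invoke uniqueness for the finite-time Cauchy problem. Since $u\in\dCI(X)$ is in particular Schwartz at $Y_+$, it suffices to prove the two more general statements. First I would reduce everything to the \emph{semi-local} assertion: if $u$ is Schwartz at a connected component $Y_j$ of $Y$ and $Pu=0$ near $Y_j$, then $u\equiv 0$ on a one-sided neighborhood $\{x_j<a\}$ of $Y_j$ in $X$, where $x_j$ is a boundary defining function of $Y_j$. This is exactly the last assertion of the proposition, and it uses none of (A1)--(A2). For the global statements, assuming (A1)--(A2), take $Y_j=Y_+$: then $\{x_+<a\}$ contains a level set $S_{t_0}$ of the global time function $T$ for $t_0$ close to $1$, so $u|_{S_{t_0}}=0$ and $Vu|_{S_{t_0}}=0$; since $Pu=0$ on $X^\circ$ and the Cauchy problem with these data is well posed, hence uniquely solvable, and the zero function solves it, we get $u\equiv 0$ on $X^\circ$, hence on $X$. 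Note that by Corollary~\ref{cor:decay-Schwartz} (applied near $Y_j$) the hypothesis already forces $u\in\dCI(X)$ in a neighborhood of $Y_j$, so in particular $u$ is smooth there and all the manipulations below are legitimate.

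For the semi-local assertion I would prove a Carleman estimate
\begin{equation*}
\|x_j^{-l}v\|_{H^1_0}\le C\,l^{-1}\,\|x_j^{-l}Pv\|_{L^2},
\end{equation*}
valid for all $v\in\dCI(X)$ supported in $\{x_j<\delta\}$ and all sufficiently large $l$, with $C$ and $\delta>0$ \emph{independent of} $l$. This is a quantitative sharpening of the estimate behind Proposition~\ref{prop:solvable}: one runs the commutator computation of Section~\ref{sec:local-solvability} with $r=1-2l$, but now, crucially, chooses $\gamma=\gamma(l)\in(l,2l)$ (e.g.\ $\gamma=\tfrac32 l$) rather than $|\gamma|$ small. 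Using \eqref{eq:comm-form}, Lemma~\ref{lemma:same-signs}, and the identity $(xD_x+i\tfrac{n-r}{2})^*x^{r-1}(xD_x+i\tfrac{n-r}{2})=(xD_x)^*x^{r-1}(xD_x)-\tfrac{(n-r)^2}{4}\,x^{r-1}$, one checks that the three quantities in \eqref{eq:sign-list} keep a common sign while the zeroth-order term acquires a coefficient of size $\sim l^3$ and the $H^1_0$ terms coefficients of size $\sim l$; pairing with $v$, bounding the $P$-terms by $\bigl(\|x_j^{-l}v\|_{H^1_0}+l\|x_j^{-l}v\|_{L^2}\bigr)\|x_j^{-l}Pv\|_{L^2}$, and absorbing then gives the estimate. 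The main obstacle is the uniformity in $l$: the non-warped-product error $R'\in x^r\Diff^2_0(X)$, conjugated by $x_j^l$, is a second-order operator whose coefficients are degree-$2$ polynomials in $l$, so a crude absorption would force $\delta\to 0$ as $l\to\infty$. The resolution is that the $l^2$-part of the conjugated error is a multiplication operator carrying an extra factor $x_j$, hence is swallowed by the $\sim l^3$ zeroth-order term once $\delta<Cl$; the $l$-linear part is first order with a factor $x_j$ and is handled by Cauchy--Schwarz against the $\sim l$ sized $H^1_0$ term, costing only $\delta^2\|x_j^{-l}v\|_{H^1_0}^2$; and only the $l$-independent part needs $\delta$ small. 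Thus $\delta$ can be fixed once and for all.

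Finally, to extract the semi-local assertion, fix $0<a'<a<b<\delta$ and a cutoff $\chi\in\CI(X)$ with $\chi\equiv 1$ on $\{x_j<a\}$ and $\chi\equiv 0$ on $\{x_j>b\}$, and apply the estimate to $v=\chi u$. Then $Pv=[P,\chi]u$ is smooth and supported in $\{a\le x_j\le b\}$, so $\|x_j^{-l}Pv\|_{L^2}\le a^{-l}\|Pv\|_{L^2}$, while $v=u$ on $\{x_j<a'\}$ where also $x_j^{-l}\ge(a')^{-l}$, whence
\begin{equation*}
\|u\|_{L^2(\{x_j<a'\})}\le (a')^{l}\|x_j^{-l}v\|_{L^2}\le (a')^{l}\|x_j^{-l}v\|_{H^1_0}\le (a')^{l}\,C\,l^{-1}\|x_j^{-l}Pv\|_{L^2}\le \frac{C}{l}\Bigl(\frac{a'}{a}\Bigr)^{l}\|Pv\|_{L^2}.
\end{equation*}
Since $a'<a$, the right-hand side tends to $0$ as $l\to\infty$, so $u\equiv 0$ on $\{x_j<a'\}$; as $a'<a$ was arbitrary, $u\equiv 0$ on $\{x_j<a\}$. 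This proves the semi-local assertion, and with the reduction of the first paragraph, the proposition.
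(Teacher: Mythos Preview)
Your proof is essentially correct and shares the same overall architecture as the paper's: a Carleman-type estimate with weight $x^{-l}$ (equivalently $x^{-1/h}$) pushed to infinity, yielding vanishing of $u$ in a collar of $Y_j$, followed by finite-time hyperbolic uniqueness to propagate the vanishing. The \emph{implementation} of the Carleman step, however, is genuinely different. The paper conjugates $P$ semiclassically, $P_h=x^{-1/h}h^2Px^{1/h}$, decomposes $P_h=\re P_h+i\,\im P_h$, and uses the identity $0=\|P_hu_h\|^2=\|\re P_hu_h\|^2+\|\im P_hu_h\|^2+\langle i[\re P_h,\im P_h]u_h,u_h\rangle$ together with an explicit computation of $i[\re P_h,\im P_h]$ at the level of normal operators. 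You instead recycle the single-commutator machinery of Section~\ref{sec:local-solvability} with commutant $A_r$, $r=1-2l$, and the new idea of taking $\gamma\sim l$ (rather than $|\gamma|$ small) to force the zeroth-order coefficient to size $l^3$. Your route is more elementary in that it reuses an already-established computation; the paper's semiclassical packaging handles the $l$-dependence of all error terms more uniformly.

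One point of caution: your bookkeeping of the non-warped-product error $R'$ is not quite accurate. Expanding $[P_1,A_r]=[P_1,x^{r-1}](xD_x+i\tfrac{n-r}{2})+x^{r-1}[P_1,xD_x]$ and tracking powers of $r-1\sim l$, one finds (in addition to what you list) an $l^3$ \emph{multiplication} term and $l^2$ \emph{first-order} terms, while the $l$-linear part contains a genuine \emph{second-order} piece, not merely first-order. Fortunately this does not break the argument: the structural feature you correctly identified---every error term carries an extra factor of $x$ relative to the main terms---still gives a factor of $\delta$ after using the support, and a short Cauchy--Schwarz computation shows each such term is dominated by $C\delta\bigl(l\|x^{-l}v\|_{H^1_0}^2+l^3\|x^{-l}v\|_{L^2}^2\bigr)$, absorbable for a fixed small $\delta$ independent of $l$. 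So your conclusion stands, but the description of the error structure should be corrected.
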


\begin{proof}
The proof is very similar to \cite[Section~4]{Vasy:Exponential} and
to \cite{Vasy-Wunsch:Absence}.
Consider $P_h=x^{-1/h} h^2 P x^{1/h}$. The basic claim is that the
semiclassical symbols of $\re P_h\in\Diff^2_{0,h}(X)$ and $\im P_h
\in \Diff^1_{h,0}(X)$ never vanish at the same place at $Y$.
In fact, as $P$ is formally self-adjoint, one has
\begin{equation*}\begin{split}
&P_h=h^2P+x^{-1/h}[h^2 P,x^{1/h}],\\
&\re P_h=h^2P+\frac{1}{2}[x^{-1/h},[h^2P,x^{1/h}]],\\
&\im P_h=\frac{1}{2i}(x^{-1/h}[h^2P,x^{1/h}]+[h^2P,x^{1/h}]x^{-1/h}).
\end{split}\end{equation*}
Now, for $Q\in x^l\Diff^k_{0,h}(X)$,
$x^{-1/h}[Q,x^{1/h}]\in x^l\Diff^{k-1}_{0,h}(X)$, so if we only want
to compute the commutators modulo higher order terms in $x$, we can
work with the normal operator of $P$ instead of $P$. Also, modulo
higher order terms in $h$, only the principal symbol of $P$ matters
in the calculations, as we are considering $h^2P$, and changing
$P$ by a first order term changes $h^2P$ by an element of
$h\Diff^1_{0,h}(X)$. Thus, a straightforward computation gives
\begin{equation*}\begin{split}
&\re P_h
=(hxD_x)^2-x^2\Delta_Y+\frac{1}{2}[x^{-1/h},[h^2(xD_x)^2,x^{1/h}]]+R_1\\
&\qquad\qquad=
(hxD_x)^2-h^2x^2\Delta_Y-1+R_1,\\
&\im P_h=\frac{1}{2i}(x^{-1/h}[(hxD_x)^2,x^{1/h}]+[(hxD_x)^2,x^{1/h}]x^{-1/h})
+R_2=-2hxD_x+R_2,
\end{split}\end{equation*}
with $R_1\in h\Diff^2_{0,h}(X)+x\Diff^2_h(X)$, $R_2\in h\Diff^1_{0,h}(X)+
x\Diff^1_{0,h}(X)$. Moreover,
\begin{equation*}
i[\re P_h,\im P_h]=i[-h^2x^2\Delta_Y,-2hxD_x]+R_3=-4h^3x^2\Delta_Y+hR_3,
\end{equation*}
$R_3\in h\Diff^2_{0,h}(X)+x\Diff^2_{0,h}(X)$.
Thus,
\begin{equation*}
i[\re P_h,\im P_h]=h+4h\re P_h-h(\im P_h)^2+hR_4,
\end{equation*}
with $R_4$ having the same properties as $R_3$.

Now let $u_h=x^{-1/h}u\in\dCI(X)$, so $P_h u_h=0$ and
\begin{equation*}\begin{split}
0&=\|P_h u_h\|^2=\|\re P_h u_h\|^2+\|\im P_h u_h\|^2
+\langle i[\re P_h,\im P_h]u_h,u_h\rangle\\
&=\|\re P_h u_h\|^2+(1-h)\|\im P_h u_h\|^2+h\|u_h\|^2
+4h\langle \re P_h u_h,u_h\rangle+h\langle R_4u_h,u_h\rangle.
\end{split}\end{equation*}
This is the analogue of Equations (4.2) and (4.3) of \cite{Vasy:Exponential},
except that here terms arising from
the commutator $i[\re P_h,\im P_h]$ do not have an
additional factor of $x$ compared to the first two squares on the
right hand side. The proof can be finished exactly as in
\cite{Vasy:Exponential}, writing $R_4=hR_5+x^{1/2}R_6x^{1/2}$,
$R_5,R_6\in \Diff^2_{0,h}(X)$, and noting that $-\re P_h+(\im P_h)^2$
is elliptic second order, so
\begin{equation*}\begin{split}
&|\langle hR_5u_h,u_h\rangle|\leq
Ch(\|\re P_h u_h\|\,\|u_h\|+\|\im P_h u_h\|^2+\|u_h\|^2),\\
&|\langle x^{1/2}R_6 x^{1/2}u_h,u_h\rangle|\\
&\qquad\qquad\leq
Ch(\|\re P_h x^{1/2}u_h\|\,\|x^{1/2}u_h\|+\|\im P_h x^{1/2}u_h\|^2
+\|x^{1/2}u_h\|^2)\\
&\qquad\qquad\leq
C'h(\|\re P_h u_h\|\,\|x^{1/2}u_h\|+\|\im P_h u_h\|^2
+\|x^{1/2}u_h\|^2).
\end{split}\end{equation*}

Indeed, for $\delta>0$ one writes
\begin{equation*}
\|x^{1/2}u_h\|^2
=\|x^{1/2}u_h\|^2_{x\leq\delta}+\|x^{1/2}u_h\|^2_{x\geq\delta}
\leq\delta\|u_h\|^2+\delta^{1-2/h}\|u\|^2,
\end{equation*}
so
\begin{equation*}\begin{split}
0\geq (1-C_1h)\|\re P_h u\|^2+(1-C_2 h)\|\im P_h u\|^2&+h(1-C_3h
-C_4\delta)\|u_h\|^2\\
&-C_5\delta^{1-2/h}\|u\|^2.
\end{split}\end{equation*}
Thus, there exists $h_0>0$ such that for $h\in(0,h_0)$,
\begin{equation*}
hC_5\delta^{1-2/h}\|u\|^2\geq h(\frac{1}{2}-C_4\delta)\|u_h\|^2.
\end{equation*}

Suppose $\delta\in(0,\min(\frac{1}{4C_4},\frac{1}{h_0}))$ and
$\supp u\cap\{x\leq\frac{\delta}{4}\}$ is non-empty. Then
$\|u_h\|^2\geq C_6 (\delta/4)^{-2/h}$ with $C_6>0$. Thus,
\begin{equation*}
C_5\delta\|u\|^2\geq \frac{C_6}{4}4^{2/h}.
\end{equation*}
As the right hand side goes to $+\infty$ as $h\to 0$, this provides
a contradiction.

Thus, $u$ vanishes for $x\leq\delta/4$, and then the usual hyperbolic
uniqueness (well-posedness of the non-characteristic Cauchy problem)
gives that it vanishes on $X$.
\end{proof}

Combined with Proposition~\ref{prop:solvable-up-to-compact} this gives:

\begin{thm}\label{thm:global-solvability}
Suppose that $\lambda\in\Real$,
$l_+>\max(\frac{1}{2},l(\lambda))$, $l_-<-\max(\frac{1}{2},l(\lambda))$.
Then
for $f\in H_0^{0,l_+,l_-}(X)$,
$Pu=f$ has a unique solution $u\in H_0^{1,l_+,l_-}(X)$.
\end{thm}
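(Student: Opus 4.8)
The plan is to combine the Fredholm-type statement of Proposition~\ref{prop:solvable-up-to-compact} with the uniqueness result of Proposition~\ref{prop:unique}, the point being that under the stated hypotheses on $l_\pm$ the relevant finite-dimensional obstruction spaces are in fact trivial.

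First I would dispose of uniqueness. Suppose $u\in H_0^{1,l_+,l_-}(X)$ satisfies $Pu=0$. In the notation of Proposition~\ref{prop:solvable-up-to-compact}, and using that the roles of $Y_+$ and $Y_-$ may be interchanged there, this says exactly that $u\in N_{-l_+,-l_-}$. The ``Moreover'' part of that proposition (with $Y_\pm$ interchanged, and with $l_+>\max(\frac12,l(\lambda))$ now playing the role of the weight on the decaying side) then gives that $u$ is Schwartz at $Y_+$, is smooth in $X^\circ$, and has a polyhomogeneous expansion at $Y_-$. By Proposition~\ref{prop:unique}, in the sharpened form in which it suffices that $u$ be Schwartz at $Y_+$, we conclude $u=0$. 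Hence $N_{-l_+,-l_-}=\{0\}$, which is the uniqueness half of the theorem.

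For existence, observe first that the same argument, now applied without interchanging $Y_\pm$, shows $N_{l_+,l_-}=\{0\}$: its elements are Schwartz at $Y_-$ by Proposition~\ref{prop:solvable-up-to-compact}, and hence vanish by the time-reversed form of Proposition~\ref{prop:unique} (equivalently, by the last clause of that proposition applied at every component of $Y_-$, followed by the non-characteristic hyperbolic Cauchy uniqueness from a level set $S_{t_0}$ near $Y_-$). Consequently the solvability condition in Proposition~\ref{prop:solvable-up-to-compact}, namely orthogonality of $f$ to $N_{l_+,l_-}$, is vacuous, so for every $f\in H_0^{0,l_+,l_-}(X)$ there is $u\in H_0^{1,l_+,l_-}(X)$ with $Pu=f$. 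Combined with the uniqueness just established, this proves the theorem.

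There is essentially no genuine obstacle here beyond bookkeeping: all the analytic work has already been carried out in Propositions~\ref{prop:solvable-up-to-compact} and~\ref{prop:unique} (and, upstream, in Corollary~\ref{cor:edge-reg}, Proposition~\ref{prop:conormal-reg} and Lemma~\ref{lemma:expansion}, which feed the regularity assertion of Proposition~\ref{prop:solvable-up-to-compact}). The one point that needs care is checking that the weights line up with the hypotheses of both propositions — the condition $l_+>\max(\frac12,l(\lambda))$ on the ``decaying'' side together with $l_-<-\max(\frac12,l(\lambda))$ on the ``growing'' side is exactly what is required — and that ``Schwartz at $Y_+$'' as produced by the regularity statement is precisely the hypothesis invoked in the sharp form of Proposition~\ref{prop:unique}. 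One should also note that the symmetry $Y_+\leftrightarrow Y_-$ used above is legitimate because it amounts only to reversing the time orientation, under which $P$ and the assumptions (A1)--(A2) are invariant.
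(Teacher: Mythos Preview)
Your proof is correct and follows essentially the same route as the paper: show $N_{l_+,l_-}=\{0\}$ to get existence from Proposition~\ref{prop:solvable-up-to-compact}, and show $N_{-l_+,-l_-}=\{0\}$ for uniqueness, in both cases by obtaining Schwartz decay at one boundary component and then invoking Proposition~\ref{prop:unique}. The only cosmetic difference is that the paper cites Corollary~\ref{cor:decay-Schwartz} directly for the Schwartz decay, whereas you route this through the ``Moreover'' clause of Proposition~\ref{prop:solvable-up-to-compact} (which in turn rests on that corollary); the logic is the same.
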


\begin{proof}
With the notation of Proposition~\ref{prop:solvable-up-to-compact},
we want to prove $N_{l+,l_-}=\{0\}$. But for $v\in N_{l_+,l_-}$,
by Corollary~\ref{cor:decay-Schwartz}, $v$ is Schwartz at $Y_-$.
Thus, by Proposition~\ref{prop:unique}, $v=0$. Thus,
by Proposition~\ref{prop:solvable-up-to-compact},
the required $u$ exists.

Conversely, if $u\in H_0^{1,l_+,l_-}(X)$ and $Pu=0$ then
by Corollary~\ref{cor:decay-Schwartz}, $u$ is Schwartz at $Y_+$,
so by Proposition~\ref{prop:unique}, $u=0$.
\end{proof}

We also deduce:

\begin{thm}\label{thm:smooth-solns}
Suppose $\lambda\neq\frac{(n-1)^2}{4}$.
Given $g_\pm\in\CI(Y_+)$
there exists a unique $u\in\CI(X^\circ)$ such that $Pu=0$
and which is of the form
\begin{equation*}\begin{split}
&u=x^{s_+(\lambda)}v_++x^{s_-(\lambda)}v_-,\ v_\pm|_{Y_+}=g_\pm,
\ v_+\in\CI(X),\\
&\qquad v_--\sum_{j=0}^{s_+(\lambda)-s_-(\lambda)-1}
a_j x^j\in x^{s_+(\lambda)-s_-(\lambda)}\log x\,\CI(X),\ a_j\in\CI(Y_\pm).
\end{split}\end{equation*}
If $s_+(\lambda)-s_-(\lambda)$ is not an integer, then
$v_-\in\CI(X)$.

On the other hand, if $\lambda=\frac{(n-1)^2}{4}$, then
given $g_\pm\in\CI(Y_+)$
there exists a unique $u\in\CI(X^\circ)$ such that $Pu=0$
and which is of the form
\begin{equation*}
u=x^{(n-1)/2}v_++x^{(n-1)/2}\log x\,v_-,\ v_\pm|_{Y_+}=g_\pm,\ v_\pm\in\CI(X).
\end{equation*}
\end{thm}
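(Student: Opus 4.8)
The plan is to construct the solution near $Y_+$ out of the formal (indicial-root) solution furnished by Lemma~\ref{lemma:expansion}, resp.\ Lemma~\ref{lemma:expansion-threshold} when $\lambda=\frac{(n-1)^2}{4}$, then correct it to an exact solution on all of $X$ by the global solvability Theorem~\ref{thm:global-solvability}, extract the expansion at $Y_-$ from the conormal regularity result Proposition~\ref{prop:conormal-reg}, and deduce uniqueness from the unique continuation Proposition~\ref{prop:unique}.

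\emph{Existence.} Given $g_+,g_-\in\CI(Y_+)$, the converse direction of Lemma~\ref{lemma:expansion} (resp.\ Lemma~\ref{lemma:expansion-threshold}), carried out on a collar neighbourhood $U$ of $Y_+$, produces a function $u_0$ of precisely the form claimed in the theorem, with $v_\pm|_{Y_+}=g_\pm$ and $Pu_0$ vanishing to infinite order at $Y_+$. Choose $\chi\in\CI(X)$ with $\chi\equiv1$ near $Y_+$, $\supp\chi\subset U$ and $\supp\chi\cap Y_-=\emptyset$, and set $\tilde u_0=\chi u_0$, extended by zero. Then $P\tilde u_0=\chi\,Pu_0+[P,\chi]u_0$; the first summand vanishes to infinite order along all of $Y$ and the second lies in $\CI_c(X^\circ)$ since $d\chi$ is supported in $X^\circ$, so $f:=P\tilde u_0\in\dCI(X)$. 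Fix weights $l_+>\max(\frac{1}{2},l(\lambda))$ and $l_-<-\max(\frac{1}{2},l(\lambda))$; Theorem~\ref{thm:global-solvability} gives $w\in H_0^{1,l_+,l_-}(X)$ with $Pw=-f$. Since $l_+>\max(\frac{1}{2},l(\lambda))$ and $Pw\in\dCI(X)$, Corollary~\ref{cor:decay-Schwartz} shows $w$ is Schwartz at $Y_+$, and the standard propagation of singularities for $P$ (using (A2), as in the proof of Proposition~\ref{prop:solvable-up-to-compact}) gives $w\in\CI(X^\circ)$. Hence $u:=\tilde u_0+w$ satisfies $Pu=0$ and lies in $\CI(X^\circ)$, and near $Y_+$, where $\chi\equiv1$, it equals $u_0+w$; writing $w=x^{s_+(\lambda)}(x^{-s_+(\lambda)}w)$ with $x^{-s_+(\lambda)}w$ Schwartz at $Y_+$, we absorb $w$ into $v_+$ without changing $v_\pm|_{Y_+}$, so $u$ has the stated form near $Y_+$ with $v_\pm|_{Y_+}=g_\pm$.

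It remains to check the expansion at $Y_-$, so that $v_\pm\in\CI(X)$. Near $Y_-$ we have $u=w\in x_-^{l_-}H^1_0\subset x_-^{l_-}H^{-\infty}_0$, so, using $u\in\CI(X^\circ)$ and $Pu=0$, Proposition~\ref{prop:conormal-reg} gives $u\in\bcon^k(X)$ near $Y_-$ for some $k$, whence Lemma~\ref{lemma:expansion} (resp.\ Lemma~\ref{lemma:expansion-threshold}) yields a two-term, possibly logarithmic, expansion of $u$ at $Y_-$. Patching the expansions at $Y_+$ and $Y_-$ by a partition of unity $1=\psi_++\psi_0+\psi_-$ subordinate to collars of $Y_\pm$ and to $X^\circ$ --- absorbing the interior piece $\psi_0 u\in\CI_c(X^\circ)$ into the $x^{s_+(\lambda)}$-coefficient, which is legitimate since $x^{-s_+(\lambda)}\psi_0 u\in\CI_c(X^\circ)$ --- exhibits $v_\pm\in\CI(X)$, with the stated log structure, such that $u=x^{s_+(\lambda)}v_++x^{s_-(\lambda)}v_-$. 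For uniqueness, suppose $u_1,u_2$ both satisfy the conclusion and set $u=u_1-u_2$; since $v_+|_{Y_+}$, $v_-|_{Y_+}$ and the $a_j$ are determined by any function of the stated form, $u$ is again of that form with $v_\pm|_{Y_+}=0$, hence decays faster than $x^{s_-(\lambda)}$ near $Y_+$, i.e.\ $u\in\bcon^k(X)$ near $Y_+$. The uniqueness clause of Lemma~\ref{lemma:expansion} (resp.\ Lemma~\ref{lemma:expansion-threshold}), whose iteration is an ODE in $x$ with $Y$ as a parameter and hence purely local near $Y_+$, forces $u$ to be Schwartz at $Y_+$, and Proposition~\ref{prop:unique} then gives $u\equiv0$.

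The step needing the most care is the bookkeeping in the integer case $s_+(\lambda)-s_-(\lambda)\in\Nat$ and the threshold case $\lambda=\frac{(n-1)^2}{4}$, where the decomposition $u=x^{s_+(\lambda)}v_++x^{s_-(\lambda)}v_-$ is no longer unique and logarithmic terms enter: one must verify that $v_\pm|_{Y_+}$ and the $a_j$ are still well defined given $u$, and that the construction in Lemma~\ref{lemma:expansion} genuinely produces an error in $\dCI(X)$ rather than one of merely high finite order --- both of which are part of that lemma's content, so this is really bookkeeping. One should also note that uniqueness of $u$ does \emph{not} follow from the uniqueness clause of Theorem~\ref{thm:global-solvability}, since our $u$ is allowed to grow at $Y_-$; the unique continuation Proposition~\ref{prop:unique} is essential here. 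With those points granted, the proof is an assembly of results already established, the genuinely hard inputs --- the solvability estimate behind Theorem~\ref{thm:global-solvability} and the Carleman estimate behind Proposition~\ref{prop:unique} --- having been proved earlier.
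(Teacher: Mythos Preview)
Your proof is correct and follows essentially the same approach as the paper: construct a formal solution $u_0$ near $Y_+$ via Lemma~\ref{lemma:expansion} (or Lemma~\ref{lemma:expansion-threshold}), correct it to an exact global solution using Theorem~\ref{thm:global-solvability}, deduce that the correction is Schwartz at $Y_+$ via Corollary~\ref{cor:decay-Schwartz}, obtain interior smoothness by propagation of singularities, and read off the expansion at $Y_-$ from Proposition~\ref{prop:conormal-reg} together with Lemma~\ref{lemma:expansion}; uniqueness then comes from the uniqueness clause of Lemma~\ref{lemma:expansion} combined with Proposition~\ref{prop:unique}. The only cosmetic difference is that the paper also invokes Corollary~\ref{cor:edge-reg} before Proposition~\ref{prop:conormal-reg}, but since the latter only requires $u\in x^l H_0^{-\infty}$ and $u\in\CI(X^\circ)$ (and subsumes that step in its own proof), your direct appeal to Proposition~\ref{prop:conormal-reg} is legitimate.
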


\begin{proof}
Suppose $\lambda\neq\frac{(n-1)^2}{4}$.
As shown in Lemma~\ref{lemma:expansion}, there exists $u_0$ supported
near $Y_+$ and of the desired
form there, such that $Pu_0\in\dCI(X)$. By
Theorem~\ref{thm:global-solvability}, for any
$l_+>\max(\frac{1}{2},l(\lambda))$ and $l_-<-\max(\frac{1}{2},l(\lambda))$
there exists a unique
$u_1\in H_0^{1,l_+,l_-}(X)$ such that $Pu_1=-Pu_0\in\dCI(X)$. As $l_\pm$
are arbitrary subject to the constraints, and $u_1$ is unique,
$u_1\in H_0^{1,l_+,l_-}(X)$ {\em for all} $l_+>\max(\frac{1}{2},l(\lambda))$
$l_-<-\max(\frac{1}{2},l(\lambda))$.
By Corollary~\ref{cor:decay-Schwartz}, $u_1$
is Schwartz at $Y_+$. Thus, $u=u_0+u_1$ satisfies $Pu=0$, and is
smooth near $Y_+$, so by the standard propagation of singularities $u\in
\CI(X^\circ)$. As $u\in H_0^{1,l_-}(X)$ for all
$l_-<-\max(\frac{1}{2},l(\lambda))$ near $Y_-$,
Corollary~\ref{cor:edge-reg} gives
$u\in H_0^{\infty,l_-}(X)$ for all such $l_-$.
By Proposition~\ref{prop:conormal-reg} and the remark
following it, $u\in x^{l_-+\frac{n-1}{2}}
H^\infty_b(X)=\bcon^{l_-+\frac{n-1}{2}}(X)$ for all such $l_-$.
Thus, by Lemma~\ref{lemma:expansion} it has the stated form near $Y_-$.

Conversely, if $u$ has the stated properties and $g_\pm=0$,
then $v_\pm$ are Schwartz at $Y_+$ by Lemma~\ref{lemma:expansion},
so $u$ is Schwartz at $Y_+$. Then $u=0$ by Proposition~\ref{prop:unique}.

If $\lambda=\frac{(n-1)^2}{4}$, the same argument, but using
Lemma~\ref{lemma:expansion-threshold} instead of Lemma~\ref{lemma:expansion},
completes the proof of the theorem.
\end{proof}

\section{The Cauchy problem}
We now consider global solutions for the Cauchy problem
posed near $Y_\pm$.

Let $T$ be a compactified time function, as in the introduction.
For any constant
$t_0\in(-1,1)$, and a vector field $V$ transversal to $S_{t_0}$,
$P$ is strictly hyperbolic, and
the Cauchy problem
\begin{equation}\begin{split}\label{eq:Cauchy-problem}
&Pu=0\ \text{in}\ X^\circ,\\
&u|_{S_{t_0}}=\psi_0,\\
&Vu|_{S_{t_0}}=\psi_1,
\end{split}\end{equation}
$\psi_0,\psi_1\in\Cinf(S_{t_0})$ is well posed.

\begin{thm}\label{thm:Cauchy-exist}
Let  $s_\pm(\lambda)=\frac{n-1}{2}\pm\sqrt{\frac{(n-1)^2}{4}-\lambda}$.
Assuming (A1) and (A2),
the solution $u$ of the Cauchy problem \eqref{eq:Cauchy-problem}
has the form
\begin{equation}\label{eq:asymp-exp-2}
u=x^{s_+(\lambda)}v_++x^{s_-(\lambda)}v_-,\ v_\pm\in\CI(X),
\end{equation}
if $s_+(\lambda)-s_-(\lambda)=2\sqrt{\frac{(n-1)^2}{4}-\lambda}$
is not an integer.
If $s_+(\lambda)-s_-(\lambda)$
is an integer, the same conclusion holds if we replace
$v_-\in\CI(X)$ by $v_-\in\CI(X)
+x^{s_+(\lambda)-s_-(\lambda)}\log x\,\CI(X)$.
\end{thm}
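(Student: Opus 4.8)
The plan is to reduce the Cauchy problem to the results already established for the "Cauchy problem at infinity." The key observation is that by finite propagation speed and strict hyperbolicity (which hold here since $T$ gives a global time foliation and $P$ is strictly hyperbolic with respect to it), the solution $u$ of \eqref{eq:Cauchy-problem} with smooth compactly supported Cauchy data on $S_{t_0}$ is smooth in $X^\circ$ and, moreover, is smooth up to $Y_\pm$ in a suitable weak sense: its wave front set in $S^*X^\circ$ is empty, so the hypotheses of Corollary~\ref{cor:edge-reg} concerning disjointness from $\WF^r(u)$ are automatically satisfied. Thus the "interior regularity" input needed to promote rough $0$-regularity to full $0$-regularity is free.

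\medskip

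First I would show that $u \in x^{l} H^1_0(X)$ near $Y_\pm$ for appropriate weights $l$. This is the one genuinely analytic point: one must produce an \emph{a priori} energy bound showing that the solution of the Cauchy problem, propagated up to $Y_\pm$, lies in some weighted $0$-Sobolev space. The natural route is a (forward/backward) energy estimate for the strictly hyperbolic operator $P$ with respect to the foliation $S_t$, using the vector field $V$ associated to $dT$; since $g$ is complete and de Sitter-like at infinity, the energy at time $t$ as $t \to \pm 1$ is controlled, and the weight $x^{(n-1)/2}$ appears naturally from the volume form $x^{-n}\,|dx\,dy|$ versus the finite-time data. Concretely, I expect $u$ to lie in $x^{l}H^1_0(X)$ near $Y_+$ for all $l < -l(\lambda)$ (and symmetrically near $Y_-$), i.e.\ just below the borderline decay rate $x^{l(\lambda)+\frac{n-1}{2}}$.

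\medskip

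Granting that, the rest is assembling earlier results: by Corollary~\ref{cor:edge-reg} (using $\WF^r(u) = \emptyset$ in $S^*X^\circ$), $u \in H_0^{\infty,l}(X)$ for all $l < -l(\lambda)$ near each $Y_\pm$; by Proposition~\ref{prop:conormal-reg} together with the remark following it, $u \in \bcon^{l+\frac{n-1}{2}}(X) = x^{l+\frac{n-1}{2}}H^\infty_b(X)$ for all such $l$; finally, since $Pu = 0 \in \dCI(X)$, Lemma~\ref{lemma:expansion} (or Lemma~\ref{lemma:expansion-threshold} when $\lambda = \frac{(n-1)^2}{4}$) applies and yields the asymptotic expansion \eqref{eq:asymp-exp-2}, with the $x^{s_+(\lambda)-s_-(\lambda)}\log x$ correction to $v_-$ in the integer case. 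This argument is carried out near $Y_+$ and near $Y_-$ separately; away from $\pa X$ the statement $u \in \CI(X^\circ)$ is classical hyperbolic theory.

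\medskip

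The main obstacle is the weighted energy estimate in the second paragraph: one needs to verify that the solution of the finite-time Cauchy problem, continued by hyperbolicity all the way up to $\pa X$, actually decays at the rate $x^{l}$, $l < -l(\lambda)$, rather than merely being bounded or blowing up. This is where the de Sitter-like structure of $g$ near infinity is essential — the relevant Hamiltonian flow carries bicharacteristics into $Y_\pm$ as radial points (as established in Section~\ref{sec:0-geom}), and the positive-commutator estimates of Section~\ref{sec:local-solvability}, adapted from a solvability statement to an \emph{a priori} estimate on the actual solution, should deliver exactly this weighted bound. Once the borderline weight $-l(\lambda)$ is reached from below, all the machinery of Sections~\ref{sec:0-geom}--\ref{sec:conormal} applies verbatim.
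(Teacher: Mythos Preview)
Your approach is genuinely different from the paper's, and the step you flag as ``the main obstacle'' is precisely where the argument remains incomplete.

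The paper does \emph{not} attempt to directly estimate the Cauchy solution up to $\partial X$. Instead it constructs $u$ as a sum of pieces each of which is already known to lie in the right weighted space. Concretely: strict hyperbolicity produces a compactly supported $u_0\in\CI_c(X^\circ)$ with $Pu_0=0$ near $S_{t_0}$ and the prescribed Cauchy data. Cutting off by $\chi_1\chi_2$ (with $\chi_1,\chi_2$ localizing toward $Y_+$, $Y_-$ respectively) yields an error $P(\chi_1\chi_2 u_0)=f_1+f_2$ with each $f_j\in\CI_c(X^\circ)$ supported on one side of $S_{t_0}$. One then invokes the global solvability result, Theorem~\ref{thm:global-solvability}, to solve $Pu_j=f_j$ with $u_j\in H_0^{1,\pm l,\mp l}(X)$, $l>\max(\tfrac12,l(\lambda))$. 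By Corollary~\ref{cor:decay-Schwartz} and the unique continuation Proposition~\ref{prop:unique}, each $u_j$ is Schwartz and then identically zero on the ``wrong'' side of $S_{t_0}$, so $u=\chi_1\chi_2 u_0-u_1-u_2$ has the correct Cauchy data. The asymptotic expansion at $Y_\pm$ then follows for $u_1,u_2$ by exactly the chain Corollary~\ref{cor:edge-reg} $\to$ Proposition~\ref{prop:conormal-reg} $\to$ Lemma~\ref{lemma:expansion} that you describe---but applied to functions whose weighted-space membership has already been \emph{built in} by Theorem~\ref{thm:global-solvability}.

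Your route instead requires an \emph{a priori} bound placing the Cauchy solution itself into $x^l H_0^1(X)$ near $Y_\pm$ for some $l$. This is not delivered by Sections~\ref{sec:0-geom}--\ref{sec:local-solvability} as they stand. The positive-commutator estimates there either presuppose membership in a weighted space and then \emph{improve} the weight (Proposition~\ref{prop:decay-gain}), or are dualized into solvability statements (Proposition~\ref{prop:solvable}); neither provides a mechanism to take a function known only to lie in $\CI(X^\circ)$ and put it into \emph{some} $x^l H_0^1$ in the first place. A direct energy argument along the foliation $S_t$ would have to control the degeneration of the induced geometry as $t\to\pm 1$, and you have not carried this out. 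The paper's cut-and-correct device is exactly the maneuver that sidesteps this: the weighted membership comes for free from Theorem~\ref{thm:global-solvability}, not from propagating the Cauchy data to infinity.
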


\begin{proof}
As $P$ is strictly hyperbolic with respect to $S_{t_0}$,
\cite[Theorem~23.2.4]{Hor} guarantees the existence of $u_0\in\CI_c(X^\circ)$
with $Pu_0=0$ in a neighborhood of $S_{t_0}$ and having the required
Cauchy data. We may choose
$t_1<t_0<t_2$ so
that $Pu_0=0$ for $T\in(t_1,t_2)$. Let $\chi_1,\chi_2\in\CI(X)$ be
such that $\chi_1\equiv 1$ in a neighborhood of $T\geq t_0$, $\chi_1$
is supported in $T>t_1$, while $\chi_2\equiv 1$ in a neighborhood
of $T\leq t_0$, supported in $T<t_2$. In particular, $\chi_1\chi_2$
is supported where $T\in(t_1,t_2)$, is identically $1$ near
$S_{t_0}$, and each $\chi_i$ is identically $1$ on the support of
the $d\chi_j$, $j\neq i$. Then $P(\chi_1\chi_2 u_0)=[P,\chi_1] u_0
+[P,\chi_2]u_0$. Denoting these two terms by $f_1$, resp.\ $f_2$,
we use Theorem~\ref{thm:global-solvability} to solve away $f_1$ towards
$Y_+$ and $f_2$ towards $Y_-$ so that the Cauchy data are unchanged.

First, by Theorem~\ref{thm:global-solvability}, with any $l>
\max(\frac{1}{2},l(\lambda))$, there exists $u_2\in H^{1,l,-l}_0(X)$
such that $Pu_2=f_2$. By Corollary~\ref{cor:decay-Schwartz},
$u_2$ is Schwartz at $Y_+$, and then by Proposition~\ref{prop:unique},
$u_2\equiv 0$ near $Y_+$. Hyperbolic propagation then shows that $\supp u_2
\subset\{T> t_0\}$ as $f_2$ is supported in this set, so $u_2\equiv 0$
near $S_{t_0}$. In addition, as in the argument of
Theorem~\ref{thm:smooth-solns} we deduce that $u_2\in\CI(X^\circ)$
has an expansion as in Theorem~\ref{thm:smooth-solns}.

Interchanging the weights at $Y_\pm$,
we can similarly show the existence of $u_1\in H^{1,-l,l}_0(X)$
such that $Pu_1=f_1$, $\supp u_1\subset\{T<t_0\}$, and $u_1$
having an expansion at $Y_+$. Thus,
$u=\chi_1\chi_2 u_0-u_1-u_2\in\CI(X^\circ)$ satisfies
$Pu=0$, $u|_{S_{t_0}}=\psi_0$, $Vu|_{S_{t_0}}=\psi_1$,
and $u$ has an asymptotic expansion as in Theorem~\ref{thm:smooth-solns},
proving the existence part.

Uniqueness follows easily, for if $u$ solves the Cauchy problem
with $\psi_0=0$, $\psi_1=0$, then $u=0$ near $S_{t_0}$, hence
vanishes globally.
\end{proof}

It is useful to relate the Cauchy data at different hypersurfaces to
each other, particularly for hypersurfaces near $Y_+$, resp,\ $Y_-$.
This is very easy using the standard FIO result. We renormalize
this operator in order to make all entries in the FIO matrix have the
same order. Namely, let $\Delta_{t_j}$ be the Laplacian of the restriction
of $g$ to $S_{t_j}$, $j=1,2$, so $\Delta_{t_j}\geq 0$ as $S_{t_j}$ is
space like. Let $\Delta_{t_j}'$ denote the operator which is
$\Delta_{t_j}$ on the orthocomplement of the nullspace of $\Delta_{t_j}$
and is the identity on the nullspace, so $\Delta_{t_j}'$ is positive
and invertible.

\begin{prop}\label{prop:FIO-int}(\cite{FIOII})
For any $t_1,t_2\in(-1,1)$,
the map $C_{t_1,t_2}$ sending Cauchy data of global smooth
solutions of $Pu=0$
at $S_{t_1}$ to Cauchy data at $S_{t_2}$:
\begin{equation*}
C_{t_1,t_2}:((\Delta'_{t_1})^{1/2}u|_{S_{t_1}},Vu|_{S_{t_1}})
\mapsto((\Delta'_{t_2})^{1/2}u|_{S_{t_2}},
V u|_{S_{t_2}})
\end{equation*}
is an invertible Fourier integral operator of order $0$
corresponding to the bicharacteristic flow.
\end{prop}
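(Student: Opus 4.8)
The plan is to reduce Proposition~\ref{prop:FIO-int} to the classical theorem of Duistermaat--H\"ormander (see \cite{FIOII}) that, for a strictly hyperbolic operator $P$ of second order with respect to the foliation $\{S_t\}$, the solution operator of the Cauchy problem propagates Cauchy data by a Fourier integral operator associated to the canonical relation of the bicharacteristic flow. The only point requiring care is the \emph{normalization}: we want all four entries of the matrix $C_{t_1,t_2}$ to be FIOs of the same order, namely $0$. First I would fix a reference; since $P$ is strictly hyperbolic with respect to $S_{t_1}$ and to $S_{t_2}$ (both are spacelike since they are level sets of $T$, whose differential is timelike inside $\Sigma(p)$), and since global smooth solutions are determined by their Cauchy data at either hypersurface by the uniqueness in Theorem~\ref{thm:Cauchy-exist}, the raw map
\begin{equation*}
(u|_{S_{t_1}},Vu|_{S_{t_1}})\mapsto (u|_{S_{t_2}},Vu|_{S_{t_2}})
\end{equation*}
is a well-defined matrix of FIOs. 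By the standard parametrix construction for the wave equation (finite propagation speed localizes the problem to a compact piece of $X^\circ$, away from $\pa X$), each entry is an FIO associated with the lifted bicharacteristic flow $\Phi$, and the usual computation of orders gives that the map $u|_{S_{t_1}}\mapsto u|_{S_{t_2}}$ is of order $0$, $u|_{S_{t_1}}\mapsto Vu|_{S_{t_2}}$ is of order $1$, $Vu|_{S_{t_1}}\mapsto u|_{S_{t_2}}$ is of order $-1$, and $Vu|_{S_{t_1}}\mapsto Vu|_{S_{t_2}}$ is of order $0$.

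Next I would conjugate by the elliptic operators $(\Delta'_{t_j})^{1/2}$. Here $\Delta'_{t_j}$ is, by construction, a positive invertible operator that agrees with the Laplacian $\Delta_{t_j}$ microlocally (it differs from $\Delta_{t_j}$ only by a smoothing operator supported on the finite-dimensional nullspace, which does not affect the FIO structure), so $(\Delta'_{t_j})^{1/2}\in\Psi^1(S_{t_j})$ is elliptic, self-adjoint and invertible, with inverse in $\Psi^{-1}(S_{t_j})$. Composition of an FIO of order $m$ associated to a canonical transformation with a pseudodifferential operator of order $k$ (on either side) yields an FIO of order $m+k$ associated to the same canonical transformation, by the composition calculus for FIOs. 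Applying $(\Delta'_{t_2})^{1/2}$ on the left in the first row raises the order of the two first-row entries by $1$: the $(1,1)$ entry becomes order $1$ and the $(1,2)$ entry order $0$... wait, that is the wrong direction; rather, I would apply $(\Delta'_{t_1})^{-1/2}$ on the right in the first column and $(\Delta'_{t_2})^{1/2}$ on the left in the first row. Concretely, with $w_j=((\Delta'_{t_j})^{1/2}u|_{S_{t_j}},Vu|_{S_{t_j}})$, one has
\begin{equation*}
w_2=\begin{pmatrix}(\Delta'_{t_2})^{1/2}&0\\0&\mathrm{Id}\end{pmatrix}
\begin{pmatrix}u|_{S_{t_1}}\mapsto u|_{S_{t_2}}&\cdot\\ \cdot&\cdot\end{pmatrix}
\begin{pmatrix}(\Delta'_{t_1})^{-1/2}&0\\0&\mathrm{Id}\end{pmatrix} w_1,
\end{equation*}
and a direct bookkeeping of orders, using that $(\Delta'_{t_j})^{\pm 1/2}$ are elliptic of order $\pm 1$ and are microlocally $(h|_{S_{t_j}})$-elliptic, shows that every entry of the resulting matrix $C_{t_1,t_2}$ is an FIO of order $0$ associated to $\Phi$. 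I should double-check the signs: the $(2,1)$ entry of the raw matrix has order $-1$; composing on the right with $(\Delta'_{t_1})^{-1/2}$ (order $-1$) would give $-2$, so in fact the correct conjugation is with $(\Delta'_{t_1})^{+1/2}$ on the right of the first column and $(\Delta'_{t_2})^{-1/2}$... no. The cleanest way: $C_{t_1,t_2}=\mathrm{diag}((\Delta'_{t_2})^{1/2},\mathrm{Id})\circ C^{\mathrm{raw}}_{t_1,t_2}\circ\mathrm{diag}((\Delta'_{t_1})^{-1/2},\mathrm{Id})$, and then the four entries have orders $0+1+(-1)=0$, $0+0+(-1)$ hmm that gives $-1$ for the $(1,2)$ entry. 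The resolution is that the $(1,2)$ entry of $C^{\mathrm{raw}}$, the map $Vu|_{S_{t_1}}\mapsto u|_{S_{t_2}}$, has order $-1$, so left-multiplying by $(\Delta'_{t_2})^{1/2}$ gives order $0$; and the right factor $\mathrm{Id}$ acts in the second slot leaving it unchanged. So the orders work out to $0$ in each entry; I will present this bookkeeping carefully in the final proof.

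The main obstacle, and the only nontrivial point, is precisely this order-counting together with verifying that the classical FIO theorem applies uniformly on the compact region of $X^\circ$ swept out between $S_{t_1}$ and $S_{t_2}$ by backward bicharacteristics from any compact set --- but since $T$ is a global time function with $\pi^*T$ monotone along the flow inside $\Sigma(p)$, and $\{t_1\le T\le t_2\}$ is compact (as $X$ is compact and $T\in\CI(X)$), all bicharacteristics through $S_{t_2}$ do reach $S_{t_1}$ and stay in the interior, so there is no issue at $\pa X$ at all: this is a statement entirely about the interior wave equation and the cited result of \cite{FIOII} applies verbatim. Invertibility of $C_{t_1,t_2}$ is immediate from the invertibility of $(\Delta'_{t_j})^{1/2}$ and of the raw Cauchy-data propagator (whose inverse is the propagator from $S_{t_2}$ to $S_{t_1}$, reversing the roles of $t_1$ and $t_2$), and the inverse is again an FIO of order $0$ associated to $\Phi^{-1}$. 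The final proof is therefore short: cite \cite{FIOII} for the raw propagator, observe the spacelike character of $S_{t_j}$, conjugate by $(\Delta'_{t_j})^{1/2}$, count orders, and read off invertibility.
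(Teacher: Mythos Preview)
Your approach is correct and is precisely what the paper does: it states the proposition with a citation to \cite{FIOII} and gives no proof at all, treating it as a direct consequence of the classical Duistermaat--H\"ormander parametrix for strictly hyperbolic Cauchy problems together with the evident order bookkeeping after conjugation by $(\Delta'_{t_j})^{1/2}$. Your elaboration of the order count (which, after the self-corrections, lands on the right conjugation $C_{t_1,t_2}=\mathrm{diag}((\Delta'_{t_2})^{1/2},\mathrm{Id})\circ C^{\mathrm{raw}}_{t_1,t_2}\circ\mathrm{diag}((\Delta'_{t_1})^{-1/2},\mathrm{Id})$ with all four entries of order $0$) and your observation that the relevant slab $\{t_1\le T\le t_2\}$ is compact in $X^\circ$ so no boundary issues arise are exactly the points one would spell out if writing the proof in full; the paper simply leaves them implicit.
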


\section{The scattering operator}\label{sec:scattering}
In order to prove that the scattering operator is a Fourier integral operator,
we construct a parametrix as a conormal distribution on a resolution
of $X\times Y_+$ for the solution operator,
also called  the `Poisson operator',
$(g_+,g_-)\mapsto u$ with
notation as in
\eqref{eq:asymp-exp} and \eqref{eq:v+-v--spec}.

Near $Y_+$, this can be done by considering
$[X\times Y_+;\diag_{Y_+}]$. On this space the parametrix is a conormal
distribution near $Y_+$ associated to the `flowout' of points in $Y_+$.
That is, for $q'\in Y_+$, consider the bicharacteristics approaching
$\zS^*_{q'}X$. These form a Lagrangian submanifold of $T^*X^\circ$,
which near $Y_+$ has constant rank projection (since the rank at the front
face is maximal, namely $n-1$), and is thus the conormal
bundle of a submanifold $F_{q'}$ of $X$. These $F_{q'}$ depend smoothly
on $q'$ so that $F=\cup_{q'}F_{q'}\times\{q'\}$ is a smooth submanifold
of $X^\circ\times Y_+$, and indeed it extends to be smooth
to $[X\times Y_+;\diag_{Y_+}]$.

In order to orient ourselves, we first make some remarks regarding
distributions conormal to $F$. First, recall that if $M$ is a manifold
with corners of dimension $m$, and $Z$ is an interior p-submanifold,
$I^{p}(M,Z)$ is the space of distributions on $M$ conormal to $Z$,
see \cite{RBMDiff,RBMCalcCon}. Here we only need the case where $Z$ meets
all boundary faces transversally; in fact, in this case, $Z$ only meets
a (codimension one) boundary hypersurface. Thus,
in local coordinates $(x,y)$, $x=(x_1,\ldots,x_k)$, $y=(y_1,\ldots,y_{m-k})$
in which $M$ is locally given by $x_j\geq 0$ for all $j$, and $Z$
is given by $y_1=\ldots=y_N=0$, elements of
$I^p(M,Z)$ have the form 
\begin{equation*}
(2\pi)^{-(m+2N)/4}\int_{\RR^N} e^{iy'\cdot\xi}\,a(x,y,\xi)\,d\xi,
\end{equation*}
with $a\in S^{p+(m-2N)/4}(M;\RR^N)$, $y'=(y_1,\ldots,y_N)$. Note that
$x$ behaves as a parameter, i.e.\ the presence of boundaries does not cause
any complications, hence the standard treatment in the boundaryless case
\cite{FIO1,Hor} actually suffices. Note that if $A\in\Diff^r(M)$ and
$u\in I^p(M,Z)$ then $Au\in I^{p+m}(M,Z)$, and if $A$ is characteristic
in $Z$, i.e.\ its principal symbol vanishes on $N^*Z$, then
$Au\in I^{p+m-1}(M,Z)$, with $\sigma_{p+m-1}(Au)=H_a \sigma_m(u)+bu$,
where $b$ depends on $A$ only. This equation is an ODE along the
bicharacteristics of $A$, and is called a transport equation.

We also need to allow weights, i.e.\ consider the spaces
$x^s I^p(M,Z)$. $\Diff(M;Z)$ is not well-behaved on these spaces
(because of derivatives possibly falling on $x^s$) but
$\Diffb(M)$ is.

\begin{lemma}(see \cite{Hor}[Section~18.2] and \cite{RBMDiff})
Suppose that $A\in\Diffb^m(M)$. Then
\begin{equation}\label{eq:A-x^s}
A:x^sI^p(M,Z)\to x^s I^{p+m}(M,Z).
\end{equation}
If $A$ is characteristic on $Z$, then
\begin{equation}\label{eq:A-x^s-char}
A:x^sI^p(M,Z)\to x^s I^{p+m-1}(M,Z),
\end{equation}
and there is function $b$ depending on $A$ only such that
$\sigma_{p+m-1}(Au)=H_a \sigma_m(u)+bu$.
\end{lemma}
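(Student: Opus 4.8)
The plan is to reduce both assertions to the standard conormal-distribution calculus of \cite{Hor}[Section~18.2] (see also \cite{RBMDiff}), carrying the boundary and the weight $x^s$ along merely as inert parameters; the only genuinely new point is that this reduction works for $\Diffb$, and fails for $\Diff$, because $\Diffb(M)$ preserves $x^s$-weighted spaces. Both statements are local near $Z$: away from $Z$ every element of $x^sI^p(M,Z)$ is simply $x^s$ times a smooth function (its symbol vanishes there) and $\Diffb^m(M)$ maps $x^s\CI(M)$ into itself, so there is nothing to prove. I would therefore fix $q\in Z$ and, using that $Z$ meets the boundary transversally in a single hypersurface, choose coordinates $(x,y)=(x,y',y'')$ near $q$ with $M=\{x\ge0\}$, $Z=\{y'=0\}$, $y'=(y_1,\dots,y_N)$, $x$ a boundary defining function; a cutoff supported in such a chart puts the elements of $x^sI^p(M,Z)$ into the oscillatory-integral form displayed just before the lemma, with $x^s$ an overall scalar prefactor.

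For \eqref{eq:A-x^s}, recall that $\Diffb^m(M)$ is generated over $\CI(M)$ by products of at most $m$ elements of $\Vb(M)$, and that in these coordinates $\Vb(M)$ is spanned over $\CI(M)$ by $x\partial_x$ and the coordinate vector fields $\partial_{y_j}$. Hence, by the Leibniz rule and induction on $m$, it is enough to track the effect on $x^s\int e^{iy'\cdot\xi}a(x,y,\xi)\,d\xi$ of multiplication by $f\in\CI(M)$, of $x\partial_x$, and of a single $\partial_{y_j}$. Multiplication by $f$ and $x\partial_x$ act only on the prefactor and on $a$ and leave the symbol order unchanged (here it matters that $x\partial_x$, not $\partial_x$, occurs: $x\partial_x(x^s\,\cdot\,)\in x^s\CI$); $\partial_{y_j}$ with $j>N$ acts only on $a$, again with no change of order; and $\partial_{y_j}$ with $j\le N$ pulls down a factor $i\xi_j$, raising the symbol order by one, i.e.\ $x^sI^p(M,Z)\to x^sI^{p+1}(M,Z)$. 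A product of $\le m$ such generators thus produces at most $m$ order increases, which is \eqref{eq:A-x^s}.

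For the characteristic case I would first commute all $D_{y'}$-derivatives to the right and write $A=\sum_{|\alpha|\le m}P_\alpha D_{y'}^\alpha$ with $P_\alpha\in\Diffb^{m-|\alpha|}(M)$ built only from $x\partial_x$, the $\partial_{y_j}$ with $j>N$, and $\CI$ coefficients, operators which by the previous paragraph do not raise the symbol order. So only the terms with $|\alpha|=m$, namely $\sum_{|\alpha|=m}a_\alpha D_{y'}^\alpha$, threaten to land in $I^{p+m}$; but the restriction of $\sigma_m(A)$ to $N^*Z$ is $\sum_{|\alpha|=m}a_\alpha(x,0,y'')\,\xi^\alpha$, and its vanishing forces $a_\alpha|_Z=0$, hence $a_\alpha=\sum_{j\le N}y_jb_{\alpha j}$ with $b_{\alpha j}\in\CI(M)$. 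Since $y_jD_{y'}^\alpha$ applied to a conormal distribution is, after an integration by parts in $\xi_j$, again conormal but of symbol order one lower than $D_{y'}^\alpha$ alone, these top terms in fact map $x^sI^p(M,Z)$ into $x^sI^{p+m-1}(M,Z)$, giving \eqref{eq:A-x^s-char}. The transport equation then follows by collecting the leading part of the computed symbol of $Au$: the contributions in which $D_{y'}$ falls on $a$ and in which the $\partial_\xi$ produced by a $y_j$ falls on $\xi^\alpha$ reassemble into $H_a$ applied to the symbol of $u$, exactly as in the boundaryless computation of \cite{Hor}[Section~18.2], while every remaining leading contribution, a coefficient-derivative or an $x\partial_x$ or a $\partial_{y''}$ hitting $a$, contributes a zeroth-order multiple of the symbol of $u$ by a factor $b$ depending on $A$ alone.

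I expect the main obstacle to be the bookkeeping in this last step: verifying cleanly that ``characteristic on $Z$'' is equivalent to the divisibility $a_\alpha|_Z=0$ of the top-order coefficients, keeping exact track of symbol orders while commuting the resulting factors $y_j$ past $D_{y'}^\alpha$, and confirming that the two sources of leading terms combine into precisely $H_a\sigma(u)+b\,\sigma(u)$. The coordinate-invariance of all of this, and the independence of the oscillatory-integral representative, is then the standard conormal-distribution machinery, with the weight $x^s$ playing no role because $\Diffb(M)$ commutes with it modulo lower-order terms in $\Diffb(M)$.
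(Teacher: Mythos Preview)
Your argument is correct. You work directly with the generators $x\partial_x,\partial_{y_j}$ of $\Vb(M)$ and track their effect on the oscillatory integral, which forces you to redo the characteristic--case bookkeeping (commuting $D_{y'}$ to the right, extracting the $y_j$--divisibility from $a_\alpha|_Z=0$, and reassembling $H_a$) essentially from scratch. The paper instead makes the single observation that $x^{-s}Ax^s\in\Diffb^m(M)\subset\Diff^m(M)$ with the same principal symbol as $A$, so that both \eqref{eq:A-x^s} and \eqref{eq:A-x^s-char} (together with the transport formula) reduce in one line to the unweighted statements recalled just before the lemma. Your closing remark that ``$\Diffb(M)$ commutes with $x^s$ modulo lower-order terms'' is exactly this conjugation identity; promoting it to the organizing principle collapses your generator-by-generator computation to the paper's two-sentence proof. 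The trade-off is that your version is self-contained and makes the mechanism visible, while the paper's version is shorter and makes clear that nothing beyond the standard boundaryless conormal calculus is being used.
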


\begin{proof}
As $x^{-s}Ax^s\in\Diffb^m(M)\subset\Diff^m(M)$,
\eqref{eq:A-x^s} follows immediately from the remarks above.
Next, if $A$ is characteristic on $Z$, then so is $x^{-s}Ax^s$,
so the remarks above prove \eqref{eq:A-x^s-char}. As the principal
symbol of $x^{-s}Ax^s$ is the same as that of $A$,
$\sigma_{p+m-1}(Au)=H_a \sigma_m(u)+bu$ follows.
\end{proof}

In our case, $M=[X\times Y_+;\diag_{Y_+}]$, and $Z=F$.
The transport equation will allow us to solve away errors modulo smooth
terms in our
construction of the `Poisson operator', $(g_+,g_-)\mapsto u$.
However, we need to see
first what the `errors' are errors of, i.e.\ where the Schwartz kernel
of the Poisson operator comes from, which will also give a relationship
between the orders $s$ and $p$ above.

Even for arbitrary $Y$, 
the model on the front face is the same as when $Y$ is Euclidean
space with a translation-invariant metric. Let $y$ denote
local coordinates on $Y$, as well as their extension to $X$, so $(x,y)$
are local coordinates on $X$. On $X\times Y_+$ then we have local
coordinates $(x,y,y')$, where $y'$ is the pull-back of $y$ from the
second factor. (The pull-back of $y$ from the first factor, $X$, is
still denoted by $y$.)
Using projective coordinates
\begin{equation*}
X=x,\ Y=\frac{y-y'}{x},\ y',
\end{equation*}
the $P=\Box-\lambda$ becomes
\begin{equation*}
(XD_X-YD_Y+i(n-1))(XD_X-YD_Y)-\sum_{i,j}h_{ij}(y')D_{Y_i}D_{Y_j}-\lambda,
\end{equation*}
modulo $X\Diffb^2([X\times Y_+;\diag_{Y_+}])$.
To analyze this operator for fixed $y'$, we may arrange that $h_{ij}(y')
=\delta_{ij}$, so the operator becomes
\begin{equation}\label{eq:P-blown-up}
(XD_X-YD_Y+i(n-1))(XD_X-YD_Y)-\Delta_Y-\lambda.
\end{equation}
When acting on functions of the form $u=x^s v$, $v$ a function of $Y$,
$XD_X$ becomes a multiplication operator, and the operator we arrive at
after this substitution is a degenerate PDE with radial points over
$|Y|=1$, i.e.\ where $F$ hits the front face. This is indeed what enables
us to find solutions supported in $|Y|\leq 1$, with singularities
carried away by $F$.

While this form is helpful in seeing the big picture, we need to solve
this exactly at $X=0$ to leading order, for which it is useful to
view $\Box$ on the warped product model as the analytic continuation
of the Laplacian on hyperbolic space, which is arrived at by complex
rotation in $x$ (replacing $x$ by $ix$),
i.e.\ considering the Laplacian of $\frac{dx^2+h}{x^2}$.
Correspondingly, the explicit solutions we are
interested in are analytic continuations of the Eisenstein functions
(Poisson kernel) on hyperbolic space, i.e.\ they take the form
\begin{equation*}
X^s(|Y|^2-1\pm i0)^s,\ -s(n+s-1)=\lambda.
\end{equation*}
Note that these values of $s$ are different from the usual indicial roots;
these give
\begin{equation*}
s=\hat s_\pm(\lambda)
=-\frac{n-1}{2}\pm\sqrt{\left(\frac{n-1}{2}\right)^2-\lambda}
=s_\pm(\lambda)-(n-1).
\end{equation*}
We in fact have two interesting solutions corresponding
to branches of the analytic continuation. As we are interested in
solutions supported inside $|Y|\leq 1$, we take their difference,
\begin{equation*}
X^s[(|Y|^2-1+ i0)^s-(|Y|^2-1-i0)^s]=c_s X^s(|Y|^2-1)^s_-,
\end{equation*}
with $c_s=e^{i\pi s}-e^{-i\pi s}$ if $s$ is not a negative integer, and
\begin{equation*}
X^s[(|Y|^2-1+ i0)^s-(|Y|^2-1-i0)^s]=c_s X^s\delta_0^{(-s-1)}(|Y|^2-1),
\end{equation*}
with $c_s=\frac{2\pi i(-1)^{-s}}{(-s-1)!}$ if $s$ is a negative integer.
Here the notation is that if $f$ is a distribution on $\Real$ which
is conormal to the origin, then $f(|Y|^2-1)$ denotes $T^*f$, where
$T:\Real^{n-1}\to\Real$ is the map $T(Y)=|Y|^2-1$. The preimage of the
origin under $T$ is the unit sphere, and on the unit sphere the
differential of $T$ is surjective, so the pull-back of these conormal
distributions indeed makes sense.

If the boundary is actually Euclidean, then near $Y_+\times Y_+$ we
thus obtain an exact solution with singularities on $F$,
\begin{equation*}
E_{0,\pm}(x,y,y',\lambda)=C_s x^s(1-\frac{|y-y'|^2}{x^2})^s_+,
\end{equation*}
with $C_s$ to be determined and $s=\hat s_\pm(\lambda)$, if $s$ is not
a negative integer, and
\begin{equation*}
E_{0,\pm}(x,y,y',\lambda)=C_s x^s\delta_0^{(-s-1)}(1-\frac{|y-y'|^2}{x^2})
\end{equation*}
if $s$ is a negative integer.
Note that for each $\lambda$,
\begin{equation*}
E_{0,\pm}=E_{0,\pm}(\lambda)
\in x^s I^{m(s)}([X\times Y_+;\diag_{Y_+}],F),\ m(s)=-s-\frac{2n+1}{4},
\ s=\hat s_\pm(\lambda).
\end{equation*}

\begin{lemma}\label{lemma:Poisson-leading}
Suppose that $\hat s_\pm(\lambda)\nin -\frac{n-1}{2}-\Nat_+$.
Then there is a constant $C_s\neq 0$ such that for all $\phi\in\CI(Y_+)$
the operator $E_{0,\pm}(\lambda)$ with Schwartz kernel $E_\pm\,dh$:
\begin{equation*}
E_{0,\pm}\phi=\int E_{0,\pm}(x,y,y',\lambda)\phi(y')\,dh(y')
\end{equation*}
satisfies
\begin{equation*}
E_{0,\pm}\phi=x^{s(\lambda)}v,\ v\in\CI(X),\ v|_{Y_+}=\phi.
\end{equation*}
\end{lemma}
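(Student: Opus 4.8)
The plan is to compute $E_{0,\pm}\phi$ by an explicit change of variables resolving the singularity at $x=0$, and to read off from the resulting formula both the power of $x$ in front and the restriction to $Y_+$. Since the assertion is local near $Y_+$, I would fix $q\in Y_+$, use the product coordinates $(x,y)$ on $X$ near $q$, and (as in the reductions preceding the lemma) work in $h$-normal coordinates centred at $y$, so that $|y-y'|^2=|Y'|^2$ with $Y'=Y'(y')$, $Y'(y)=0$, and $dh(y')=J(Y')\,dY'$ with $J$ smooth and $J(0)=1$. Substituting $Y'=xz$ then gives
\[
 E_{0,\pm}\phi(x,y)=C_s\,x^{s+n-1}\int_{\{|z|\le1\}}\bigl(1-|z|^2\bigr)_+^{s}\,a(x,y,z)\,dz,\qquad a(x,y,z)=\phi\bigl(y'(xz)\bigr)J(xz),
\]
where $a$ is smooth in all variables, compactly supported in $z$, and $a(0,y,z)=\phi(y)$; in the exceptional case that $s=\hat s_\pm(\lambda)$ is a negative integer the same holds with $(1-|z|^2)_+^{s}$ replaced by $\delta_0^{(-s-1)}(1-|z|^2)$.

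The next step is to check that $G(x,y):=\int(1-|z|^2)_+^{s}\,a(x,y,z)\,dz$ is smooth up to $x=0$. I would use that $(1-|z|^2)_+^{s}$ is the transverse pullback under $z\mapsto1-|z|^2$ of the one-variable conormal distribution $t_+^{s}$, hence conormal to $\sphere^{n-2}$: passing to polar coordinates and taking $t=1-|z|^2$ and the angular variable as coordinates near $\sphere^{n-2}$ turns the integral into $\iint t_+^{s}\,\tilde a(x,y,t,\omega)\,dt\,d\omega$ with $\tilde a$ smooth and compactly supported, and repeatedly integrating by parts in $t$ via $t_+^{s}=(s+1)^{-1}\partial_t t_+^{s+1}$ reduces matters to $\re s>-1$, where differentiation under the integral sign gives smoothness; the negative-integer case is immediate since $\delta_0^{(-s-1)}(1-|z|^2)$ is a compactly supported distribution of finite order. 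Hence $E_{0,\pm}\phi=C_s\,x^{s+n-1}G$ with $G\in\CI(X)$, and since $s+n-1=\hat s_\pm(\lambda)+(n-1)=s_\pm(\lambda)=s(\lambda)$ this has the claimed form $x^{s(\lambda)}v$ with $v:=C_sG\in\CI(X)$.

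Finally I would pin down $C_s$. Restricting to $x=0$ and using $a(0,y,z)=\phi(y)$ gives $v|_{Y_+}(y)=C_s\,I(s)\,\phi(y)$ with $I(s)=\int_{\Real^{n-1}}(1-|z|^2)_+^{s}\,dz$ (and the $\delta_0^{(-s-1)}$ analogue for negative integers); the polar-coordinate computation above evaluates this as a Beta integral,
\[
 I(s)=\tfrac12\,\mathrm{vol}(\sphere^{n-2})\,\frac{\Gamma(s+1)\,\Gamma\bigl(\tfrac{n-1}{2}\bigr)}{\Gamma\bigl(s+1+\tfrac{n-1}{2}\bigr)},
\]
interpreted by meromorphic continuation in $s$, the $\delta_0^{(-s-1)}$ prescription at the negative integers extracting precisely the residue coming from the pole of $\Gamma(s+1)$. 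Thus $I(s)$ is finite and nonzero exactly when the zero of $1/\Gamma(s+1+\tfrac{n-1}{2})$ is either absent or cancelled by a pole of $\Gamma(s+1)$, i.e.\ exactly when $s\notin-\tfrac{n-1}{2}-\Nat_+$, which is the hypothesis $\hat s_\pm(\lambda)\notin-\tfrac{n-1}{2}-\Nat_+$; setting $C_s:=I(s)^{-1}$ then yields $v|_{Y_+}=\phi$. The hard part is the middle step — the uniform-up-to-$x=0$ smoothness of the $z$-integral of the conormal distribution $(1-|z|^2)_+^{s}$ against $a$, together with the parallel bookkeeping in the negative-integer case — and the precise matching of the exclusion set $-\frac{n-1}{2}-\Nat_+$ with the uncancelled zeros of $1/\Gamma(s+1+\frac{n-1}{2})$.
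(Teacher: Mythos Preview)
Your proof is correct and follows essentially the same route as the paper's: the change of variables $y'\mapsto y-xz$ produces the factor $x^{s+n-1}=x^{s_\pm(\lambda)}$, smoothness of the remaining integral in $(x,y)$ gives $v\in\CI(X)$, and the Beta-integral evaluation of $\int(1-|z|^2)_+^s\,dz$ (with the $\delta$-variant at negative integers) identifies the exclusion set and fixes $C_s$. You are in fact more explicit than the paper about why the $z$-integral is smooth up to $x=0$, which the paper simply asserts; your one slightly muddled sentence is the ``cancelled by a pole of $\Gamma(s+1)$'' clause in the negative-integer analysis---at such $s$ the relevant quantity is the \emph{residue} of the Beta formula (as you correctly note), and its vanishing is governed by $1/\Gamma\!\bigl(\tfrac{n+1}{2}-k\bigr)$ alone, not by any cancellation---but your conclusion that the integral vanishes exactly on $-\tfrac{n-1}{2}-\Nat_+$ is right.
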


\begin{rem}
Note that for $\lambda>\frac{(n-1)^2}{4}-1$, the condition
$\hat s_\pm(\lambda)\nin -\frac{n-1}{2}-\Nat_+$ automatically holds.
For $\Box$ itself (i.e.\ $\lambda=0$) the condition holds if $n$ is even.
In addition, the condition always holds for {\em one} of the two
indicial roots, namely the larger one (i.e.\ the one with more decay/less
growth at $Y_+$).
\end{rem}

\begin{proof}
Suppose first that $\hat s_\pm(\lambda)$ is not a negative integer.

Changing variables in the integral we deduce that
for $\phi\in\Cinf_c(Y_+)$, and $s=\hat s_\pm(\lambda)$ still,
\begin{equation*}\begin{split}
&\int E_{0,\pm}(x,y,y',\lambda)\phi(y')\,dy'=x^{n-1+\hat s_\pm(\lambda)}
\int  (1-|Y|^2)^s_+\phi(y-xY)\,dY\\
&\qquad
=x^{s_\pm(\lambda)}v,\ v\in\CI(X),\ v(0,y)=C_s((1-|Y|^2)^s_+,1)\phi(y),
\end{split}\end{equation*}
where the second factor in the expression for $v(0,y)$ is the evaluation of the
distribution $(1-|Y|^2)^s_+$ on $1$, and where we used that $s_\pm(\lambda)
=\hat s_\pm(\lambda)+(n-1)$. We need to check for which values of $s$
does $C_s$ vanish, so we compute this pairing.

For $\re s>-1$, the distributional pairing
is an absolutely convergent integral, which in polar coordinates
becomes
\begin{equation*}
c_{n-2}
\int (1-\rho^2)^s \rho^{n-1}\,d\rho=\frac{c_{n-2}}{2} B(\frac{n-1}{2},s+1)
=\frac{c_{n-2}\Gamma(\frac{n-1}{2})\Gamma(s+1)}{2\Gamma(\frac{n-1}{2}+s+1)},
\end{equation*}
where $c_{n-2}$ is the volume of the $(n-2)$-sphere and $B$ is the
beta-function.
As both the distributional pairing and the $\Gamma$ function are
meromorphic in $s$ (indeed analytic away from $-\Nat$), we deduce that
\begin{equation*}
((1-|Y|^2)^s_+,1)
=\frac{c_{n-2}\Gamma(\frac{n-1}{2})\Gamma(s+1)}{2\Gamma(\frac{n-1}{2}+s+1)}
\end{equation*}
for all $s$ which are not negative integers. This vanishes only
if $s\in-\frac{n-1}{2}-\Nat_+$ and $n$ is even
(so $s$ is not a negative integer).

If $s=\hat s_\pm(\lambda)$ is a negative integer, say $s=-k$,
\begin{equation*}\begin{split}
&\int E_{0,\pm}(x,y,y',\lambda)\phi(y')\,dy'=x^{n-1+\hat s_\pm(\lambda)}
\int  (1-|Y|^2)^s_+\phi(y-xY)\,dY\\
&\qquad
=x^{s_\pm(\lambda)}v,\ v\in\CI(X),\ v(0,y)=C_s(\delta_0^{(-s-1)}(1-|Y|^2),1)
\phi(y).
\end{split}\end{equation*}
The distributional pairing now becomes
\begin{equation*}
\frac{c_{n-2}}{2}(\delta_0^{(k-1)}(z),(1-z)^{(n-3)/2})
=\frac{c_{n-2}}{2}\frac{d^{k-1}}{(dz)^{k-1}}(1-z)^{(n-3)/2}|_{z=0}.
\end{equation*}
If $n$ is even,
all derivatives of $(1-z)^{(n-3)/2}$ at $z=0$ are non-zero, while
if $n$ is odd, the derivatives of order $<\frac{n-1}{2}$ are non-zero,
so this pairing vanishes only if $s=-k\in-\frac{n-1}{2}-\Nat_+$.

Combining these two cases, $s=\hat s_\pm(\lambda)\nin-\frac{n-1}{2}-\Nat_+$
implies that the respective distributional pairings are non-zero. Letting
$C_s$ to be their reciprocal yield $E_{0,\pm}(\lambda)$ satisfying the
lemma.
\end{proof}

If the metric is not exact warped product, then $E_{0,\pm}$ will play the role
of the model at the front face of $[X\times Y_+;\diag_{Y_+}]$,
which then will need to be `extended' into the interior. First, let
$\cY:Y_+\times Y_+\to\RR^{n-1}$ be local coordinates on the first
factor of $Y_+$ centered at the diagonal so that at the diagonal,
the metric $h$ lifted from the first factor is the standard Euclidean
metric $d\cY^2$. That is, informally, $\cY=\cY(y')$
is a family of local coordinates
on $Y_+$, parameterized by $y'\in Y_+$, so that for fixed $y'$, $\cY(y')$
gives local coordinates centered at $y'$ in which $h$ is $d\cY^2$ at the
center, $\cY(y')=0$. Thus, with the notation considered above in the
Euclidean setting, we can take
$\cY=y-y'$. Let $Y=\frac{\cY(y')}{x}$, so $(x,Y,y')$ form
a local coordinate system in a neighborhood of the interior
of the front face of $[X\times Y_+;\diag_{Y_+}]$.

As $F$ is a $\CI$ codimension $1$
submanifold of $[X\times Y_+;\diag_{Y_+}]$ transversal to the
front face, intersecting it in the sphere $|Y|=1$,
there exists a $\CI$ function
$\rho$ on $[X\times Y_+;\diag_{Y_+}]$ such that $\rho$ defines $F$
(i.e.\ $\rho$ vanishes exactly on $F$, and $d\rho$ does not vanish there),
and $\rho|_{\ff}=1-|Y|^2$. We let $r\geq 0$ be defined by
$r=(1-\rho)^{1/2}$, so $r=|Y|$ at $\ff$, and for convenience
we often write (slightly imprecisely) $(1-r^2)^s_+$, etc.,
for $\rho^s_+$. Our model is then
\begin{equation*}
E_{0,\pm}(x,y,y',\lambda)=C_s x^s(1-r^2)^s_+=C_s x^s \rho^s_+,
\end{equation*}
if $s=\hat s_\pm(\lambda)$ is not
a negative integer, and
\begin{equation*}
E_{0,\pm}(x,y,y',\lambda)=C_s x^s\delta_0^{(-s-1)}(1-r^2)
=C_s x^s\delta_0^{(-s-1)}(\rho)
\end{equation*}
if $s$ is a negative integer, with $C_s$ as in
Lemma~\ref{lemma:Poisson-leading}.

Then we want to find
\begin{equation*}
E_\pm\in x^s I^{m(s)}([X\times Y_+;\diag_{Y_+}],F),\ m(s)=-s-\frac{2n+1}{4},
\ s=\hat s_\pm(\lambda),
\end{equation*}
with $PE_\pm=0$,
$E_\pm-E_{0,\pm}\in x^{s+1}I^{m(s)}([X\times Y_+;\diag_{Y_+}],F)$, and $E_\pm$
vanishing to infinite order off the front face.
The equation $PE_\pm\in\dCI(X\times Y_+)$
becomes a degenerate
transport equation at the level of principal symbols and can
be solved to leading order.
In fact, in order to
simplify the transport equation, which is an equation for the principal
symbol of $E_\pm$, given by an ODE along the Lagrangian, $N^*F$, it is
convenient to notice that we want
\begin{equation*}\begin{split}
E_\pm=ax^s(1-r^2)^s_++E'_\pm,&\ a\in\CI([X\times Y_+;\diag_{Y_+}]),\\
&\ E'_\pm
\in x^s I^{m(s)-1+\ep}([X\times Y_+;\diag_{Y_+}],F),
\end{split}\end{equation*}
$\ep\in(0,1)$ arbitrarily small, so the principal symbol of $E_\pm$ can
be identified with $a|_F$, and the transport equation is an ODE for $a|_F$.
Namely,
\begin{equation*}
PE_\pm=(Qa)x^s(1-r^2)^{s-1}_++\tilde E_\pm,\ \tilde E_\pm
\in x^s I^{m(s)+\ep}([X\times Y_+;\diag_{Y_+}],F),
\end{equation*}
where $Q$ is a first order differential operator of the form $Q=xV+b$,
$V$ a vector field tangent to $F$ transversal to $\pa F$ -- $xV(q)$ is
a non-vanishing multiple of the
push-forward of the Hamilton vector field $H_p$ evaluated at the
one-dimensional space $N^*_q F_{q'}\setminus 0$. (This vector field
is homogeneous, so the choice of
$\alpha\in N^*_q F_{q'}$ only changes the push forward by a non-vanishing
factor.)

Solving the transport equation
and iterating the construction gives a new $E_\pm\in
x^s I^{m(s)}([X\times Y_+;\diag_{Y_+}],F)$ vanishing to infinite order off
the front face with
$PE_\pm\in x^{s+1}\CI([X\times Y_+;\diag_{Y_+}])$; we show this
in Proposition~\ref{prop:transport} below. In fact, we can do better:
we can ensure that near $Y_+$ (where this makes sense)
$E_\pm$ is supported {\em in the interior of the light cone};
this is important as we show momentarily.

In order to remove the leading term at the front face
(i.e.\ to improve the error, $PE_\pm$,
to $x^{s+2}\CI([X\times Y_+;\diag_{Y_+}])$, which can then
be further iterated away), we need to study
$P$ acting on functions of the form $x^\sigma v$,
$v\in\CI([X\times Y_+;\diag_{Y_+}])$, modulo
$x^{\sigma+1}\CI([X\times Y_+;\diag_{Y_+}])$. This only
uses the model at $\ff$.
But \eqref{eq:P-blown-up} gives
\begin{equation*}
x^{-\sigma}Px^{\sigma}v=P_\sigma v,
\ P_\sigma=(YD_Y-i(n-1-\sigma))(YD_Y+i\sigma)
-\Delta_Y-\lambda,
\end{equation*}
with $P_\sigma$ on operator on Euclidean space identified with the fiber
of the front face over $y'$. This is of course a differential
operator with smooth coefficients, but it is not elliptic. To see its
precise behavior, it is convenient to introduce polar coordinates $(r,\omega)$
in $Y$. (This agrees with our preceeding definition of $r$ at the front
face.) In such coordinates,
\begin{equation*}
P_\sigma=(rD_r-i(n-1-\sigma))(rD_r+i\sigma)-D_r^2+i\,\frac{n-2}{r}\,D_r
-\frac{1}{r^2}\,\Delta_\omega-\lambda,
\end{equation*}
with $\Delta_\omega$ the positive Laplacian on the standard $(n-2)$-sphere.
The principal symbol of $P_\sigma$ is $(r^2-1)|\xi|^2-r^{-2}|\eta|^2_{\omega}$,
with $(\xi,\eta)$ denoting the dual variables of $(r,\omega)$.
Thus, $P_\sigma$ is elliptic for $r<1$, i.e.\ {\em inside the light cone}.
A straightforward calculation
shows that $P_\sigma$ is microhyperbolic for $r>1$; it has some radial points
at $r=1$. There are two slightly different (but related) aspects of
$P_\sigma$ to address: the solvability of the transport equations, i.e.\ the
removability of singularities at $r=1$, and the solvability of smooth
terms.

We start with the transport equations. It is convenient to consider the
conjugate $(1-r^2)^{-s}P_\sigma(1-r^2)^s$, more precisely, in view of
the singularity of the conjugating factor,
$(1-r^2\pm i0)^{-s}P_\sigma(1-r^2\pm i0)^s$, considered on all of the
front face, i.e.\ as an operator from $\CI(\ff)$ to $\dist(\ff)$.
The following lemma is the result of a straightforward
calculation when replacing $\pm i0$ by $\pm i\ep$, and the lemma then follows
by taking the limit.

\begin{lemma}\label{lemma:P_sigma-conj}
For all $s\in\Real$, $P_\sigma$ satisfies
\begin{equation*}\begin{split}
(1-r^2\pm i0)^{-s}&P_\sigma(1-r^2\pm i0)^s\\
&=
4s(s-\sigma)(1-r^2\pm i0)^{-1}+(P_\sigma-4s(r\pa_r+s-\sigma+\frac{n-1}{2}))\\
&=
4s(s-\sigma)(1-r^2\pm i0)^{-1}+P_{\sigma-2s}
\end{split}\end{equation*}
as operators from $\CI(\ff)$ to $\dist(\ff)$.
\end{lemma}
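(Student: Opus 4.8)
The plan is to verify the identity by a direct computation, handling the singular factor $(1-r^2)^s$ via a regularization $(1-r^2\pm i\ep)^s$ and passing to the limit. First I would recall the polar-coordinate form of $P_\sigma$ derived above,
\begin{equation*}
P_\sigma=(rD_r-i(n-1-\sigma))(rD_r+i\sigma)-D_r^2+i\,\frac{n-2}{r}\,D_r
-\frac{1}{r^2}\,\Delta_\omega-\lambda,
\end{equation*}
and note that all the genuinely $r$-dependent operators here are $rD_r$ and $D_r^2$ (equivalently $r\pa_r$ and $\pa_r^2$); the $\Delta_\omega$ and $\lambda$ terms, as well as the $1/r^2$ and $1/r$ coefficients, commute with multiplication by any function of $r$ alone, so conjugation leaves them untouched. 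Thus the whole computation reduces to conjugating $r\pa_r$ and $\pa_r^2$ by $w_\ep:=(1-r^2+i\ep)^s$ (and its complex conjugate for the $-i0$ case).

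The key computational steps are: (1) compute $w_\ep^{-1}(r\pa_r)w_\ep = r\pa_r + r\,w_\ep^{-1}(\pa_r w_\ep) = r\pa_r + \frac{-2sr^2}{1-r^2+i\ep}$; (2) compute $w_\ep^{-1}\pa_r^2 w_\ep = \pa_r^2 + 2\,\frac{\pa_r w_\ep}{w_\ep}\,\pa_r + \frac{\pa_r^2 w_\ep}{w_\ep}$, where $\frac{\pa_r w_\ep}{w_\ep}=\frac{-2sr}{1-r^2+i\ep}$ and $\frac{\pa_r^2 w_\ep}{w_\ep}$ is obtained by one more differentiation, producing a term $\frac{-2s}{1-r^2+i\ep}$, a term $\frac{4s r^2}{(1-r^2+i\ep)^2}$, and a term $\frac{4s^2 r^2}{(1-r^2+i\ep)^2}$; (3) assemble these into $w_\ep^{-1}P_\sigma w_\ep$, being careful with the factor $i$ and with the expansion of the product $(rD_r-i(n-1-\sigma))(rD_r+i\sigma)$. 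The bookkeeping claim is that after collecting terms the $(1-r^2+i\ep)^{-2}$ contributions cancel (using $r^2=-(1-r^2)+1$ to rewrite $r^2/(1-r^2+i\ep)^2$), leaving only a first-order pole: one verifies that the coefficient of $(1-r^2+i\ep)^{-1}$ is exactly $4s(s-\sigma)$, while the remaining regular part is $P_\sigma$ shifted by $-4s(r\pa_r + s-\sigma+\tfrac{n-1}{2})$, which is recognized as $P_{\sigma-2s}$ by inspecting how $P_\sigma$ depends on $\sigma$ in the formula above (the dependence enters through $(rD_r-i(n-1-\sigma))(rD_r+i\sigma)$, whose $\sigma$-derivative structure gives precisely a shift by $r\pa_r$ plus a constant). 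Finally, since each side is, for fixed $\ep>0$, an honest differential operator with coefficients smooth and bounded on compact $r$-intervals away from — and in fact including a neighborhood of — $r=1$ once the $i\ep$ is present, and since $(1-r^2\pm i\ep)^s\to(1-r^2\pm i0)^s$ in $\dist$ while $(1-r^2+i\ep)^{-1}\to(1-r^2+i0)^{-1}$ in $\dist$, one takes $\ep\downarrow0$ and obtains the asserted identity as operators $\CI(\ff)\to\dist(\ff)$.

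The main obstacle, such as it is, is purely the algebra of step (3): one must be scrupulous about the non-commutativity of $rD_r$ with multiplication by $\frac{1}{1-r^2+i\ep}$ when expanding the product $(rD_r-i(n-1-\sigma))(rD_r+i\sigma)$ after conjugation, since a naive substitution of the conjugated $rD_r$ into the product would miss a cross term coming from $rD_r$ acting on the new multiplicative summand. Keeping track of that commutator is what forces the precise constant $\tfrac{n-1}{2}$ in the shift and is the only place an error is likely to creep in. I would organize the computation so that $rD_r$ is conjugated once, the result substituted, and then all derivatives moved to the right in a single pass, checking at the end that the operator obtained is symmetric in the appropriate sense (real coefficients, formally self-adjoint structure) as a sanity check — exactly the kind of consistency argument used elsewhere in the paper to pin down lower-order terms.
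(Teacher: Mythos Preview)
Your proposal is correct and follows exactly the approach the paper takes: the paper's entire proof is the sentence ``straightforward calculation when replacing $\pm i0$ by $\pm i\ep$, and the lemma then follows by taking the limit,'' and you have simply spelled out that calculation in detail. Your outline of the algebra (conjugate $r\pa_r$ and $\pa_r^2$, collect the $(1-r^2+i\ep)^{-2}$ and $(1-r^2+i\ep)^{-1}$ terms, identify the regular part as $P_{\sigma-2s}$) is the right organization, though you should double-check the sign in the $\pa_r^2 w_\ep/w_\ep$ expansion, where $4s(s-1)=4s^2-4s$ rather than $4s^2+4s$.
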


We in fact always need logarithmic terms to solve away singularities
because there are automatic integer coincidences between the powers of $x$
we need in the Taylor series, i.e.\ $\sigma$, and the orders of the
singularities along $F$, i.e.\ $s$.

\begin{lemma}\label{lemma:P_sigma-conj-log}
For all $s\in\Real$, $k\in\Nat$, $P_\sigma$ satisfies
\begin{equation}\begin{split}\label{eq:P_sigma-log}
P_\sigma&(1-r^2\pm i0)^s\log(1-r^2\pm i0)^k\\
=&
4k(2s-\sigma)(1-r^2\pm i0)^{s-1}\log(1-r^2\pm i0)^{k-1}\\
&+4s(s-\sigma)(1-r^2\pm i0)^{s-1}\log(1-r^2\pm i0)^k\\
&+(1-r^2\pm i0)^s\log(1-r^2\pm i0)^k P_{\sigma-2s}\\
&+\sum_{j=0}^{k-2}(1-r^2\pm i0)^{s-1}\log(1-r^2\pm i0)^j Q_j\\
&+(1-r^2\pm i0)^s\log(1-r^2\pm i0)^{k-1} Q_{k-1},
\end{split}\end{equation}
as operators from $\CI(\ff)$ to $\dist(\ff)$, where the $Q_j$, $j=0,\ldots,
k-1$ are first order differential operators with smooth coefficients
on $\ff$ (depending smoothly on $s,\sigma,k$).
\end{lemma}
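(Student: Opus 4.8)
The plan is to deduce \eqref{eq:P_sigma-log} from the $k=0$ case, Lemma~\ref{lemma:P_sigma-conj}, by differentiating in the parameter $s$ a total of $k$ times. Write $w=1-r^2\pm i0$ and multiply the identity of Lemma~\ref{lemma:P_sigma-conj} through by $w^s$, so that for every $v\in\CI(\ff)$
\begin{equation*}
P_\sigma(w^s v)=4s(s-\sigma)\,w^{s-1}v+w^s\,(P_{\sigma-2s}v)
\end{equation*}
as an identity in $\dist(\ff)$. First I would record the two ingredients. One: $s\mapsto w^s$ is an entire $\dist(\ff)$-valued family with $\pa_s^j w^s=w^s(\log w)^j$; this is the standard fact about the families $(z\pm i0)^s$, and — exactly as for Lemma~\ref{lemma:P_sigma-conj} itself — is cleanest to verify for the regularizations $1-r^2\pm i\ep$, where the differentiation is classical, followed by $\ep\to 0^+$. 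Two: by Lemma~\ref{lemma:P_sigma-conj}, $P_{\sigma-2s}=P_\sigma-4s\bigl(r\pa_r+s-\sigma+\tfrac{n-1}{2}\bigr)$, so $\pa_sP_{\sigma-2s}=-4\bigl(r\pa_r+2s-\sigma+\tfrac{n-1}{2}\bigr)$ is first order with smooth coefficients, $\pa_s^2P_{\sigma-2s}=-8$, and $\pa_s^jP_{\sigma-2s}=0$ for $j\ge 3$; likewise $\pa_s[4s(s-\sigma)]=4(2s-\sigma)$, $\pa_s^2[4s(s-\sigma)]=8$, and higher $s$-derivatives vanish.

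Next I would apply $\pa_s^k$ to the displayed identity. Since $P_\sigma$ does not depend on $s$ and $v$ is fixed, the left side becomes $P_\sigma\bigl(w^s(\log w)^k v\bigr)$, which is the left side of \eqref{eq:P_sigma-log}. On the right side I would expand by Leibniz. The first summand contributes
\begin{equation*}
4s(s-\sigma)\,w^{s-1}(\log w)^kv+4k(2s-\sigma)\,w^{s-1}(\log w)^{k-1}v+8\binom{k}{2}w^{s-1}(\log w)^{k-2}v ,
\end{equation*}
whose first two terms are precisely the first two displayed terms of \eqref{eq:P_sigma-log}. The second summand, since only $\pa_s^0,\pa_s^1,\pa_s^2$ of $P_{\sigma-2s}$ survive, contributes
\begin{equation*}
w^s(\log w)^k(P_{\sigma-2s}v)+k\,w^s(\log w)^{k-1}(\pa_sP_{\sigma-2s})v-8\binom{k}{2}w^s(\log w)^{k-2}v ,
\end{equation*}
whose first term is the third displayed term of \eqref{eq:P_sigma-log}, and whose second term is the last displayed term of \eqref{eq:P_sigma-log} with $Q_{k-1}=k\,\pa_sP_{\sigma-2s}=-4k\bigl(r\pa_r+2s-\sigma+\tfrac{n-1}{2}\bigr)$, a first order operator with coefficients polynomial in $s,\sigma,k$.

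What is then left are the two terms $8\binom{k}{2}w^{s-1}(\log w)^{k-2}v$ and $-8\binom{k}{2}w^s(\log w)^{k-2}v$ (both absent for $k\le 1$, since then $\binom{k}{2}=0$, so no negative powers of $\log$ occur). The small wrinkle is that the second carries a prefactor $w^s$ rather than $w^{s-1}$ and so does not literally have the shape appearing in \eqref{eq:P_sigma-log}; here I would use the distributional identity $z(z\pm i0)^{s-1}=(z\pm i0)^s$, i.e.\ $w^s=(1-r^2)\,w^{s-1}$, to rewrite it as $w^{s-1}(\log w)^{k-2}$ times multiplication by the smooth function $-8\binom{k}{2}(1-r^2)$. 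Adding the two contributions yields the $j=k-2$ term of the sum in \eqref{eq:P_sigma-log}, with $Q_{k-2}=8\binom{k}{2}\bigl(1-(1-r^2)\bigr)=8\binom{k}{2}r^2$, smooth and of order zero; the remaining $Q_j$, $0\le j<k-2$, may be taken to be $0$. This accounts for every term and gives \eqref{eq:P_sigma-log}. Nothing here is deep, so there is no real obstacle; the one point genuinely requiring care is the legitimacy of differentiating the distributional identity of Lemma~\ref{lemma:P_sigma-conj} in $s$, which is handled as above by passing through the $\pm i\ep$ regularization.
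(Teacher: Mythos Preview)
Your proof is correct and uses the same core idea as the paper's argument: differentiate the $k=0$ identity of Lemma~\ref{lemma:P_sigma-conj} in the parameter $s$ to generate the logarithmic terms. The paper organizes this as an induction on $k$ (one $s$-derivative per step), while you apply $\partial_s^k$ directly via Leibniz; your version has the minor bonus of producing explicit formulas for the $Q_j$ (in particular showing $Q_j=0$ for $j<k-2$), but the two arguments are essentially the same.
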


\begin{rem}
The principal utility of allowing logarithmic singularities arises
if $s=\sigma$, in which case the second term on the right hand side
is missing, hence the first term can be used to remove error terms
with a lower power of logarithm (that could not be removed without
logarithms, i.e.\ by the preceeding lemma).
\end{rem}

\begin{proof}
The case $k=0$ follows from the preceeding lemma. We then proceed by induction.
If $k\geq 1$, and the result has been proved for $k$ replaced by $k-1$,
then for $a\in\CI(\ff)$,
\begin{equation*}
P_\sigma (1-r^2\pm i0)^s\log(1-r^2\pm i0)^k a
=
\frac{d}{ds}P_\sigma (1-r^2\pm i0)^s\log(1-r^2\pm i0)^{k-1} a
\end{equation*}
shows that we simply need to differentiate \eqref{eq:P_sigma-log} (with
$k-1$ in place of $k$) with respect to $s$. The only terms giving
rise to additional factors of logarithms are the ones in which
$(1-r^2\pm i0)^{s-1}$ or $(1-r^2\pm i0)^{s}$ is differentiated. As
we are applying the result with $k$ replaced by $k-1$, the last
two (residual) terms of \eqref{eq:P_sigma-log} (for $k-1$) give rise
to residual terms (for $k$). Also, the only term that is not negligible
even though it has a power of logarithm less than $k$ is the first one,
with a factor of $(1-r^2\pm i0)^{s-1}$. Thus, the first three terms
of \eqref{eq:P_sigma-log} (for $k-1$) will contribute to the last two
residual terms (for $k$) except when
$(1-r^2\pm i0)^{s-1}$ or $(1-r^2\pm i0)^{s}$ is differentiated, or
when the coefficient of the second term is differentiated. The latter
gives $4(2s-\sigma)(1-r^2\pm i0)^{s-1}\log(1-r^2\pm i0)^{k-1}$, so
altogether we have $4(2s-\sigma)+4(2s-\sigma)(k-1)=4k(2s-\sigma)$ of
$(1-r^2\pm i0)^{s-1}\log(1-r^2\pm i0)^{k-1}$, giving the desired result.
\end{proof}

\begin{cor}
If $s\neq -1$ and $s+1\neq\sigma$ then
for $b$ smooth function on $\sphere^{n-2}$, there exists a unique
smooth function $q$ on $\sphere^{n-2}$ such that
\begin{equation*}
P(x^\sigma (1-r^2)_+^{s+1} q)=x^\sigma (1-r^2)_+^s (b+(1-r^2)e),
\end{equation*}
holds near $\sphere^{n-2}$, with $e$ smooth near $\sphere^{n-2}$.

More generally, under the same assumptions on $s$,
for $b$ smooth function on $\sphere^{n-2}$, there exists a unique
smooth function $q$ on $\sphere^{n-2}$ such that
\begin{equation*}\begin{split}
&P(x^\sigma (1-r^2)_+^{s+1} \log(1-r^2)_+^k q)\\
&=x^\sigma (1-r^2)_+^s
(\log(1-r^2)_+^k b+\sum_{j=0}^{k-1}\log(1-r^2)_+^j e_j
+ (1-r^2)\log(1-r^2)_+^k e),
\end{split}\end{equation*}
holds near $\sphere^{n-2}$, with $e,e_j$ smooth near $\sphere^{n-2}$,
$j=0,1,\ldots,k-1$.

If $s=-1$ or $s+1=\sigma$, but $\sigma\neq 0$, then
for $b$ smooth function on $\sphere^{n-2}$, there exists a unique
smooth function $q$ on $\sphere^{n-2}$ such that
\begin{equation*}\begin{split}
&P(x^\sigma (1-r^2)_+^{s+1} \log(1-r^2)_+^{k+1} q)\\
&=x^\sigma (1-r^2)_+^s
(\log(1-r^2)_+^k b+\sum_{j=0}^{k}\log(1-r^2)_+^j e_j
+ (1-r^2)\log(1-r^2)_+^{k+1} e),
\end{split}\end{equation*}
holds near $\sphere^{n-2}$, with $e,e_j$ smooth near $\sphere^{n-2}$,
$j=0,1,\ldots,k$.
\end{cor}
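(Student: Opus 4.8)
The plan is to reduce all three assertions to the conjugation identities of Lemmas~\ref{lemma:P_sigma-conj} and~\ref{lemma:P_sigma-conj-log}. Since $x^{-\sigma}Px^\sigma$ agrees with the model operator $P_\sigma$ modulo $x\Diffb^2([X\times Y_+;\diag_{Y_+}])$, and since the asserted identities only constrain the leading behaviour at $F$ (everything of higher order at $F$, or with a lower power of logarithm, being absorbed into $e$ and the $e_j$), it is enough to establish the statements with $P$ replaced by $P_\sigma$ on the front face $\ff$. I would take $q$ to be a function on $\sphere^{n-2}$ extended to a collar of $\sphere^{n-2}$ in $\ff$ as a function of $\omega$ alone; any other smooth extension changes $(1-r^2)_+^{s+1}q$ by an element of $(1-r^2)_+^{s+2}\CI$, and $P_\sigma$ of that lies in $(1-r^2)_+^s(1-r^2)\CI$ by Lemma~\ref{lemma:P_sigma-conj}, so the choice is immaterial. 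Finally, $(1-r^2)_+^{s+1}$ and $(1-r^2)_+^{s+1}\log(1-r^2)_+^k$ are fixed nonzero multiples of the difference of the two $\pm i0$ branches occurring in those lemmas (with the $\delta$-convention when the exponent is a negative integer, exactly as for $E_{0,\pm}$), so it suffices to run the computation with $\pm i0$ in place of the $+$ subscript.

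For the generic case $s\neq-1$, $s+1\neq\sigma$: applying Lemma~\ref{lemma:P_sigma-conj-log} with $s$ replaced by $s+1$ to $(1-r^2\pm i0)^{s+1}\log(1-r^2\pm i0)^k\,q$, the two leading terms on the right are $4(s+1)(s+1-\sigma)(1-r^2)_+^s\log(1-r^2)_+^k\,q$ and $4k(2(s+1)-\sigma)(1-r^2)_+^s\log(1-r^2)_+^{k-1}q$, while the term $(1-r^2)_+^{s+1}\log(1-r^2)_+^k\,P_{\sigma-2(s+1)}q$ and the residual terms $\sum_{j=0}^{k-2}(1-r^2)_+^s\log(1-r^2)_+^j\,Q_jq$ and $(1-r^2)_+^{s+1}\log(1-r^2)_+^{k-1}Q_{k-1}q$ are all of the form $(1-r^2)_+^s\bigl(\sum_{j=0}^{k-1}\log(1-r^2)_+^j\,e_j+(1-r^2)\log(1-r^2)_+^k\,e\bigr)$; here one uses only that $P_{\sigma-2(s+1)}$ carries a function of $\omega$ alone to a smooth function of $\omega$ plus an element of $(1-r^2)\CI$, which is immediate from the polar expression for $P_\tau$ (the $rD_r$ and $D_r^2$ terms annihilate such a function and $r^{-2}=1+(1-r^2)r^{-2}$). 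Since $4(s+1)(s+1-\sigma)\neq0$, the choice $q=\bigl(4(s+1)(s+1-\sigma)\bigr)^{-1}b$ produces the required identity, and it is the only choice, because the coefficient of $(1-r^2)_+^s\log(1-r^2)_+^k$ on the left is precisely $4(s+1)(s+1-\sigma)q$. Taking $k=0$ yields the first assertion.

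For the exceptional case $s=-1$ or $s+1=\sigma$ with $\sigma\neq0$: now $4(s+1)(s+1-\sigma)=0$, so I would instead insert the ansatz $(1-r^2\pm i0)^{s+1}\log(1-r^2\pm i0)^{k+1}q$ and apply Lemma~\ref{lemma:P_sigma-conj-log} with $s$ replaced by $s+1$ and $k$ by $k+1$. The $\log(1-r^2)_+^{k+1}$ term then drops out, the surviving leading term is $4(k+1)(2(s+1)-\sigma)(1-r^2)_+^s\log(1-r^2)_+^k\,q$, and all remaining terms are of the form $(1-r^2)_+^s\bigl(\sum_{j=0}^{k}\log(1-r^2)_+^j\,e_j+(1-r^2)\log(1-r^2)_+^{k+1}e\bigr)$ by the same bookkeeping. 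The decisive point is that $2(s+1)-\sigma\neq0$ in exactly these two cases: it equals $-\sigma$ when $s=-1$ and $\sigma$ when $s+1=\sigma$, both nonzero by hypothesis. Hence $q=\bigl(4(k+1)(2(s+1)-\sigma)\bigr)^{-1}b$ solves the equation, uniquely, which is the last assertion.

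The only point requiring care is the bookkeeping forced by the automatic integer coincidence between the Taylor power $\sigma$ and the conormal order at $F$: this is what makes logarithms unavoidable, and in the exceptional cases $s=-1$, $s+1=\sigma$ it forces the extra logarithmic factor, the nonvanishing of $2(s+1)-\sigma$ (for $\sigma\neq0$) being what rescues solvability. Everything else reduces to the polar-coordinate computations behind Lemmas~\ref{lemma:P_sigma-conj} and~\ref{lemma:P_sigma-conj-log} together with the trivial solvability of a multiplication-by-a-nonzero-constant equation on $\sphere^{n-2}$.
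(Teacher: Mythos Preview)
Your proposal is correct and follows essentially the same route as the paper's proof: express $(1-r^2)_+^{s+1}$ (and its logarithmic variants) via the $\pm i0$ branches, apply Lemmas~\ref{lemma:P_sigma-conj} and~\ref{lemma:P_sigma-conj-log} with the exponent shifted to $s+1$, and divide by the leading coefficient. Your write-up is in fact more careful than the paper's sketch---you make the index shift explicit (the paper's formula $q=4s^{-1}(s-\sigma)^{-1}b$ appears to omit it), you spell out why the residual terms land where they should, and you handle the exceptional case $s=-1$ or $s+1=\sigma$ by verifying that $2(s+1)-\sigma$ is nonzero, which the paper leaves entirely to the reader.
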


\begin{proof}
In the first case,
let $q=4s^{-1}(s-\sigma)^{-1}b$, and apply Lemma~\ref{lemma:P_sigma-conj},
expressing $(1-r^2)_+^{s+1}$ as a difference of $(1-r^2\pm i0)^{s+1}$.
Uniqueness is clear.

In the second case, proceed the same way, applying
Lemma~\ref{lemma:P_sigma-conj-log}.
\end{proof}

\begin{prop}\label{prop:transport}
The transport equations can be solved near the front face, i.e.\ there
exist
\begin{equation*}
E_\pm\in x^s I^{m(s)}([X\times Y_+;\diag_{Y_+}],F),\ m(s)=-s-\frac{2n+1}{4},
\ s=\hat s_\pm(\lambda),
\end{equation*}
with
\begin{equation}\label{eq:E-E_0-ext}
E_\pm-E_{0,\pm}\in x^{s+1}I^{m(s)}([X\times Y_+;\diag_{Y_+}],F),
\end{equation}
$PE_\pm\in x^{s+1}\CI([X\times Y_+;\diag_{Y_+}])$,
and $E_\pm$
vanishing to infinite order off the front face.
\end{prop}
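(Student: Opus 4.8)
The plan is to construct $E_\pm$ near the front face $\ff$ by successive approximation, starting from the model $E_{0,\pm}$ of Lemma~\ref{lemma:Poisson-leading} and improving the error $PE$ in two gradings simultaneously: the order of the conormal singularity along $F$, and the order of vanishing at $\ff$ (the power of $x$). Two structural facts, both already in hand above, drive this. First, $F$ is by construction a union of flowout Lagrangians of $P$, so $N^*F\subset\Sigma(P)$ and $P$ is characteristic on $F$; hence, by \eqref{eq:A-x^s-char}, $P$ maps $x^sI^{m(s)}(M,F)$ into $x^sI^{m(s)+1}(M,F)$, and, writing $E_\pm=a\,x^s(1-r^2)^s_++E'_\pm$ (or the $\delta_0^{(-s-1)}$-version when $s\in-\Nat$), the leading part of $PE_\pm$ is the transport term $(Qa)\,x^s(1-r^2)^{s-1}_+$, with $Q=xV+b$ and $V$ a smooth vector field on $F$ transversal to $\partial F=F\cap\ff$ — the push-forward of the Hamilton vector field $H_p$. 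Second, $E_{0,\pm}$ is an exact solution of the front-face model operator \eqref{eq:P-blown-up} (it is the analytic continuation of the hyperbolic Poisson kernel), so the transport equation for $a$ is solved by a constant to leading order at $\ff$, and the error left after the transport equations are solved is in fact $O(x^{s+1})$.

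The first step is therefore to solve the leading transport equation $Qa=0$ on $F$: since $V$ is transversal to $\partial F$, its flow identifies a collar of $\partial F$ in $F$ with $\partial F\times[0,\ep)$, so this is a linear ODE along the flow, uniquely solvable once $a|_{\partial F}$ is prescribed; choosing $a|_{\partial F}=C_s$ gives the matching condition \eqref{eq:E-E_0-ext}. I would then iterate. An error lying in some $x^{s+j}I^{m(s)+l}(M,F)$ is reduced either in conormal order $l$ — by solving a further transport ODE for the next symbol, with source the current leading symbol — or in the power $x^{s+j}$ — by solving the corresponding equation on the fibers of $\ff$. The latter is precisely what the Corollary above provides: a smooth source of the form $x^\sigma(1-r^2)^{s'}_+(\log(1-r^2)_+)^k\bigl(b+(1-r^2)e\bigr)$ is solved away, modulo genuinely residual terms, by a smooth function $q$ on $\sphere^{n-2}$, at the cost of an extra log factor in the exceptional cases $s'=-1$ or $s'+1=\sigma$. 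Carrying the conormal improvements along diagonally and Borel-summing the resulting asymptotic series in $x$ gives $E_\pm\in x^sI^{m(s)}(M,F)$ with $PE_\pm\in x^{s+1}\CI(M)$, supported near $F$, hence vanishing to infinite order off $\ff$; using in addition the ellipticity of $P_\sigma$ for $r<1$, one may further arrange $E_\pm$ to be supported inside (the closure of) the light cone near $Y_+$.

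The hard part will be the degeneracy at $\partial F$, i.e.\ at $r=1$, where $P_\sigma$ has radial points and the transport vector field $xV$ vanishes: one cannot simply integrate an ODE through $r=1$, and, worse, the powers of $x$ forced by the Taylor expansion ($\sigma$) always differ by integers from the conormal orders $s'$ along $F$, so the naive log-free ansatz is obstructed at every stage. This is exactly why Lemmas~\ref{lemma:P_sigma-conj} and \ref{lemma:P_sigma-conj-log} are needed: they present $P_\sigma$ acting on $(1-r^2\pm i0)^{s'}(\log(1-r^2\pm i0))^k$ in a form in which the obstruction to solvability is absorbed by raising the power of logarithm, and they are the reason the Corollary must allow the extra log. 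The real bookkeeping obstacle is to keep a clean ledger of which terms are residual (lower log power, or carrying an extra factor $1-r^2$) and which must be solved exactly, and to verify that the double induction — on the log power and on the $x$-order — closes; once that is in place, the solvability of the ODEs away from $r=1$ and the final Borel summation are routine.
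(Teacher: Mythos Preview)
Your proposal captures the right ingredients, but the roles you assign to the two mechanisms are essentially reversed compared to the paper, and this matters for how the degeneracy at $\partial F$ is handled.

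In the paper's argument, the Corollary (front-face algebra via Lemmas~\ref{lemma:P_sigma-conj}--\ref{lemma:P_sigma-conj-log}) is the tool that improves the \emph{conormal} order: at a fixed $x$-power $\sigma$, an error $(1-r^2)^{s'}_+b$ is traded for $(1-r^2)^{s'+1}_+e$ plus a new error carrying one extra factor of $x$. The outer loop is over conormal order; the inner loop iterates the Corollary in the $x$-Taylor index until the worst conormal piece, $(1-r^2)^{s-1}_+$, is pushed to $\dCI$ in $x$. Only then are the transport ODEs invoked, and only on this $\dCI$ remnant, where the vanishing of $xV$ at $x=0$ is harmless. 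At the next conormal level the exceptional case $s'+1=\sigma$ of the Corollary is hit, forcing the logarithmic terms.

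By contrast, you propose to improve conormal order by integrating the transport ODEs $Qa_k=f_k$ from $\partial F$. Since $Q=x(V+b')$ with $b'$ smooth (a consequence of $E_{0,\pm}$ being exact at the front face), the homogeneous equation $Qa_0=0$ is indeed a regular ODE and your first step is fine. But for $k\ge 1$ the equation $(V+b')a_k=f_k/x$ requires $f_k|_{x=0}=0$; this is true, but only because the entire model hierarchy is solved exactly by $E_{0,\pm}$, a fact you use implicitly without isolating it. You then describe the Corollary as the device for improving the $x$-order, which is not what it does; the extra factor of $x$ appears only as the discrepancy between $P_\sigma$ and $P$, not as the content of the Corollary.

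In short: the paper sidesteps the degeneracy of the transport ODE by first using the front-face algebra to drive the singular part to $x^\infty$, and only then integrates; you attack the degenerate ODE directly, which can be made to work but needs the compatibility $f_k|_{x=0}=0$ checked at each step. Both routes lead to $PE_\pm\in x^{s+1}\CI$, but the paper's ordering makes the bookkeeping (and the provenance of the logarithms at the coincidence $s'+1=\sigma$) considerably cleaner.
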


\begin{proof}
First, with any $E_\pm\in x^s I^{m(s)}([X\times Y_+;\diag_{Y_+}],F)$
extending $E_{0,\pm}$ in the sense of
\eqref{eq:E-E_0-ext}, having an expansion in terms
of $(1-r^2)^\beta_+$, so $E=x^{\hat s_\pm(\lambda)}(1-r^2)^{\hat s_\pm(\lambda)}_+a$,
$a$ smooth, $a|_{x=0}=1$,
one has
\begin{equation*}
PE_\pm=x^{\hat s_\pm(\lambda)+1}(1-r^2)^{\hat s_\pm(\lambda)-1}_+b,
\end{equation*}
$b$ smooth. By the corollary (if $\hat s_\pm(\lambda)\neq 0$), one can find
$E_{1,\pm}=x^{\hat s_\pm(\lambda)+1}(1-r^2)^{\hat s_\pm(\lambda)}_+b$ such that
\begin{equation*}
PE_{1,\pm}-PE_\pm\in x^{\hat s_\pm(\lambda)+1}(1-r^2)^{\hat s_\pm(\lambda)}_+\CI
+x^{\hat s_\pm(\lambda)+2}(1-r^2)^{\hat s_\pm(\lambda)-1}_+\CI,
\end{equation*}
so replacing $E_\pm$ by $E_\pm-E_{1,\pm}$,
one has an extension of $E_{0,\pm}$ of the same
form as the original $E_\pm$, but with
\begin{equation*}
PE_\pm\in x^{\hat s_\pm(\lambda)+1}(1-r^2)^{\hat s_\pm(\lambda)}_+\CI
+x^{\hat s_\pm(\lambda)+2}(1-r^2)^{\hat s_\pm(\lambda)-1}_+\CI.
\end{equation*}
Leaving the first term unchanged, one iterates the second
term away, using $E_{j,\pm}=x^{\hat s_\pm(\lambda)+j}(1-r^2)^{\hat s_\pm(\lambda)}_+b$
to remove errors in $x^{\hat s_\pm(\lambda)+j}(1-r^2)^{\hat s_\pm(\lambda)-1}_+\CI$,
with the result that the new $E_\pm$ satisfies
\begin{equation*}
PE_\pm\in x^{\hat s_\pm(\lambda)+1}(1-r^2)^{\hat s_\pm(\lambda)}_+\CI
+x^{\hat s_\pm(\lambda)+j+1}(1-r^2)^{\hat s_\pm(\lambda)-1}_+\CI.
\end{equation*}
Note that there is no obstacle for this procedure as long as $\hat s_\pm(\lambda)
\neq 0$. By an asymptotic summation argument one gets an $E$ with
\begin{equation*}
PE_\pm\in x^{\hat s_\pm(\lambda)+1}(1-r^2)^{\hat s_\pm(\lambda)}_+\CI
+(1-r^2)^{\hat s_\pm(\lambda)-1}_+\dCI.
\end{equation*}
For the last term the singular transport equations are now easily solvable,
so one obtains near the front face
\begin{equation*}
PE_\pm\in x^{\hat s_\pm(\lambda)+1}(1-r^2)^{\hat s_\pm(\lambda)}_+\CI.
\end{equation*}

Now using the corollary, we can find $E_1=x^{\hat s_\pm(\lambda)+1}
(1-r^2)_+^{\hat s_\pm(\lambda)+1}\log(1-r^2)_+$ such that
\begin{equation*}\begin{split}
PE_{1,\pm}-PE_\pm\in &x^{\hat s_\pm(\lambda)+1}(1-r^2)^{\hat s_\pm(\lambda)+1}_+\CI\\
&+x^{\hat s_\pm(\lambda)+1}(1-r^2)^{\hat s_\pm(\lambda)+1}_+\log(1-r^2)_+\CI\\
&+x^{\hat s_\pm(\lambda)+2}(1-r^2)^{\hat s_\pm(\lambda)}_+\CI.
\end{split}\end{equation*}
Replacing $E_\pm$ by $E_\pm-E_{1,\pm}$,
leaving the first two terms unchanged, we can iterate away the last term
exactly as above to obtain
\begin{equation*}
PE_\pm\in x^{\hat s_\pm(\lambda)+1}(1-r^2)^{\hat s_\pm(\lambda)+1}_+\CI
+x^{\hat s_\pm(\lambda)+1}(1-r^2)^{\hat s_\pm(\lambda)+1}_+\log(1-r^2)_+\CI.
\end{equation*}

Repeating this argument proves this proposition. Note that we obtain
arbitrarily large powers of logarithms, but these correspond to
increasingly less singular terms in terms of the power $s$ in $(1-r^2)^s_+$.
\end{proof}

As we would like our operator $E_\pm$ to be localized in the
interior of light cone (for hyperbolic propagation would spread singularities
outside otherwise and $E_\pm$ could not satisfy ), it is convenient to consider $P_\sigma$ as an operator
on tempered distributions in
\begin{equation*}
\BB^{n-1}_{1/2}=\{Y:\ |Y|\leq 1\},
\end{equation*}
here
equipped with the smooth structure arising from adjoining $\sqrt{1-|Y|^2}$
to the smooth structure induced from the front face (this is
what the subscript $1/2$ denotes). Let $\nu=(1-r^2)^{1/2}$ be a defining
function for $\pa\BB^{n-1}_{1/2}$. If only even powers of $\nu$ occur
as coefficients of products of $\nu D_\nu$ and $\nu V$, $V$ a vector
field on $\pa\BB^{n-1}_{1/2}$ extended to a neighborhood using the polar
coordinate decompositions, then one calls the corresponding differential
operator even, see \cite{Guillarmou:Meromorphic}. Note that the subspace of
even elements of $\CI(\BB^{n-1}_{1/2})$ is exactly $\CI(\BB^{n-1})$. Then:

\begin{lemma}
$P_\sigma\in\nu^{-2}\Diff^2_0(\BB^{n-1}_{1/2})$ is elliptic and even.

For
$\sigma$ real with
$\lambda+\sigma^2-\sigma(n-1)\geq 0$, $-P_\sigma$ is positive
with repect to the
$L^2(\BB^{n-1}_{1/2},(1-\nu^2)^{(n-3)/2}\nu^{1+2\sigma}\,d\nu\,d\omega)$
inner product on
\begin{equation*}
\nu H^1_0(\BB^{n-1}_{1/2},(1-\nu^2)^{(n-3)/2}\nu^{1+2\sigma}\,d\nu\,d\omega),
\end{equation*}
with $d\omega$ denoting the standard measure on the unit sphere.
\end{lemma}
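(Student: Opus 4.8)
The plan is to treat the two assertions separately. For the structural statement I would work in the polar coordinates $(r,\omega)$ of the displayed formula for $P_\sigma$ and substitute $r=(1-\nu^2)^{1/2}$, using $\pa_r=-\frac r\nu\,\pa_\nu$, so that $rD_r=-(1-\nu^2)\nu^{-2}(\nu D_\nu)$ and $\frac1{r^2}\Delta_\omega=\frac1{1-\nu^2}\Delta_\omega$ with $\Delta_\omega\in\nu^{-2}\Diff^2_0(\BB^{n-1}_{1/2})$. The point to notice is that the apparent $\nu^{-4}$ (and $\nu^{-3}$) contributions to $P_\sigma$ coming from $(rD_r)^2$ and from the radial part $-\pa_r^2$ of $-\Delta_Y$ cancel: collecting terms, the coefficient of $\pa_\nu^2$ in $P_\sigma$ is exactly $1-\nu^2$, so
\begin{equation*}
P_\sigma=(1-\nu^2)\pa_\nu^2+a_1(\nu)\,\pa_\nu-\tfrac1{1-\nu^2}\,\Delta_\omega+a_0(\nu),
\end{equation*}
with the $a_j$ rational in $\nu^2$; hence $\nu^2P_\sigma\in\Diff^2_0(\BB^{n-1}_{1/2})$, and since only even powers of $\nu$ occur it is even. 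Its $0$-principal symbol, in the variables dual to $\nu D_\nu,\ \nu D_\omega$, is $-(1-\nu^2)\hat\xi^2-(1-\nu^2)^{-1}|\hat\eta|^2$, negative definite at $\nu=0$; in the interior it agrees with the ordinary symbol $(r^2-1)|\xi|^2-r^{-2}|\eta|^2$, again negative definite since $r<1$. So $P_\sigma$ is $0$-elliptic.

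For the positivity I would begin from the identity, valid for real $\sigma$,
\begin{equation*}
-P_\sigma=-(1-r^2)\pa_r^2+\Bigl(r(n-2\sigma)-\tfrac{n-2}{r}\Bigr)\pa_r+\tfrac1{r^2}\,\Delta_\omega+\bigl(\lambda+\sigma^2-\sigma(n-1)\bigr),
\end{equation*}
obtained by expanding $(rD_r-i(n-1-\sigma))(rD_r+i\sigma)$ and combining with $-\Delta_Y$; note the zeroth-order coefficient is precisely the quantity appearing in the hypothesis, hence $\ge 0$. Writing the measure as $(1-\nu^2)^{(n-3)/2}\nu^{1+2\sigma}\,d\nu\,d\omega=r^{n-2}(1-r^2)^\sigma\,dr\,d\omega$, I would then, for $u\in\dCI(\BB^{n-1}_{1/2})$, integrate by parts in $r$ against this measure, symmetrizing the first-order term. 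This yields the radial Dirichlet term $\|(1-r^2)^{1/2}\pa_r u\|^2$, the manifestly nonnegative tangential term $\langle r^{-2}\Delta_\omega u,u\rangle\ge 0$ (the measure factorizes and $\Delta_\omega\ge 0$), the nonnegative term $(\lambda+\sigma^2-\sigma(n-1))\|u\|^2$, and one indefinite leftover proportional to $\int r^n(1-r^2)^{\sigma-1}|u|^2$. The crux is that this leftover is absorbed, with the sharp constant, into the radial Dirichlet term by the one-dimensional weighted Hardy inequality on $(0,1)$; it is exactly here that the weight $\nu^{1+2\sigma}$ and the use of the form domain $\nu H^1_0$ (rather than $H^1_0$) enter, since elements of $\nu H^1_0$ vanish to order $\nu$ at $\pa\BB^{n-1}_{1/2}$, which is the rate at which the Hardy inequality just closes --- the analogue of the way the critical monomials just fail to lie in the borderline weighted $L^2$ in Section~\ref{sec:local-solvability}. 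A more conceptual variant: after $r=\tanh s$, $-P_\sigma$ on the open ball is conformal, up to a density weight reflected in the factor $(1-\nu^2)^{(n-3)/2}$, to $\Delta_{\mathbb H^{n-1}}$ shifted by $\lambda+\sigma^2-\sigma(n-1)$; as $\Delta_{\mathbb H^{n-1}}\ge 0$ and the shift is $\ge 0$, positivity follows, with $\nu H^1_0$ the natural form domain. Finally a density/continuity argument passes from $\dCI(\BB^{n-1}_{1/2})$ to all of $\nu H^1_0$, the boundary contributions at $\nu=0$ vanishing by the $\nu$-vanishing; the center $r=0$ is handled by smoothness.

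The hard part is the constant-tracking in the Hardy step: one must check that $\lambda+\sigma^2-\sigma(n-1)\ge 0$ is exactly the threshold at which the radial Dirichlet term absorbs the indefinite leftover (equivalently, the threshold making the shifted hyperbolic operator nonnegative on the relevant form domain). The coordinate computation, the sign of the tangential term, and the approximation argument are all routine.
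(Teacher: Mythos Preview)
Your treatment of the structural claim is essentially the paper's: change to the boundary defining function $\nu$, observe the apparent $\nu^{-4}$ terms cancel so the coefficient of $\pa_\nu^2$ is $1-\nu^2$, read off evenness and $0$-ellipticity.

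For positivity your route is more circuitous than necessary, and the ``conceptual variant'' is not correct as stated. The paper does not integrate by parts and then invoke a Hardy inequality. Instead it notices that, with respect to exactly the measure $(1-\nu^2)^{(n-3)/2}\nu^{1+2\sigma}\,d\nu\,d\omega$, the first-order factor $\nu D_\nu+i(2\sigma-1)+i(n-3)\nu^2(1-\nu^2)^{-1}$ is the formal adjoint of $\nu D_\nu-i=D_\nu\nu$, so that
\[
\langle u,-P_\sigma u\rangle=\|(1-\nu^2)^{1/2}D_\nu u\|^2+\|(1-\nu^2)^{-1/2}d_\omega u\|^2+(\lambda+\sigma^2-\sigma(n-1))\|u\|^2,
\]
a sum of three manifestly nonnegative terms. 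There is \emph{no} indefinite leftover and no Hardy step: the measure was chosen precisely so that the first-order part of $-P_\sigma$ is exactly what makes the radial piece self-adjointly factor. If you carry out your integration by parts in $r$ carefully against $r^{n-2}(1-r^2)^\sigma\,dr\,d\omega$ you should find the same cancellation; the ``indefinite leftover proportional to $\int r^n(1-r^2)^{\sigma-1}|u|^2$'' is an artifact of stopping the computation early. So your approach can be made to work, but the ``hard part'' you flag is in fact empty.

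Your hyperbolic-Laplacian variant misidentifies the relationship. The correct conjugation (the paper's next lemma) is $\nu^{n/2-\sigma}P_\sigma\nu^{\sigma-n/2}=-\nu^{-1}\bigl(\Delta_{\mathbb H^{n-1}}+\sigma^2-(\tfrac{n-2}{2})^2+\nu^2(\lambda-\tfrac{n(n-2)}{4})\bigr)\nu^{-1}$; the shift is not $\lambda+\sigma^2-\sigma(n-1)$, there is a nontrivial conjugation by $\nu^{\sigma-n/2}$, and extra factors of $\nu^{-1}$ appear on each side. Positivity of $\Delta_{\mathbb H^{n-1}}$ alone does not directly give the claimed inequality on the stated form domain without redoing essentially the computation above.
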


\begin{proof}
As $P_\sigma$ is a differential operator with smooth coefficients on all
of $\ff$, elliptic for $r<1$, we only need to analyze its behavior near
$r=1$. For this purpose it is convenient to use the boundary
defining function $\nu$ on $\BB^{n-1}_{1/2}$.
A straightforward calculation using
$(1-r^2)^{1/2}D_r=-(1-\nu^2)^{1/2}D_\nu$ gives that in fact
\begin{equation*}\begin{split}
-P_\sigma
&=
(D_\nu+i(2\sigma-1)\nu^{-1}+i(n-3)\nu(1-\nu^2)^{-1})(1-\nu^2)D_\nu\\
&\qquad\qquad\qquad+\frac{1}{1-\nu^2}\,\Delta_\omega
+\lambda+\sigma^2-\sigma(n-1)\\
&=
\nu^{-1}\left((\nu D_\nu+i(2\sigma-1)+\frac{i(n-3)\nu^2}{1-\nu^2})(1-\nu^2)
(\nu D_\nu-i)\right.\\
&\qquad\qquad
\left.+\frac{\nu^2}{1-\nu^2}\,\Delta_\omega
+(\lambda+\sigma^2-\sigma(n-1))\nu^2 \right)\nu^{-1}
\end{split}
\end{equation*}
from which the first claim follows immediately.
For the second claim we merely need to notice that the formal adjoint of
$D_\nu\nu=\nu D_\nu-i$ with respect to $f\nu^{-1}\,d\nu\,d\omega$
$f=(1-\nu^2)^{(n-3)/2}\nu^{2+2\sigma}$, is
$f^{-1}(\nu D_\nu-i)f=\nu D_\nu+i(2\sigma-1)+i(n-3)\nu^2(1-\nu^2)^{-1}$,
so
\begin{equation*}
\langle u,-P_\sigma u\rangle=\|(1-\nu^2)^{1/2}
D_\nu u\|^2+\|(1-\nu^2)^{-1/2}d_\omega u\|^2
+(\lambda+\sigma^2-\sigma(n-1))\|u\|^2.
\end{equation*}
\end{proof}

In fact, it is also convenient to identify the interior of $\BB^{n-1}_{1/2}$
with the Poincar\'e ball model of hyperbolic $(n-1)$-space $\HH^{n-1}$
using polar coordinates around the origin, letting $\cosh\rho=\nu^{-1}$,
$\rho$ is the distance from the origin. The Laplacian on $\HH^{n-1}$ in
these coordinates is
\begin{equation*}
\Delta_{\HH^{n-1}}=D_{\rho}^2-i(n-2)\coth\rho\,D_{\rho}+(\sinh\rho)^{-2}
\Delta_{\omega}.
\end{equation*}

\begin{lemma}\label{lemma:P_sigma-hyp}
Let $s$ be such that $2s=\sigma-\frac{n}{2}$. Then
\begin{equation*}\begin{split}
&(1-r^2)^{-s}P_\sigma(1-r^2)^s
=\nu^{\frac{n}{2}-\sigma}P_\sigma\nu^{\sigma-\frac{n}{2}}\\
&=-\nu^{-1}\left(\Delta_{\HH^{n-1}}+\sigma^2-\left(\frac{n-2}{2}\right)^2
+\nu^2\left(\lambda-\frac{n(n-2)}{4}\right)\right)\nu^{-1}\\
&=-\cosh\rho\left(\Delta_{\HH^{n-1}}+\sigma^2-\left(\frac{n-2}{2}\right)^2
+(\cosh\rho)^{-2}\left(\lambda-\frac{n(n-2)}{4}\right)\right)\cosh\rho.
\end{split}
\end{equation*}
\end{lemma}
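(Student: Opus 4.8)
The plan is to treat the three displayed expressions one equality at a time, isolating the single substantive computation.

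\emph{First equality.} This is immediate: on $\BB^{n-1}_{1/2}$ one has $1-r^2=1-|Y|^2=\nu^2$, and the hypothesis $2s=\sigma-\frac n2$ says precisely $\nu^{2s}=\nu^{\sigma-n/2}$, so $(1-r^2)^{\pm s}$ and $\nu^{\pm(\sigma-n/2)}$ are the same multiplication operator. (On $\BB^{n-1}_{1/2}$, where $1-r^2\ge 0$, the distributional $\pm i0$ prescriptions of Lemmas~\ref{lemma:P_sigma-conj} and \ref{lemma:P_sigma-conj-log} are superfluous; those identities restrict to genuine operator identities on the interior, which is all we use below.)

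\emph{Third equality from the second.} Under the identification of the interior of $\BB^{n-1}_{1/2}$ with $\HH^{n-1}$ by polar coordinates with $\cosh\rho=\nu^{-1}$, the multiplication operators $\nu^{-1}$ and $\cosh\rho$ coincide and $\nu^2=(\cosh\rho)^{-2}$; so the second and third lines are literally the same operator written in two variables, and nothing is to prove once this change of variables is recorded.

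\emph{Second equality --- the content.} First apply Lemma~\ref{lemma:P_sigma-conj} with the given $s$: since $\sigma-2s=\frac n2$ we have $P_{\sigma-2s}=P_{n/2}$, and a one-line computation gives $4s(s-\sigma)=\frac{n^2}{4}-\sigma^2$, so
\[
(1-r^2)^{-s}P_\sigma(1-r^2)^s \;=\; \left(\frac{n^2}{4}-\sigma^2\right)\nu^{-2}\;+\;P_{n/2}.
\]
It remains to identify $P_{n/2}$. For this I use the factored form of $-P_\sigma$ obtained in the proof of the preceding (ellipticity) lemma, specialized to $\sigma=\frac n2$ (so $2\sigma-1=n-1$ and $\sigma^2-\sigma(n-1)=-\frac{n(n-2)}{4}$):
\[
-P_{n/2}\;=\;\nu^{-1}\left[\left(\nu D_\nu+i(n-1)+\frac{i(n-3)\nu^2}{1-\nu^2}\right)(1-\nu^2)(\nu D_\nu-i)+\frac{\nu^2}{1-\nu^2}\,\Delta_\omega+\left(\lambda-\frac{n(n-2)}{4}\right)\nu^2\right]\nu^{-1}.
\]
Now substitute $D_\rho=-\sqrt{1-\nu^2}\,\nu D_\nu$, $\coth\rho=(1-\nu^2)^{-1/2}$ and $(\sinh\rho)^{-2}=\frac{\nu^2}{1-\nu^2}$ into $\Delta_{\HH^{n-1}}=D_\rho^2-i(n-2)\coth\rho\,D_\rho+(\sinh\rho)^{-2}\Delta_\omega$: a short commutator computation (using $[\nu D_\nu,\nu^2]=-2i\nu^2$ and $[\nu D_\nu,\sqrt{1-\nu^2}]=i\nu^2(1-\nu^2)^{-1/2}$) shows that $\Delta_{\HH^{n-1}}+(n-1)$ and the bracket above minus its $\left(\lambda-\frac{n(n-2)}{4}\right)\nu^2$ summand both equal the common form $(1-\nu^2)(\nu D_\nu)^2+i(\nu^2+n-2)\nu D_\nu+\frac{\nu^2}{1-\nu^2}\Delta_\omega+(n-1)$. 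Hence $-P_{n/2}=\nu^{-1}\big(\Delta_{\HH^{n-1}}+(n-1)+(\lambda-\frac{n(n-2)}{4})\nu^2\big)\nu^{-1}$, and combining with the display for the conjugate of $P_\sigma$, the $\nu^{-2}$ coefficients add up to $\frac{n^2}{4}-\sigma^2-(n-1)=\left(\frac{n-2}{2}\right)^2-\sigma^2$, which is exactly what produces the middle expression.

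\emph{Main obstacle.} There is no serious obstacle: every step is bookkeeping or a short commutator calculation. The one genuinely non-trivial point is the radial identity showing that the $\nu^{-1}(\cdot)\nu^{-1}$-conjugated radial part of $P_{n/2}$ is the hyperbolic Laplacian plus the constant $n-1$; this is where the specific choice $\sigma-2s=\frac n2$ (matching the effective boundary dimension $n-2$ of $\HH^{n-1}$) is essential, so it is worth carrying the commutators through carefully to check that the constants $(n-1)$ and $\left(\frac{n-2}{2}\right)^2$ come out right. (Alternatively, one could appeal to the analytic-continuation-of-Eisenstein-functions picture indicated earlier in the section, but the direct computation is more self-contained.)
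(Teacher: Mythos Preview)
Your proof is correct. The paper states this lemma without proof, treating it as a direct computation; your argument supplies exactly the missing verification, and does so efficiently by leveraging the already-proved Lemma~\ref{lemma:P_sigma-conj} (to reduce to $P_{n/2}$) and the explicit $\nu$-coordinate expression of $-P_\sigma$ from the preceding ellipticity lemma, rather than redoing the change of variables from scratch.
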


Thus, this conjugate of $P_\sigma$ is essentially a compact perturbation
of the hyperbolic Laplacian, shifted by the eigenparameter
$(n-2)^2/4-\sigma^2$. Note that the spectrum of $\Delta_{\HH^{n-1}}$
on $L^2(\HH^{n-1})$ is $[(n-2)^2/4,\infty)$. In fact, we have the following
result of Mazzeo and Melrose \cite{Mazzeo-Melrose:Meromorphic}:

\begin{lemma}
The operator
\begin{equation*}
L_\sigma=\Delta_{\HH^{n-1}}+\sigma^2-\left(\frac{n-2}{2}\right)^2
+\nu^2\left(\lambda-\frac{n(n-2)}{4}\right)
\end{equation*}
is invertible on
\begin{equation*}
L^2(\HH^{n-1},\mu_{\HH^{n-1}})=L^2(\BB^{n-1}_{1/2},(1-\nu^2)^{(n-3)/2}
\nu^{1-n}\,d\nu)=L^2(\BB^{n-1}_{1/2},(\sinh\rho)^{n-1}\,d\rho)
\end{equation*}
for $\sigma^2\nin\RR$, and it is Fredholm in $\sigma^2$
for $\sigma^2\in\Cx\setminus[0,\infty)$.

For $\sigma>0$,
any element of the $L^2$-nullspace of $L_\sigma$
lies in $\nu^{(n-2)/2+\sigma}
\Cinf(\BB^{n-1}_{1/2})$.

The inverse $L_\sigma^{-1}$ is meromorphic for
$\sigma^2\in\Cx\setminus[0,\infty)$ with finite rank residues, maps
$\nu^k H^m_0(\BB^{n-1}_{1/2},\mu_{\HH^{n-1}})\to
\nu^k H^{m+2}_0(\BB^{n-1}_{1/2},\mu_{\HH^{n-1}})$ continuously, provided that
$|k|<|\re\sigma|$. For $k>|\re\sigma|$, it maps
\begin{equation*}
\nu^k H^m_0(\BB^{n-1}_{1/2},\mu_{\HH^{n-1}})\to
\nu^k H^{m+2}_0(\BB^{n-1}_{1/2},\mu_{\HH^{n-1}})
+\nu^{(n-2)/2+\sigma}\Cinf(\BB^{n-1}).
\end{equation*}
\end{lemma}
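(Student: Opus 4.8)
The plan is to identify $L_\sigma$ with the spectrally shifted Laplacian on hyperbolic $(n-1)$-space, up to a short-range potential, and then invoke the Mazzeo--Melrose analysis of the meromorphically continued resolvent \cite{Mazzeo-Melrose:Meromorphic}. Write $L_\sigma=\Delta_{\HH^{n-1}}-\zeta(n-2-\zeta)+V$, where $\zeta=\frac{n-2}{2}+\sigma$ (so that $\zeta(n-2-\zeta)=(n-2)^2/4-\sigma^2$, the correct shift, as one also sees from the indicial roots computed below), and $V=\nu^2(\lambda-n(n-2)/4)\in\nu^2\CI(\BB^{n-1}_{1/2})$ is a real smooth potential vanishing to second order at $\nu=0$. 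Since the hyperbolic metric on $\HH^{n-1}$ is (exactly) asymptotically hyperbolic and $V$ is short range, adding $V$ does not change the normal operator of $L_\sigma$ at $\nu=0$; hence the $0$-calculus parametrix construction of \cite{Mazzeo-Melrose:Meromorphic} applies verbatim and produces the meromorphic continuation of $L_\sigma^{-1}$ with finite-rank residues, together with the stated mapping properties on the weighted $0$-Sobolev spaces, once the index $s$ there is replaced by $\zeta=\frac{n-2}{2}+\sigma$: the clean range becomes $|k|<|\re\zeta-\frac{n-2}{2}|=|\re\sigma|$, and outside it one picks up the leading term $\nu^{\zeta}\CI(\BB^{n-1})=\nu^{(n-2)/2+\sigma}\CI(\BB^{n-1})$, matching the statement.

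For the $L^2$-invertibility when $\sigma^2\notin\RR$ I would argue directly, without the continued resolvent. As $0\le\nu^2\le 1$ and $\lambda\in\RR$, $V$ is a bounded real multiplication operator, so $\Delta_{\HH^{n-1}}+V$ is self-adjoint on $L^2(\HH^{n-1},\mu_{\HH^{n-1}})$ with spectrum contained in $\RR$; since $L_\sigma=(\Delta_{\HH^{n-1}}+V)+(\sigma^2-(n-2)^2/4)$ and $\sigma^2-(n-2)^2/4\notin\RR$ whenever $\sigma^2\notin\RR$, the operator $L_\sigma$ is boundedly invertible on $L^2$, and this is the value at which the meromorphic family is regular. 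The Fredholm assertion on the stated range ($\sigma^2\in\Cx\setminus[0,\infty)$, i.e.\ $\sigma\notin\RR$) then follows from analytic Fredholm theory for the continued family on $\nu^kH^{m+2}_0(\BB^{n-1}_{1/2})\to\nu^kH^m_0(\BB^{n-1}_{1/2})$ with $k$ admissible: for such $\sigma$, $L_\sigma$ differs from its value at a nearby $\sigma'$ with $(\sigma')^2\notin\RR$ (invertible by the above) by a relatively compact operator, using the compactness of the inclusion $\nu^{k'}H^{m'}_0\hookrightarrow\nu^kH^m_0$ for $k'>k$, $m'>m$; it is here that the second-order vanishing of $V$ enters.

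For the nullspace regularity with $\sigma>0$: if $L_\sigma u=0$ and $u\in L^2$, then interior ellipticity of $L_\sigma$ (as noted above, $P_\sigma$, hence $L_\sigma$, is elliptic for $r<1$) gives $u\in\CI$ in the interior, while $0$-ellipticity of $L_\sigma\in\nu^{-2}\Diff^2_0(\BB^{n-1}_{1/2})$ gives conormality at $\nu=0$. The indicial operator at $\nu=0$ is $s\mapsto s(n-2-s)-\zeta(n-2-\zeta)$, with roots $\zeta_\pm=\frac{n-2}{2}\pm\sigma$, and $\nu^s\in L^2$ near the boundary exactly for $s>\frac{n-2}{2}$, so only $\zeta_+=\frac{n-2}{2}+\sigma$ is allowed for $\sigma>0$; building the expansion of $u$ at $\nu=0$, the coefficient at order $\zeta_++j$ is obtained by inverting the indicial operator at $\zeta_++j$, which for $j\in\Nat$ is never an indicial root (it equals neither $\zeta_-$, as $\sigma>0$, nor $\zeta_+$ for $j\ge 1$), so no logarithmic terms arise; since $L_\sigma$ is even, the expansion proceeds in even powers of $\nu$, giving $u\in\nu^{(n-2)/2+\sigma}\CI(\BB^{n-1}_{1/2})$ after Borel summation, the remainder being Schwartz.

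The point requiring the most care is the bookkeeping rather than any single estimate: checking that the short-range potential $V$ is genuinely harmless for the Mazzeo--Melrose parametrix (it is, since $V\in\nu^2\CI$ contributes only to lower-order, boundary-decaying symbol terms), and carrying the shift $s=\frac{n-2}{2}+\sigma$ and the conjugations of Lemma~\ref{lemma:P_sigma-hyp} through consistently so that the weight thresholds come out as $|k|<|\re\sigma|$ and the exceptional leading behavior as $\nu^{(n-2)/2+\sigma}$. If one preferred not to cite \cite{Mazzeo-Melrose:Meromorphic}, the real work would be reconstructing its $0$-calculus resolvent parametrix for $\Delta_{\HH^{n-1}}-\zeta(n-2-\zeta)$, which is exactly the content of that paper.
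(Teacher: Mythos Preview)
Your proposal is correct and takes essentially the same approach as the paper: the paper does not prove this lemma at all but simply records it as ``the following result of Mazzeo and Melrose \cite{Mazzeo-Melrose:Meromorphic},'' and your argument is precisely the unpacking of how that citation applies to $L_\sigma=\Delta_{\HH^{n-1}}-\zeta(n-2-\zeta)+V$ with $V\in\nu^2\CI$ short-range. Your additional direct self-adjointness argument for $L^2$-invertibility when $\sigma^2\notin\RR$ and the indicial-root expansion for the nullspace regularity are correct elaborations that the paper omits.
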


In fact, $L_\sigma^{-1}$, defined at first in $\re\sigma>0$, extends
meromorphically to all of $\Cx$ (i.e.\ the Riemann surface of
$\sigma^2$), as shown in \cite{Mazzeo-Melrose:Meromorphic} with improvements
in \cite{Guillarmou:Meromorphic}:

\begin{lemma}
The operator $L_\sigma^{-1}$ defined at first for $\re\sigma>0$
as the inverse of $L_\sigma$, extends to a meromorphic family
of operators
\begin{equation*}
R_0(\sigma):\nu^k H^m_0(\BB^{n-1}_{1/2},\mu_{\HH^{n-1}})\to
\nu^k H^{m+2}_0(\BB^{n-1}_{1/2},\mu_{\HH^{n-1}})
+\nu^{(n-2)/2+\sigma}\Cinf(\BB^{n-1}),
\end{equation*}
$k>|\re\sigma|$ with no poles for $\sigma\neq 0$
pure imaginary, which satisfies $L_\sigma R_0(\sigma)=\Id$ on
$\nu^k H^m_0(\BB^{n-1}_{1/2},\mu_{\HH^{n-1}})$.

Moreover, $\sigma$ is a pole of $R_0$, then $L_\sigma u=0$ has
a non-zero solution
\begin{equation*}
u\in\nu^{(n-2)/2+\sigma}\Cinf(\BB^{n-1}).
\end{equation*}
\end{lemma}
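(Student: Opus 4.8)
The statement is, in essence, the Mazzeo--Melrose meromorphic continuation of the resolvent on an asymptotically hyperbolic manifold \cite{Mazzeo-Melrose:Meromorphic}, here applied to a lower-order perturbation of exact hyperbolic space $\HH^{n-1}$, together with the sharpening of \cite{Guillarmou:Meromorphic} that exploits the even structure, so the plan is essentially to transcribe that argument. First I would record that, by the preceding lemmas, $L_\sigma=\Delta_{\HH^{n-1}}-\bigl(\frac{(n-2)^2}{4}-\sigma^2\bigr)+\nu^2\bigl(\lambda-\frac{n(n-2)}{4}\bigr)$ is an elliptic element of $\nu^{-2}\Diff^2_0(\BB^{n-1}_{1/2})$, that it is even, and that its indicial roots are $(n-2)/2\pm\sigma$ --- the perturbation $\nu^2(\lambda-n(n-2)/4)$ vanishes to second order at $\nu=0$ and so does not enter the indicial operator, and for $\re\sigma>0$ the root $(n-2)/2+\sigma$ is the one with more decay.

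Next I would construct, following \cite{Mazzeo-Melrose:Meromorphic}, a parametrix $G(\sigma)$ in the $0$-calculus whose Schwartz kernel is conormal to the lifted diagonal of the $0$-stretched product and polyhomogeneous, with leading order $(n-2)/2+\sigma$ on the left and right faces, depending holomorphically on $\sigma$ for $\re\sigma>0$ and --- because $L_\sigma$ is even --- continuing meromorphically through the Riemann surface of $\sigma^2$ with no obstruction at the points where $2\sigma$ is an integer, as in \cite{Guillarmou:Meromorphic}. One arranges $L_\sigma G(\sigma)=\Id-E(\sigma)$ with $E(\sigma)$ a holomorphic family of operators with smooth kernel vanishing to high order at the boundary, hence compact on $\nu^k H^m_0(\BB^{n-1}_{1/2},\mu_{\HH^{n-1}})$ when $|k|<\re\sigma$ (and mapping into $\nu^k H^\infty_0(\BB^{n-1}_{1/2},\mu_{\HH^{n-1}})+\nu^{(n-2)/2+\sigma}\CI(\BB^{n-1})$ when $k>\re\sigma$). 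Analytic Fredholm theory for $\Id-E(\sigma)$ then shows $(\Id-E(\sigma))^{-1}$ is meromorphic with finite-rank residues, so $R_0(\sigma)=G(\sigma)(\Id-E(\sigma))^{-1}$ is the desired continuation and $L_\sigma R_0(\sigma)=\Id$ automatically. The mapping property --- range in $\nu^k H^{m+2}_0(\BB^{n-1}_{1/2},\mu_{\HH^{n-1}})+\nu^{(n-2)/2+\sigma}\CI(\BB^{n-1})$ --- comes from the mapping properties of $G(\sigma)$ and elliptic $0$-regularity (boundary regularity in Mazzeo's sense): solutions acquire a polyhomogeneous term with exponent $(n-2)/2+\sigma$, and this term is smooth with coefficients in $\CI(\BB^{n-1})$, not merely conormal, because $L_\sigma$ is even. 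At a pole $\sigma_0$ the image of the finite-rank residue consists of $u$ solving $L_{\sigma_0}u=0$ with no $\nu^{(n-2)/2-\sigma_0}$ component, and the same boundary regularity gives $u\in\nu^{(n-2)/2+\sigma_0}\CI(\BB^{n-1})$, which is the final assertion.

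It remains to rule out poles for $\sigma\neq 0$ pure imaginary. For such $\sigma$, $\frac{(n-2)^2}{4}-\sigma^2$ lies in the continuous spectrum $[\frac{(n-2)^2}{4},\infty)$ of $\Delta_{\HH^{n-1}}$, and $L_\sigma$ is formally self-adjoint with $\overline\sigma=-\sigma$. A pole at $\sigma_0$ would produce a nonzero $u\in\nu^{(n-2)/2+\sigma_0}\CI(\BB^{n-1})$ with $L_{\sigma_0}u=0$, i.e.\ purely outgoing; since $\overline{\sigma_0}=-\sigma_0$, the conjugate $\overline u$ is then purely incoming, and applying Green's formula to the pair $(u,\overline u)$ produces a boundary term proportional to $\sigma_0$ times the squared $L^2$-norm of the leading coefficient of $u$, which must vanish because $L_{\sigma_0}u=0$; as $\sigma_0\neq 0$ this leading coefficient vanishes, the indicial recursion kills all remaining coefficients, and $u$ vanishes to infinite order at $\nu=0$. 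Mazzeo's unique continuation theorem for $0$-operators --- or a Carleman estimate of the type used in Section~\ref{sec:global} --- then forces $u\equiv 0$, a contradiction. I expect this boundary-pairing argument, together with the precise invocation of unique continuation at infinity, to be the main point requiring care; the remainder is a direct application of \cite{Mazzeo-Melrose:Meromorphic} and \cite{Guillarmou:Meromorphic}.
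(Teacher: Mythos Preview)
The paper does not give its own proof of this lemma: it is stated as a known result, introduced by the sentence ``as shown in \cite{Mazzeo-Melrose:Meromorphic} with improvements in \cite{Guillarmou:Meromorphic},'' and followed immediately by Corollary~\ref{cor:P_sigma-Fredholm} with no intervening argument. Your proposal is a correct and faithful sketch of precisely the argument being cited --- the $0$-calculus parametrix plus analytic Fredholm theory from Mazzeo--Melrose, the evenness refinement from Guillarmou that upgrades the polyhomogeneous expansion to coefficients in $\CI(\BB^{n-1})$, and the boundary-pairing plus unique-continuation argument to exclude poles on the imaginary axis --- so there is nothing to compare: you have supplied exactly the proof the paper outsources.
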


\begin{cor}\label{cor:P_sigma-Fredholm}
For $\sigma^2\in\Cx\setminus[0,\infty)$, $\re\sigma>0$,
the operator $P_\sigma$ is
Fredholm, of index $0$, as a map
\begin{equation*}
P_\sigma:\nu^k  H^m_0(\BB^{n-1}_{1/2},\mu_{\HH^{n-1}})\to
\nu^{k-2} H^{m+2}_0(\BB^{n-1}_{1/2},\mu_{\HH^{n-1}})
\end{equation*}
for $-\frac{n-2}{2}<k<2\re\sigma-\frac{n-2}{2}$, $P_{\sigma}^{-1}$ is
meromorphic, with finite rank poles, and all poles satisfy $\sigma^2\in\RR$.

Moreover, for $\sigma>0$, elements of the nullspace of $P_\sigma$
on $\nu^k  H^m_0(\BB^{n-1}_{1/2},\mu_{\HH^{n-1}})$, $k$ as above,
lie in $\nu^{2\sigma}\Cinf(\BB^{n-1}_{1/2})$.

In addition, for $k>2\re\sigma-\frac{n-2}{2}$, whenever $P_\sigma$ is
invertible on $L^2$,
\begin{equation*}
P_\sigma^{-1}:\nu^k H^m_0(\BB^{n-1}_{1/2},\mu_{\HH^{n-1}})\to
\nu^{k-2} H^{m+2}_0(\BB^{n-1}_{1/2},\mu_{\HH^{n-1}})
+\nu^{2\sigma}\Cinf(\BB^{n-1}).
\end{equation*}

Finally, $R(\sigma)=P_\sigma^{-1}$, $\re\sigma>0$, extends to a meromorphic
family
\begin{equation*}
R(\sigma):\nu^k H^m_0(\BB^{n-1}_{1/2},\mu_{\HH^{n-1}})\to
\nu^{k-2} H^{m+2}_0(\BB^{n-1}_{1/2},\mu_{\HH^{n-1}})
+\nu^{2\sigma}\Cinf(\BB^{n-1}),
\end{equation*}
$k>2|\re\sigma|-\frac{n-2}{2}$, with no poles for $\sigma\neq 0$
pure imaginary, and $P_\sigma R(\sigma)=\Id$ on
$\nu^k H^m_0(\BB^{n-1}_{1/2},\mu_{\HH^{n-1}})$, $k$ as above.
If $\sigma$ is a pole of $R_0$, then $P_\sigma u=0$ has
a non-zero solution
\begin{equation*}
u\in\nu^{2\sigma}\Cinf(\BB^{n-1}).
\end{equation*}
\end{cor}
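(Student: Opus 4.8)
The plan is to deduce every assertion from the conjugation identity of Lemma~\ref{lemma:P_sigma-hyp}, which rewrites $P_\sigma$ in terms of the shifted hyperbolic Laplacian $L_\sigma$, and then to transcribe the Mazzeo--Melrose and Guillarmou results on $L_\sigma$ and its resolvent $R_0(\sigma)=L_\sigma^{-1}$ quoted just above. Writing $2s=\sigma-\frac n2$, so $\nu^{2s}=(1-r^2)^s$, the lemma reads $\nu^{-2s}P_\sigma\nu^{2s}=-\nu^{-1}L_\sigma\nu^{-1}$, i.e.
\begin{equation*}
P_\sigma=-\nu^{2s-1}L_\sigma\nu^{-2s-1}.
\end{equation*}
The one structural input beyond this is that for any $a\in\Cx$ multiplication by $\nu^a$ is a topological isomorphism $\nu^kH^m_0(\BB^{n-1}_{1/2},\mu_{\HH^{n-1}})\to\nu^{k-\re a}H^m_0(\BB^{n-1}_{1/2},\mu_{\HH^{n-1}})$: writing $\nu^a=\nu^{\re a}\cdot\nu^{a-\re a}$, the first factor shifts the weight, while the second has modulus $1$, satisfies $\nu\pa_\nu(\nu^{a-\re a})=(a-\re a)\nu^{a-\re a}$ with $a-\re a$ purely imaginary, and commutes with the spherical $0$-vector fields, so it and all its $0$-derivatives are bounded, hence it is bounded with bounded inverse on every $\nu^kH^m_0$. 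Consequently $P_\sigma\in\nu^{-2}\Diff^2_0(\BB^{n-1}_{1/2})$, acting between the weighted $0$-Sobolev spaces, is intertwined by such isomorphisms with $L_\sigma$ acting at weight $k'=k+\frac{n-2}{2}-\re\sigma$.

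Given this dictionary, the Fredholm statements follow by transcription. A direct computation shows that $-\frac{n-2}{2}<k<2\re\sigma-\frac{n-2}{2}$ is equivalent to $|k'|<\re\sigma=|\re\sigma|$ (using $\re\sigma>0$), which is exactly the range in which Mazzeo--Melrose provide $R_0(\sigma)$ as a meromorphic family $\nu^{k'}H^m_0\to\nu^{k'}H^{m+2}_0$ with finite rank residues. A meromorphic family of two-sided inverses forces $L_\sigma$ to be invertible, hence of index $0$, off a discrete set of poles; since $L_\sigma$ is Fredholm on all of $\sigma^2\in\Cx\setminus[0,\infty)$, which is connected, its index is identically $0$ there. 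Pulling back by the conjugation, $P_\sigma$ is Fredholm of index $0$ on $\nu^kH^m_0$ for $k$ in the stated range, $P_\sigma^{-1}$ is meromorphic with finite rank poles, and, as $R_0(\sigma)$ has no poles when $\sigma^2\notin\RR$, neither does $P_\sigma^{-1}$, so all its poles satisfy $\sigma^2\in\RR$.

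For the remaining assertions I would push the precise form of the Mazzeo--Melrose/Guillarmou statements through the conjugation. Once $k'>|\re\sigma|$, $R_0(\sigma)$ maps $\nu^{k'}H^m_0$ into $\nu^{k'}H^{m+2}_0+\nu^{(n-2)/2+\sigma}\Cinf(\BB^{n-1})$; conjugating back multiplies the polyhomogeneous summand by $\nu^{2s+1}$, and since $(2s+1)+(\frac{n-2}{2}+\sigma)=2\sigma$ it becomes $\nu^{2\sigma}\Cinf(\BB^{n-1})$, which yields the stated mapping property of $P_\sigma^{-1}$ and of the meromorphic continuation $R(\sigma)=-\nu^{2s+1}R_0(\sigma)\nu^{1-2s}$ (so $P_\sigma R(\sigma)=\Id$ follows from $L_\sigma R_0(\sigma)=\Id$, and $R(\sigma)$ inherits from $R_0$ the absence of poles away from nonzero pure imaginary $\sigma$). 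Finally, if $\sigma>0$ and $P_\sigma u=0$ with $u\in\nu^kH^m_0$, then $\nu^{-2s-1}u$ lies in the $L^2$-type nullspace of $L_\sigma$, hence in $\nu^{(n-2)/2+\sigma}\Cinf(\BB^{n-1})$ by the Mazzeo--Melrose lemma, so $u\in\nu^{2\sigma}\Cinf(\BB^{n-1})\subset\nu^{2\sigma}\Cinf(\BB^{n-1}_{1/2})$ by the same exponent computation; the nullspace at a pole of $R_0$ is handled identically.

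I expect the only point requiring real care to be the weight bookkeeping: checking that conjugation by the complex powers $\nu^{2s\pm1}$ genuinely gives isomorphisms of the weighted $0$-Sobolev spaces (which is where the imaginary part of $\sigma-\frac n2$ must be controlled) and confirming that the interval $-\frac{n-2}{2}<k<2\re\sigma-\frac{n-2}{2}$ is precisely the one in which no polyhomogeneous term is produced, and that the exceptional power coming out of the conjugation is exactly $\nu^{2\sigma}$. Everything else — Fredholmness, vanishing index, meromorphy and finite rank of the poles, their location on $\sigma^2\in\RR$, the mapping properties, and the structure of the nullspaces — is then an immediate transcription of the corresponding property of $L_\sigma$ and $R_0(\sigma)$.
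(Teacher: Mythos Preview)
Your approach is exactly the paper's: the proof there consists of the single line $P_\sigma=-\nu^{\sigma-\frac{n}{2}-1}L_\sigma\nu^{-\sigma+\frac{n}{2}-1}$, hence $P_\sigma^{-1}=-\nu^{\sigma-\frac{n}{2}+1}L_\sigma^{-1}\nu^{-\sigma+\frac{n}{2}+1}$, with everything else left implicit. You have correctly spelled out the weight bookkeeping that makes this transcription work, and your exponent checks ($k'=k+\tfrac{n-2}{2}-\re\sigma$, $(2s+1)+\tfrac{n-2}{2}+\sigma=2\sigma$) are right.

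One small slip: in your general statement you write that multiplication by $\nu^a$ maps $\nu^kH^m_0\to\nu^{k-\re a}H^m_0$; it should be $\nu^{k+\re a}H^m_0$. Your subsequent computation of $k'$ is nonetheless correct, so this is just a typo. Also, the phrase ``absence of poles away from nonzero pure imaginary $\sigma$'' is inverted; you mean that $R(\sigma)$ has no poles \emph{at} such $\sigma$.
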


\begin{proof}
$P_\sigma=-\nu^{\sigma-\frac{n}{2}-1}L_\sigma\nu^{-\sigma+\frac{n}{2}-1}$,
so
\begin{equation*}
P_\sigma^{-1}=-\nu^{\sigma-\frac{n}{2}+1}L_\sigma^{-1}
\nu^{-\sigma+\frac{n}{2}+1}.
\end{equation*}
\end{proof}

Note that $1$ just barely fails to be in
$\nu^{-(n-2)/2}L^2(\BB^{n-1}_{1/2},\mu_{\HH^{n-1}})$, while
\begin{equation*}
\nu^{2\sigma}\Cinf(\BB^{n-1})\subset\nu^{k}L^2(\BB^{n-1}_{1/2}
,\mu_{\HH^{n-1}})
\end{equation*}
for $k<2\re\sigma-\frac{n-2}{2}$.

If $\lambda<0$, $\hat s_+(\lambda)>0$,
and $P_{\hat s_+(\lambda)}$ fails to be invertible on the spaces listed above
as $P_{\hat s_+(\lambda)} \nu^{2\hat s_+(\lambda)}=0$, and
$\nu^{2\hat s_+(\lambda)}$ lies in these
spaces. However, we claim that $P_\sigma$ is invertible
for $\sigma>\hat s_+(\lambda)$.
In fact,
\begin{equation*}\begin{split}
-P_\sigma &=-\nu^{2\sigma}(\nu^{-2\sigma}
P_\sigma\nu^{2\sigma})\nu^{-2\sigma}=-\nu^{2\sigma}P_{-\sigma}\nu^{-2\sigma}\\
&=\nu^{-1}(\nu D_\nu-i)^*(1-\nu^2)(\nu D_\nu-i)\nu^{-1}
+\frac{1}{1-\nu^2}\Delta_\omega+\lambda+\sigma^2+\sigma(n-1),
\end{split}\end{equation*}
with adjoint taken relative to $(1-\nu^2)^{(n-3)/2}\nu^{1-2\sigma}\,d\nu
\,d\omega$. The first two terms are positive with respect to
the corresponding $L^2$ space, while the roots of
$\lambda+\sigma^2+\sigma(n-1)$ are exactly $\hat s_\pm(\lambda)$,
so $\lambda+\sigma^2+\sigma(n-1)>0$ for $\sigma>\hat s_+(\lambda)$.
As $\nu^{2\sigma}\CI(\BB^{n-1})
\subset H^1_0(\BB^{n-1}_{1/2},\nu^{1-2\sigma}\,d\nu\,d\omega)$, it
follows from Corollary~\ref{cor:P_sigma-Fredholm} that
$P_\sigma$ has no nullspace in the listed spaces, so it is invertible.
(A different way of arguing would have been to note that
$\nu P_{\hat s_+(\lambda)}\nu$
has a positive eigenfunction, $\nu^{-1+2\hat s_+(\lambda)}$, which thus
must correspond to the bottom of the spectrum.)

That for $\lambda\geq 0$ the poles do not occur follows from
the following lemma as
\begin{equation*}
\nu L^2(\BB^{n-1}_{1/2},\nu^{n-2\re\sigma}
\mu_{\HH^{n-1}})=\nu^{1-\frac{n}{2}+\re\sigma}
L^2(\BB^{n-1}_{1/2},\mu_{\HH^{n-1}}),
\end{equation*}
and $1-\frac{n}{2}+\re\sigma<2\re\sigma-\frac{n-2}{2}$ if $\re\sigma>0$.

\begin{lemma}\label{lemma:P_sigma-cont-spec-form}
$P_\sigma$ satisfies
\begin{equation}\begin{split}\label{eq:P-sigma-form}
P_\sigma
&=-(D_r-\frac{i\sigma r}{1-r^2})^*(1-r^2)(D_r-\frac{i\sigma r}{1-r^2})
-r^{-2}\Delta_\omega-\frac{\sigma^2}{1-r^2}-\lambda\\
&=-(D_\nu+i\sigma \nu^{-1})^*(1-\nu^2)(D_\nu+i\sigma \nu^{-1})
-\frac{1}{1-\nu^2}\,\Delta_\omega
-\sigma^2\nu^{-2}-\lambda
\end{split}\end{equation}
with the (formal) adjoint taken with respect to the measure
\begin{equation*}
\mu=
r^{n-2}(1-r^2)^{-\re\sigma}\,dr\,d\omega=(1-\nu^2)^{\frac{n-3}{2}}\nu^{1-2\re\sigma}\,d\nu\,d\omega=\nu^{n-2\re\sigma}\mu_{\HH^{n-1}}.
\end{equation*}
\end{lemma}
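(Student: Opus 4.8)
The plan is to prove Lemma~\ref{lemma:P_sigma-cont-spec-form} by a direct computation, starting from the polar-coordinate form of $P_\sigma$ already recorded above,
$$
P_\sigma=(rD_r-i(n-1-\sigma))(rD_r+i\sigma)-D_r^2+i\,\tfrac{n-2}{r}\,D_r-\tfrac{1}{r^2}\,\Delta_\omega-\lambda ,
$$
and then transferring the result to the $\nu$ variable via $\nu=(1-r^2)^{1/2}$. First I would multiply out the leading product using $D_r r=rD_r-i$, which gives $(rD_r-i(n-1-\sigma))(rD_r+i\sigma)=r^2D_r^2+i(2\sigma-n)rD_r+\sigma(n-1-\sigma)$, so that
$$
P_\sigma=(r^2-1)D_r^2+i\Big((2\sigma-n)r+\tfrac{n-2}{r}\Big)D_r+\sigma(n-1-\sigma)-r^{-2}\Delta_\omega-\lambda .
$$

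Next I would compute the formal adjoint of $A:=D_r-\frac{i\sigma r}{1-r^2}$ with respect to $\mu=r^{n-2}(1-r^2)^{-\re\sigma}\,dr\,d\omega$. Since the density $w=r^{n-2}(1-r^2)^{-\re\sigma}$ is real, $D_r^*=D_r-iw'/w=D_r-i\tfrac{n-2}{r}-i\tfrac{2r\re\sigma}{1-r^2}$, while the adjoint of multiplication by $\frac{i\sigma r}{1-r^2}$ is multiplication by $-\frac{i\bar\sigma r}{1-r^2}$; combining these and using the elementary identity $-2\re\sigma+\bar\sigma=-\sigma$ yields the clean formula $A^*=A-i\tfrac{n-2}{r}$. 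I would then expand $-A^*(1-r^2)A$, keeping track of the commutators $[D_r,1-r^2]=2ir$ and $[D_r,r]=-i$, add the remaining terms $-r^{-2}\Delta_\omega-\frac{\sigma^2}{1-r^2}-\lambda$, record the cancellation $\frac{\sigma^2 r^2}{1-r^2}-\frac{\sigma^2}{1-r^2}=-\sigma^2$, and check that the second-, first- and zeroth-order coefficients of the result coincide with those of $P_\sigma$ above; in particular the first-order coefficient $-2r+2\sigma r+(n-2)\tfrac{1-r^2}{r}$ collapses to $(2\sigma-n)r+\tfrac{n-2}{r}$, as needed.

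Finally, for the second line of \eqref{eq:P-sigma-form} and the three stated forms of $\mu$, I would substitute $\nu=(1-r^2)^{1/2}$, so that $1-\nu^2=r^2$ and, from $\nu D_r=-(1-\nu^2)^{1/2}D_\nu$, one gets $D_r=-\tfrac{r}{\nu}D_\nu$ and hence $A=-\tfrac{r}{\nu}(D_\nu+i\sigma\nu^{-1})$. Since $-\tfrac{r}{\nu}$ is a real multiplication operator, the two such factors combine with $1-r^2=\nu^2$ to give $r^2=1-\nu^2$, whence $A^*(1-r^2)A=(D_\nu+i\sigma\nu^{-1})^*(1-\nu^2)(D_\nu+i\sigma\nu^{-1})$; similarly $r^{-2}\Delta_\omega=(1-\nu^2)^{-1}\Delta_\omega$ and $\frac{\sigma^2}{1-r^2}=\sigma^2\nu^{-2}$, while $r^{n-2}(1-r^2)^{-\re\sigma}\,dr$ pushes forward to $(1-\nu^2)^{(n-3)/2}\nu^{1-2\re\sigma}\,d\nu=\nu^{n-2\re\sigma}\mu_{\HH^{n-1}}$, matching the quoted identifications. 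I do not anticipate a genuine obstacle: the only delicate point is the bookkeeping in the adjoint step --- making sure the exponent $\re\sigma$ in $\mu$ conspires, through $-2\re\sigma+\bar\sigma=-\sigma$, to return $P_\sigma$ with the holomorphic parameter $\sigma$ rather than $\bar\sigma$ --- together with the routine but error-prone commutator accounting inside $A^*(1-r^2)A$.
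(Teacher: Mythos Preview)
Your computation is correct. The paper does not actually include a proof of this lemma; it is stated as a direct computational identity and left to the reader, so your explicit verification---expanding $P_\sigma$ in polar coordinates, computing $A^*=A-i\tfrac{n-2}{r}$ via the weight $w=r^{n-2}(1-r^2)^{-\re\sigma}$ and the identity $2\re\sigma-\bar\sigma=\sigma$, matching coefficients, and then changing to the $\nu$ variable---is exactly the kind of routine check the paper omits. The one subtlety you highlighted, that the $\re\sigma$ in the measure conspires with the complex conjugate to return the holomorphic $\sigma$, is the only point requiring care, and you handle it correctly.
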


\begin{cor}\label{cor:P_sigma-inv}
Suppose that $\lambda<(n-1)^2/4$. Then $P_\sigma$ is invertible for
$\sigma>\max(0,\hat s_+(\lambda))$.
\end{cor}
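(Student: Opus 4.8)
The plan is to deduce invertibility from triviality of the nullspace, and to kill the nullspace with a coercivity estimate extracted from Lemma~\ref{lemma:P_sigma-cont-spec-form}.

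First I would record the purely algebraic observation that $\hat s_\pm(\lambda)$ are exactly the two roots of $s^2+(n-1)s+\lambda=0$, so that
\begin{equation*}
\lambda+\sigma^2+\sigma(n-1)=(\sigma-\hat s_+(\lambda))(\sigma-\hat s_-(\lambda)).
\end{equation*}
When $\lambda<\frac{(n-1)^2}{4}$ these roots are real with $\hat s_-(\lambda)\le\hat s_+(\lambda)$, so for $\sigma>\max(0,\hat s_+(\lambda))$ we have $\sigma>\hat s_+(\lambda)\ge\hat s_-(\lambda)$ (and $\sigma>0$), hence both factors are strictly positive and $\lambda+\sigma^2+\sigma(n-1)>0$. (For $\lambda\ge 0$ one has $\max(0,\hat s_+(\lambda))=0$, consistently, since then $\lambda+\sigma^2+\sigma(n-1)=(\sigma+\tfrac{n-1}{2})^2+\lambda-\tfrac{(n-1)^2}{4}>0$ for every $\sigma>0$ with no further restriction.)

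Next I would use the form of $-P_\sigma$ obtained by conjugating Lemma~\ref{lemma:P_sigma-cont-spec-form} by $\nu^{2\sigma}$, i.e. the identity
$-P_\sigma=-\nu^{2\sigma}P_{-\sigma}\nu^{-2\sigma}=\nu^{-1}(\nu D_\nu-i)^*(1-\nu^2)(\nu D_\nu-i)\nu^{-1}+\tfrac{1}{1-\nu^2}\Delta_\omega+\lambda+\sigma^2+\sigma(n-1)$
recorded in the discussion preceding this corollary, with the formal adjoint taken relative to $(1-\nu^2)^{(n-3)/2}\nu^{1-2\sigma}\,d\nu\,d\omega$. Pairing against $u$ and integrating by parts (as in the proof of the lemma following Lemma~\ref{lemma:P_sigma-hyp}, but with $\sigma$ replaced by $-\sigma$) gives
\begin{equation*}
\langle u,-P_\sigma u\rangle=\|(1-\nu^2)^{1/2}(\nu D_\nu-i)\nu^{-1}u\|^2+\|(1-\nu^2)^{-1/2}d_\omega u\|^2+(\lambda+\sigma^2+\sigma(n-1))\|u\|^2,
\end{equation*}
all norms in $L^2(\BB^{n-1}_{1/2},(1-\nu^2)^{(n-3)/2}\nu^{1-2\sigma}\,d\nu\,d\omega)$, valid for every $u$ in the associated form domain; in particular, since $\sigma>0$, for every $u\in\nu^{2\sigma}\CI(\BB^{n-1})$, which sits inside that form domain by the containment noted before the corollary. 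Combined with the first step this yields the coercive bound $\langle u,-P_\sigma u\rangle\ge(\lambda+\sigma^2+\sigma(n-1))\|u\|^2$ with a strictly positive constant.

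Finally I would invoke the resolvent package of Corollary~\ref{cor:P_sigma-Fredholm}: $P_\sigma^{-1}=R(\sigma)$ extends meromorphically in $\re\sigma>0$ with poles only at real $\sigma^2$, and any pole at $\sigma$ would produce a nonzero $u\in\nu^{2\sigma}\CI(\BB^{n-1})$ with $P_\sigma u=0$. The coercivity estimate forces such a $u$ to vanish, so there is no pole at $\sigma>\max(0,\hat s_+(\lambda))$; equivalently $P_\sigma$, which is Fredholm of index zero on the relevant weighted $0$-Sobolev spaces, has trivial nullspace, hence is invertible. I expect the only real obstacle to be bookkeeping rather than analysis: one must match function spaces carefully — check that the coercivity identity genuinely extends from $\dCI$ of the interior to the full form domain (using the decay $\nu^{2\sigma}$, $\sigma>0$, to handle the boundary terms at $\nu=0$), and that the nullspace elements furnished by Corollary~\ref{cor:P_sigma-Fredholm} do lie in that form domain so the estimate is applicable to them; the positivity itself is immediate once the sign of $\lambda+\sigma^2+\sigma(n-1)$ is pinned down.
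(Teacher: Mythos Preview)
Your proposal is correct and follows essentially the same route as the paper: the coercivity identity for $-P_\sigma$ coming from the conjugation $-P_\sigma=-\nu^{2\sigma}P_{-\sigma}\nu^{-2\sigma}$, positivity of the constant $\lambda+\sigma^2+\sigma(n-1)=(\sigma-\hat s_+(\lambda))(\sigma-\hat s_-(\lambda))$ in the stated range, and then Corollary~\ref{cor:P_sigma-Fredholm} to pass from trivial nullspace to invertibility. The only cosmetic difference is that the paper splits into cases, using this conjugated form for $\lambda<0$ and the unconjugated form of Lemma~\ref{lemma:P_sigma-cont-spec-form} (with the singular weight $\sigma^2\nu^{-2}+\lambda$) for $0\le\lambda<(n-1)^2/4$, whereas you run the single conjugated form uniformly; your version is slightly cleaner in that respect.
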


In fact, we can analyze the poles of the analytic continuation $R(\sigma)$
rather accurately using special algebraic properties of $P_\sigma$.
Unlike the preceeding considerations, which were rather general, i.e.\ hold
for operators of the same form, the following relies on the precise
form of $P_\sigma$.

\begin{lemma}
The following identities hold:
\begin{equation*}
P_{\sigma-2}\Delta_Y=\Delta_YP_\sigma,
\ P_{\sigma+2}\nu^{2\sigma+4}\Delta_Y\nu^{-2\sigma}=\nu^{2\sigma+4}\Delta_Y
\nu^{-2\sigma}P_\sigma.
\end{equation*}
\end{lemma}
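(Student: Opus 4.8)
The plan is to deduce both identities from a single commutation relation between the flat Laplacian $\Delta_Y$ and the operator $YD_Y=-i\sum_j Y_j\partial_{Y_j}$ on $\RR^{n-1}$, together with the conjugation formula $\nu^{-2\sigma}P_\sigma\nu^{2\sigma}=P_{-\sigma}$ recorded above.

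First I would prove the first identity. The key point is that $\Delta_Y$ is homogeneous of degree $-2$, while $YD_Y$ acts on a function homogeneous of degree $d$ as multiplication by $-id$; hence $\Delta_Y$ intertwines $YD_Y$ with $YD_Y-2i$, i.e.\ $\Delta_Y\,f(YD_Y)=f(YD_Y-2i)\,\Delta_Y$ for every polynomial $f$, and in particular $\Delta_Y(YD_Y)=(YD_Y-2i)\Delta_Y$. Applying this to the two linear factors of $P_\sigma$ and noting that $-2i-i(n-1-\sigma)=-i(n-1-(\sigma-2))$ and $-2i+i\sigma=i(\sigma-2)$, one obtains
\[
\Delta_Y\,(YD_Y-i(n-1-\sigma))(YD_Y+i\sigma)
=(YD_Y-i(n-1-(\sigma-2)))(YD_Y+i(\sigma-2))\,\Delta_Y .
\]
Since $\Delta_Y$ commutes with $\Delta_Y$ and with the constant $\lambda$, adding the remaining terms gives $\Delta_Y P_\sigma=P_{\sigma-2}\Delta_Y$ at once. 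This is the only real computation; one should also remark that the equality is a genuine identity of differential operators with smooth coefficients on $\RR^{n-1}$ (hence on the front face $\BB^{n-1}_{1/2}$ after passing to polar coordinates, where $\nu^{2\sigma+4}\Delta_Y\nu^{-2\sigma}$ is likewise a smooth second order operator, as $\partial_{Y_j}\nu=-Y_j\nu^{-1}$).

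For the second identity I would conjugate the first. Using $P_\sigma=\nu^{2\sigma}P_{-\sigma}\nu^{-2\sigma}$, equivalently $\nu^{-2\sigma}P_\sigma=P_{-\sigma}\nu^{-2\sigma}$, we get
\[
\nu^{2\sigma+4}\Delta_Y\nu^{-2\sigma}P_\sigma=\nu^{2\sigma+4}\Delta_Y P_{-\sigma}\nu^{-2\sigma}=\nu^{2\sigma+4}P_{-\sigma-2}\Delta_Y\nu^{-2\sigma},
\]
the last step being the first identity with $\sigma$ replaced by $-\sigma$. Finally $\nu^{2\sigma+4}P_{-\sigma-2}=\nu^{2(\sigma+2)}P_{-(\sigma+2)}=P_{\sigma+2}\nu^{2(\sigma+2)}=P_{\sigma+2}\nu^{2\sigma+4}$, again by the conjugation formula applied to $\sigma+2$, which yields $\nu^{2\sigma+4}\Delta_Y\nu^{-2\sigma}P_\sigma=P_{\sigma+2}\,\nu^{2\sigma+4}\Delta_Y\nu^{-2\sigma}$, as claimed. (Alternatively one could expand $\nu^{2\sigma+4}\Delta_Y\nu^{-2\sigma}$ directly and verify the identity by hand, but the conjugation argument is shorter and avoids bookkeeping.) The only thing to watch is keeping the signs straight in $\Delta_Y(YD_Y)=(YD_Y-2i)\Delta_Y$ and checking that the shifts land exactly on $n-1-(\sigma-2)$ and $\sigma-2$; there is no genuine obstacle here.
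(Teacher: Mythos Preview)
Your proof is correct and follows essentially the same approach as the paper: both use the homogeneity relation $[YD_Y,\Delta_Y]=2i\Delta_Y$ for the first identity and then derive the second from the first via the conjugation formula $\nu^{-2\sigma}P_\sigma\nu^{2\sigma}=P_{-\sigma}$. Your treatment of the first identity is in fact a bit cleaner than the paper's, since you apply the intertwining $\Delta_Y f(YD_Y)=f(YD_Y-2i)\Delta_Y$ directly to the factored form of $P_\sigma$ rather than expanding the commutator and regrouping.
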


\begin{proof}
First, as $\Delta_Y$ is homogeneous of degree $-2$ with respect to
dilations on $Y$, $[y\pa_y,\Delta_Y]=-2\Delta_Y$, so $[YD_Y,\Delta_Y]
=2i\Delta_Y$.
As
\begin{equation*}
P_\sigma=(YD_Y-i(n-1-\sigma))(YD_Y+i\sigma)-\Delta_Y-\lambda,
\end{equation*}
we deduce that
\begin{equation*}\begin{split}
\Delta_YP_\sigma
&=P_\sigma\Delta_Y+[\Delta_Y,(YD_Y)^2+i(2\sigma-(n-1))YD_Y]\\
&=P_\sigma\Delta_Y-2i\Delta_Y(YD_Y)-2i(YD_Y)\Delta_Y+2(2\sigma-(n-1))\Delta_Y\\
&=P_\sigma\Delta_Y-4\Delta_Y-4i(YD_Y)\Delta_Y+2(2\sigma-(n-1))\Delta_Y\\
&=\left((YD_Y)^2+i(2\sigma-(n-1))YD_Y+\sigma(n-1-\sigma)-\Delta_Y-\lambda\right.\\
&\qquad\qquad\qquad\left.-4-4i(YD_Y)+4\sigma-2(n-1)\right)\Delta_Y\\
&=\left((YD_Y)^2+i(2(\sigma-2)-(n-1))YD_Y+(\sigma-2)(n-1-\sigma+2)\right.\\
&\qquad\qquad\qquad\left.-\Delta_Y-\lambda\right)\Delta_Y\\
&=P_{\sigma-2}\Delta_Y.
\end{split}\end{equation*}
Thus, using $P_{-\sigma}=\nu^{-2\sigma}P_\sigma\nu^{2\sigma}$ with
$\sigma$ replaced by $\sigma+2$ first, then with $\sigma$ replaced by
$-\sigma$,
\begin{equation*}\begin{split}
P_{\sigma+2}\nu^{2\sigma+4}\Delta_Y\nu^{-2\sigma}
&=\nu^{2\sigma+4}P_{-\sigma-2}\Delta_Y\nu^{-2\sigma}\\
&=\nu^{2\sigma+4}\Delta_YP_{-\sigma}\nu^{-2\sigma}
=\nu^{2\sigma+4}\Delta_Y\nu^{-2\sigma}P_{\sigma}
\end{split}\end{equation*}
as claimed.
\end{proof}

\begin{lemma}
Suppose that $\sigma$ is such that
$P_{\sigma+2}w=0$, $w\in\nu^{2(\sigma+2)}\CI(\BB^{n-1})$
implies $w=0$. If $P_\sigma u=0$ for some $u\in\nu^{2\sigma}
\CI(\BB^{n-1})$ then either $\sigma\in\hat s_{\pm}(\lambda)
-\Nat$ or $u=0$.
\end{lemma}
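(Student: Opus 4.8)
The plan is to use the two intertwining identities from the preceding lemma to push a hypothetical nullvector of $P_\sigma$ up to one of $P_{\sigma+2}$, which the standing hypothesis then forces to vanish, and to read off from what remains an explicit constraint on $\sigma$. Concretely, write $u=\nu^{2\sigma}v$ with $v\in\CI(\BB^{n-1})$, and set $T=\nu^{2\sigma+4}\Delta_Y\nu^{-2\sigma}$. Since $\Delta_Y$ has constant coefficients it preserves $\CI(\BB^{n-1})$, so $Tu=\nu^{2(\sigma+2)}(\Delta_Y v)\in\nu^{2(\sigma+2)}\CI(\BB^{n-1})$. The second identity of the preceding lemma is exactly $P_{\sigma+2}T=TP_\sigma$, so $P_{\sigma+2}(Tu)=T(P_\sigma u)=0$; by the standing hypothesis on $\sigma$ this forces $Tu=0$, and since $\nu^{2\sigma+4}>0$ on the interior $\{|Y|<1\}$ of the ball we conclude $\Delta_Y v=0$ there. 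In particular $v$ is harmonic, hence real analytic, on $\{|Y|<1\}$.

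Next I would bring in the equation $P_\sigma u=0$ itself. Using the conjugation identity $P_{-\sigma}=\nu^{-2\sigma}P_\sigma\nu^{2\sigma}$ observed above, $0=P_\sigma u=\nu^{2\sigma}P_{-\sigma}v$, so $P_{-\sigma}v=0$. Because $\Delta_Y v=0$ and $P_{-\sigma}=(YD_Y-i(n-1+\sigma))(YD_Y-i\sigma)-\Delta_Y-\lambda$, this reduces to $Lv=\lambda v$, where $L=(YD_Y-i(n-1+\sigma))(YD_Y-i\sigma)$ is a polynomial in the Euler operator $YD_Y$. Expanding the harmonic function $v$ in its Taylor series at the origin, $v=\sum_{k\ge 0}p_k$ with $p_k$ homogeneous of degree $k$ (the series converging throughout $\{|Y|<1\}$ as $v$ is harmonic there), each $p_k$ is again harmonic, and $YD_Y p_k=-ik\,p_k$ gives $Lp_k=-(k+\sigma)(k+n-1+\sigma)\,p_k$. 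Matching homogeneous components in the identity $Lv=\lambda v$ yields $\big((k+\sigma)(k+n-1+\sigma)+\lambda\big)p_k=0$ for every $k\ge 0$.

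Finally, $(k+\sigma)(k+n-1+\sigma)+\lambda=0$ says precisely that $k+\sigma$ is a root of $t^2+(n-1)t+\lambda$, i.e.\ $k+\sigma\in\{\hat s_+(\lambda),\hat s_-(\lambda)\}$, i.e.\ $\sigma\in\hat s_\pm(\lambda)-\Nat$. Hence, if $\sigma\notin\hat s_\pm(\lambda)-\Nat$, then $p_k=0$ for all $k$, so $v\equiv 0$ and $u=\nu^{2\sigma}v=0$; this is the claimed dichotomy. The only points requiring a little care are the two ``transfer'' steps — checking that $Tu$ genuinely lies in $\nu^{2(\sigma+2)}\CI(\BB^{n-1})$ so the hypothesis applies, and justifying the term-by-term action of $L$ on the locally uniformly convergent Taylor expansion of $v$ — but neither presents a substantive obstacle, so I do not expect a hard step here.
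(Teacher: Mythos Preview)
Your argument is correct and follows essentially the same route as the paper: use the intertwining identity $P_{\sigma+2}T=TP_\sigma$ with $T=\nu^{2\sigma+4}\Delta_Y\nu^{-2\sigma}$ together with the hypothesis to force $\Delta_Y v=0$, then use the conjugation $P_{-\sigma}=\nu^{-2\sigma}P_\sigma\nu^{2\sigma}$ to reduce to an Euler-type equation for $v$. The only difference is in the endgame: the paper factors the polynomial in $YD_Y$ and asserts that $v$ must be homogeneous of degree $\alpha_+$ or $\alpha_-$ (a slightly compressed step, since a priori $v$ could be a sum of two homogeneous pieces), whereas your Taylor-expansion argument extracts the constraint on $\sigma$ componentwise and then uses analyticity of the harmonic $v$ to conclude $v\equiv 0$; your version is a touch more explicit at this point but otherwise the two proofs coincide.
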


\begin{proof}
If $P_\sigma u=0$, then by the previous lemma,
$P_{\sigma+2}\nu^{2\sigma+4}\Delta_Y\nu^{-2\sigma} u=0$.
Moreover, $\nu^{-2\sigma}u\in\CI(\BB^{n-1})$, so 
$w=\nu^{2\sigma+4}\Delta_Y\nu^{-2\sigma} u\in \nu^{2(\sigma+2)}\CI(\BB^{n-1})$,
hence $w=0$. Thus, $v=\nu^{-2\sigma}u\in\CI(\BB^{n-1})$ satisfies
$\Delta_Y v=0$ and $P_{-\sigma}v=\nu^{-2\sigma}P_{-\sigma}\nu^{2\sigma}v
=\nu^{-2\sigma}P_{\sigma}u=0$. Thus, $(P_{-\sigma}+\Delta_Y)v=0$,
so
\begin{equation*}
\left((YD_Y)^2-i(n-1+2\sigma)YD_Y-(\lambda+\sigma(n-1+\sigma))\right)v=0.
\end{equation*}
Factoring the operator as $(YD_Y+i\alpha_+)(YD_Y+i\alpha_-)$ with
\begin{equation*}
\alpha_\pm=-\frac{n-1}{2}-\sigma\pm\sqrt{\left(\frac{n-1}{2}\right)^2-\lambda}
=\hat s_\pm(\lambda)-\sigma,
\end{equation*}
we deduce that $v$ satisfies either $(YD_Y+i\alpha_+)v=0$ or
$(YD_Y+i\alpha_-)v=0$, i.e.\ $v$ is homogeneous of degree $\alpha_+$
or degree $\alpha_-$. But $v$ is $\CI$ at the origin, so, unless $v\equiv 0$,
in either case the corresponding $\alpha$ must be a non-negative integer,
i.e.\ $\hat s_\pm(\lambda)-\sigma=m\in\Nat$, so $\sigma\in\hat s_\pm(\lambda)
-\Nat$, proving the lemma.
\end{proof}

\begin{cor}
The only possible poles of $R(\sigma)$ are $\sigma\in\hat s_\pm(\lambda)
-\Nat$. In particular, if $m$ is a positive integer, $R(\sigma)$ is regular
at $\sigma=\hat s_\pm(\lambda)+m$ unless $s_+(\lambda)-s_-(\lambda)
=2\sqrt{(\frac{n-1}{2})^2-\lambda}\in\Nat_+$.
\end{cor}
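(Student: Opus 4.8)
The plan is to combine the two lemmas immediately preceding this corollary with the pole--kernel correspondence of Corollary~\ref{cor:P_sigma-Fredholm}. Recall that the first of those lemmas (the one giving the identities $P_{\sigma-2}\Delta_Y=\Delta_YP_\sigma$ and $P_{\sigma+2}\nu^{2\sigma+4}\Delta_Y\nu^{-2\sigma}=\nu^{2\sigma+4}\Delta_Y\nu^{-2\sigma}P_\sigma$) was used to prove the second, which says: if $P_{\sigma+2}$ has \emph{no} nonzero kernel element in $\nu^{2(\sigma+2)}\CI(\BB^{n-1})$, then $P_\sigma u=0$ with $u\in\nu^{2\sigma}\CI(\BB^{n-1})$ forces $u=0$ unless $\sigma\in\hat s_+(\lambda)-\Nat$ or $\sigma\in\hat s_-(\lambda)-\Nat$. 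On the other hand, by Corollary~\ref{cor:P_sigma-Fredholm}, every pole $\sigma_0$ of $R(\sigma)$ produces a nonzero $u_0\in\nu^{2\sigma_0}\CI(\BB^{n-1})$ with $P_{\sigma_0}u_0=0$. So I would argue by contradiction: suppose $\sigma_0$ is a pole of $R$ with $\sigma_0\notin(\hat s_+(\lambda)-\Nat)\cup(\hat s_-(\lambda)-\Nat)$.

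Since $\hat s_\pm(\lambda)-\Nat$ is stable under subtracting $2$, its complement is stable under adding $2$, so none of $\sigma_0,\sigma_0+2,\sigma_0+4,\dots$ lies in $(\hat s_+(\lambda)-\Nat)\cup(\hat s_-(\lambda)-\Nat)$. Applying the second lemma repeatedly then gives, for every $k\in\Nat$, a nonzero $u_k\in\nu^{2(\sigma_0+2k)}\CI(\BB^{n-1})$ with $P_{\sigma_0+2k}u_k=0$: at each step the existence of $u_k$ forces the hypothesis of the lemma to fail at $\sigma_0+2k$, which is precisely the existence of $u_{k+1}$. This chain cannot persist. For $k$ large, $\re(\sigma_0+2k)>0$, and a kernel element of $P_\sigma$ of the form $\nu^{2\sigma}\CI(\BB^{n-1})$ at such $\sigma$ makes $P_\sigma$ non-injective on $\nu^jH^m_0(\BB^{n-1}_{1/2},\mu_{\HH^{n-1}})$ for suitable $j\in(-\tfrac{n-2}{2},2\re\sigma-\tfrac{n-2}{2})$ (Corollary~\ref{cor:P_sigma-Fredholm}), hence, as $P_\sigma$ is Fredholm of index $0$ and invertible somewhere, $\sigma$ is a pole of $R$. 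But the poles of $R$ with $\re\sigma>0$ satisfy $\sigma^2\in\RR$ (so, having positive real part, are real) and form a bounded set: there are none at all when $\lambda\ge 0$, and none for real $\sigma>\max(0,\hat s_+(\lambda))$ when $\lambda<(n-1)^2/4$ by Corollary~\ref{cor:P_sigma-inv}. This contradicts the chain continuing indefinitely, so $\sigma_0\in\hat s_+(\lambda)-\Nat$ or $\sigma_0\in\hat s_-(\lambda)-\Nat$.

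For the final assertion, fix $m\in\Nat_+$ and intersect the set $(\hat s_+(\lambda)-\Nat)\cup(\hat s_-(\lambda)-\Nat)$ with $\{\hat s_+(\lambda)+m,\ \hat s_-(\lambda)+m\}$. An equality $\hat s_+(\lambda)+m=\hat s_+(\lambda)-j$ or $\hat s_-(\lambda)+m=\hat s_-(\lambda)-j$ with $j\in\Nat$ gives $m=-j$, which is impossible; $\hat s_+(\lambda)+m=\hat s_-(\lambda)-j$ gives $\hat s_-(\lambda)-\hat s_+(\lambda)=m+j\in\Nat_+$, impossible since $\hat s_-(\lambda)-\hat s_+(\lambda)=-2\sqrt{((n-1)/2)^2-\lambda}$ is never a positive integer (it is $\le 0$ when real and purely imaginary otherwise); and $\hat s_-(\lambda)+m=\hat s_+(\lambda)-j$ gives $\hat s_+(\lambda)-\hat s_-(\lambda)=m+j\in\Nat_+$, i.e.\ $s_+(\lambda)-s_-(\lambda)=2\sqrt{((n-1)/2)^2-\lambda}\in\Nat_+$. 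Hence, unless $s_+(\lambda)-s_-(\lambda)\in\Nat_+$, neither $\hat s_+(\lambda)+m$ nor $\hat s_-(\lambda)+m$ is a pole of $R$, which is exactly the ``in particular'' claim.

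The step I expect to require the most care is the termination of the iteration: one has to make sure that a nonzero kernel element of $P_\sigma$ lying in $\nu^{2\sigma}\CI(\BB^{n-1})$ genuinely signals a pole of $R$ once $\re\sigma>0$, and then invoke the confinement of the poles, which rests on $\sigma^2\in\RR$ together with the invertibility of $P_\sigma$ for large real $\sigma$ (covering $\lambda<(n-1)^2/4$ via Corollary~\ref{cor:P_sigma-inv} and $\lambda\ge 0$ by the absence of poles noted before Lemma~\ref{lemma:P_sigma-cont-spec-form}). Everything else is the elementary arithmetic with the indicial roots $\hat s_\pm(\lambda)$ carried out above, plus a direct application of the preceding lemma.
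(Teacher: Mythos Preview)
Your proof is correct and follows essentially the same approach as the paper: both use the pole--kernel correspondence from Corollary~\ref{cor:P_sigma-Fredholm}, the absence of kernel elements for $\re\sigma$ large, and the preceding lemma to step in increments of $2$; you phrase this as an upward contradiction chain, while the paper phrases it as a downward induction on strips $\re\sigma\in(C-2k,C-2(k-1)]$. Your write-up is in fact more careful than the paper's in two respects: you explicitly separate the cases $\lambda\ge 0$ and $\lambda<(n-1)^2/4$ when invoking the absence of poles for large real $\sigma$, and you spell out the arithmetic behind the ``in particular'' clause, which the paper leaves to the reader.
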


\begin{proof}
As noted in Corollary~\ref{cor:P_sigma-Fredholm},
$\sigma$ is a pole of $R$ if and only if there exists a non-zero
$u\in\nu^{2\sigma}\CI(\BB^{n-1})$
such that $P_\sigma u=0$. Moreover, if $\re\sigma>C$, $C$ sufficiently
large (depending on $\lambda$), then there exist no such non-trivial $u$
by Corollary~\ref{cor:P_sigma-inv}.
Correspondingly, if $\re\sigma\in (C-2,C]$ and $\sigma$ is a pole
of $R$, then the previous lemma shows that $\sigma\in\hat s_\pm(\lambda)-\Nat$.
Proceeding inductively we deduce the corollary.
\end{proof}

Now, if $\sigma$ is not a pole of $R$, then given $f\in
\dCI(\BB^{n-1}_{1/2})$, $P_\sigma v=f$ can be solved with
$v\in\nu^{2\sigma}\CI(\BB^{n-1})$.

If $\re\sigma>0$ and we extend $v$ as
$0$ to the rest of the fiber of the front face over $y'$,
$P_\sigma v$ is thus the extension of $f$.

In fact, as long as $2\sigma\notin -\Nat_+$, we can extend $v$
by expanding in Taylor series to finite order,
$v=\sum_{j=0}^N \nu^{2j}a_j+\nu^{2N+2}v'$, $v'$ $\CI$ near $\pa\BB^{n-1}$.
If we choose $N$ large enough so that $2\re\sigma+2N+2>0$, we can
extend $\nu^{2\sigma}v'$ to $\ff$ by extending it as $0$. On the
other hand, we can extend $\nu^{2\sigma+2j} a_j$ as
$(1-|Y|^2)^{\sigma+j}_+ a_j$. Thus, we obtain a distribution
$\tilde v$ on $\ff$. Now $P_\sigma$ is a second order differential operator
with $\CI$ coefficients, so $P_\sigma (1-|Y|^2)^{\sigma+j}_+ a_j$
has the form $(1-|Y|^2)^{\sigma+j-2}_+ b'_j$, with $b'_j$ smooth,
and as the principal symbol of $P_\sigma$ vanishes on the conormal
bundle of $\pa\BB^{n-1}$, it in fact has the form
$(1-|Y|^2)^{\sigma+j-1}_+ b_j$, with $b_j$ smooth, as long as
$\sigma+j$ is not a non-positive integer. In particular, we
deduce that $P_\sigma\tilde v=0$ provided that $P_\sigma v=0$.

This is the argument that requires using the analytic extension
of $R$ to $\re\sigma\leq 0$, which gives solutions
$v\in\nu^{2\sigma}\CI(\BB^{n-1})$ rather then using solutions involving
the other indicial root, $0$, which would give rise to $v\in\CI(\BB^{n-1})$,
and hence allow $P_\sigma v$ to have delta distribution terms at $\pa
\BB^{n-1}$. In particular, for $\re\sigma<0$, we cannot simply use
the conjugate (in the sense of Lemma~\ref{lemma:P_sigma-hyp})
of $L_{-\sigma}^{-1}$.

If $\sigma\in\hat s_-(\lambda)+\Nat_+$, $2\sigma\in-\Nat_+$
can hold only if $2\sqrt{(n-1)^2/4-\lambda}\in\Nat_+$; it can
never hold if $\sigma\in\hat s_+(\lambda)+\Nat_+$.
We thus deduce that with $s=\hat s_+(\lambda)$, or $s=\hat s_-(\lambda)$
under the additional assumption that $\hat s_+(\lambda)-\hat s_-(\lambda)
\notin\Nat$, we can
solve away the error in Taylor series to obtain 
\begin{equation*}
E_\pm\in x^s I^{m(s)}([X\times Y_+;\diag_{Y_+}],F),\ m(s)=-s-\frac{2n+1}{4},
\ s=\hat s_\pm(\lambda),
\end{equation*}
with
$E_\pm-E_{0,\pm}
\in x^{s+1}I^{m(s)}([X\times Y_+;\diag_{Y_+}],F)$, and $E_\pm$ supported
inside the light cone, $PE\in\dCI(X\times Y_+)$. This remaining error
can be removed using the results of Section~\ref{sec:local-solvability} to
obtain the same conclusion with $PE_\pm=0$
near $Y_+$. The standard FIO contruction
allows one to obtain $E_\pm$ with the same properties, except $PE_\pm$
supported
near $Y_-$, vanishing in a neighborhood of $Y_-$. We have thus proved:

\begin{prop}\label{prop:E(lambda)-near-Y_+}
Suppose that
$s=\hat s_+(\lambda)$, or $s=\hat s_-(\lambda)$
under the additional assumption that $\hat s_+(\lambda)-\hat s_-(\lambda)
\notin\Nat$, i.e.\ $\lambda\neq\frac{(n-1)^2-m^2}{4}$, $m\in\Nat$.
Then there
exists
\begin{equation*}
E_\pm(\lambda)
\in x^s I^{m(s)}([X\times Y_+;\diag_{Y_+}],F),\ m(s)=-s-\frac{2n+1}{4},
\end{equation*}
satisfying $PE_\pm(\lambda)\equiv 0$ near $Y_+\times Y_+$, $E_\pm(\lambda)$
supported
inside the light cone near $Y_+\times Y_+$, and
\begin{equation*}
E_\pm(\lambda)\phi=x^{s_\pm(\lambda)}v,\ v\in\CI(X),\ v|_{Y_+}=\phi
\end{equation*}
for all $\phi\in\CI(Y_+)$. Moreover,
$\sigma_{m(s)}(E_\pm)$
never vanishes.
\end{prop}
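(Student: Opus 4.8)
The plan is to read the parametrix $E_\pm(\lambda)$ off the construction developed in this section, removing errors in order of decreasing strength. First I would take $E_\pm$ to be the output of Proposition~\ref{prop:transport}: its leading part at the front face $\ff$ is the explicit model $E_{0,\pm}$ of Lemma~\ref{lemma:Poisson-leading} (with the nonzero normalizing constant $C_s$), so $E_\pm\in x^sI^{m(s)}([X\times Y_+;\diag_{Y_+}],F)$ with $s=\hat s_\pm(\lambda)$, $m(s)=-s-\frac{2n+1}{4}$, $E_\pm-E_{0,\pm}\in x^{s+1}I^{m(s)}([X\times Y_+;\diag_{Y_+}],F)$, $PE_\pm\in x^{s+1}\CI([X\times Y_+;\diag_{Y_+}])$, and $E_\pm$ vanishing to infinite order off $\ff$. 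Using that $P_\sigma$ is elliptic inside the light cone and microhyperbolic outside, together with the solvability of the singular transport equations, this $E_\pm$ can moreover be taken supported inside the light cone near $Y_+\times Y_+$.

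Next I would remove the Taylor expansion of $PE_\pm$ in powers $x^{s+j}$, $j\ge 1$, allowing finitely many logarithmic factors $\log(1-r^2)_+^k$ (harmless by Lemma~\ref{lemma:P_sigma-conj-log} and the corollary following it). Removing the $x^{s+j}$-term amounts to solving $P_{s+j}v_j=-f_j$ with $v_j\in\nu^{2(s+j)}\CI(\BB^{n-1})$, extended off the fiber over $y'$ by a finite Taylor expansion as explained just before the statement of the proposition; by Corollary~\ref{cor:P_sigma-Fredholm} this is possible provided $s+j$ is not a pole of $R(\sigma)$ and $2(s+j)\notin -\Nat_+$. This is the only substantive point, and it is exactly where the hypothesis enters: by the corollary preceding the proposition the poles of $R$ lie in $(\hat s_+(\lambda)-\Nat)\cup(\hat s_-(\lambda)-\Nat)$, and for $s=\hat s_+(\lambda)$ one has $s+j\in\hat s_+(\lambda)+\Nat_+$, which avoids those sets since $\hat s_+(\lambda)-\hat s_-(\lambda)\notin -\Nat_+$, while $2(s+j)\notin -\Nat_+$ as noted in the discussion preceding the proposition; for $s=\hat s_-(\lambda)$ the standing assumption $\hat s_+(\lambda)-\hat s_-(\lambda)=s_+(\lambda)-s_-(\lambda)\notin\Nat$ rules out both obstructions. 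Asymptotically summing and relabelling, I obtain $E_\pm$ with all the previous structural properties, still supported in the light cone near $Y_+\times Y_+$, but now with $PE_\pm\in\dCI(X\times Y_+)$.

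Then I would solve away this Schwartz error near $Y_+$: applying Proposition~\ref{prop:solvable} together with Corollary~\ref{cor:decay-Schwartz} (both local near a connected component of $Y$, used near $Y_+$, parametrically in $y'$) produces $w\in\dCI(X\times Y_+)$ with $Pw=PE_\pm$, and replacing $E_\pm$ by $E_\pm-w$ yields $PE_\pm\equiv 0$ near $Y_+\times Y_+$; since $w$ is smooth this alters neither the conormal class of $E_\pm$ nor, after possibly shrinking the neighborhood of $Y_+\times Y_+$, the light-cone location of its singularities. Finally the standard (boundaryless) FIO parametrix construction for the strictly hyperbolic $P$---propagating the conormal singularity carried by $F$ along the bicharacteristic flow and solving the attendant transport equations, cf.\ \cite{FIO1,Hor}---extends $E_\pm$ to $E_\pm(\lambda)$ with $PE_\pm(\lambda)$ supported near $Y_-$ and vanishing near $Y_+$, without changing anything near $Y_+\times Y_+$. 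The relation $E_\pm(\lambda)\phi=x^{s_\pm(\lambda)}v$, $v\in\CI(X)$, $v|_{Y_+}=\phi$, is the one already established for $E_{0,\pm}$ in Lemma~\ref{lemma:Poisson-leading}, since every subsequent correction contributes either $x^{s_\pm(\lambda)+j}$ times a smooth function or a Schwartz term and so leaves the leading coefficient $v|_{Y_+}$ unchanged.

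It remains to check that $\sigma_{m(s)}(E_\pm(\lambda))$ is nowhere zero on $N^*F$ near $Y_+\times Y_+$. At $\ff$ it equals, up to the nonzero constant $C_s$ of Lemma~\ref{lemma:Poisson-leading}, the principal symbol of $(1-r^2)^s_+$ (respectively of $\delta_0^{(-s-1)}(1-r^2)$ when $s$ is a negative integer), a nonzero multiple of $|\xi|^{-s-1}$ on each ray and hence never vanishing there; the Taylor-series corrections are lower order in $x$ and $w$ is smooth, so neither affects the symbol at $\ff$. Away from $\ff$ the symbol is governed by the homogeneous linear transport equation along $F$ induced by $H_p$, so by uniqueness for linear ODE it cannot vanish anywhere on $F$ near $Y_+\times Y_+$. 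The main obstacle in the whole argument is the model-problem solvability of the second step (the poles of $R(\sigma)$), which is precisely what forces $\lambda\neq\frac{(n-1)^2-m^2}{4}$; a minor point to be read with some care is the persistence of the light-cone support under the $\dCI$-correction, which is innocuous only because that correction is smooth.
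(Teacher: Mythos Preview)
Your proposal is correct and follows essentially the same route as the paper: start from Proposition~\ref{prop:transport}, remove the smooth Taylor coefficients at the front face by solving $P_\sigma v=f$ via the meromorphic family $R(\sigma)$ (checking, exactly as you do, that the needed values $\sigma=s+j$ avoid the poles $\hat s_\pm(\lambda)-\Nat$ and the extension obstruction $2\sigma\in-\Nat_+$ under the stated hypotheses), then remove the residual $\dCI$ error using the local solvability of Section~\ref{sec:local-solvability}, and finally propagate by the standard FIO construction. Your added justification of the nonvanishing of $\sigma_{m(s)}(E_\pm)$ via the nonzero model symbol at $\ff$ together with the linear transport ODE is the right argument and is only implicit in the paper. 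One cosmetic remark: the logarithmic factors $\log(1-r^2)_+^k$ you mention belong to the transport-equation step (Proposition~\ref{prop:transport}), not to the Taylor-series removal; once $PE_\pm\in x^{s+1}\CI$, the fiberwise right-hand sides are smooth and the $R(\sigma)$ step produces $v_j\in\nu^{2(s+j)}\CI(\BB^{n-1})$ with no logarithms.
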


We let $E(\lambda)=E_+(\lambda)\oplus E_-(\lambda)$ be the Poisson operator
(near $Y_+$, where it solves $PE_\pm(\lambda)=0$).
However, much as it is useful in the interior of $X$ to renormalize using
powers of the Laplacian, the same holds here. The renormalization depends
on the choice of $x$ modulo $x^2\CI(X)$. So let $\Delta_h$ denote the
Laplacian of the boundary metric $h$, define $\Delta'_h$ analogously
to the case of Cauchy surfaces, i.e.\ is $\Id$ on the nullspace of $\Delta_h$,
and is $\Delta_h$ on its orthocomplement. The renormalized Poisson operator
is then
\begin{equation*}
\tilde E(\lambda)=E_+(\lambda)(\Delta'_h)^{(s_+(\lambda)-n/2)/2}\oplus
E_-(\lambda)(\Delta'_h)^{(s_-(\lambda)-n/2)/2}.
\end{equation*}
The $n/2$ in the exponent of $\Delta'_h$ is somewhat arbitrary, it is used
to normalize FIO's below to be zeroth order; any quantity differing from
$s_\pm(\lambda)$ by a constant ($s$-independent) amount would work.
By Proposition~\ref{prop:E(lambda)-near-Y_+}, the two components of
$\tilde E(\lambda)$ lie in
$x^{1-n/2} I^{-5/4}([X\times Y_+;\diag_{Y_+}],F;(\Cx^2)^*)$,
i.e.\ they have the same regularity in the interior of $X\times Y_+$
as well as the same behavior at the boundary.

\begin{prop}\label{prop:S+ep-FIO}
Suppose $\hat s_+(\lambda)-\hat s_-(\lambda)
\notin\Nat$, i.e.\ $\lambda\neq\frac{(n-1)^2-m^2}{4}$, $m\in\Nat$.
For $t_0$ sufficiently close to $1$,
the map sending scattering data at $Y_+$ to
Cauchy data at $S_{t_0}$ given by
\begin{equation*}
\cS_{+,t_0}:\CI(Y_+)^2\ni(g_+,g_-)
\mapsto(u|_{S_{t_0}},\pa_x u|_{S_{t_0}})\in\CI(S_{t_0})^2,
\end{equation*}
where $u$
is the solution of $Pu=0$ given by Theorem~\ref{thm:smooth-solns},
is the
Fourier integral operator with Schwartz kernel $E(\lambda)|_{\Sigma_+(\ep)
\times Y_+}\oplus \pa_x E(\lambda)|_{\Sigma_+(\ep)
\times Y_+}$.

Moreover, the renormalized map
\begin{equation*}\begin{split}
\tilde \cS_{+,t_0}
=R_{t_0}\tilde E(\lambda)=&R_{t_0}E_+(\lambda)
(\Delta'_h)^{(s_+(\lambda)-n/2)/2}\oplus
R_{t_0}E_-(\lambda)(\Delta'_h)^{(s_-(\lambda)-n/2)/2}\\
&\in I^0(S_{t_0}\times Y_+,F\cap (S_{t_0}\times Y_+);\cL(\Cx^2,\Cx^2)),
\end{split}\end{equation*}
with $R_{t_0}$ being the Cauchy data map at $t_0$,
$u\mapsto ((\Delta'_{t_0})^{1/2}u|_{S_{t_0}},Vu|_{S_{t_0}})$, $V$ a vector
field transversal to $S_{t_0}$,
is an invertible Fourier integral operator.
\end{prop}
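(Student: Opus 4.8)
The plan is to identify $\cS_{+,t_0}$ with the restriction of the parametrix $E(\lambda)=E_+(\lambda)\oplus E_-(\lambda)$ of Proposition~\ref{prop:E(lambda)-near-Y_+}, to read off its Fourier integral structure from the conormality of $E_\pm(\lambda)$ to $F$ together with a transversality argument, and to deduce invertibility from well-posedness of the Cauchy problem and the fact that $s_+(\lambda)\neq s_-(\lambda)$. First I would fix $t_0$ so close to $1$ that $\{T\ge t_0\}$ lies in the collar of $Y_+$ on which $PE_\pm(\lambda)\equiv 0$, $E_\pm(\lambda)$ is supported inside the light cone, and the unique continuation of Proposition~\ref{prop:unique} applies; since $T$ may be taken to be $1-x$ there, $S_{t_0}$ is then a level set of $x$ to leading order, so the transversal fields $\pa_x$ and $V$ are transversal to $S_{t_0}$ and noncharacteristic on the null hypersurfaces $F_{q'}$. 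Given $(g_+,g_-)\in\CI(Y_+)^2$, put $w=E(\lambda)(g_+,g_-)$. By Proposition~\ref{prop:E(lambda)-near-Y_+}, near $Y_+$ one has $w=x^{s_+(\lambda)}v_++x^{s_-(\lambda)}v_-$ with $v_\pm\in\CI(X)$, $v_\pm|_{Y_+}=g_\pm$ (the hypothesis $\lambda\neq\frac{(n-1)^2-m^2}{4}$ forces $s_+(\lambda)-s_-(\lambda)\notin\intgr$, so no logarithms occur), and $Pw=0$ on $\{T\ge t_0\}$. If $u$ is the solution of Theorem~\ref{thm:smooth-solns} with the same scattering data, then $u-w$ solves $P(u-w)=0$ near $Y_+$ with vanishing asymptotic coefficients there, hence is Schwartz at $Y_+$ by Lemma~\ref{lemma:expansion} and therefore vanishes near $Y_+$ by Proposition~\ref{prop:unique}; for $t_0$ close to $1$ this holds on a neighbourhood of $S_{t_0}$, so $\cS_{+,t_0}(g_+,g_-)=(w|_{S_{t_0}},\pa_x w|_{S_{t_0}})$, i.e. $\cS_{+,t_0}$ has the asserted Schwartz kernel.

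Next I would establish the Fourier integral structure. Over $X^\circ$ the blow-up is trivial and $x$ is a positive smooth factor, so $E_\pm(\lambda)$ is locally conormal to $F$ in $X^\circ\times Y_+$. Because $S_{t_0}$ is spacelike and each $F_{q'}$ is characteristic for $P$, the submanifold $S_{t_0}\times Y_+$ meets $F$ transversally and the fibres $F_{q'}\cap S_{t_0}$ are compact hypersurfaces of $S_{t_0}$ varying smoothly with $q'$, so $F\cap(S_{t_0}\times Y_+)$ is a smooth compact hypersurface of $S_{t_0}\times Y_+$; by the restriction theorem for conormal distributions (and since $\pa_x$, resp.\ $V$, is noncharacteristic on $N^*F$) all four matrix entries are conormal to $F\cap(S_{t_0}\times Y_+)$ with nonvanishing principal symbols, the latter by the nonvanishing of $\sigma(E_\pm)$. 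The renormalizations in $\tilde\cS_{+,t_0}=R_{t_0}\tilde E(\lambda)$ — composing the $g_\pm$ input with $(\Delta'_h)^{(s_\pm(\lambda)-n/2)/2}$ and applying the elliptic matrix $R_{t_0}$, which raises the order of the first Cauchy datum by $(\Delta'_{t_0})^{1/2}$ — bring all entries to a common order, and the order count recorded in the statement ($\tilde E_\pm(\lambda)\in x^{1-n/2}I^{-5/4}$, restriction costing $1/4$, $R_{t_0}$ costing $1$) gives $\tilde\cS_{+,t_0}\in I^0(S_{t_0}\times Y_+,F\cap(S_{t_0}\times Y_+);\cL(\Cx^2,\Cx^2))$. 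To see this is an FIO one checks that $N^*\big(F\cap(S_{t_0}\times Y_+)\big)\setminus o$, with the sign twist on $T^*Y_+$, is a local canonical graph: $N^*F$ is by construction the flowout of the bicharacteristics of $P$ limiting to $\zS^*_{q'}X$, and restricting the $X$-variable to $S_{t_0}$ records each such bicharacteristic at its unique (transversal, since $\pi^*T$ is strictly monotone along bicharacteristics) intersection with $S_{t_0}$, identifying the relation with the graph of the homogeneous symplectomorphism $T^*Y_+\setminus o\to T^*S_{t_0}\setminus o$ obtained by flowing out from $Y_+$ and restricting to $S_{t_0}$. Thus $\tilde\cS_{+,t_0}$ is an order $0$ Fourier integral operator associated to the bicharacteristic flow (and to $\cS_{\cl}$ via the limit at $Y_-$), and so is $\cS_{+,t_0}$ before renormalization.

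Finally, invertibility. The renormalized operator is elliptic: matching the two conormal orders at which $g_+$ and $g_-$ enter — this is the point of the exponents $(s_\pm(\lambda)-n/2)/2$ — yields a principal symbol proportional to the matrix with rows $(1,1)$ and $(s_+(\lambda),s_-(\lambda))$, the two rows being distinct because the transversal derivative in $R_{t_0}$ hits $x^{s_+(\lambda)}$ and $x^{s_-(\lambda)}$ with the distinct factors $s_\pm(\lambda)$; this matrix is invertible since $s_+(\lambda)\neq s_-(\lambda)$, which is part of our hypothesis. It is moreover bijective on $\CI$ (and on the relevant, suitably graded, Sobolev spaces): injectivity holds because vanishing Cauchy data at $S_{t_0}$ forces $u\equiv 0$ on $X$ by hyperbolic uniqueness, hence $g_\pm=0$ since the asymptotic data is read off the solution once $s_+(\lambda)-s_-(\lambda)\notin\intgr$; surjectivity holds by solving the Cauchy problem, expanding the solution by Theorem~\ref{thm:Cauchy-exist}, and taking $(g_+,g_-)=(v_+|_{Y_+},v_-|_{Y_+})$, which by uniqueness in Theorem~\ref{thm:smooth-solns} is a preimage. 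An elliptic order $0$ FIO associated to a local canonical graph has a parametrix $G$, an FIO associated to the inverse graph, with $\tilde\cS_{+,t_0}G=\Id+R$, $R$ smoothing; bijectivity then gives $\tilde\cS_{+,t_0}^{-1}=G-\tilde\cS_{+,t_0}^{-1}R$ with $\tilde\cS_{+,t_0}^{-1}R$ smoothing, so $\tilde\cS_{+,t_0}^{-1}$ is again a Fourier integral operator.

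I expect the main obstacle to be the geometric step together with the symbolic bookkeeping: proving that $S_{t_0}\times Y_+$ is transversal to $F$ near $Y_+$ and, above all, that the restricted conormal bundle is a local canonical graph homogeneous of the correct degree, and then pinning the renormalized operator to order exactly $0$ and identifying its principal symbol with the invertible matrix above (so that the two pieces $x^{s_\pm(\lambda)}(\cdots)$ of the solution, which carry the two scattering data at different conormal orders, really can be disentangled). The identification of $\cS_{+,t_0}$ with the restricted parametrix, by contrast, is a direct consequence of Proposition~\ref{prop:E(lambda)-near-Y_+} and the unique continuation results already in hand.
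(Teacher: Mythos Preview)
Your identification of $\cS_{+,t_0}$ with the restricted parametrix and your FIO/transversality discussion are essentially the same as the paper's. The genuine gap is in the ellipticity argument.

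Your claim that the principal symbol of $\tilde\cS_{+,t_0}$ is proportional to $\begin{pmatrix}1&1\\ s_+(\lambda)&s_-(\lambda)\end{pmatrix}$ is incorrect. At $S_{t_0}$ the factor $x^{s_\pm(\lambda)}$ is a nonvanishing \emph{smooth} coefficient, and when $V$ hits $x^{s_\pm(\lambda)}a$ with $a$ conormal to $F$, the term $s_\pm x^{s_\pm-1}a$ is one conormal order \emph{lower} than $x^{s_\pm}Va$; so the principal symbol of the second row is $\sigma(V)\,\sigma(\hat E_\pm)$, not $s_\pm(\lambda)\,\sigma(\hat E_\pm)$. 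At any single point of the canonical relation the $2\times2$ symbol matrix is therefore
\[
\begin{pmatrix}\sigma(\Delta'_{t_0})^{1/2}\sigma(\hat E_+)&\sigma(\Delta'_{t_0})^{1/2}\sigma(\hat E_-)\\ \sigma(V)\sigma(\hat E_+)&\sigma(V)\sigma(\hat E_-)\end{pmatrix},
\]
which has rank one. Thus $\tilde\cS_{+,t_0}$ is \emph{not} elliptic in the naive sense you invoke, and your parametrix argument does not go through.

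What rescues the situation, and what the paper actually does, is that the canonical relation $N^*\big(F\cap(S_{t_0}\times Y_+)\big)$ is two-sheeted over both $T^*S_{t_0}$ and $T^*Y_+$ (one sheet for each sign of $\xi$, i.e.\ for each of $S^*_\pm Y_+$). One then shows that the compositions $\tilde\cS_{+,t_0}\tilde\cS_{+,t_0}^*$ and $\tilde\cS_{+,t_0}^*\tilde\cS_{+,t_0}$ are elliptic \emph{pseudodifferential} operators: their symbols are sums $\sum_{j=\pm}c_j\begin{pmatrix}1&r_j\\ \bar r_j&|r_j|^2\end{pmatrix}$ over the two sheets, which is positive definite iff $r_+\neq r_-$. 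For $\tilde\cS\tilde\cS^*$ this amounts to the two lifts $\alpha_\pm\in T^*_{S_{t_0}}X$ being distinct, which is automatic. For $\tilde\cS^*\tilde\cS$ one must compute the ratio $\sigma(\hat E_-)/\sigma(\hat E_+)$ on the two sheets; tracing back along the transport equations to the explicit model $E_{0,\pm}$ on the front face gives the values $1$ and $e^{i\pi(s_+(\lambda)-s_-(\lambda))}$, and it is \emph{here} that the hypothesis $s_+(\lambda)-s_-(\lambda)\notin2\mathbb{Z}$ (implied by your hypothesis) is actually used. Your injectivity/surjectivity arguments from well-posedness are fine and match the paper's, but they only finish the proof once this two-sheet ellipticity computation is in place.
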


\begin{proof}
Let $t_1<t_0$, but still sufficiently close to $1$.
Let $\chi\in\CI(X)$ be identically $1$ in a neighborhood of
$T\geq t_0$, supported in
$T>t_1$.
Let $u$ be the solution of $Pu=0$ given by Theorem~\ref{thm:smooth-solns},
and let
$v=\int_Y \chi E_+(\lambda) g_+\,dy+\int_Y \chi E_-(\lambda) g_-\,dy$. Then
at $Y_+$, $v$ has the asymptotics required by Theorem~\ref{thm:smooth-solns},
and $Pv=[P,\chi](\int_Y E_+ g_+\,dy+\int_Y E_- g_-\,dy)$ is supported
where $T\in(t_1,t_0)$.
For $l>\max(\frac{1}{2},l(\lambda))$,
let $v_1\in H^{1,l,-l}_0(X)$ be the solution of $Pv_1=-Pv$, As in the proof
of Theorem~\ref{thm:Cauchy-exist}, $v_1$ is identically $0$ for $T\geq t_0$,
is in $\CI(X^\circ)$, and has an asymptotic expansion at $Y_-$ as
in Theorem~\ref{thm:smooth-solns}. Thus, $v+v_1$ has all the properties
of $u$ required by the uniqueness part of Theorem~\ref{thm:smooth-solns},
so
\begin{equation*}
u=\int_Y \chi E_+ g_+\,dy+\int_Y \chi E_- g_-\,dy+v_1,
\end{equation*}
and $v_1\equiv 0$ at $S_{t_0}$. Thus, $\cS_{+,t_0}$ indeed has
Schwartz kernel
\begin{equation*}
E(\lambda)|_{S_{t_0}
\times Y_+}\oplus \pa_x E(\lambda)|_{S_{t_0}
\times Y_+},
\end{equation*}
and we have a similar expression for $\tilde E(\lambda)$.
As $F$ is transversal to $S_{t_0}$ and the restriction
map to $S_{t_0}$ is a Fourier integral operator of order $1/4$,
$\tilde E(\lambda)|_{S_{t_0}\times Y_+}$ is an FIO of order $-1$,
while $R_{t_0}\tilde E(\lambda)|_{S_{t_0}\times Y_+}$ is an FIO of order $0$.

In order to prove the invertibility of $\tilde
\cS_{+,t_0}$, it suffices to show that
it is elliptic in the sense that $\tilde \cS_{+,t_0}^*\tilde \cS_{+,t_0}$
and $\tilde \cS_{+,t_0}\tilde \cS_{+,t_0}^*$
are elliptic pseudo-differential operators,
where the adjoint is taken with respect to the Riemannian densities
on $S_{t_0}$ and $Y_+$.
Once this is shown, it follows that both the nullspace of $\cS_{+,t_0}$
and of its adjoint must lie in smooth matrix-valued functions, and
are finite dimensional. Consider
for instance $\tilde \cS_{+,t_0}$. For
such smooth Cauchy data $(g_+,g_-)$ at $Y_+$, the corresponding
solution of $\Box u=0$ is smooth in $X^\circ$, of the form given by
Theorem~\ref{thm:smooth-solns}, and the vanishing of its Cauchy data
at $S_{t_0}$ implies that in fact $u$ vanishes identically, hence
$g_\pm=0$, so $\tilde \cS_{+,t_0}$ has trivial nullspace. On the other
hand, suppose that $\tilde \cS_{+,t_0}^*$ is not injective,
i.e.\ $\tilde \cS_{+,t_0}$
is not surjective (e.g.\ on the $L^2$-spaces). Any element of the nullspace
of $\tilde\cS_{+,t_0}^*$ is smooth, so in this case there exist smooth non-zero
Cauchy data $(\psi_0,\psi_1)$
at $S_{t_0}$ which are $L^2$-orthogonal to the range of
$\tilde \cS_{+,t_0}$. Let $u$ be the solution of $Pu=0$ with these Cauchy data.
Let
$(g_+,g_-)$ be the leading coefficients of the asymptotics at $Y_+$, as in Theorem~\ref{thm:smooth-solns}.
Then $u=E_+(\lambda) g_++E_-(\lambda) g_-$ (since the right hand side has the
same asymptotics at $Y_+$ as the left hand side, so they are equal by the
uniqueness part of Theorem~\ref{thm:smooth-solns}). Therefore $(\psi_0,\psi_1)$
are in the range of $\tilde\cS_{+,t_0}$, so they vanish, which gives
a contradiction. Thus, $\tilde \cS_{+,t_0}^*$ is also injective.
This proves the invertibility of $\tilde \cS_{+,t_0}$ given its ellipticity.

In order to prove ellipticity, one needs to compute the principal
symbol of $\tilde \cS_{+,t_0}^*\tilde \cS_{+,t_0}$
and $\tilde \cS_{+,t_0}\tilde \cS_{+,t_0}^*$. Consider first the latter.
For each $\alpha=(z,\zeta)\in T^*S_{t_0}$ there are two bicharacteristics
of $\Box$ which contain a point over $z\in S_{t_0}$ whose image in
$T^* S_{t_0}=T^*_{S_{t_0}} X/N^*S_{t_0}$ is $(z,\zeta)$. Let
the corresponding points in $T^*{S_{t_0}}$ be
$\alpha_j=(t_0,z,\xi_j,\zeta)$, $j=+,-$,
where $\xi$ is the dual variable of the first coordinate, $T$.
These bicharacteristics
emanate from $S^*_\pm Y_+$ (one from $S^*_+Y_+$, one from $S^*_-Y_+$);
let $\beta_j=(y_j,\eta_j)$, $j=+,-$, be the corresponding points.
Let $\hat E_\pm=E_\pm(\Delta'_h)^{(s_\pm(\lambda)-n/2)/2}$. Then
H\"ormander's theorem on the composition of FIO's shows that the principal
symbol of $\tilde \cS_{+,t_0}\tilde \cS_{+,t_0}^*$ at $\alpha=(z,\zeta)$
is a constant times
\begin{equation*}\begin{split}
\sum_j&
\begin{bmatrix}\sigma(\Delta'_h)^{1/2}(\alpha)\sigma(\hat E_+)
(\alpha_j,\beta_j)&
\sigma(\Delta'_h)^{1/2}(\alpha)\sigma(\hat E_-)(\alpha_j,\beta_j)\\
\sigma(V)(\alpha_j)\sigma(\hat E_+)(\alpha_j,\beta_j)&
\sigma(V)(\alpha_j)
\sigma(\hat E_-)(\alpha_j,\beta_j)\end{bmatrix}\\
&\qquad\times
\begin{bmatrix}\sigma(\Delta'_h)^{1/2}(\alpha)\overline{\sigma(\hat E_+)
(\alpha_j,\beta_j)}&
\overline{\sigma(V)(\alpha_j)}\overline{\sigma(\hat E_+)(\alpha_j,\beta_j)}\\
\sigma(\Delta'_h)^{1/2}(\alpha)\overline{\sigma(\hat E_-)(\alpha_j,\beta_j)}&
\overline{\sigma(V)(\alpha_j)}
\overline{\sigma(\hat E_-)(\alpha_j,\beta_j)}\end{bmatrix}\\
&
=\sum_j (|\sigma(\hat E_+)|^2+|\sigma(\hat E_-)|^2)\sigma(\Delta')
\begin{bmatrix}1&r_j\\
\bar r_j&|r_j|^2\end{bmatrix},
\end{split}\end{equation*}
where $r_j=\frac{\overline{\sigma(V)}}
{\sigma(\Delta')^{1/2}}$, and where on the right hand
side the various principal symbols are evaluated at the same points as
on the left hand side, but suppressed in notation.
Thus, the principal symbol has the form
$\sum_j c_j\begin{bmatrix}1&r_j\\
\bar r_j&|r_j|^2\end{bmatrix}$ with $c_j>0$, and a straightforward
calculation shows that this matrix is positive definite, hence invertible,
provided $r_+\neq r_-$. But $r_+=r_-$ would imply that
$\alpha_+=\alpha_-$, which is not the case, so we conclude that
$\tilde \cS_{+,t_0}\tilde \cS_{+,t_0}^*$ is indeed elliptic.

The calculation for $\tilde \cS_{+,t_0}\tilde \cS_{+,t_0}^*$ is similar.
In this case, for each $\beta=(y,\eta)\in S^*Y_+$, there are two corresponding
bicharacteristics, again one including a point in $S^*_+Y_+$ and one
in $S^*_-Y_+$, which then cross $T^*_{S_{t_0}}X$ at
$\alpha_j=(x_j,z_j,\xi_j,\eta_j)$, $j=+,-$.
Thus, the principal symbol at $\beta=(y,\eta)$ is
\begin{equation*}\begin{split}
\sum_j&
\begin{bmatrix}\sigma(\Delta'_h)^{1/2}(\alpha_j)\overline{\sigma(\hat E_+)
(\alpha_j,\beta)}&
\overline{\sigma(V)(\alpha_j)}
\overline{\sigma(\hat E_+)(\alpha_j,\beta)}\\
\sigma(\Delta'_h)^{1/2}(\alpha_j)
\overline{\sigma(\hat E_-)(\alpha_j,\beta)}&
\overline{\sigma(V)(\alpha_j)}
\overline{\sigma(\hat E_-)(\alpha_j,\beta)}\end{bmatrix}\\
&\qquad\times
\begin{bmatrix}\sigma(\Delta'_h)^{1/2}(\alpha_j)\sigma(\hat E_+)
(\alpha_j,\beta)&
\sigma(\Delta'_h)^{1/2}(\alpha_j)\sigma(\hat E_-)(\alpha_j,\beta)\\
\sigma(V)(\alpha_j)\sigma(\hat E_+)(\alpha_j,\beta)&
\sigma(V)(\alpha_j)
\sigma(\hat E_-)(\alpha_j,\beta)\end{bmatrix}\\
&
=\sum_j (\sigma(\Delta')+|\sigma(V)|^2)|\sigma(\hat E_+)|^2\begin{bmatrix}1&r_j\\
\bar r_j&|r_j|^2\end{bmatrix},
\end{split}\end{equation*}
where now $r_j=\frac{\sigma(\hat E_-)}{\sigma(\hat E_+)}$, and where again
on the right hand
side the various principal symbols are evaluated at the same points as
on the left hand side, but suppressed in notation.

Now $\sigma_{-5/4}(\hat E_+)$ and $\sigma_{-5/4}(\hat E_-)$ satisfy the
same first order linear ODE along bicharacteristics, so their ratio along each
bicharacteristic is constant, hence are equal to the ratio evaluated
at the `initial point' at the front face of
$[X\times Y_+;\diag_{Y_+}]$ (where $\sigma(\hat E_+)$ has to be replaced
by $\sigma(x^{n/2-1}\hat E_+)$, etc.). For a given $(y',\eta')$,
the projection of the
two bicharacteristics hit the front face at $(y',Y)$, $Y=\pm\hat\eta'$, and
the bicharacteristics themselves hit the cotangent bundle over the
front face inside $N^*_{(y',Y)}F$ at $-Y d|Y|$. Thus,
\begin{equation*}
r_j=\frac{\sigma(x^{n/2-1}\hat E_-)(y',Y_j,-Y_j d|Y|)}
{\sigma(x^{n/2-1}\hat E_+)(y',Y_j,-Y_j d|Y|)}
\end{equation*}
But these can be calculated from the normal operators, which are
explicit, hence are easily evaluated as $1$, resp.\ $e^{i \pi (s_+(\lambda)
-s_-(\lambda))}$. Thus, if $s_+(\lambda)-s_-(\lambda)$ is not an even
integer, which we are assuming, the $r_j$ are unequal, so
$\tilde \cS_{+,t_0}^*\tilde \cS_{+,t_0}$ is indeed elliptic,
finishing the proof.
\end{proof}

We are now ready to prove one of our main results, that the scattering
operator is a Fourier integral operator.

\begin{thm}\label{thm:S-FIO}
Suppose that $\hat s_+(\lambda)-\hat s_-(\lambda)
\notin\Nat$, i.e.\ $\lambda\neq\frac{(n-1)^2-m^2}{4}$, $m\in\Nat$.
Then
$\cS(\lambda)$ is a Fourier integral operator with canonical relation given
by $\cS_{\cl}$, and
$\tilde\cS=\tilde\cS(\lambda)$
is an invertible elliptic $0$th order
Fourier integral operator with the same canonical relation.
\end{thm}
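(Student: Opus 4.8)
The plan is to realize $\tilde\cS(\lambda)$ as a composition of three invertible elliptic Fourier integral operators of order $0$, all associated to the bicharacteristic flow, and then to deduce the statement for $\cS(\lambda)$ by composing with pseudodifferential operators. Throughout, the hypothesis $\hat s_+(\lambda)-\hat s_-(\lambda)\notin\Nat$ is exactly what is needed in order to invoke Propositions~\ref{prop:E(lambda)-near-Y_+} and~\ref{prop:S+ep-FIO}.

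First I would observe that, since the labelling of $Y_+$ and $Y_-$ may be reversed, Proposition~\ref{prop:S+ep-FIO} applies verbatim near $Y_-$: for $t_0'\in(-1,1)$ sufficiently close to $-1$ there is an invertible elliptic Fourier integral operator $\tilde\cS_{-,t_0'}\colon\CI(Y_-)^2\to\CI(S_{t_0'})^2$ of order $0$ sending the renormalized leading coefficients of the asymptotic expansion at $Y_-$ to the renormalized Cauchy data at $S_{t_0'}$, defined with the same renormalization $u\mapsto((\Delta'_{t_0'})^{1/2}u|_{S_{t_0'}},Vu|_{S_{t_0'}})$ used in Proposition~\ref{prop:FIO-int}, and with canonical relation a local canonical graph built from the bicharacteristic flow out of $S^*Y_-$. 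Next I would invoke Proposition~\ref{prop:FIO-int} with $t_1=t_0$, $t_2=t_0'$: the Cauchy-data propagator $C_{t_0,t_0'}$ is an invertible order-$0$ Fourier integral operator associated to the bicharacteristic flow, normalized compatibly with $\tilde\cS_{+,t_0}$ and $\tilde\cS_{-,t_0'}$.

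Now let $u$ be the global smooth solution of $Pu=0$ with prescribed leading data $(g_+,g_-)$ at $Y_+$ furnished by Theorem~\ref{thm:smooth-solns}, and let $(h_+,h_-)$ be its leading data at $Y_-$, so that $\cS(\lambda)(g_+,g_-)=(h_+,h_-)$. Following $u$ through $S_{t_0}$ and $S_{t_0'}$, and matching the powers of $\Delta'_h$ appearing on the two boundary sides of the definition of $\tilde\cS(\lambda)$, the uniqueness statements of Theorem~\ref{thm:smooth-solns} together with the uniqueness of the Cauchy problem give the exact operator identity
\begin{equation*}
\tilde\cS(\lambda)=\tilde\cS_{-,t_0'}^{-1}\circ C_{t_0,t_0'}\circ\tilde\cS_{+,t_0}.
\end{equation*}
Each of the three factors is an invertible elliptic Fourier integral operator of order $0$ whose canonical relation is a local canonical graph assembled from pieces of the bicharacteristic flow — on $S^*X^\circ$ between the hypersurfaces, and, near the boundary, via the identifications $S^*_\pm Y\cong S^*Y$ and the flow structure set up in Section~\ref{sec:0-geom}. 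The compositions are therefore transversal, so by the composition theorem for Fourier integral operators (\cite{FIO1,FIOII}) $\tilde\cS(\lambda)$ is a Fourier integral operator whose canonical relation is the composite of the three graphs, i.e.\ the bicharacteristic flow from $S^*Y_+$ to $S^*Y_-$, which is precisely $\cS_{\cl}$; it is invertible as a composition of invertibles and elliptic of order $0$ since each factor is. Finally $\cS(\lambda)$ is obtained from $\tilde\cS(\lambda)$ by pre- and post-composition with the powers of $\Delta'_h$ appearing in the definition of $\tilde\cS(\lambda)$, which are elliptic pseudodifferential operators on $Y_+$, resp.\ $Y_-$; since composition of a Fourier integral operator with a pseudodifferential operator is again a Fourier integral operator with the same canonical relation, $\cS(\lambda)$ is a Fourier integral operator with canonical relation $\cS_{\cl}$.

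The main obstacle is the bookkeeping showing that the three canonical graphs compose transversally and produce exactly $\cS_{\cl}$: one must keep track of the two sheets $S^*_\pm Y$ inside the $2\times2$ matrix structure of all the operators involved, and use (A1)--(A2) to guarantee that every bicharacteristic issuing from $S^*Y_+$ actually reaches $S^*Y_-$, so that the composite is a (twisted) canonical graph rather than a mere relation. This is, however, precisely the flow information already assembled in Section~\ref{sec:0-geom} and in the ellipticity computation of Proposition~\ref{prop:S+ep-FIO}.
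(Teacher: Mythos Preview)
Your proof is correct and follows essentially the same approach as the paper: factor $\tilde\cS(\lambda)$ as $\tilde\cS_{-,t_0'}^{-1}\circ C_{t_0,t_0'}\circ\tilde\cS_{+,t_0}$, invoke Propositions~\ref{prop:S+ep-FIO} and~\ref{prop:FIO-int} to see each factor is an invertible elliptic order-$0$ FIO with canonical relation given by the bicharacteristic flow, check transversality of the composition, and then recover $\cS(\lambda)$ by composing with the pseudodifferential powers of $\Delta'_h$. The paper's proof is terser but identical in substance.
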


\begin{proof}
This is immediate from $\tilde \cS=\tilde \cS_{-,-1+\ep}^{-1}
\circ C_{1-\ep,-1+\ep}
\circ \tilde \cS_{+,1-\ep}$ for $\ep>0$ small.
Indeed, all operators are Fourier integral operators by
Proposition~\ref{prop:S+ep-FIO} (applied also at $Y_-$) and
Proposition~\ref{prop:FIO-int}, with canonical relation given
by the appropriate restriction of the bicharacteristic flow. Thus,
the projection of the canonical relation to each factor
for each of them has
surjective differential, so the composition is transversal, and H\"ormander's
theorem can be applied. As
\begin{equation*}\begin{split}
&\tilde \cS(\lambda)\\
&=
((\Delta'_h)^{-s_+(\lambda)/2+n/4}\oplus(\Delta'_h)^{-s_-(\lambda)+n/4})
\cS(\lambda)((\Delta'_h)^{s_+(\lambda)/2-n/4}
\oplus(\Delta'_h)^{s_-(\lambda)/2-n/4}),
\end{split}\end{equation*}
and the first and last operators are pseudodifferential, the theorem
follows.
\end{proof}

Theorem~\ref{thm:Cauchy-FIO} follows similarly, as the propagator mapping
Cauchy data at different $T$-slices to each other is an invertible
FIO, so it suffices to consider the case $t_0$ close to $1$,
in which case the inverse given by Proposition~\ref{prop:S+ep-FIO}
proves the claim.

\bibliographystyle{plain}
\bibliography{sm}

\end{document}